\documentclass[11pt]{article} 

\usepackage{graphicx}
\usepackage[english]{babel}
\usepackage{amssymb,amsmath,amsthm}  
\usepackage{amsfonts}
\usepackage{color}
\usepackage{hyperref}
\numberwithin{equation}{section}

\usepackage{anysize}
\marginsize{3.5cm}{3.5cm}{2.2cm}{2.2cm}

\theoremstyle{plain}
\newtheorem{theorem}{Theorem}[section]
\newtheorem{proposition}[theorem]{Proposition}
\newtheorem{lemma}[theorem]{Lemma}

\newtheorem{definition}{Definition}[section]

\theoremstyle{definition}

\newenvironment{remark}{\pushQED{\qed} \remarkbase}{\popQED\endremarkbase}

% Bibliografia compatta (poco spazio verticale tra i bibitems)
\let\OLDthebibliography\thebibliography
\renewcommand\thebibliography[1]{
  \OLDthebibliography{#1}
  \setlength{\parskip}{0pt}
  \setlength{\itemsep}{0pt plus 0.3ex} }

% Calligraphic
\newcommand{\mA}{\mathcal{A}}
\newcommand{\mB}{\mathcal{B}}

\newcommand{\mD}{\mathcal{D}}
\newcommand{\mE}{\mathcal{E}}
\newcommand{\mF}{\mathcal{F}}
\newcommand{\mG}{\mathcal{G}}
\newcommand{\mH}{\mathcal{H}}

\newcommand{\mL}{\mathcal{L}}

\newcommand{\mN}{\mathcal{N}}

\newcommand{\mS}{\mathcal{S}}

\newcommand{\mR}{\mathcal{R}}
\newcommand{\mT}{\mathcal{T}}

% Greek
\renewcommand{\a}{\alpha}
\renewcommand{\b}{\beta}
\newcommand{\g}{\gamma}
\renewcommand{\d}{\delta}

\newcommand{\e}{\varepsilon}
\newcommand{\ph}{\varphi}
\newcommand{\vphi}{\varphi}
\newcommand{\lm}{\lambda}
\newcommand{\Om}{\Omega}
\newcommand{\om}{\omega}
\newcommand{\p}{\pi}
\newcommand{\s}{\sigma}
\renewcommand{\t}{\tau}

\renewcommand{\t}{\tau }

\newcommand{\teta}{\theta}

\newcommand{\be}{\begin{equation}}
\newcommand{\ee}{\end{equation}}

% Complex

\newcommand{\ii}{{\mathrm i}}

% Gradient
\newcommand{\gr}{\nabla}

% Integral

% Kernel and Range

% Mix

% Langle Rangle
\newcommand{\la}{\langle}
\newcommand{\ra}{\rangle}

\newcommand{\R}{\mathbb R}
\newcommand{\C}{\mathbb C}
\newcommand{\Z}{\mathbb Z}
\newcommand{\N}{\mathbb N}
\newcommand{\T}{\mathbb T}

\newcommand{\inv}{^{-1}}

% Derivate parziali
\newcommand{\pa}{\partial}

\newcommand{\fracchi}{{\mathfrak{I}}}

\newcommand{\Lipg}{{\mathrm{Lip}(\g)}}
\newcommand{\lip}{\mathrm{lip}}

\begin{document}

\title{\textbf{KAM for autonomous quasi-linear perturbations of mKdV}}
\date{}
\author{Pietro Baldi, Massimiliano Berti, Riccardo Montalto}

\maketitle

\begin{small}
\noindent
\textbf{Abstract.} 
We prove the existence  of Cantor families of small amplitude, 
linearly  stable,  quasi-periodic solutions of \emph{quasi-linear} (also called strongly nonlinear) 
autonomous Hamiltonian differentiable perturbations of  the mKdV equation.
The proof is based on a weak version of the Birkhoff normal form algorithm and 
a nonlinear Nash-Moser iteration. 
The analysis of the linearized operators at each step of the iteration 
is achieved by pseudo-differential operator techniques and a linear KAM reducibility scheme. 

\smallskip

\noindent
\emph{Keywords:} mKdV, KAM for PDEs, quasi-linear PDEs, Nash-Moser theory, quasi-periodic solutions. 

\noindent
\emph{MSC 2010:} 37K55, 35Q53.
\end{small}

\section{Introduction and main result}

In the paper \cite{bbm} we proved the first existence result of 
quasi-periodic solutions for autonomous quasi-linear PDEs 
(also called  ``{\it strongly nonlinear}'' in \cite{k1}), in particular 
of small amplitude quasi-periodic  solutions of the KdV equation subject to  
a Hamiltonian quasi-linear perturbation.
The approach developed in \cite{bbm} (see also \cite{BBM1})
is of wide applicability for quasi-linear PDEs in 1 space dimension. 
In this paper we take the opportunity to explain the general strategy of \cite{bbm} 
applied to a model which is slightly simpler than KdV.

\smallskip

We consider the cubic, focusing or defocusing, mKdV equation
\begin{equation}\label{eq:1}
u_t + u_{xxx} + \varsigma \, \pa_x(u^3) + {\mathcal N}_4 (x, u, u_x, u_{xx}, u_{xxx}) = 0\,, 
\quad \varsigma = \pm1, 
\end{equation}
under periodic boundary conditions $ x \in \T := \R / 2 \p \Z$, 
where 
\begin{equation}\label{qlpert}
{\mathcal N}_4 (x, u, u_x, u_{xx}, u_{xxx}) := - \partial_x \big[ (\partial_u f)(x, u,u_x)  - \partial_{x} ((\partial_{u_x} f)(x, u,u_x)) \big]   
\end{equation}
is the most general quasi-linear Hamiltonian (local) nonlinearity. 
Note that $ {\cal N}_4 $ contains as many derivatives as the linear vector field $ \pa_{xxx} $.
It is a {\it quasi-linear} perturbation because $ {\cal N}_4 $ depends linearly on the highest derivative $ u_{xxx} $ multiplied by a  coefficient 
which is a nonlinear function of the lower order derivatives $ u, u_x, u_{xx} $. 
The equation \eqref{eq:1} is the Hamiltonian PDE
\be\label{KdV:HS}
u_t =  X_H (u) \, , \quad 
 X_H (u) := \partial_x \nabla H (u) \, , 
\ee
where $ \nabla H $ denotes the $L^2(\T_x)$ gradient of the Hamiltonian 
\begin{equation} \label{Ham in intro}
H(u) = \frac12 \int_\T u_x^2 \, dx - \frac{\varsigma}{4} \int_\T u^4 \, dx 
+ \int_\T f(x, u,u_x) \, dx 
\end{equation}
on the real phase space   
\begin{equation}\label{def phase space}
H^1_0 (\T_x) := \Big\{ u(x ) \in H^1 (\T, \R) \ : \ \int_{\T} u(x) \, dx = 0  \Big\} 
\end{equation}
endowed with the non-degenerate symplectic form
\be\label{2form KdV}
\Omega (u, v) := \int_{\T} (\partial_x^{-1 } u) \, v \, dx \, ,   \quad 
\forall u, v \in H^1_0 (\T_x) \, , 
\ee
where $ \partial_x^{-1} u $ is the periodic primitive of $ u $ with zero average. 
The phase space $ H^1_0 (\T_x) $ is invariant for the evolution of \eqref{eq:1} 
because the integral $ \int_{\T} u(x) \, dx  $ is a prime integral (the mass). 
For simplicity we fix its value to $ \int_{\T} u(x) \, dx = 0 $. 
We recall that the Poisson bracket 
between two functions $ F $, $ G : H^1_0(\T_x) \to \R$ is defined as
\begin{equation}\label{Poisson bracket}
\{ F, G \}(u) := \Om (X_F(u), X_G(u) ) = \int_{\T} \gr F(u) \pa_x \gr G (u) dx \, . 
\end{equation}

We assume that the ``Hamiltonian density'' $f$ is of class  
$C^q (\T \times \R \times \R; \R ) $ for some $ q $ large enough
(otherwise, as it is well known, we cannot expect the existence of smooth invariant KAM tori).  
We also assume that  $ f $ vanishes of order five around $ u = u_x = 0$, namely 
\begin{equation}\label{order5}
|f(x,u,v)| \leq C (|u| + |v|)^5 \quad \forall (u,v) \in \R^2 \, ,  \ |u|+|v| \leq 1.
\end{equation}
As a consequence 
the nonlinearity $ {\mathcal N}_4 $ vanishes of order $ 4 $ at $ u = 0 $ and 
 \eqref{eq:1}
may be seen, close to the origin, as  a ``small'' perturbation of the cubic mKdV equation
\begin{equation}\label{KdVmKdV} 
u_t + u_{xxx} + 3 \varsigma u^2 u_x = 0 \, . 
\end{equation}
Such equation is known to be completely integrable. 
Actually it is mapped into KdV by a Miura transform, 
and it may be described by global analytic action-angle variables,
as it was proved by Kappeler-Topalov \cite{KaT}. 
We also remark that, among the generalized KdV equations $ u_t + u_{xxx} \pm \pa_x (u^p) = 0$, $ p \in \N $,  
the only known completely integrable ones are the KdV $ p=2$ and the cubic mKdV $ p = 3 $.

\smallskip

It is a natural question to know whether the periodic, quasi-periodic or almost periodic solutions of \eqref{KdVmKdV} persist under small perturbations. 
This is the content of KAM theory. 
It is a difficult problem because of small divisors resonance phenomena,
which are especially strong in presence of quasi-linear perturbations like ${\cal N}_4$.

In this paper (as well as in \cite{bbm}) we restrict the analysis to the search 
of small amplitude solutions.
It is also a very interesting question to investigate possible extensions of this 
result to perturbations of finite gap solutions. 
A difficulty which 
arises in the search of small amplitude solutions 
is that the mKdV equation \eqref{eq:1} is a {\it completely resonant} PDE at $ u = 0 $, 
namely the linearized equation at the origin is the linear Airy equation 
$$ 
u_t + u_{xxx} = 0 
$$ 
which possesses only the $ 2 \pi $-periodic in time, real solutions
\begin{equation}\label{Airyper}
u(t,x) = \sum_{j \in \Z \setminus \{0\} } u_j e^{\ii j^3 t} e^{\ii jx } , \quad 
u_{- j} = {\bar u}_j .
\end{equation}
Thus the existence of small amplitude quasi-periodic solutions of \eqref{eq:1} is 
entirely due to  the nonlinearity. 
Indeed, the nonlinear term $\varsigma \pa_x (u^3)$ is the one that produces the main modulation 
of the frequency vector of the solution with respect to its amplitude 
(the well-known frequency-to-action map, or frequency-amplitude relation, or ``twist'', 
see \eqref{mappa freq amp})
and that allows to ``tune'' the action parameters $\xi$ so that the 
frequencies becomes rationally independent and diophantine. 
Note that the mKdV equation  \eqref{eq:1} does not depend on other external parameters
which may influence the frequencies. This is a further difficulty in the study of  autonomous PDEs
with respect to the forced cases studied in \cite{BBM}. Actually, 
in \cite{BBM} we considered non-autonomous quasi-linear (and fully nonlinear) 
perturbations of the Airy  equation and we used the forcing frequencies as  
independent parameters. 

\smallskip

The core of the matter is to understand the perturbative effect 
of the quasi-linear term $ {\cal N}_4 $ over \emph{infinite times}. 
By \eqref{order5}, close to the origin, the quartic term $ {\cal N}_4 $ 
is smaller than the pure cubic mKdV \eqref{KdVmKdV}. 
Therefore, when we restrict the equation to finitely many 
space-Fourier indices $ |j| \leq C $, we essentially enter 
in the range of applicability of finite dimensional KAM theory 
close to an elliptic equilibrium. 
The new problem is to understand what happens to the dynamics 
on the high frequencies $ |j| \to + \infty $, since $ {\cal N}_4 $ is a 
nonlinear differential operator of the {\it same} order (i.e.\ 3) 
as the constant coefficient linear (and integrable) vector field $ \pa_{xxx} $. 

\smallskip

Does such a strongly nonlinear perturbation give rise to the 
formation of singularities for a solution in finite time,  
as it happens for the quasi-linear wave equations
considered by Lax \cite{Lax} and Klainerman-Majda \cite{KM}? 
Or, on the contrary, does the KAM phenomenon persist nevertheless 
for the mKdV equation \eqref{eq:1}?  
The answer to these questions has been controversial for several years. 
For example, Kappeler-P\"oschel \cite{KaP} (Remark 3, page 19) wrote: 
``{It would be interesting to obtain perturbation results which also include terms of higher order, at least in the region where the KdV approximation is valid. However, results of this type are still out of reach, if true at all}''.

\smallskip

We think that these are very important dynamical questions to be investigated, 
especially because many of the equations 
arising in Physics are quasi-linear or even fully nonlinear. 

\smallskip

The main result of this paper 
proves that 
the KAM phenomenon actually persists, at least close to the origin,  
for quasi-linear Hamiltonian perturbations of mKdV 
(the same result is proved in \cite{bbm} for KdV). 
More precisely, Theorem \ref{thm:mKdV} proves the existence of Cantor families of 
small amplitude, linearly stable, quasi-periodic solutions of the mKdV equation
\eqref{eq:1} subject to quasi-linear Hamiltonian perturbations. 
It is not surprising that the same result applies for both the focusing 
and the defocusing mKdV because we are looking for small amplitude solutions. 
Thus the different sign $ \varsigma = \pm 1 $ only affects the branch of the bifurcation. 

From a dynamical point of view, note that the parameters $\xi$ selected by the KAM Theorem  \ref{thm:mKdV} give rise to solutions 
of \eqref{eq:1}-\eqref{qlpert} which are global in time.  
This is  interesting information because, as far as we know, 
there are no results of global or even local solutions of the Cauchy problem 
for \eqref{eq:1}-\eqref{qlpert}, and such PDEs are in general believed to be ill-posed in Sobolev spaces
(for a rough result of local well-posedness for \eqref{eq:1}-\eqref{qlpert} see \cite{Baldi-Floridia-Haus}).

The iterative procedure we are going to present is able to 
select many parameters $\xi$ which give rise to quasi-periodic solutions
(hence defined for all times). 
This procedure works for parameters  
belonging to a finite dimensional Cantor like set 
which becomes asymptotically dense at the origin.

How can this kind of result be achieved? 
The proof of Theorem \ref{thm:mKdV} -- which we shall discuss 
in more detail later -- is based on an iterative Nash-Moser scheme. 
As it is well known, the main step of this procedure is to invert the linearized operators 
obtained at each step of the iteration and to prove that the inverse operators, 
albeit they lose derivatives (because of small divisors), 
satisfy tame estimates in high Sobolev norms. 
The linearized equations are non-autonomous linear PDEs 
which depend quasi-periodically on time. 
The key point of this paper (and \cite{bbm}) is that, 
using the symplectic decoupling of \cite{BB13}, 
some techniques of pseudo-differential operators 
adapted to the symplectic structure, 
and a linear Birkhoff normal form analysis, 
we are able to construct, for most diophantine frequencies, 
a time dependent (quasi-periodic) change of variables 
which conjugates each linearized equation into another one 
that is diagonal and has constant coefficients, 
that is, in ``normal form''. 
This means that, in the new coordinates, we have \emph{integrated the equations}. 
Then we easily invert the linearized operator (recall that the inverse loses derivatives 
because of small divisors) and we conjugate it back 
to solve the linear equation in the original set of variables. 
We remark that these quasi-periodic Floquet changes of variable 
map Sobolev spaces of arbitrarily high norms into itself and satisfy tame estimates.  
Hence the inverse operator also loses derivatives, but it satisfies tame estimates as well. 

In the dynamical systems literature, this strategy is called ``reducibility''  of the equation 
and it is a quasi-periodic KAM perturbative extension of Floquet theory 
(Floquet theory deals with periodic solutions of finite dimensional systems).  
The difficulty to make it work in the present setting is due to the 
quasi-linear character of the nonlinearity in \eqref{eq:1}.

\smallskip

Before stating precisely our main result we shortly present some related literature. 
In the last years a big interest has been devoted to understand the effect of derivatives in the nonlinearity in KAM theory.   
For {\it unbounded} perturbations the first KAM results 
have been proved by Kuksin \cite{K2} and Kappeler-P\"oschel \cite{KaP}
for KdV (see also Bourgain \cite{B96}), and more recently by  Liu-Yuan \cite{LY}, Zhang-Gao-Yuan \cite{ZGY} for derivative 
NLS, and by Berti-Biasco-Procesi \cite{BBiP1}-\cite{BBiP2} for derivative NLW.
For a recent survey of known results for KdV, we refer to \cite{K13}. 
Actually all these results still concern semi-linear perturbations. 

The KAM theorems in \cite{K2}, \cite{KaP} prove the persistence 
of the finite-gap solutions of the integrable KdV 
under semilinear Hamiltonian perturbations $ \e \partial_{x} (\partial_u f) (x, u) $, 
namely when the density  $ f $ is independent of $ u_x $, 
so that \eqref{qlpert} is a differential operator of order $ 1 $. 
The key idea in \cite{K2} is to exploit the fact that the frequencies of KdV 
grow as $ \sim j^3 $ and the difference $ |j^3  - i^3| \geq \frac12 (j^2 + i^2) $, $i \neq j $, 
so that KdV gains (outside the diagonal) two derivatives.
This approach also works for Hamiltonian pseudo-differential perturbations of order 2 (in space),
using the improved Kuksin's lemma proved by Liu-Yuan in \cite{LY}. 
However it does {\it not} work for the general quasi-linear perturbation in \eqref{qlpert},
which is a nonlinear differential operator of the {\it same} order 
as the constant coefficient linear operator $ \partial_{xxx}$.

\smallskip

Now we state precisely the main result of the paper. 
The solutions we find are, at the first order of amplitude, 
localized in Fourier space on finitely many {``tangential sites''}  
\begin{equation}  \label{tang sites}
S^+ := \{ \bar\jmath_1, \ldots, \bar\jmath_\nu \}\,, \quad 
S := \{ \pm j : j \in S^+ \}\,, \quad 
{\bar \jmath}_i \in \N \setminus \{0\}  \quad \forall i =1, \ldots, \nu.
\end{equation}
The set $ S $ is required to be even because the solutions $ u $ 
of \eqref{eq:1} have to be real valued. 
Moreover, we also assume the following explicit ``non-degeneracy'' hypothesis on $S$: 
\begin{equation} \label{scelta siti}
\frac{2}{2\nu-1} \, \sum_{i=1}^\nu \bar \jmath_i^{\,2} \, \notin \, 
\Big\{ j^2 + kj + k^2 : \, j,k \in \Z \setminus S, \ \,  j \neq k \Big\}.
\end{equation}

\begin{theorem}[KAM for quasi-linear perturbations of mKdV] 
\label{thm:mKdV}
Given $ \nu \in \N $,  
let $ f  \in C^q $ (with $ q := q(\nu) $ large enough) satisfy \eqref{order5}.
Then, for all the  tangential sites $ S $ as in \eqref{tang sites} 
satisfying \eqref{scelta siti}, 
the mKdV equation \eqref{eq:1} possesses small amplitude quasi-periodic solutions 
with diophantine frequency vector $\om := \om(\xi) = (\om_j)_{j \in S^+} \in \R^\nu$ of the form 
\begin{equation}\label{solution u}
u(t,x) 
= \sum_{j \in S^+} 2 \sqrt{\xi_j} \, \cos( \om_j t + j x) + o( \sqrt{|\xi|} ), 
\end{equation}
where  
\begin{equation}\label{omj in thm}
\om_j := j^3 + 3 \varsigma \big[ \xi_j - 2 \big( \sum_{j' \in S^+} \xi_{j'} \big) \big] j, 
\quad j \in S^+,
\end{equation}
for a ``Cantor-like'' set of small amplitudes $ \xi \in \R^\nu_+ $ with density $ 1 $ at $ \xi = 0 $.
The term $o(\sqrt{|\xi|})$ in \eqref{solution u} 
is a function $u_1(t,x) = \tilde u_1(\om t, x)$, 
with $\tilde u_1$ in the Sobolev space $H^s(\T^{\nu+1},\R)$ of periodic functions, 
and Sobolev norm $\| \tilde u_1 \|_s = o(\sqrt{|\xi|})$ as $\xi \to 0$,
for some $s < q$. 
These quasi-periodic solutions are linearly stable.  

If the density $  f(u, u_x) $ is independent on $ x $, 
a similar result holds for {\it all} the choices of the tangential sites, 
without assuming \eqref{scelta siti}.
\end{theorem}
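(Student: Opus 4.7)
The proof plan is to combine a weak Birkhoff normal form with a Nash--Moser iteration whose linearized step is handled by pseudo-differential reduction followed by a linear KAM reducibility scheme. Since mKdV is completely resonant at $u=0$, the first task is to extract from the cubic term $\varsigma \pa_x(u^3)$ the integrable part that produces the non-degenerate frequency-to-amplitude map \eqref{omj in thm}. I would perform a \emph{weak} Birkhoff normal form: a symplectic change of variables that eliminates those cubic and quartic monomials that are non-resonant at the tangential sites $S$, leaving the integrable terms depending only on the actions of the tangential modes plus higher-order remainders. The non-degeneracy hypothesis \eqref{scelta siti} is exactly what forbids the problematic resonant cubic monomials with one or more normal indices from surviving this procedure.

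Next I would introduce action-angle coordinates $(\teta, I) \in \T^\nu \times \R^\nu$ on the tangential subspace spanned by $\{e^{\pm \ii \bar\jmath_k x}\}_{k=1}^\nu$, and a normal variable $z$ in the $L^2$-orthogonal complement. The search for quasi-periodic solutions then becomes the search for a zero of a nonlinear functional $\mF(\om, \teta, I, z) = 0$ on a space of $(\teta, x)$-periodic functions parametrized by $\om$ close to $j \mapsto j^3$, to be solved by a Nash--Moser iteration. At each step one must invert the linearized operator at an approximate solution. Following the symplectic decoupling of Berti--Bolle \cite{BB13}, this splits into an elementary action-angle block plus a \emph{normal} block, and the core analytic task is to invert the linearized operator restricted to the normal directions: a non-autonomous, quasi-linear third-order differential operator with quasi-periodically time-dependent coefficients.

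The strategy for inverting the normal operator is \emph{reducibility}: construct a quasi-periodic change of variables that conjugates it to a diagonal, constant-coefficient operator $\om \cdot \pa_\teta + \mathrm{diag}(\ii \mu_j)$ with $\mu_j = j^3 + \ldots$. This is carried out in two stages. In the first stage, a finite chain of pseudo-differential changes of variables --- a quasi-periodic reparametrization of time, a $\teta$-dependent diffeomorphism of $\T_x$, a multiplication operator, and a Fourier multiplier, all chosen to be symplectic --- conjugates the operator, modulo a smoothing remainder of arbitrarily negative order, to $\om \cdot \pa_\teta + m_3 \pa_{xxx} + m_1 \pa_x$ with constants $m_j = m_j(\om)$. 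In the second stage, a KAM reducibility iteration diagonalizes the smoothing remainder, using second Melnikov non-resonance conditions of the form $|\om \cdot \ell + \mu_j - \mu_k| \geq \g \la \ell \ra^{-\t}$. Inverting the resulting diagonal operator loses derivatives because of small divisors, but tame estimates for the conjugacies translate into tame estimates for the inverse of the original linearized operator, which is what the Nash--Moser scheme requires.

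The main obstacle is precisely this first stage of reducibility, because the nonlinearity $\mN_4$ is of the \emph{same} order as the constant-coefficient linear operator $\pa_{xxx}$. Consequently the coefficient of $\pa_{xxx}$ in the linearized operator is a genuinely non-constant function of $(\teta, x)$, and a naive conjugation would produce remainders of order $3$, not lower. The pseudo-differential changes of variables must be designed so that, after each conjugation, the leading symbol is flattened to a constant while the symplectic structure and the reality of the operator are preserved; this is what allows the smoothing KAM step to close. Once the linearized inversion with tame estimates is in place, the convergence of the Nash--Moser scheme is standard, and a measure estimate --- using the twist \eqref{omj in thm} to change variables between $\xi$ and $\om$ --- shows that the set of admissible amplitudes has density one at the origin. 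The final statement for $f(u,u_x)$ independent of $x$ follows from the additional conservation of momentum, which forbids the cubic resonant monomials that \eqref{scelta siti} was designed to exclude, so the non-degeneracy condition on $S$ can be dropped.
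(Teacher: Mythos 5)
Your outline reproduces the overall architecture of the paper (weak Birkhoff normal form, action-angle variables, Nash--Moser with an approximate inverse via the symplectic decoupling of \cite{BB13}, pseudo-differential reduction of the normal linearized operator followed by KAM reducibility), but it contains two genuine gaps.

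First, you have misplaced the role of the hypothesis \eqref{scelta siti}. It is \emph{not} what allows the weak Birkhoff normal form to eliminate the monomials with at most one index outside $S$: that elimination works for every symmetric choice of $S$, because by the identity $j_1^3+j_2^3+j_3^3+j_4^3=-3(j_1+j_2)(j_1+j_3)(j_2+j_3)$ (Lemma \ref{lemma:interi}) there are simply no resonances with three indices in $S$ and one in $S^c$. The condition \eqref{scelta siti} enters only at the very end, in the measure estimate: after reducibility the Floquet exponents are $\mu_j^\infty=\ii(-\tilde m_3 j^3+\tilde m_1 j)+r_j^\infty$ with $\tilde m_1=\e^2 c(\xi)+O(\e^5\g^{-1})$, and for the finitely many pairs $j,k\in S^c$ with $j^2+k^2\leq C_0$ the second Melnikov function $\ii\om\cdot l+\mu_j^\infty-\mu_k^\infty$ is a small perturbation of an affine function $a_{jk}+b_{ljk}\cdot\om$ whose constant term $a_{jk}=-\ii(j-k)\bigl(j^2+jk+k^2-\frac{2}{2\nu-1}\sum_i\bar\jmath_i^{\,2}\bigr)$ could vanish; \eqref{scelta siti} is exactly the condition $a_{jk}\neq0$ (Lemma \ref{lemma:a jk}), and it is the only place in the proof where it is used. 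Accordingly, the final claim for $x$-independent $f$ is not about ``forbidding cubic resonant monomials'': one replaces $H$ by $K=H+\tfrac{3\varsigma}{4}M^2$ using the conserved $L^2$-norm, which makes the twist diagonal and removes the non-perturbative constant $\e^2 c(\xi)$ from $\tilde m_1$, so the low-mode measure estimate needs no condition on $S$.

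Second, and more seriously, your reduction scheme for the normal linearized operator would not close as stated. Because mKdV is completely resonant, the diophantine constant must be taken $\g=\e^{2+a}=o(\e^2)$, so the variable-coefficient terms of size $\e^2$ in the linearized operator (coming from the $O(z^2)$ monomials that the \emph{weak} Birkhoff normal form deliberately does not touch) satisfy $\e^2\g^{-1}=\e^{-a}\gg1$: they are \emph{not} perturbative for the KAM reducibility iteration, no matter how smoothing you make the remainder by descent. The paper inserts an additional \emph{linear Birkhoff normal form} step (section \ref{BNF:step1}) which removes these $\e^2$-terms by a purely algebraic argument exploiting the structure of $\mB_j^{j'}(l)$ in \eqref{def cal B1 b} and again Lemma \ref{lemma:interi} (Lemma \ref{lem:bnf 1}): the only surviving harmonic is the diagonal constant $\mB_j^j(0)=\ii j\,c(\xi)$, which is then absorbed into $m_1$. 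Without this step the smallness condition $N_0^{C_0}|R_5|_{s_0+\b}\g^{-1}\leq1$ required by the reducibility theorem fails, and the whole inversion of $\mL_\om$ breaks down. You need to add this ingredient (or an equivalent normal form for the quadratic-in-$z$ part) for the argument to go through.
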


This result is deduced from Theorem \ref{main theorem}.  It was announced also in \cite{BBM1}-\cite{bbm}
under the stronger condition on the tangential sites 
\begin{equation} \label{too strong}
\frac{2}{2\nu-1} \, \sum_{i=1}^\nu \bar \jmath_i^{\,2} \, \notin \Z \, . 
\end{equation}
Let us make some comments. 
\begin{enumerate}
\item 
In the case $\nu = 1$ (time-periodic solutions), the condition \eqref{scelta siti} is always satisfied. 
Indeed, suppose, by contradiction, that there exist integers $\bar\jmath_1 \geq 1$, $j,k \in \Z$ such that 
\begin{equation} \label{rel}
2 \bar \jmath_1^{\,2} = j^2 + jk + k^2.
\end{equation}
Then $j^2 + jk + k^2$ is even, and therefore both $j$ and $k$ are even, 
say $j = 2n$, $k = 2m$ with $n,m \in \Z$. 
Hence $2 \bar \jmath_1^{\,2} = 4(n^2 + nm + m^2)$, and this implies that $\bar\jmath_1$ is even, 
say $\bar\jmath_1 = 2p$ for some positive integer $p$. It follows that 
$2 p^2 = n^2 + nm + m^2$, namely $p,n,m$ satisfy 
\eqref{rel}.
Then, iterating the argument, 
we deduce that $\bar\jmath_1$ can be divided by $2$ infinitely many times in $\N$, 
which is impossible. 

\smallskip
\item 
When the density $ f(u, u_x )$ is independent of $ x $, the $L^2$-norm 
\begin{equation} \label{def M}
M(u) := \int_\T u^2 \, dx = \| u \|_{L^2(\T)}^2
\end{equation}
is a prime integral of the Hamiltonian equation \eqref{eq:1}. 
Hence the solutions of \eqref{eq:1} are in one-to-one correspondence with those of the 
Hamiltonian equation
\begin{equation} \label{eq:Kv}
v_t = \pa_x \gr K(v) \ \quad \text{with} \ \quad 
K := H + \lm M^2 \, , \ \lm \in \R \, . 
\end{equation}
More precisely,  if $u(t,x)$ is a solution of \eqref{eq:1}, 
then  $v(t,x) := u(t, x-ct)$, with $c := -4\lm M(u)$, 
is a solution of \eqref{eq:Kv}. 
Vice versa, if $v(t,x)$ solves \eqref{eq:Kv}, 
then the function $u(t,x) := v(t, x+ct)$, with $c := -4\lm M(v)$, 
is a solution of \eqref{eq:1}
($M(v)$ is also a prime integral of the equation  \eqref{eq:Kv}).

The advantage of looking for quasi-periodic solutions of \eqref{eq:Kv} 
is that, for $ \lm = 3\varsigma /4 $, 
the fourth order Birkhoff normal form of $ K $ is diagonal (remark \ref{rem:lm M2-1}) and therefore 
no conditions on the tangential sites $ S $ are required (remark \ref{rem:lm M2-7}).

\smallskip
\item 
The diophantine frequency vector  $ \om(\xi) = (\om_j)_{j \in S^+} \in \R^\nu$ 
of the quasi-periodic solutions of Theorem \ref{thm:mKdV} 
is $ O(|\xi|) $-close as $ \xi \to 0 $  (see \eqref{omj in thm}) 
to the integer vector of the unperturbed linear frequencies 
\begin{equation}\label{bar omega}
\bar\om := (\bar\jmath_1^3, \ldots, \bar\jmath_\nu^3) \in \N^\nu \, . 
\end{equation}
This makes perturbation theory more difficult. This is the difficulty 
due to  the fact that the mKdV equation  
\eqref{eq:1} is completely resonant at $ u = 0 $. 

\smallskip

\item 
As shown by \eqref {solution u} the expected quasi-periodic solutions are mainly supported in Fourier space on the tangential sites $  S $.
The dynamics of the Hamiltonian PDE \eqref{eq:1} restricted (and projected) to the
symplectic  subspaces 
\begin{equation}\label{splitting S-S-bot}
H_S := \Big\{ v = \sum_{j \in S} u_j e^{\ii jx}   \Big\} \, , 
\quad 
H_S^\bot := \Big\{ z = \sum_{j \in S^c} u_j e^{\ii jx} \in H^1_0(\T_x)  \Big\} ,
\end{equation}
where  $S^c := \{ j \in \Z \setminus \{ 0 \} : j \notin S \}$,  
is quite different. We call $ v $  the {\it tangential} variable and $ z $ the {\it normal} one. 
On $ H_S $ the dynamics is mainly governed by a finite dimensional integrable system (see 
Proposition \ref{prop:weak BNF}), 
and we find it convenient to describe the dynamics in this subspace
by introducing   action-angle variable, see section \ref{sec:4}. 
On the infinite dimensional subspace $ H_S^\bot $ the solution will stay forever close to the elliptic equilibrium $ z = 0 $.  
\end{enumerate}

\medskip

In Theorem \ref{thm:mKdV} it is stated that the quasi-periodic solutions 
are linearly stable. 
This information is not only an important complement of the result, 
but also  an essential ingredient for the existence proof.  
Let us explain better what we mean. 
By the general procedure in \cite{BB13} we  prove that, around each invariant torus, 
there exist symplectic coordinates (see \eqref{trasformazione modificata simplettica}) 
$$ 
(\psi, \eta, w) \in \T^\nu \times \R^\nu \times  H_{S}^\bot 
$$ 
in which the mKdV Hamiltonian  \eqref{Ham in intro} assumes the normal form
\begin{align}\label{weak-KAM-normal-form}
K (\psi, \eta , w) & =  
\om \cdot \eta  + \frac12 K_{2 0}(\psi) \eta \cdot \eta +  \big( K_{11}(\psi) \eta , w \big)_{L^2(\T)} 
+ \frac12 \big(K_{02}(\psi) w , w \big)_{L^2(\T)} \nonumber \\
& \quad + K_{\geq 3}(\psi, \eta, w)  
\end{align}
where $ K_{\geq 3} $ collects the terms at least cubic in the variables $ (\eta, w )$, see
remark \ref{rem:KAM normal form}. 
In these coordinates the quasi-periodic solution reads 
$ t \mapsto (\om t , 0, 0 ) $ and the corresponding linearized equations are
\begin{equation}\label{sistema lineare dopo}
\begin{cases}
\dot \psi = K_{20}(\om t) \eta + K_{11}^T (\om t ) w  
\\
\dot \eta = 0 
\\
\dot w - \pa_x K_{0 2}(\om t ) w = \partial_x  K_{11}(\om t) \eta  \, .
\end{cases} 
\end{equation}
Thus the actions $ \eta (t) = \eta (0) $ do not evolve in time and 
the third equation reduces to the forced PDE
\begin{equation}\label{San pietroburgo modi normali}
\dot{w} = \pa_x K_{02}(\omega t)[w] + \pa_x K_{11}(\omega t)[ \eta_0] \, .
\end{equation}
Ignoring the forcing term $\pa_x K_{11}(\omega t)[ \eta_0]$ for a moment, 
we note that the equation $\dot{w} = \pa_x K_{02}(\omega t)[w]$ is,  
up to a finite dimensional remainder (Proposition \ref{prop:lin}),   
the restriction to $ H_{S}^\bot $ of the ``variational equation''
$$
h_t = \pa_x \, (\pa_u \nabla H)( u(\om t, x) ) [h ] = X_{K}(h) \, ,  
$$
where $ X_K $ is the KdV Hamiltonian vector field with quadratic Hamiltonian 
$ K = \frac12  ((\pa_u \nabla H)(u)[h], h)_{L^2(\T_x)} $ 
$= \frac12 (\pa_{uu} H)(u)[h, h] $. 
This is a linear PDE with quasi-periodically time-dependent coefficients of the form 
 \be\label{Lom KdVnew-simpler-0}
h_t =   \partial_{xx} (a_1 (\om t, x)  \partial_x h)  +
\partial_x ( a_0 (\om t, x) h ) \, .
\ee
In section \ref{operatore linearizzato sui siti normali} we prove 
the reducibility of the linear operator $ \dot w - \pa_x K_{0 2}(\om t ) w $, 
which conjugates \eqref{San pietroburgo modi normali}
to the diagonal system (see \eqref{Lfinale})
\begin{equation}\label{san pietroburgo siti normali ridotta}
\partial_t v  = - \ii {\cal D}_\infty  v + f (\omega t) 
\end{equation}
where ${\cal D}_\infty := {\rm Op} \{ \mu_j^\infty\}_{j \in S^c}$ 
is a Fourier multiplier operator acting in $ H^s_\bot $, 
$$
\mu_j^\infty := \ii (-m_3 j^3 + m_1 j) + r_j^\infty \in \ii \R \, , \quad j \in S^c \, , 
$$
with $ m_3 = 1 + O(\e^3) $, $ m_1 = O(\e^2) $, $ \sup_{j \in S^c} r_j^\infty = o(\e^2) $, 
see \eqref{espressione autovalori}, \eqref{autofinali}. 
The eigenvalues $ \mu_j^\infty $ are the {\it Floquet} exponents of the quasi-periodic solution. 
The solutions of the scalar  non-homogeneous equations 
$$
{\dot v}_j +  \mu_j^\infty v_j = f_j (\om t)  \, , \quad j \in S^c \, , 
\quad \mu_j^\infty \in \ii \R \, , 
$$
are
$$
v_j (t) = c_j e^{ \mu_j^\infty t} + {\tilde v}_j (t) \,, \quad \text{where} \quad 
{\tilde v}_j (t)  := \sum_{l \in \Z^\nu} \frac{f_{jl} \, e^{\ii \om \cdot l t } }{ \ii \om \cdot l + \mu_j^\infty } 
$$ 
(recall that the first Melnikov conditions \eqref{prime di melnikov} hold at a solution).
As a consequence, the Sobolev norm of the solution of \eqref{san pietroburgo siti normali ridotta} satisfies
$$
\| v(t) \|_{H^s_x} \leq C \| v(0)  \|_{H^s_x} \, , \quad \forall t \in \R \, , 
$$ 
i.e. it does not increase in time. 

\smallskip

We now describe in detail the strategy of proof of Theorem \ref{thm:mKdV}. 
Many of the arguments that we use 
are quite general and of wide applicability to other PDEs.
Nevertheless, we think that a unique abstract KAM theorem applicable to {\it all} quasi-linear PDEs 
can not be expected. 
Indeed the suitable pseudo-differential operators that are required to conjugate the
highest order of the linearized operator to constant coefficients highly depend on the PDE at hand,
see the discussion after \eqref{L6 qualitativo}. 

There are two main issues in the proof: 
\begin{enumerate}
\item {\bf Bifurcation analysis.} Find approximate quasi-periodic solutions of \eqref{eq:1} 
up to a sufficiently small remainder (which, in our case, should be $ O( u^4 ) $).
In this step we also find the approximate ``frequency-to-amplitude'' modulation 
of the frequency with respect to the amplitude, see  \eqref{mappa freq amp}.  
This is the goal of sections \ref{sec:WBNF} and \ref{sec:4}. 

\item {\bf Nash-Moser implicit function theorem.} 
Prove that, close to the above approximate solutions,  
there exist exact quasi-periodic solutions of \eqref{eq:1}. 
By means of a Nash-Moser iteration, we construct a sequence of approximate solutions 
that converges to a quasi-periodic solution of \eqref{eq:1} 
(sections \ref{sec:functional}-\ref{sec:NM}). 

The key step consists in proving the invertibility of the linearized operator and 
tame estimates for its inverse. This is achieved in two main steps.
\begin{enumerate}
\item {\sc Symplectic decoupling  procedure.} 
The method in Berti-Bolle \cite{BB13} allows to approximately decouple the ``tangential'' and the ``normal'' dynamics around an approximate invariant torus (section \ref{costruzione dell'inverso approssimato}). 
It reduces the problem to the one of inverting a quasi-periodically forced PDE 
restricted to the normal subspace $ H_S^\bot $. 
Its precise form is found in section \ref{forma-linear-normal}. 

\item {\sc Analysis of the linearized operator in the normal directions.} 
In sections \ref{linearizzato siti normali}, \ref{operatore linearizzato sui siti normali}
we reduce the linearized equations to constant coefficients. 
This involves three steps: 
\begin{enumerate} 
\item {\it Reduction  in decreasing symbols}, sections \ref{step1}-\ref{step3} and  \ref{step5}, 
\item {\it Linear Birkhoff normal form},  section \ref{BNF:step1},
\item {\it KAM reducibility}, section \ref{subsec:mL0 mL5}.  
\end{enumerate}
\end{enumerate}
All the changes of variables used in the steps i)-iii)  are $ \vphi $-dependent families of 
symplectic maps $ \Phi (\vphi) $ 
which act on the phase space $ H^1_0 (\T_x) $. Therefore they preserve 
the Hamiltonian dynamical systems structure of the conjugated  linear operators.
\end{enumerate}

Let us discuss these issues in detail.

\medskip
\noindent 
\emph{Weak Birkhoff normal form.} 
According to the orthogonal splitting 
$$
H^1_0 (\T_x) := H_S \oplus H_S^\bot
$$
into the symplectic subspaces defined in \eqref{splitting S-S-bot}, we decompose 
\begin{equation}  \label{u = v + z}
u = v + z, \quad  
v = \Pi_S u := \sum_{j \in S} u_j \, e^{\ii jx}, \quad  
z = \Pi_S^\bot u := \sum_{j \in S^c} u_j \, e^{\ii jx},  
\end{equation}
where $\Pi_S $, $\Pi_S^\bot $ denote the orthogonal projectors on $ H_S $, $ H_S^\bot $. 

We perform   a ``weak'' Birkhoff normal form (weak BNF),
whose goal is to find an invariant manifold of solutions of the third order approximate mKdV equation \eqref{eq:1}, 
on which the dynamics is completely integrable,  see section \ref{sec:WBNF}.  
We construct in  Proposition \ref{prop:weak BNF} a symplectic map $ \Phi_B $ 
such that the transformed Hamiltonian $\mH := H \circ \Phi_B$ 
possesses the invariant subspace $ H_S $  (see \eqref{splitting S-S-bot}).  
To this purpose we have to eliminate the term $ \int v^3 z \, dx $ (which is linear in $ z $). 
Then we  check that its dynamics on $ H_S $ is integrable and non-isocronous. 
For that we perform the classical finite dimensional Birkhoff normalization of the Hamiltonian term $ \int v^4 \, dx $
which turns out to be  integrable and non-isocronous. 

Since the present weak  Birkhoff map has to remove only finitely many monomials, 
it is the time $ 1 $-flow  map  of an Hamiltonian system whose Hamiltonian is supported 
on only finitely many Fourier indices.  
Therefore it  is close to the  identity up to  {\it finite dimensional} operators, see Proposition \ref{prop:weak BNF}.
The key advantage is that it modifies  $ {\mathcal N}_4 $ very mildly, only up to finite dimensional operators 
(see for example Lemma \ref{lemma astratto potente}), and thus the spectral analysis of the linearized equations 
(that we shall perform in section \ref{operatore linearizzato sui siti normali}) is essentially the
same as if we were in the {\it original} coordinates.

The weak normal form \eqref{widetilde cal H} does not remove (nor normalize) the monomials $ O(z^2) $. 
We point out that a stronger normal form that removes/normalizes the monomials $O(z^2)$ 
is also  well-defined (it is called ``partial Birkhoff normal form'' in Kuksin-P\"oschel \cite{KP} and P\"oschel \cite{Po3}). 
However, we do not use it because, for such a stronger normal form, 
the corresponding Birkhoff map is close to the identity 
only up to an operator of order $ O(\partial_x^{-1}) $, 
and so it would produce terms of order $ \partial_{xx} $ and $ \partial_x $.
For the same reason, we do not use the global nonlinear Fourier transform in \cite{KaT} (Birkhoff coordinates), 
which is close to the Fourier transform up to smoothing operators of order $ O(\partial_x^{-1}) $
(this is explicitly proved for KdV). 

We remark that mKdV is simpler than 
KdV because the nonlinearity in \eqref{eq:1} is cubic and not only quadratic, 
and, as a consequence, less steps of Birkhoff normal form are required
to reach the sufficient smallness for the Nash-Moser scheme to converge
(see Remark \ref{rem:one power less}).

\medskip
\noindent 
{\it Action-angle and rescaling.}
At this point we introduce action-angle variables on the tangential sites (section \ref{sec:4})
and, after the rescaling \eqref{rescaling kdv quadratica}, 
we  look for quasi-periodic solutions of the 
Hamiltonian \eqref{formaHep}.
Note that the coefficients of the  normal form $ {\cal N } $ in \eqref{Hamiltoniana Heps KdV}
depend on the angles $ \theta $, unlike the usual KAM theorems \cite{Po3}, \cite{Ku},
where the whole normal form is reduced to constant coefficients.
This is because the weak BNF of section \ref{sec:WBNF} did not normalize the quadratic terms $ O(z^2) $. 
These terms are dealt with the ``linear Birkhoff normal form'' (linear BNF)
in section \ref{BNF:step1}.
In some sense  the ``partial'' Birkhoff normal form of \cite{Po3} is split into the weak BNF of section \ref{sec:WBNF} and the linear BNF of sections \ref{BNF:step1}.
 
The present functional formulation with the introduction of 
the action-angle variables allows to prove the stability of the solutions 
(unlike the Lyapunov-Schmdit reduction approach). 

\medskip
\noindent
{\it Nonlinear functional setting and approximate inverse.}
We look for a zero of the nonlinear operator \eqref{operatorF}, where the unknown is the
torus embeddeding $ \vphi \mapsto i(\vphi ) $, and where the frequency $ \om $ is seen as an ``external'' parameter.
This formulation is convenient in order to verify the Melnikov non-resonance conditions
required to invert the linearized operators at each step. 
The solution is obtained  by a Nash-Moser iterative scheme in Sobolev scales. 
The key step  is to  construct  (for $ \omega  $ restricted to a suitable Cantor-like set) 
 an approximate inverse ({\it \`a la} Zehnder \cite{Z1}) of the linearized operator
at any approximate solution. Roughly, this means to find a linear operator which 
is an inverse  at an exact solution. A major difficulty is that the tangential and the normal dynamics near an invariant torus are strongly coupled. 

\medskip
\noindent 
{\it Symplectic approximate decoupling.}
The above difficulty is overcome by implementing the abstract procedure in Berti-Bolle \cite{BB13}, 
which was developed in order to prove the existence of quasi-periodic solutions 
for autonomous NLW (and NLS) with a multiplicative potential. 
This approach reduces the search of an approximate inverse for \eqref{operatorF} 
to the invertibility of a quasi-periodically forced PDE 
restricted to the normal directions. 
This method approximately decouples the 
tangential and the normal dynamics around an approximate invariant torus, 
introducing a suitable set of symplectic variables
$$ 
(\psi, \eta, w) \in \T^\nu \times \R^\nu \times H_S^\bot 
$$ 
near the torus, see \eqref{trasformazione modificata simplettica}.
Note that, in the first line of \eqref{trasformazione modificata simplettica}, $ \psi $ is the ``natural'' angle variable which coordinates  the torus, and, in the third line, the normal variable $ z $ is 
only translated by the component $ z_0 (\psi )$ of the torus.
The second line 
completes this transformation
to a symplectic one. The canonicity of this map  is proved in  \cite{BB13} using the isotropy of
the approximate invariant torus $ i_\d $, see Lemma \ref{toro isotropico modificato}.
In these new variables 
the  torus $ \psi \mapsto i_\d  (\psi) $ reads $ \psi \mapsto (\psi, 0, 0 )$. 
The main advantage of these coordinates is that the second equation in \eqref{operatore inverso approssimato} 
(which corresponds to the action variables of the torus) can be immediately solved, see \eqref{soleta}. 
Then it remains to solve the third equation \eqref{cal L omega}, {i.e.} to invert
the linear operator  $ {\cal L}_\om $. 
This is a quasi-periodic Hamiltonian perturbed linear  Airy equation of the form  
\be\label{Lom KdVnew-simpler}
h \mapsto {\cal L}_\om h  :=  \Pi_S^\bot \big( \om \! \cdot \! \partial_\vphi h + \partial_{xx} (a_1 \partial_x h)  +
\partial_x ( a_0 h ) + \partial_x \mR h    \big)  \, , \quad \forall h \in  H_S^\bot \, ,
\ee
where $ \mR $ is a finite dimensional remainder.   The exact form of $ {\cal L}_\om $ is obtained in Proposition \ref{prop:lin}, 
see \eqref{Lom KdVnew}. 

\medskip
\noindent 
{\it Reduction to constant coefficients of the linearized operator in the normal directions.} 
In section \ref{operatore linearizzato sui siti normali}
we conjugate the variable coefficients operator $ {\cal L}_\om $
to a diagonal operator with constant coefficients
which describes infinitely many harmonic oscillators
\begin{equation}\label{linearosc}
{\dot v}_j + \mu_j^\infty v_j  = 0 \, , \quad 
\mu_j^\infty := \ii (-m_3 j^3 + m_1 j) + r_j^\infty \in \ii \R \, , \quad 
j \notin S \, , 
\end{equation}
where the constants $  m_3 -1 $, $ m_1 \in \R $ and  $ \sup_j |r_j^\infty | $ are small, 
see Theorem \ref{teoremadiriducibilita}.
The main perturbative effect to the spectrum (and the eigenfunctions) of $ {\cal L}_\om $
is due to  the term $ a_1 (\omega t, x ) \partial_{xxx} $ (see \eqref{Lom KdVnew-simpler}), 
and it is too strong for the usual 
 reducibility KAM techniques to work directly. 
The conjugacy of $ {\cal L}_\om $ with \eqref{linearosc} is obtained in several steps. 
The first task (obtained in sections \ref{step1}-\ref{step5}) is to conjugate 
$ {\cal L}_\om $ to another 
Hamiltonian operator of $ H_S^\bot $ with constant coefficients 
\be\label{L6 qualitativo}
{\cal L}_5 := \Pi_S^\bot \big(\om \cdot \partial_\vphi + m_3 \partial_{xxx}  + m_1 \partial_x + R_5 \big) \Pi_S^\bot  \, , \quad m_1, m_3 \in \R \, , 
\ee
up to a small bounded remainder $ R_5 = O(\partial_x^0 ) $, see \eqref{def L6}. 
This expansion of $ {\cal L}_\om $ in ``decreasing symbols'' with constant coefficients 
follows \cite{BBM}, and it is somehow in the spirit of the works of Iooss, 
Plotnikov and Toland \cite{Ioo-Plo-Tol}-\cite{IP09}
in water waves theory, 
and Baldi \cite{Baldi-Benj-Ono} for Benjamin-Ono.
It is obtained by transformations which are very different from the usual KAM changes of variables. 
We underline that the specific form of these transformations depend on the structure of mKdV. 
For other quasi-linear PDEs the analogous reduction requires different transformations,
see for example Alazard-Baldi \cite{Alazard-Baldi}, Berti-Montalto \cite{Berti-Montalto} for recent developments of these techniques 
for gravity-capillary  water waves, and Feola-Procesi \cite{FP} for quasi-linear forced perturbations of 
Schr\"odinger equations.

The transformation of \eqref{Lom KdVnew-simpler} into \eqref{L6 qualitativo} is made in several steps. 
\begin {enumerate}
\item 
{\it Reduction of the highest order.} The first step (section \ref{step1}) is to eliminate the $ x $-dependence from the coefficient $ a_1 (\omega t, x ) \partial_{xxx} $ 
of the Hamiltonian operator $ {\cal L}_\om $. 
In order to find a symplectic diffeomorphism of $ H_S^\bot $ near  $ {\cal A}_\bot $, 
the starting point is to observe that the diffeomorphism   (see  \eqref{primo cambio di variabile modi normali})
$$
u \mapsto ({\cal A} u)(\vphi,x) := (1 + \beta_x(\vphi,x)) u(\vphi,x + \beta(\vphi,x)) \, , 
$$
is, for each $ \vphi \in \T^\nu $,  the time-one flow map of the time dependent 
Hamiltonian transport linear PDE 
\be \label{transport-free}
\partial_\tau u = \partial_x (b(\vphi, \tau, x) u) \, , \quad b (\vphi, \tau, x) := \frac{\beta(\vphi, x)}{1 + \tau \beta_x(\vphi, x)}\, , 
\ee
Actually the flow of \eqref{transport-free} is the path of symplectic diffeomorphisms 
$$ 
u (\vphi, x) \mapsto (1+ \tau \b_x (\vphi, x) ) u (\vphi, x+ \tau \b(\vphi, x) )\, , \quad  \tau \in [0,1] \, .  
$$
Thus, like in \cite{bbm}, we conjugate $ {\cal L}_\om $ with the symplectic  time 1 flow
 map of the projected Hamiltonian equation 
\begin{equation}\label{problemi di cauchy} 
\partial_\tau u = \Pi_S^\bot \partial_x (b(\tau, x) u) 
= \partial_x (b(\tau, x) u) - \Pi_S \partial_x (b(\tau, x) u)  \, ,
\quad u \in H_S^\bot 
 \end{equation}
 generated by the 
the quadratic Hamiltonian $ \frac12 \int_{\T} b(\tau, x) u^2 dx  $ restricted to $ H_S^\bot $. 
By Lemma \ref{modifica simplettica cambio di variabile} (which was proved in \cite{bbm}) 
such symplectic map 
differs from 
$ {\cal A}_\bot := \Pi_S^\bot  {\cal A} \Pi_S^\bot $ only for finite dimensional operators.

This step may be seen as a quantitative application of the Egorov theorem, see \cite{Taylor},
which describes how the principal symbol of a pseudo-differential operator (here $ a_1 (\om t, x) \pa_{xxx} $)
transforms under the flow of a  linear hyperbolic PDE (here \eqref{problemi di cauchy}). 

\smallskip

Because of the Hamiltonian structure, the previous step also eliminates the term $ O( \pa_{xx} )$, see \eqref{cal L1 Kdv}.
In section \ref{step2} we eliminate the time-dependence of the 
coefficient at the order $ \partial_{xxx} $.  

\item 
{\it Linear Birkhoff normal form.} In section \ref{BNF:step1} we eliminate 
the variable coefficient terms at the order $ O(\e^2 )$, which are present in 
the operator $ {\cal L}_\om $,  see \eqref{Lom KdVnew}-\eqref{a1p1p2}.
This is a consequence of the fact that the weak BNF procedure of section \ref{sec:WBNF} 
did not touch the quadratic terms $ O(z^2 ) $. 
These terms cannot be reduced to constants by the 
perturbative scheme in  section \ref{subsec:mL0 mL5} 
(developed in \cite{BBM})
which applies to terms $ R $  such that 
$ R \g^{ -1} \ll 1 $ where $ \g $ is the diophantine constant of the frequency vector $ \om $ 
(the case in \cite{BBM} is simpler because the diophantine constant is $ \gamma = O(1) $).
Here, as well as in \cite{bbm}, since mKdV is completely resonant,  
such $ \gamma = o(\e^2 ) $, see \eqref{omdio}.
The terms of size $\e^2$ are reduced to constant coefficients in section \ref{BNF:step1} 
by means of purely algebraic arguments (linear BNF), which, ultimately, 
stem from the complete integrability of the fourth order BNF of the 
mKdV equation \eqref{KdVmKdV}. More general nonlinearities should be dealt with the
normal form arguments of Procesi-Procesi \cite{PP1} for generic choices of the tangential sites. 
\end{enumerate}

\medskip
\noindent 
{\it Complete diagonalization of \eqref{L6 qualitativo}.} 
In section \ref{subsec:mL0 mL5} we apply the abstract KAM reducibility Theorem 4.2 of \cite{BBM}, 
 which 
completely diagonalizes the linearized operator, obtaining  \eqref{linearosc}. 
The required smallness condition \eqref{R6resto} for $ R_5 $ holds, 
after that the linear BNF of section \ref{BNF:step1} has put into constant coefficients 
the unbounded terms of nonperturbative size $\e^2$, 
and the conjugation procedure of sections \ref{step1}-\ref{step3} and \ref{step5} 
has arrived to a bounded and small remainder $R_5$. 

\medskip
\noindent 
{\it The Nash-Moser iteration to an invariant torus embedding.}
In section \ref{sec:NM} we perform the nonlinear Nash-Moser iteration which 
finally proves Theorem \ref{main theorem} and, therefore, Theorem \ref{thm:mKdV}.
The smallness condition that is required for the convergence of the scheme 
is $ \e^2 \| {\cal F}(\vphi, 0, 0 ) \|_{s_0+ \mu} \g^{-2}$ sufficiently small,  
see \eqref{nash moser smallness condition}.
It is verified because $ \| X_P(\vphi, 0 , 0 ) \|_s \leq_s \e^{5 - 2b} $ 
(Lemma \ref{lemma quantitativo forma normale}) and $ \g = \e^{2+a}$ with $ a >  0 $ small. 
See also remark \ref{rem:one power less} for a comparison between the smallness condition 
required here with the one in \cite{bbm}.  

\paragraph{Notation.}
We shall use the notation 
$$
a \leq_s b \quad \ \Longleftrightarrow \quad a \leq C(s) b \quad
\text{for  some  constant  } C(s) > 0  \, . 
$$ 
We denote by $ \pi_0 $ the operator 
\be\label{def:pi0}
u \mapsto \pi_0(u) := u -  \frac{1}{2\pi} \int_\T u \, dx \, .
\ee

\section{Functional setting}

For a function $u : \Om_o \to E$, $\om \mapsto u(\om)$, where $(E, \| \ \|_E)$ is a Banach space and 
$ \Om_o $ is a subset of $\R^\nu $, we define the sup-norm and the Lipschitz semi-norm
\begin{equation} \label{def norma sup lip}
\begin{aligned}
\| u \|^{\sup}_E 
& := \| u \|^{\sup}_{E,\Om_o} 
:= \sup_{ \om \in \Om_o } \| u(\om ) \|_E, \\ 
\| u \|^{\lip}_E 
& := \| u \|^{\lip}_{E,\Om_o}  
:= \sup_{\om_1 \neq \om_2 } 
\frac{ \| u(\om_1) - u(\om_2) \|_E }{ | \om_1 - \om_2 | }\,,
\end{aligned}
\end{equation}
and, for $ \g > 0 $, the Lipschitz norm
\begin{equation} \label{def norma Lipg}
\| u \|^{\Lipg}_E  
:= \| u \|^{\Lipg}_{E,\Om_o}
:= \| u \|^{\sup}_E + \g \| u \|^{\lip}_E  \, . 
\end{equation}
If $ E = H^s $ we simply denote $ \| u \|^{\Lipg}_{H^s} := \| u \|^{\Lipg}_s $.  
 
\paragraph{Sobolev norms.}
We  denote by 
\begin{equation}\label{Sobolev coppia}
\| u \|_s := \| u \|_{H^s( \T^{\nu + 1})} := \| u \|_{H^s_{\vphi,x} }
\end{equation}
the Sobolev norm of functions $ u = u(\vphi,x) $ in the Sobolev space $ H^{s} (\T^{\nu + 1} ) $. 
We denote by $ \| \ \|_{H^s_x} $ the Sobolev norm in the phase space of functions $ u :=  u(x) \in H^{s} (\T ) $.
Moreover $ \| \ \|_{H^s_\vphi} $ denotes the Sobolev norm of scalar functions, like 
the Fourier components $ u_j (\vphi)  $.  

We fix $ s_0 := (\nu+2) \slash 2 $ so that   $ H^{s_0} (\T^{\nu + 1} ) \hookrightarrow L^{\infty} (\T^{\nu + 1} ) $ and any space $ H^s (\T^{\nu + 1} ) $, $ s \geq s_0  $, is an algebra and satisfy the interpolation inequalities:
for $s \geq s_0$, 
$$
\| uv \|_s \leq C(s_0) \|u\|_s \|v\|_{s_0} + C(s) \|u\|_{s_0} \| v \|_s \, , 
\quad \forall u,v \in H^s(\T^d) \, .
$$
The above inequalities also  hold for  the norms $\Vert \  \Vert_s^{{\rm Lip}(\gamma)}$.

We  also denote 
\begin{align} 
H^s_{S^\bot} (\T^{\nu+1}) 
& := \big\{  u \in H^s(\T^{\nu + 1} )  \, : \, u (\vphi, \cdot ) \in H_S^\bot \  
\forall \vphi \in \T^\nu \big\} \,, \nonumber 
\\
H^s_{S} (\T^{\nu+1}) 
& := \big\{  u \in H^s(\T^{\nu + 1} )  \, : \, u (\vphi, \cdot ) \in H_{S} \   
\forall \vphi \in \T^\nu \big\} \,.  \nonumber 
\end{align}

\paragraph{Matrices with off-diagonal decay.}
A linear operator can be identified, as usual, with its matrix representation. 
We recall the definition of the $ s $-decay norm (introduced in \cite{BB13JEMS}) 
of an infinite dimensional matrix. 

\begin{definition}\label{def:norms}
Let $ A := (A_{i_1}^{i_2} )_{i_1, i_2 \in \Z^b } $, $b \geq 1$, be an infinite dimensional matrix.  
Its $s$-decay norm $|A|_s$ is defined by 
\begin{equation} \label{matrix decay norm}
\left| A \right|_{s}^2 := 
\sum_{i \in \Z^b} \left\langle i \right\rangle^{2s} 
\big( \sup_{ \begin{subarray}{c} i_{1} - i_{2} = i 
\end{subarray}}
| A^{i_2}_{i_1}| \big)^{2}.
\end{equation}
For parameter dependent matrices $ A := A(\omega) $, $\omega  \in \Omega_o \subseteq \R^\nu $, 
the definitions \eqref{def norma sup lip} and \eqref{def norma Lipg} become 
\begin{equation} \label{matrix decay norm Lip}
| A |^{\sup}_s  := \sup_{ \omega \in \Omega_o } | A(\om ) |_s, \quad
| A |^{\lip}_s := \sup_{\om_1 \neq \om_2} 
\frac{ | A(\om_1) - A(\om_2) |_s }{ | \om_1 - \om_2 | },
\end{equation}
and $| A |^{\Lipg}_s := | A |^{\sup}_s + \g | A |^{\lip}_s$. 
\end{definition}

Such a norm is modeled on the behavior of matrices representing the multiplication
operator by a function.
Actually, given a function $ p \in H^s(\T^b) $, the multiplication operator $ h \mapsto p h $ is represented 
by the T\"oplitz matrix 
$ T_i^{i'} = p_{i - i'} $ and  $ |T|_s = \| p \|_s $. 
If $p = p(\om )$ is a Lipschitz family of functions, then 
$$
|T|_s^\Lipg = \| p \|_s^\Lipg\,.
$$
The $s$-norm  satisfies classical algebra and interpolation inequalities proved in \cite{BBM}. 

\begin{lemma} \label{prodest}
Let  $A = A(\om), B = B(\om)$ be matrices depending in a Lipschitz way on the parameter $\om \in 
\Omega_o \subset \R^\nu $. 
Then for all $s \geq s_0 > b/2 $ there are $ C(s) \geq C(s_0) \geq 1 $ such that
\begin{align}
|A B |_s^{\Lipg} 
& \leq  C(s) |A|_s^{\Lipg} |B|_s^{\Lipg} \, , \nonumber
\\
|A B|_{s}^{\Lipg} 
& \leq C(s) |A|_{s}^{\Lipg} |B|_{s_0}^{\Lipg} 
+ C(s_0) |A|_{s_0}^{\Lipg} |B|_{s}^{\Lipg} . \nonumber 
\end{align}
\end{lemma}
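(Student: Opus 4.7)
The plan is to reduce matrix multiplication to a convolution estimate on the sup-sequences extracted from each matrix, and then prove a weighted Young-type inequality. Given a matrix $A = (A_{i_1}^{i_2})$ on $\Z^b$, define the nonnegative sequence
\begin{equation*}
a(i) := \sup_{i_1 - i_2 = i} |A_{i_1}^{i_2}|, \qquad i \in \Z^b,
\end{equation*}
so that $|A|_s^2 = \sum_{i} \langle i \rangle^{2s} a(i)^2 =: \|a\|_{\ell^2_s}^2$, and analogously define $b(i)$ from $B$. Since $(AB)_{i_1}^{i_2} = \sum_{i_3} A_{i_1}^{i_3} B_{i_3}^{i_2}$, the triangle inequality and a change of summation variable yield
\begin{equation*}
\sup_{i_1 - i_2 = i} |(AB)_{i_1}^{i_2}| \,\leq\, \sum_{i_3} a(i_1 - i_3)\, b(i_3 - i_2) \,=\, (a \ast b)(i).
\end{equation*}
Hence $|AB|_s \leq \|a \ast b\|_{\ell^2_s}$, and the sup-norm part of both inequalities is reduced to the weighted convolution estimate $\|a \ast b\|_{\ell^2_s} \leq C(s)\|a\|_{\ell^2_s}\|b\|_{\ell^2_{s_0}} + C(s_0)\|a\|_{\ell^2_{s_0}}\|b\|_{\ell^2_s}$.

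To establish this convolution inequality I would invoke the Peetre-type splitting $\langle i \rangle^s \leq C(s)\bigl(\langle i-j \rangle^s + \langle j \rangle^s\bigr)$, valid for every $s \geq 0$, which gives pointwise
\begin{equation*}
\langle i \rangle^s (a \ast b)(i) \,\leq\, C(s)\bigl( (\langle \cdot \rangle^s a) \ast b \bigr)(i) + C(s)\bigl( a \ast (\langle \cdot \rangle^s b) \bigr)(i).
\end{equation*}
Taking $\ell^2$ norms and applying Young's inequality $\|f \ast g\|_{\ell^2} \leq \|f\|_{\ell^2}\|g\|_{\ell^1}$ to each factor produces $\|a\|_{\ell^2_s}\|b\|_{\ell^1} + \|b\|_{\ell^2_s}\|a\|_{\ell^1}$. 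The hypothesis $s_0 > b/2$ is exactly what makes the continuous embedding $\ell^2_{s_0}(\Z^b) \hookrightarrow \ell^1(\Z^b)$ hold, via Cauchy–Schwarz and the convergence of $\sum_i \langle i \rangle^{-2s_0}$; this replaces $\|\cdot\|_{\ell^1}$ by $\|\cdot\|_{\ell^2_{s_0}}$ and delivers the interpolation bound. The first inequality of the lemma is then an immediate weaker consequence upon using $|A|_{s_0} \leq |A|_s$ and $|B|_{s_0} \leq |B|_s$.

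Finally, to upgrade from sup to $\Lipg$ norms I would exploit the telescoping identity
\begin{equation*}
A(\om_1)B(\om_1) - A(\om_2)B(\om_2) = \bigl(A(\om_1)-A(\om_2)\bigr)B(\om_1) + A(\om_2)\bigl(B(\om_1)-B(\om_2)\bigr),
\end{equation*}
divide by $|\om_1 - \om_2|$, take the $|\cdot|_s$ norm of each term using the sup-norm inequality already proven, and pass to the supremum over $\om_1 \neq \om_2$. Bounding $|A(\om_2)|_s \leq |A|_s^{\sup}$ and similarly for $B(\om_1)$, the two cross terms assemble into $|A|_s^{\lip}|B|_{s_0}^{\sup} + |A|_{s_0}^{\sup}|B|_s^{\lip}$ plus its $s \leftrightarrow s_0$ companion; adding $\gamma$ times these Lipschitz bounds to the sup bound and regrouping factors exactly reconstructs the product $|A|^{\Lipg}|B|^{\Lipg}$ structure. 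No step presents a real obstacle here; the only technical point worth isolating is the Peetre-type splitting, which is what converts the naive bound $|AB|_s \leq |A|_s |B|_s$ into the tame form having a single high-norm factor on each side, a feature that will be essential for the Nash–Moser scheme.
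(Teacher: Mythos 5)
Your overall route is the standard one (the paper itself gives no proof and refers to \cite{BBM}, where essentially this argument appears): the reduction of $|AB|_s$ to the convolution $a\ast b$ of the sup-sequences, the use of the embedding $\ell^2_{s_0}(\Z^b)\hookrightarrow\ell^1(\Z^b)$ — which is precisely where $s_0>b/2$ enters — and the telescoping upgrade to the $\Lipg$ norm are all correct. There is, however, one point where what you prove is strictly weaker than what the lemma asserts. The symmetric Peetre splitting $\langle i\rangle^s\leq C(s)\big(\langle i-j\rangle^s+\langle j\rangle^s\big)$ puts the large constant $C(s)$ in front of \emph{both} resulting terms, so your argument yields $|AB|_s\leq C(s)\big(|A|_s|B|_{s_0}+|A|_{s_0}|B|_s\big)$, whereas the statement requires the coefficient of the low--high product $|A|_{s_0}|B|_s$ to be $C(s_0)$, i.e.\ bounded uniformly in $s$. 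This asymmetry is the reason the lemma is stated in this form: it is what keeps constants under control when the inequality is iterated on powers $A^k$, e.g.\ in the exponentials $\exp(\e^2 A)$ of section \ref{BNF:step1} and $\mathcal{S}$ of section \ref{step5}.

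The repair is short: replace the symmetric splitting by the asymmetric one, namely for every $s\geq s_0$ there is $C(s)$ with $\langle i\rangle^s\leq C(s)\langle i-j\rangle^s+2\langle j\rangle^s$ for all $i,j\in\Z^b$. To see this, set $\e:=2^{1/s}-1$; if $\langle i-j\rangle\leq\e\langle j\rangle$ then $\langle i\rangle\leq\langle i-j\rangle+\langle j\rangle\leq(1+\e)\langle j\rangle$ and $(1+\e)^s=2$, while otherwise $\langle i\rangle\leq(1+\e^{-1})\langle i-j\rangle$. Feeding this into your convolution step gives the second term the coefficient $2\big(\sum_{i\in\Z^b}\langle i\rangle^{-2s_0}\big)^{1/2}$, which depends only on $s_0$ and $b$, as required; the rest of your proof, including the Lipschitz part, then goes through unchanged.
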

The $ s $-decay norm controls the Sobolev norm, namely 
$$
\| A h \|_s^\Lipg 
\leq C(s) \big(|A|_{s_0}^\Lipg \| h \|_s^\Lipg + |A|_{s}^\Lipg \| h \|_{s_0}^\Lipg \big).
$$
Let now $ b := \nu + 1 $. 
An important sub-algebra  is formed by the {\it T\"oplitz in time matrices} defined by
$$
 A^{(l_2, j_2)}_{(l_1, j_1)}  := A^{j_2}_{j_1}(l_1 - l_2 )\,  ,
$$
whose  decay norm \eqref{matrix decay norm} is
$$
|A|_s^2 =  \sum_{j \in \Z, l \in \Z^\nu} \big( \sup_{j_1 - j_2 = j} |A_{j_1}^{j_2}(l)| \big)^2  \langle l,j \rangle^{2 s} \, . 
$$
These matrices are identified with the $ \vphi $-dependent family
of operators
$$
A(\vphi) := \big( A_{j_1}^{j_2} (\vphi)\big)_{j_1, j_2 \in \Z} \, , \quad 
A_{j_1}^{j_2} (\vphi) := {\mathop\sum}_{l \in \Z^\nu} A_{j_1}^{j_2}(l) e^{\ii l \cdot \vphi}
$$
which act on functions of the $x$-variable as
$$
A(\vphi) : h(x) = \sum_{j \in \Z} h_j e^{\ii jx} \mapsto  
A(\vphi) h(x) = \sum_{j_1, j_2 \in \Z}  A_{j_1}^{j_2} (\vphi) h_{j_2} e^{\ii j_1 x} \, .  
$$
All the transformations that  we  construct in this paper are of this type (with $ j, j_1, j_2 \neq 0 $ because
they act on the phase space $ H^1_0 (\T_x) $).

\begin{definition}\label{operatore Hamiltoniano}
We say that   
\begin{enumerate}
\item
an operator $(A h)(\vphi, x) := A(\vphi) h(\vphi, x)$ 
is \emph{symplectic} if each $ A (\vphi ) $, $ \vphi \in \T^\nu $, is a symplectic map of the phase space (or of a symplectic subspace like $ H_S^\bot $)
\item 
the operator $\om \cdot \partial_{\vphi} - \partial_x G( \vphi)$ 
is \emph{Hamiltonian} if each $ G (\vphi) $, $  \vphi \in \T^\nu $, is symmetric;
\item 
an operator is \emph{real} if it maps real-valued functions into real-valued functions.
\end{enumerate}
\end{definition}

A Hamiltonian operator 
is transformed, under a symplectic map,  
into another Hamiltonian operator, see  \cite{BBM}-section 2.3.

We conclude this preliminary section recalling the following well known lemmata about composition of functions
(see, e.g., Appendix of \cite{BBM}).

\begin{lemma}[Composition] 
\label{lemma:composition of functions, Moser}
Assume $ f \in C^s (\T^d \times B_1)$, $B_1 := \{ y \in \R^m  : |y| \leq 1 \}$. Then 
$ \forall u \in H^{s}(\T^d, \R^m) $ such that $ \| u \|_{L^\infty} < 1  $, 
the composition operator $\tilde{f}(u)(x) := f(x, u(x))$ satisfies
$ \| \tilde f(u) \|_s \leq C \| f \|_{C^s} (\|u\|_{s} + 1)  $
where the constant $C $ depends on $ s ,d $.  
If $ f \in C^{s+2} $ and $ \| u + h \|_{L^\infty} < 1$, then for $k=0,1$
\[
\big\| \tilde f(u+h) - \sum_{i = 0}^k \frac{\tilde{f}^{(i)}(u)}{i !} [h^i] \big\|_s
\leq C \| f \|_{C^{s+ 2}} \, \| h \|_{L^\infty}^k ( \| h \|_{s} + \| h \|_{L^\infty} \| u \|_{s}).
\]
The statement also holds replacing $\| \ \|_s$ with the norms $| \ |_{s, \infty}$
of $W^{s,\infty}(\T^d)$.
\end{lemma}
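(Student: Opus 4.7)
The plan is to first establish the composition bound $\|\tilde f(u)\|_s \leq C\|f\|_{C^s}(\|u\|_s + 1)$ — the classical Moser-type estimate — and then deduce the two Taylor expansion inequalities as immediate consequences via the fundamental theorem of calculus. The $W^{s,\infty}$ version proceeds identically, with $L^\infty$ in place of $L^2$ at every step.

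For the composition bound with integer $s$, I would apply $\partial_x^\alpha$, $|\alpha| = s$, to $f(x,u(x))$ and use the Fa\`a di Bruno chain rule, which expresses $\partial_x^\alpha[f(x,u(x))]$ as a finite sum of terms of the form
\[
(\partial_x^{\beta_0} \partial_y^{\gamma} f)(x, u(x)) \prod_{j=1}^{|\gamma|} (\partial_x^{\beta_j} u)(x),
\]
with $\beta_0 + \sum_{j \geq 1} \beta_j = \alpha$ and $|\gamma| \leq s$. The first factor is bounded pointwise by $\|f\|_{C^s}$ since $\|u\|_{L^\infty} < 1$. For the product of $u$-derivatives I would invoke the Gagliardo--Nirenberg interpolation
\[
\Big\| \prod_{j=1}^{n} \partial_x^{\beta_j} u \Big\|_{L^2} \leq C \, \|u\|_{L^\infty}^{n-1} \|u\|_{H^s},
\]
valid whenever $\sum_j |\beta_j| \leq s$, which places $\|u\|_{H^s}$ in exactly one factor and the controlled $L^\infty$-norm in all the others. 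Summing over the finitely many Fa\`a di Bruno monomials and adding the zeroth-order contribution gives the stated bound. Non-integer $s$ then follows by real interpolation between consecutive integer exponents.

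For the Taylor expansions, I would write, by the fundamental theorem of calculus,
\[
\tilde f(u+h) - \tilde f(u) = \int_0^1 (\partial_y f)(x, u + t h) \, h(x) \, dt,
\]
and, for $k = 1$, by Taylor with integral remainder,
\[
\tilde f(u+h) - \tilde f(u) - (\partial_y f)(x,u)\, h = \int_0^1 (1-t) (\partial_{yy} f)(x, u + t h)\, h(x)^2 \, dt.
\]
In each case the integrand is a product of the composition $(\partial_y^{j} f)(x, u + th)$ — to which the bound just proved applies with $f$ replaced by $\partial_y^j f \in C^{s+2-j}$ — and $h^{k+1}$. Applying the tame product estimate $\|ab\|_s \leq C(\|a\|_{L^\infty}\|b\|_s + \|a\|_s\|b\|_{L^\infty})$ and using $\|u + th\|_{L^\infty} < 1$ together with $\|h\|_{L^\infty} \leq \|u\|_{L^\infty} + \|u+h\|_{L^\infty} < 2$ yields the two inequalities after collecting terms; the loss of two derivatives on $f$ comes from having to differentiate $f$ up to twice under the integral \emph{and} apply the Moser bound at regularity $s$ to the result.

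The main obstacle is really the bookkeeping in the Fa\`a di Bruno step: one must verify that, among the many monomials produced by the chain rule, $\|u\|_{H^s}$ appears \emph{linearly}, multiplied only by powers of the controlled quantity $\|u\|_{L^\infty}$. This linearity is the entire content of the tame (Moser) character of the estimate, and it is ensured precisely by placing all but one factor of $u$ at the Gagliardo--Nirenberg $L^\infty$ endpoint. Everything subsequent — interpolation to non-integer $s$, the two Taylor expansions, and the $W^{s,\infty}$ variant — is a routine consequence.
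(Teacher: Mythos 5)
The paper does not prove this lemma: it is quoted as a classical (Moser-type) result, with a pointer to the Appendix of \cite{BBM}. Your sketch reconstructs exactly the standard argument that reference uses — Fa\`a di Bruno plus the Gagliardo--Nirenberg product inequality (placing the $H^s$ norm on a single factor and the controlled $L^\infty$ norm on all the others) for the first bound, then Taylor's formula with integral remainder combined with the tame product estimate for the two expansion inequalities. The bookkeeping in the Taylor step is right, provided you use $s\geq s_0>d/2$ (or, in the $W^{s,\infty}$ case, the trivial embedding) to absorb the leftover terms $\|h\|_{L^\infty}(1+\|h\|_s)$ into $C\|h\|_s$; this hypothesis is implicit in the paper's use of the lemma but worth stating.

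The one step you dispatch too quickly is ``non-integer $s$ then follows by real interpolation between consecutive integer exponents.'' The composition operator $u\mapsto f(\cdot,u)$ is nonlinear, so the linear interpolation theorems do not apply to it; to interpolate you would need a nonlinear interpolation theorem (Peetre/Tartar type), whose hypotheses — typically a Lipschitz estimate in the lower norm, which is essentially your $k=0$ Taylor bound — must then be verified, or else a direct argument at fractional regularity (finite differences, or a Littlewood--Paley/paraproduct decomposition). This is a known and fixable point, but as written it is a gap rather than a routine consequence. Everything else in the proposal is sound.
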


\begin{lemma}[Change of variable]   \label{lemma:utile} 
Let $p \in W^{s,\infty} (\T^d,\R^d) $,  $ s \geq 1$, with 
$ \| p \|_{W^{1, \infty}}$ $ \leq 1/2 $.  
Then the function  $f(x) = x + p(x)$ is invertible, with inverse  $ f\inv(y)  = y + q(y)$ 
where $q \in W^{s,\infty}(\T^d,\R^d)$, 
and $ \| q \|_{W^{s, \infty}}  \leq C \| p \|_{ W^{s, \infty}} $.

If, moreover,  $p$ depends in a Lipschitz way on a parameter $\om \in \Omega \subset \R^\nu $, 
and $\| D_x p \|_ {L^\infty}  \leq 1/2 $ for all $\om$, 
then $ \| q \|_{W^{s, \infty}}^{{\rm Lip}(\gamma)} 
\leq  C  \| p \|_{W^{s+1, \infty}}^{{\rm Lip}(\gamma)} $.
The constant $C := C (d, s) $ is independent of $\g$.

If $u \in H^s (\T^d,\C)$, then $ (u\circ f)(x) := u(x+p(x))$ satisfies 
\begin{align*}
\| u \circ f \|_s 
& \leq  C (\|u\|_s + \| p \|_{W^{s, \infty}} \|u\|_1),
\\
\| u \circ f - u \|_s 
& \leq C ( \| p \|_{L^\infty} \| u \|_{s + 1}  + \| p \|_{W^{s, \infty}} \| u \|_{2} ) ,
\\
\| u \circ f \|_{s}^{{{\rm Lip}(\gamma)}}
& \leq C  \, 
\big( \| u \|_{s+1}^{{{\rm Lip}(\gamma)}} + \| p \|_{W^{s, \infty}}^{{\rm Lip}(\gamma)}\| u \|_2^{{\rm Lip}(\gamma)} \big). \nonumber
\end{align*}
The function  $u \circ f^{-1} $  satisfies the same bounds.
\end{lemma}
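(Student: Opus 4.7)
The plan is to prove Lemma \ref{lemma:utile} by a standard inverse-function/composition argument, carefully keeping track of how many derivatives are needed.

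First, the invertibility of $f(x) = x + p(x)$ follows from the contraction mapping theorem: the inverse $f^{-1}(y) = y + q(y)$ is characterized by the functional equation $q(y) = -p(y + q(y))$, and the map $T : q \mapsto -p(\cdot + q)$ is a contraction on $\{q \in L^\infty : \|q\|_\infty \leq 1\}$ since $\|Dp\|_{L^\infty} \leq 1/2$. To obtain the $W^{s,\infty}$ bound on $q$ I would differentiate the identity $y = f(f^{-1}(y)) = f^{-1}(y) + p(f^{-1}(y))$, giving $Dq(y) = -Dp(f^{-1}(y))\,[I + Dp(f^{-1}(y))]^{-1}$. Higher derivatives $D^k q$ are then polynomial expressions in $D^j p \circ f^{-1}$ (for $1 \leq j \leq k$) divided by powers of $\det(I+Dp)$, which is bounded below by a positive constant on account of $\|Dp\|_\infty \leq 1/2$. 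Combining this with the standard Moser (tame) estimates for products/compositions yields $\|q\|_{W^{s,\infty}} \leq C \|p\|_{W^{s,\infty}}$.

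For the Lipschitz dependence on the parameter $\omega$, I would apply the identity $q(\omega,y) = -p(\omega, y + q(\omega,y))$ at two parameters $\omega_1,\omega_2$, subtract, and use a telescoping argument: writing $\Delta q := q(\omega_1,\cdot) - q(\omega_2,\cdot)$ and analogously $\Delta p$, one gets
\[
\Delta q(y) = -\Delta p\bigl(\omega_1, y + q(\omega_1,y)\bigr) - \int_0^1 Dp\bigl(\omega_2, y + q(\omega_2,y) + t\Delta q(y)\bigr)\,dt \cdot \Delta q(y),
\]
and solving for $\Delta q$ (the coefficient of $\Delta q$ being invertible thanks to $\|Dp\|_\infty \leq 1/2$) yields the $L^\infty$-Lipschitz bound. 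For the $W^{s,\infty}$-Lipschitz bound one differentiates this relation $s$ times; each differentiation that falls on the argument $y+q(\omega_i,y)$ of $p$ produces an extra derivative of $p$, so one naturally loses one derivative, arriving at $\|q\|_{W^{s,\infty}}^{\Lipg} \leq C \|p\|_{W^{s+1,\infty}}^{\Lipg}$.

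For the Sobolev composition estimates on $u \circ f$, I would use the Faà di Bruno formula to write $D^\alpha(u \circ f)$ as a sum over partitions of products of the form $(D^\beta u)\circ f \cdot \prod_\ell D^{\gamma_\ell} p$, then apply interpolation (Gagliardo–Nirenberg) together with Lemma \ref{lemma:composition of functions, Moser} applied to $(D^\beta u) \circ f$. The key observation is that $f$ is a bi-Lipschitz diffeomorphism with Jacobian bounded above and below by universal constants, so $\|v \circ f\|_{L^2} \sim \|v\|_{L^2}$; the tame bound $\|u \circ f\|_s \leq C(\|u\|_s + \|p\|_{W^{s,\infty}}\|u\|_1)$ then follows by splitting the highest derivative onto either $u$ or $p$. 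For the difference estimate $\|u\circ f - u\|_s$, I would use the integral Taylor formula $u\circ f(x) - u(x) = \int_0^1 (Du)(x+tp(x)) \cdot p(x)\,dt$ and apply the previous tame estimate to each integrand. The Lipschitz-in-$\omega$ version is obtained by the same calculations applied to the difference $u\circ f(\omega_1,\cdot) - u\circ f(\omega_2,\cdot)$, with no further obstacle.

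The main technical obstacle I expect is the bookkeeping in the derivative-loss claim $\|q\|_{W^{s,\infty}}^{\Lipg}\leq C\|p\|_{W^{s+1,\infty}}^{\Lipg}$, since one must carefully track that differentiating the implicit relation for $q$ only one extra time with respect to $\omega$ does not produce worse losses; all the remaining estimates are standard consequences of Moser–Nirenberg tame inequalities combined with the fact that $f$ is a small $C^1$-perturbation of the identity.
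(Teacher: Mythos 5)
The paper gives no proof of this lemma: it is recalled as a "well known" composition lemma with a pointer to the Appendix of \cite{BBM}, and your argument (contraction mapping plus implicit differentiation of $q(y)=-p(y+q(y))$ for the inverse diffeomorphism, telescoping for the Lipschitz dependence on $\om$, and Fa\`a di Bruno combined with tame interpolation for $u\circ f$) is exactly the standard proof used there. The proposal is correct and takes essentially the same route as the cited source.
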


\section{Weak Birkhoff normal form}\label{sec:WBNF}

In this section it is convenient to analize the mKdV equation  in the Fourier representation 
\begin{equation}\label{Fourier}
u(x) = {\mathop \sum}_{j \in \Z \setminus \{0\} } u_j e^{\ii j x}, \quad 
u(x) \longleftrightarrow u := (u_j)_{j \in \Z \setminus \{0\} }, \quad 
u_{-j} = \overline{u}_j,
\end{equation}
where the Fourier indices are nonzero integers $j$,  by the definition \eqref{def phase space} of the phase space, and $u_{-j} = \overline{u}_j$ because $u(x)$ is real-valued. 
The symplectic structure \eqref{2form KdV} writes
\begin{equation}\label{2form0}
\Omega = \frac12 \sum_{j \neq 0}  \frac{1}{\ii j} du_j \wedge d u_{-j}, 
\quad 
\Omega ( u, v ) = \sum_{j \neq 0} \frac{1}{\ii j} u_j v_{-j},
\end{equation}
the Hamiltonian vector field $X_H$ in \eqref{KdV:HS} and the Poisson bracket $\{ F, G \}$ in \eqref{Poisson bracket} are
respectively 
\begin{equation}\label{PoissonBr}
[X_H (u)]_j = \ii j \partial_{u_{-j}} H(u), 
\ \ 
\{ F, G \}(u) = - \sum_{j \neq 0} \ii j (\partial_{u_{-j}} F) (u)  (\partial_{u_j} G) (u).  
\end{equation}
We shall sometimes identify $ v \equiv (v_j)_{j \in S } $ and  $ z \equiv (z_j)_{j \in S^c } $. 

\smallskip

The Hamiltonian of the perturbed cubic mKdV equation \eqref{eq:1} is 
$ H = H_2 + H_4 + H_{\geq 5} $ (see \eqref{Ham in intro}) where 
\begin{equation} \label{H iniziale KdV}
H_2(u) := \int_{\T} \frac{u_x^{2}}{2} dx, \quad  
H_4(u) :=  - \varsigma \int_\T \frac{u^4}{4} dx, \quad  
H_{\geq 5}(u) := \int_\T f(x, u,u_x) dx,
\end{equation}
$\varsigma = \pm1$ and $f$ satisfies \eqref{order5}. 
According to the splitting \eqref{u = v + z} $ u = v + z $, 
where $ v \in H_S $ and $ z \in H_S^\bot $, we have 
$H_2(u) = H_2(v) + H_2(z)$ and 
\[ 
H_4(u) = 
- \frac{\varsigma}{4} \int_{\T} v^4 \, dx 
- \varsigma \int_{\T} v^3 z \, dx 
- \frac{3\varsigma}{2} \int_{\T} v^2 z^2 \, dx 
- \varsigma \int_{\T} v z^3 \, dx 
- \frac{\varsigma}{4} \int_{\T} z^4 \, dx.
\] 
For a finite-dimensional space
\begin{equation} \label{def E finito}
E := E_{C} :=  \mathrm{span} \{ e^{\ii jx} :  0 < |j| \leq C \}, \quad C > 0,
\end{equation}
let $\Pi_E $ denote the corresponding $ L^2 $-projector on $E$.

In the next proposition we  construct a symplectic map $ \Phi_B $ 
such that the transformed Hamiltonian $\mH := H \circ \Phi_B$ 
possesses the invariant subspace $ H_S $  defined in \eqref{splitting S-S-bot}, 
and its dynamics on $ H_S $ is integrable and non-isocronous.  
To this purpose we have to eliminate the term $ \int v^3 z \, dx $ 
(which is linear in $ z $) and to normalize the term $ \int v^4 \, dx $ 
(which is independent of $ z $) in the quartic component of the Hamiltonian.

\begin{proposition}[Weak Birkhoff normal form]  
\label{prop:weak BNF}  
There exists an analytic invertible symplectic transformation 
of the phase space $ \Phi_B : H^1_0 (\T_x) \to H^1_0 (\T_x) $ 
of the form 
\begin{equation} \label{finito finito}
\Phi_B(u) = u + \Psi(u), 
\quad
\Psi(u) = \Pi_E \Psi(\Pi_E u),
\end{equation}
where $ E $ is a finite-dimensional space as in \eqref{def E finito}, such that the transformed Hamiltonian is
\begin{equation} \label{widetilde cal H}
{\cal H} := H \circ \Phi_B  
= H_2 + \mH_4 + {\cal H}_{\geq 5} \,,
\end{equation}
where $H_2$ is defined in \eqref{H iniziale KdV}, 
\begin{equation} \label{H3tilde} 
\begin{aligned}
\mH_4 & := \frac{3\varsigma}{4} \Big( \sum_{j \in S} |u_j|^4 - \sum_{j,j' \in S} |u_j|^2 |u_{j'}|^2 \Big) 
- \frac{3\varsigma}{2} \int_\T v^2 z^2 \, dx  \\
& \quad \ \ 
- \varsigma \int_\T v z^3 \, dx - \frac{\varsigma}{4} \int_\T z^4 \, dx,
\end{aligned}
\end{equation} 
and ${\cal H}_{\geq 5}$ collects all the terms of order at least five in $(v,z)$. 
\end{proposition}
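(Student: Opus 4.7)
The plan is to take $\Phi_B$ to be the time-$1$ flow of an auxiliary quartic Hamiltonian $F$ supported on finitely many Fourier modes, chosen so that $\{H_2, F\}$ exactly cancels the quartic monomials of $H_4$ that must be removed. By inspection of the target \eqref{H3tilde}, these are the full ``mixed'' term $-\varsigma\int_\T v^3 z \, dx$ together with the non-resonant part of $-\frac{\varsigma}{4}\int_\T v^4 \, dx$; the remaining pieces $\int_\T v^2 z^2\,dx$, $\int_\T v z^3\,dx$, $\int_\T z^4\,dx$, together with the resonant part of $\int_\T v^4\,dx$ (which will provide the integrable normal form on $H_S$), are left untouched.

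To construct $F$, I expand in the Fourier basis \eqref{Fourier}. A quartic monomial $u_{j_1}u_{j_2}u_{j_3}u_{j_4}$ with $j_1+j_2+j_3+j_4=0$ is an eigenvector of the derivation $\{H_2, \cdot\}$ with eigenvalue proportional to $\ii(j_1^3+j_2^3+j_3^3+j_4^3)$, and under the momentum constraint the algebraic identity
\begin{equation*}
j_1^3+j_2^3+j_3^3+j_4^3 \, = \, 3(j_1+j_2)(j_1+j_3)(j_1+j_4)
\end{equation*}
holds, so resonance is equivalent to the existence of a two-pairing of the indices into opposite pairs. For a $v^3 z$-monomial (three indices in $S$, one in $S^c$), the symmetry $S = -S$ would then force the $S^c$-index into $S$, a contradiction; hence every such monomial is non-resonant, with integer small divisor bounded below by an absolute constant. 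The non-resonant $v^4$-monomials are treated identically. One then defines $F$ by inverting $\{H_2,\cdot\}$ on the non-resonant subspace and setting $F = 0$ on the resonant one. Momentum conservation together with the condition that at least three indices lie in $S$ forces $|j_i| \le 3\bar\jmath_\nu$ for every $i$, so $F$ is polynomial in the finitely many variables $\{u_j : 0 < |j| \le C\}$ with $C := 3\bar\jmath_\nu$, and hence is supported on a finite-dimensional subspace $E$ of the form \eqref{def E finito}.

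The flow $\Phi_B := \Phi_F^1$ is an analytic symplectic diffeomorphism of $H^1_0(\T_x)$ since $X_F = \partial_x \nabla F$ is a polynomial vector field on the finite-dimensional space $E$; moreover $\Phi_B - \mathrm{Id}$ takes values in $E$ and depends only on $\Pi_E u$, giving \eqref{finito finito}. One then computes $H \circ \Phi_B$ via the Lie series
\begin{equation*}
H \circ \Phi_B \, = \, H + \{H,F\} + \tfrac{1}{2}\{\{H,F\},F\} + \cdots,
\end{equation*}
observing that at order four only $H_4 + \{H_2,F\}$ contributes, equalling $\mH_4$ by construction, while every other term is of degree $\ge 5$ in $u$ (since $F$ is quartic) and is absorbed into $\mH_{\ge 5}$.

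The main obstacle is the algebraic bookkeeping required to identify the resonant part of $\int_\T v^4\,dx$ with the specific expression $\frac{3\varsigma}{4}\bigl(\sum_{j \in S}|u_j|^4 - \sum_{j,j' \in S}|u_j|^2|u_{j'}|^2\bigr)$ in \eqref{H3tilde}: one must sum over the three possible opposite pairings of four indices from $S$ and apply inclusion-exclusion to account for coincidences on the diagonal, checking that the combinatorial factors combine with the $-\varsigma/4$ prefactor to produce exactly the stated coefficient. Once the cubic identity and this bookkeeping are in place, the non-resonance verification and the finite-support argument for $F$ are routine, and the Lie-series computation is a direct expansion.
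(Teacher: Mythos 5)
Your proposal is correct and follows essentially the same route as the paper's proof: the time-$1$ flow of a quartic Hamiltonian $F$ obtained by inverting $\{H_2,\cdot\}$ on the non-resonant monomials with at most one index outside $S$, the cubic factorization of $j_1^3+j_2^3+j_3^3+j_4^3$ under zero momentum (your form $3(j_1+j_2)(j_1+j_3)(j_1+j_4)$ is equivalent to the paper's $-3(j_1+j_2)(j_1+j_3)(j_2+j_3)$), and the symmetry of $S$ to exclude resonant $v^3z$ monomials. The one computation you defer -- summing the resonant $v^4$ monomials over the three opposite pairings with inclusion--exclusion -- is exactly the paper's $A_1+A_2+A_3$ splitting and does yield the coefficient $\frac{3\varsigma}{4}\bigl(\sum_{j\in S}|u_j|^4-\sum_{j,j'\in S}|u_j|^2|u_{j'}|^2\bigr)$.
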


\begin{proof} 
In Fourier coordinates \eqref{Fourier} we have (see \eqref{H iniziale KdV})
\begin{equation}\label{H3 Fourier}
H_2(u) = \frac{1}{2} \sum_{j \neq 0} j^2 |u_j|^2, \quad 
H_4(u) = - \frac{\varsigma}{4} \, \sum_{j_1 + j_2 + j_3 + j_4 = 0} u_{j_1} u_{j_2} u_{j_3} u_{j_4} \,.
\end{equation}
We look for a symplectic transformation $\Phi$ of the phase space which eliminates or normalizes
the monomials $ u_{j_1} u_{j_2} u_{j_3} u_{j_4} $ of $ H_4 $ with at most one index outside $ S $. 
By the relation $ j_1 + j_2 + j_3 + j_4 = 0 $, they are {\it finitely} many.
Thus, we look for a map $\Phi := (\Phi_{F}^t)_{|t=1}$ which is the time $ 1$-flow map 
of an auxiliary quartic Hamiltonian
$$
F(u) := \sum_{j_1 + j_2 + j_3 + j_4 = 0} 
F_{j_1 j_2 j_3 j_4} u_{j_1} u_{j_2} u_{j_3} u_{j_4} \,.
$$
The transformed Hamiltonian is
\begin{equation}\label{callH}
\mH :=  H \circ \Phi 
= H_2 + \mH_4 +  \mH_{\geq 5}, \quad 
\mH_4 = \{ H_2, F \} + H_4,
\end{equation}
where $ \mH_{\geq 5} $ collects all the terms in $\mH$ of order at least five. 
By \eqref{H3 Fourier} and \eqref{PoissonBr} we calculate 
$$
\mH_4 = \sum_{j_1 + j_2 + j_3 + j_4 = 0} 
\Big\{ - \frac{\varsigma}{4} \, - \ii (j_1^3 + j_2^3 + j_3^3 + j_4^3) F_{j_1 j_2 j_3 j_4} \Big\} \, 
u_{j_1} u_{j_2} u_{j_3} u_{j_4}\,.
$$
In order to eliminate or normalize only the monomials with at most one index outside $ S $, 
we choose
\begin{equation}\label{F4}
F_{j_1 j_2 j_3 j_4} := \begin{cases}
 \dfrac{\ii \varsigma}{4 (j_1^3 + j_2^3 + j_3^3 + j_4^3)} & \text{if} \,\,(j_1,j_2,j_3,j_4) \in {\cal A}\,, 
\\
0 & \text{otherwise},
\end{cases}
\end{equation}
where 
\begin{multline*}
{\cal A} := \big\{ (j_1 , j_2 , j_3, j_4) \in (\Z \setminus \{ 0 \})^4 : \ 
j_1 + j_2 + j_3 + j_4= 0, \quad  
j_1^{3} + j_2^{3} + j_3^{3} + j_4^3 \neq 0, \\ 
\text{and at least three among} \  j_1 , j_2 , j_3, j_4 \ \text{belong to } S \big\}.
\end{multline*}
We recall the following elementary identity (Lemma 13.4 in \cite{KaP}).

\begin{lemma}  \label{lemma:interi} 
Let $j_1, j_2, j_3, j_4 \in \Z $ such that $ j_1 + j_2 + j_3 + j_4 = 0 $. 
Then 
$$
j_1^3 + j_2^3 + j_3^3 + j_4^3 = -3 (j_1 + j_2) (j_1 + j_3) (j_2 + j_3).
$$
\end{lemma}

By definition \eqref{F4}, $\mH_4$ does not contain any monomial $u_{j_1} u_{j_2} u_{j_3} u_{j_4}$
with three indices in $S$ and one outside, 
because there exist no integers $ j_1, j_2 , j_3 \in S$, $ j_4 \in S^c $ satisfying 
$ j_1 + j_2 + j_3 + j_4 = 0 $ and $ j_1^3 + j_2^3 + j_3^3 + j_4^3 = 0 $, 
by Lemma \ref{lemma:interi} and the fact that $ S $ is symmetric. 

By construction, the quartic monomials with at least two indices outside $S$ are not changed by $\Phi$. 
Also, by construction, the monomials $u_{j_1} u_{j_2} u_{j_3} u_{j_4}$ in $\mH_4$ 
with all integers in $S$ are those for which $j_1 + j_2 + j_3 + j_4 = 0$ and 
$j_1^3 + j_2^3 + j_3^4 + j_4 ^3 = 0$.  
By Lemma \ref{lemma:interi}, we split 
\[
\sum_{\begin{subarray}{c}
j_1, j_2, j_3, j_4  \in S \\
j_1 + j_2 + j_3 + j_4 = 0 \\
j_1^3 + j_2^3 + j_3^3 + j_4^3 = 0 \end{subarray}} 
u_{j_1} u_{j_2} u_{j_3} u_{j_4} 
= A_1 + A_2 + A_3
\]
where $A_1$ is given by the sum over $j_1, j_2, j_3, j_4  \in S$, $j_1 + j_2 + j_3 + j_4 = 0$ 
with the restriction $j_1 + j_2 = 0$, 
$A_2$ with the restriction $j_1 + j_2 \neq  0$ and $j_1 + j_3 = 0$, 
and $A_3$ with the restriction $j_1 + j_2 \neq  0$, $j_1 + j_3 \neq 0$ and $j_2 + j_3 = 0$. 
We get 
\begin{align*}
A_2 & = \sum_{\begin{subarray}{c} j, j' \in S \\ j' \neq -j \end{subarray}} |u_j|^2 |u_{j'}|^2 
= \sum_{j, j' \in S} |u_j|^2 |u_{j'}|^2  - \sum_{j \in S} |u_j|^4 \,, 
\qquad A_1 = \sum_{j, j' \in S} |u_j|^2 |u_{j'}|^2 \,,  
\\
A_3 & = \sum_{\begin{subarray}{c} j, j' \in S \\ j' \neq \pm j \end{subarray}} |u_j|^2 |u_{j'}|^2 
= \sum_{j, j' \in S} |u_j|^2 |u_{j'}|^2  - 2 \sum_{j \in S} |u_j|^4 \,,
\end{align*}
whence \eqref{H3tilde} follows. 
\end{proof}

\begin{remark} \label{rem:lm M2-1}
In the Birkhoff normal form 
for the Hamiltonian $ K = H + \lm M^2 $ defined in \eqref{eq:Kv}, 
 three additional terms appear in  \eqref{H3tilde}, which are
\[
\lm \sum_{j, j' \in S} |u_j|^2 |u_{j'}|^2 + 2 \lm M(v) M(z) + \lm M^2(z).
\]
Then in \eqref{H3tilde} the sum $(\lm - \frac{3\varsigma}{4}) \sum_{j, j' \in S} |u_j|^2 |u_{j'}|^2$ vanishes if 
we choose $\lm := 3 \varsigma /4$.
\end{remark}

\section{Action-angle variables}\label{sec:4}

We introduce action-angle variables on the tangential directions by the change of coordinates
\begin{equation}\label{coordinate azione angolo}
u_j := \sqrt{\tilde\xi_j + |j| \tilde y_j} \, e^{\ii \tilde \theta_j} \quad \text{for} \  j \in S \,; \qquad 
u_j := \tilde z_j \quad \text{for} \  j \in S^c \,, 
\end{equation}
where (recall that $ u_{-j} = {\overline u}_j $)
\begin{equation}\label{simmeS}
\tilde \xi_{-j} = \tilde \xi_j  \, , \quad 
\tilde \xi_j > 0 \, , \quad 
\tilde y_{-j} = \tilde y_j \, , \quad 
\tilde \theta_{-j} = - \tilde \theta_j \, , \quad 
\tilde \teta_j, \, \tilde y_j \in \R \, , \quad  
\forall j \in S \,.
\end{equation}
To simplify notation, 
for the tangential sites $ S^+ := \{ {\bar \jmath_1}, \ldots, {\bar \jmath_\nu} \} $ we also denote 
$\tilde \teta_{\bar \jmath_i} := \tilde \teta_i $, $ \tilde y_{\bar \jmath_i} := \tilde y_i $, 
$ \tilde \xi_{\bar \jmath_i} := \tilde \xi_i $,  $ i =1, \ldots \, \nu $.

The symplectic 2-form $ \Omega $ in \eqref{2form0} (i.e.  \eqref{2form KdV}) becomes 
\begin{equation}\label{2form}
{\cal W} := \sum_{i=1}^\nu  d \tilde \theta_i \wedge d \tilde y_i  
+ \frac12 \sum_{j \in S^c \setminus \{ 0 \} } \frac{1}{\ii j} \, d \tilde z_j \wedge d \tilde z_{-j} = \big( \sum_{i=1}^\nu  d \tilde \theta_i \wedge d \tilde y_i \big)  \oplus \Om_{S^\bot} = d \Lambda 
\end{equation}
where $ \Om_{S^\bot} $ denotes the restriction of $ \Om $ to $ H_S^\bot $ (see \eqref{splitting S-S-bot}) and 
$ \Lambda $ is the Liouville $ 1 $-form on $ \T^\nu \times \R^\nu \times H_S^\bot $ defined by
$ \Lambda_{(\tilde \theta, \tilde y, \tilde z)} : \R^\nu \times \R^\nu \times H_S^\bot \to \R $, 
\begin{equation}\label{Lambda 1 form}
\Lambda_{(\tilde \theta, \tilde y, \tilde z)}[\widehat \theta, \widehat y, \widehat z] := 
- \tilde y \cdot \widehat \theta + \frac12 ( \partial_x^{-1} \tilde z, \widehat z )_{L^2 (\T)} \, .   
\end{equation}
We rescale the ``unperturbed actions'' $ \xi $ and the variables $\tilde \theta, \tilde y, \tilde z$ as
\begin{equation}\label{rescaling kdv quadratica}
\tilde \xi = \e^2 \xi \, , \quad  
\tilde y = \e^{2b} y \, , \quad 
\tilde z = \e^b z  \,, \quad b > 1.
\end{equation}
The symplectic $ 2 $-form in \eqref{2form} transforms into $ \e^{2b} {\cal W } $. 
Hence the Hamiltonian system generated by $ {\cal H} $ in \eqref{widetilde cal H} 
transforms into the new Hamiltonian system 
\begin{equation}  \label{def H eps}
\begin{cases}
\dot \theta = \partial_y H_{\e} (\theta, y, z), \\ 
\dot y = -  \partial_\teta H_{\e} (\theta, y, z), \\ 
\dot z =  \partial_x \nabla_z H_{\e} (\theta, y, z),
\end{cases} \qquad 
H_{\e} := \e^{-2b} \mH \circ A_\e,
\end{equation}
where 
\begin{equation}  \label{def A eps}
A_\e (\theta, y, z) := \e v_\e(\theta, y) + \e^b z, \ \ \    
v_\e(\theta,y) := \sum_{j \in S} \sqrt{\xi_j + \e^{2(b-1)} |j| y_j} \, e^{\ii \theta_j} e^{\ii j x}. \end{equation}
We still denote by 
$$ 
X_{H_\e} = (\partial_y H_\e, - \partial_\teta H_\e, \pa_x \nabla_z H_\e) 
$$ 
the Hamiltonian vector field in the variables 
$ (\teta, y, z ) \in \T^\nu \times \R^\nu \times H_S^\bot $.

We now write explicitly the Hamiltonian $ H_{\e} (\theta, y, z) $ defined in \eqref{def H eps}. Recall the
expression of $ {\cal H } $  given in \eqref{widetilde cal H}.
The quadratic Hamiltonian $ H_2 $ in \eqref{H iniziale KdV} transforms into
\be\label{shape H2}
\e^{-2b} H_2 \circ A_\e = const + 
{\mathop \sum}_{j \in S^+} j^3 y_j + \frac{1}{2} \int_{\T} z_x^2 \, dx \, , 
\ee
and, by \eqref{H3tilde},   \eqref{widetilde cal H} 
we get (writing, in short, $ v_\e := v_\e (\theta, y) $)  
\begin{align} 
H_{\e} (\theta, y, z) 
& =  e(\xi) + \a (\xi) \cdot y + \frac12 \int_\T z_x^2 \, dx 
- \frac{3 \varsigma}{2}\, \e^2 \int_\T v_\e^2 z^2 \, dx 
\notag \\ & \quad
+ 3 \varsigma \e^{2b} \Big( \frac12 \sum_{j \in S^+} j^2 y_j^2 - \sum_{j,j' \in S^+} j y_j j' y_{j'} \Big) 
- \varsigma \e^{1+b} \int_\T v_\e z^3 \, dx 
\notag \\ & \quad
- \frac{\varsigma}{4}\, \e^{2b} \int_\T z^4 \, dx 
+ \e^{-2b} {\cal H}_{\geq 5} (\e v_\e(\theta,y) + \e^b z )
\label{formaHep}
\end{align}
where $e(\xi)$ is a constant, and $\a(\xi) \in \R^\nu$ is the vector of components  
$$
\a_i(\xi) := \bar \jmath_i^3 + 3 \varsigma \e^2 [ \xi_i - 2 (\xi_1 + \ldots + \xi_\nu) ] \bar \jmath_i \, , \quad 
i = 1, \ldots, \nu \, . 
$$
This is  the ``frequency-to-amplitude'' map which describes, at the main order,  
how the tangential frequencies are shifted by  the amplitudes $ \xi := ( \xi_1, \ldots, \xi_\nu ) $. 
It can be written in compact form as 
\begin{equation} \label{mappa freq amp}
\a(\xi) := \bar \om + \e^2 {\mathbb A} \xi  \, , \quad 
{\mathbb A} := 3\varsigma D_S (I - 2 U),
\end{equation} 
where $ \bar\om := (\bar\jmath_1^3, \ldots, \bar\jmath_\nu^3) \in \N^\nu  $ (see \eqref{bar omega})
is the vector of the unperturbed linear frequencies of oscillations on the tangential  sites, 
$ D_S $ is the diagonal matrix 
$$
D_S := \mathrm{diag}(\bar \jmath_1, \ldots, \bar \jmath_\nu) \in {\rm Mat}(\nu \times \nu) \, , 
$$
$I$ is the $\nu \times \nu$ identity matrix, 
and $U$ is the $\nu \times \nu$ matrix with all entries equal to 1. 
The matrix $\mathbb A$ is often called the ``twist'' matrix . It turns out to be invertible.
Indeed, since $U^2 = \nu U$, one has 
$(I - 2 U)( I - \frac{2}{2\nu-1}\, U ) = I$, and therefore 
\begin{equation} \label{AA -1}
\mathbb A^{-1} = \frac{1}{3\varsigma}\, \Big( I - \frac{2}{2\nu-1}\, U \Big) D_S^{-1} \,.
\end{equation}
With this notation, one can also write 
\be\label{twist-pezzo}
\frac12 \sum_{j \in S^+} j^2 y_j^2 - \sum_{j,j' \in S^+} j y_j j' y_{j'} 
= \frac12 (I-2U) (D_S y) \cdot (D_S y). 
\ee
\begin{remark} \label{rem:lm M2-2}
By remark \ref{rem:lm M2-1}, 
for the Hamiltonian $ K = H + \lm M^2 $, $\lm := 3 \varsigma /4$, defined in \eqref{eq:Kv}
the twist matrix in the  frequency-amplitude relation \eqref{mappa freq amp} 
becomes $\mathbb A = 3 \varsigma D_S$, which is diagonal.
\end{remark}

We write the Hamiltonian in \eqref{formaHep} (eliminating the constant $e(\xi)$
which is  irrelevant for the dynamics) 
as $H_{\e} = {\cal N} +  P$, where 
\begin{equation}  \label{Hamiltoniana Heps KdV}
\begin{aligned}
{\cal N}(\theta, y, z) 
& = \a (\xi) \cdot y + \frac12 \big(N(\theta) z , z \big)_{L^2(\T)} \,,
\\
\big(N(\theta) z, z \big)_{L^2(\T)}  
& := \int_\T z_x^2 dx - 3\varsigma \e^2 \int_\T v_\e^2(\theta,0) z^2 \, dx \,,
\end{aligned}
\end{equation}
describes the linear dynamics, 
and $ P := H_{\e} - {\cal N} $, namely 
\begin{align} 
& P := \frac{3\varsigma}{2}\, \e^{2b} (I-2U) (D_S y) \cdot (D_S y)
- \frac{3 \varsigma}{2}\, \e^2 \int_\T [v_\e^2(\theta,y) - v_\e^2(\theta,0)] z^2 \, dx 
\notag \\ &  
- \varsigma \e^{1+b} \int_\T v_\e(\theta,y) z^3 \, dx 
- \frac{\varsigma}{4}\, \e^{2b} \int_\T z^4 \, dx 
+ \e^{-2b} {\cal H}_{\geq 5} (\e v_\e(\theta,y) + \e^b z ) \, , 
\label{def P1}
\end{align}
collects the nonlinear perturbative effects.

\section{The nonlinear functional setting}\label{sec:functional}

We look for an embedded invariant torus 
\begin{equation} \label{embedded torus i}
i : \T^\nu \to \T^\nu \times \R^\nu \times H_S^\bot, \quad  
\vphi \mapsto i (\vphi) := ( \theta (\vphi), y (\vphi), z (\vphi)) 
\end{equation}
of the Hamiltonian vector field $ X_{H_\e} $ 
filled by quasi-periodic solutions with diophantine frequency $ \om \in \R^\nu $, that we regard as  independent parameters. 
We require that $ \om $ belongs to the set 
\begin{equation}\label{Omega epsilon}
\Omega_\e := \a ( [1,2]^\nu ) 
= \{ \a(\xi) : \xi \in [1,2]^\nu \} 
\end{equation}
where $ \a $ is the affine diffeomorphism \eqref{mappa freq amp}. 
Since any $ \omega \in \Omega_\e $ is $ \e^2 $-close to the integer vector $ \bar \om  \in \N^\nu $ 
(see \eqref{mappa freq amp},  \eqref{bar omega}), we require that the constant $\g$ in the diophantine inequality 
\be\label{omdio}
 |\omega \cdot l | \geq \gamma \langle l \rangle^{-\tau} \, ,  \  
 \forall l \in \Z^\nu \setminus \{0\} \, , 
\quad \text{satisfies} \  \  \gamma = \e^{2+a} \quad \text{for some} \ a > 0 \, .  
\ee
Note that the definition of $\g$ in \eqref{omdio} is slightly stronger than the minimal condition, which is $ \g \leq c \e^2 $ with $ c $ small enough. 
In addition to \eqref{omdio} we shall also require that $ \om $ satisfies the first and second order Melnikov-non-resonance conditions \eqref{Omegainfty}. 

We {\it fix} the amplitude $\xi$ as a function of $\om $ and $ \e$, as 
\begin{equation} \label{linkxiomega}
\xi := \e^{-2} \mathbb{A}^{-1} [\om - \bar \om]\,,
\end{equation}
so that $\a(\xi) = \om$ (see \eqref{mappa freq amp}). 

\smallskip

Now  we look for an embedded invariant torus of the modified Hamiltonian vector field 
$ X_{H_{\e, \zeta}} = X_{H_\e} + (0, \zeta, 0) $, $ \zeta \in \R^\nu $, 
which is generated by the Hamiltonian 
\begin{equation}\label{hamiltoniana modificata}
H_{\e, \zeta} (\teta, y, z) :=  H_\e(\teta, y, z) + \zeta \cdot \theta\,,\quad \zeta \in \R^\nu \,.
\end{equation}
Note that the vector field $ X_{H_{\e, \zeta}} $ is  periodic in $\theta $
(unlike the Hamiltonian $ H_{\e, \zeta} $). 
We introduce $\zeta $ in order to adjust the average 
in the second equation of the linearized system \eqref{operatore inverso approssimato}, see 
\eqref{fisso valore di widehat zeta}.  The vector $ \zeta $ has however no dynamical consequences. Indeed
it turns out that an invariant torus for the Hamiltonian vector field 
$ X_{H_{\e, \zeta}} $ is actually 
invariant  for $ X_{H_\e} $ itself, see Lemma \ref{zeta = 0}. 
Hence we look for zeros of the nonlinear operator
\begin{align} \label{operatorF}
{\cal F} (i, \zeta) 
& :=   {\cal F} (i, \zeta, \omega, \e ) 
:= {\cal D}_\om i (\vphi) - X_{H_\e} (i(\vphi)) + (0, \zeta, 0 )  
\\ & 
= \begin{pmatrix} 
{\cal D}_\om \theta (\vphi) - \partial_y H_\e ( i(\vphi)  )   \\
{\cal D}_\om y (\vphi)  +  \partial_\teta H_\e ( i(\vphi)  ) + \zeta  \\
{\cal D}_\om z (\vphi) -  \partial_x \nabla_z H_\e ( i(\vphi)) 
\end{pmatrix}
\notag \\ & 
= \begin{pmatrix} 
{\cal D}_\om \Theta (\vphi) - \partial_y P (i(\vphi) )   \\
{\cal D}_\om  y (\vphi) 
+ \frac12 \partial_\teta ( N(\theta (\vphi)) z(\vphi), z(\vphi) )_{L^2(\T)} 	
+ \partial_\teta P ( i(\vphi) ) + \zeta  \\
{\cal D}_\om  z (\vphi) - \partial_x N ( \theta  (\vphi )) z (\vphi)  
- \partial_x \nabla_z P  ( i(\vphi) ) 
\end{pmatrix} 
\notag
\end{align}
where $ \Theta(\ph) := \teta (\vphi) - \vphi $ is $ (2 \pi)^\nu $-periodic 
and we use (here and everywhere in the paper) the short notation 
\begin{equation}\label{Domega}
{\cal D}_\om := \om \cdot \partial_\vphi \, . 
\end{equation}
The Sobolev norm of the periodic component of the embedded torus 
\begin{equation}\label{componente periodica}
{\mathfrak I}(\vphi)  := i (\vphi) - (\vphi,0,0) := ( {\Theta} (\ph), y(\ph), z(\ph))\,, \quad \Theta(\ph) := \teta (\vphi) - \vphi \, , 
\end{equation}
is $\| {\mathfrak I}  \|_s := \| \Theta \|_{H^s_\vphi} +  \| y  \|_{H^s_\vphi} +  \| z \|_s $
where $ \| z \|_s := \| z \|_{H^s_{\vphi,x}}  $ is defined in \eqref{Sobolev coppia}.
We link the rescaling \eqref{rescaling kdv quadratica}
with the diophantine constant $ \g = \e^{2+a} $ by choosing
\be\label{link gamma b}
\gamma = \e^{2+a} = \e^{2b}\,, \quad 
b = 1 + ( a \slash 2 ) \,, \quad 
a \in (0, 1/6).
\ee
Other choices are possible, see Remark \ref{comm3}.

\begin{theorem}\label{main theorem}
Let the tangential sites $ S $ in \eqref{tang sites} 
satisfy \eqref{scelta siti}. 
For all $ \e \in (0, \e_0 ) $, where $ \e_0 $ is small enough,   
there exist a constant $C>0$ and a Cantor-like set $ {\cal C}_\e \subset \Omega_\e $,
with  asympotically full measure as $ \e \to 0 $, namely
\begin{equation}\label{stima in misura main theorem}
\lim_{\e\to 0} \, \frac{|{\cal C}_\e|}{|\Omega_\e|} = 1 \, , 
\end{equation}
such that, for all $ \omega \in {\cal C}_\e $, 
there exists a solution $ i_\infty (\vphi) := i_\infty (\omega, \e)(\vphi) $ 
of the equation $\mF(i_\infty, 0, \om, \e) = 0$ 
(the nonlinear operator $\mF(i,\zeta,\om,\e)$ is defined in \eqref{operatorF}).
Hence the embedded torus $ \vphi \mapsto i_\infty (\vphi) $ is invariant for the Hamiltonian vector field $ X_{H_\e} $, and it is filled by quasi-periodic solutions with frequency $ \om $.
The torus $i_\infty$ satisfies 
\be\label{stima toro finale}
\|  i_\infty (\vphi) - (\vphi,0,0) \|_{s_0 + \mu}^\Lipg \leq C \e^{5-2b} \g^{-1}
= C \e^{1-2a}
\ee
for some $ \mu := \mu (\nu) > 0 $. 
Moreover, the torus $ i_\infty $ is linearly stable. 
\end{theorem}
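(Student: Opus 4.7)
The plan is to produce the embedded invariant torus $i_\infty$ as the limit of a Nash--Moser iteration applied to the nonlinear operator $\mathcal{F}(i,\zeta,\omega,\e)$ defined in \eqref{operatorF}, parametrized by $\omega \in \Omega_\e$ with $\xi$ linked to $\omega$ by \eqref{linkxiomega}. The starting point is the trivial approximate solution $i_0(\varphi) = (\varphi,0,0)$, for which $X_P(i_0) = O(\e^{5-2b})$ by inspection of \eqref{formaHep}--\eqref{def P1}. The Cantor set $\mathcal{C}_\e$ is defined recursively in parallel with the iteration, by excising at each step the parameters $\omega$ which violate the diophantine condition \eqref{omdio} and the first/second order Melnikov conditions required to invert the linearized operator. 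The measure estimate \eqref{stima in misura main theorem} will follow because, thanks to the non-trivial twist in \eqref{mappa freq amp} (invertible via \eqref{AA -1}) and the choice \eqref{link gamma b}, these resonant strips shrink faster than the size $|\Omega_\e| \sim \e^{2\nu}$ of the parameter domain.

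The central technical step at each Nash--Moser stage is the construction of an approximate right inverse for $d_{i,\zeta}\mathcal{F}(i_n,\zeta_n)$ with tame bounds in high Sobolev norm. Following the Berti--Bolle symplectic decoupling of \cite{BB13}, I would introduce a $\varphi$-dependent symplectic change of coordinates $(\theta,y,z) \mapsto (\psi,\eta,w)$ near the approximate torus, whose canonicity rests on the approximate isotropy of $i_n$ (with error controlled by $\|\mathcal{F}(i_n,\zeta_n)\|$). In these coordinates the linearized system \eqref{sistema lineare dopo} is essentially triangular: the $\eta$-equation is solved by an average, the parameter $\zeta_n$ is tuned to cancel the zero-mode obstruction, and the $\psi$-equation is solved by integration, reducing the problem to the invertibility, for $\omega \in \mathcal{C}_\e$, of the quasi-periodically forced Hamiltonian linear PDE $\mathcal{L}_\omega$ on $H_S^\perp$ given by \eqref{Lom KdVnew-simpler}.

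The hardest part is the reduction of $\mathcal{L}_\omega$ to constant coefficients, since the variable coefficient $a_1(\omega t,x)\partial_{xxx}$ is of the same order as $\partial_{xxx}$ itself and its size is non-perturbative with respect to the small divisor $\gamma = \e^{2+a}$. I would proceed in three stages as outlined in the introduction. First, a cascade of symplectic pseudo-differential conjugations reduces $\mathcal{L}_\omega$ to the operator $\mathcal{L}_5$ of the form \eqref{L6 qualitativo}: the highest order coefficient is straightened into a constant $m_3$ by conjugation with the time-one flow of the transport equation \eqref{transport-free} projected on $H_S^\perp$, which plays the role of a quasi-periodic Egorov theorem; subsequent steps normalize, in decreasing order, the symbols down to order zero, leaving a small bounded remainder $R_5$. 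Second, the nonperturbative terms of size $O(\e^2)$ surviving in $R_5$ are eliminated by a purely algebraic linear Birkhoff normal form, and this is precisely where the tangential-site hypothesis \eqref{scelta siti} is used to guarantee that the required resonance denominators do not vanish. Third, once $R_5$ is bounded and of perturbative size compared to $\gamma$, the abstract KAM reducibility theorem of \cite{BBM} diagonalizes it completely to \eqref{linearosc}, so that inversion reduces to dividing by $\ii\omega\cdot l + \mu_j^\infty$, controlled on $\mathcal{C}_\e$ by the Melnikov conditions.

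With these tame estimates on $\mathcal{L}_\omega^{-1}$ in hand, the smallness condition $\e^2\|\mathcal{F}(\varphi,0,0)\|_{s_0+\mu}\gamma^{-2} \ll 1$ required by the Nash--Moser scheme is verified using $\|X_P(\varphi,0,0)\|_s \leq_s \e^{5-2b}$ and \eqref{link gamma b}. Convergence in $H^{s_0+\mu}$ produces an embedded torus $i_\infty$ satisfying \eqref{stima toro finale}; on the final Cantor set the auxiliary parameter $\zeta_\infty$ vanishes, so $i_\infty$ is invariant for $X_{H_\e}$ itself. Linear stability is a direct consequence of the diagonalization: the Floquet exponents $\mu_j^\infty$ in \eqref{linearosc} lie on the imaginary axis, so the dynamics in the normal directions preserves Sobolev norms, while the actions $\eta$ are conserved and $\psi$ evolves linearly, which together imply that all nearby solutions stay close to the torus in $H^s$ for all times.
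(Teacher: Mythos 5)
Your outline reproduces the paper's strategy essentially step by step: the same starting guess with $X_P(\vphi,0,0)=O(\e^{5-2b})$, the Berti--Bolle symplectic decoupling, the three-stage reduction of $\mathcal{L}_\omega$ (decreasing symbols, linear Birkhoff normal form, KAM reducibility), the same smallness condition $\e^2\|\mathcal{F}(\vphi,0,0)\|_{s_0+\mu}\g^{-2}\ll1$, and the same route to linear stability via the purely imaginary Floquet exponents. As far as the architecture goes there is nothing to object to.

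There is, however, one concrete misstatement that would become a real gap if you carried the proof out: you claim that hypothesis \eqref{scelta siti} is used in the linear Birkhoff normal form step ``to guarantee that the required resonance denominators do not vanish''. It is not. The denominators \eqref{cal A 1} in that step are bounded below as in \eqref{BNFdeno} simply because $\bar\omega\cdot l+j'^3-j^3$ is an integer, hence of modulus at least $1$ whenever it is nonzero; and the resonant monomials with $\bar\omega\cdot l+j'^3-j^3=0$ are identified in Lemma \ref{lem:bnf 1} using only the algebraic identity of Lemma \ref{lemma:interi} and the symmetry $S=-S$, reducing to the diagonal constant contribution $\e^2 c(\xi)\partial_x$ in \eqref{primo termine BNF1}. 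The hypothesis \eqref{scelta siti} enters only in the measure estimate, precisely in Lemma \ref{lemma:a jk}: for the finitely many second-order Melnikov resonant sets $R_{ljk}$ in \eqref{resonant sets} with $j^2+k^2\le C_0$, the transversality argument of Lemma \ref{lemma:res alti} (which relies on $|l|\gtrsim|j^3-k^3|$ via Lemma \ref{matteo 4}) is unavailable, and one must check that the affine function $\omega\mapsto a_{jk}+b_{ljk}\cdot\omega$ does not nearly vanish; the constant $a_{jk}=-\ii(j-k)\big(j^2+jk+k^2-\tfrac{2}{2\nu-1}\sum_i\bar\jmath_i^{\,2}\big)$ is nonzero exactly by \eqref{scelta siti}. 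Your measure discussion, which appeals generically to the invertibility of the twist matrix, glosses over exactly this low-momentum case, which is the only delicate point of $({\cal P}4)_n$ and the only place in the paper where \eqref{scelta siti} is invoked.
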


Theorem \ref{main theorem} is  proved in sections \ref{costruzione dell'inverso approssimato}-\ref{sec:NM}. 
It implies Theorem \ref{thm:mKdV} where the $ \xi_j $ in \eqref{solution u} are the 
components of the vector $\mathbb{A}^{-1}[\om - \bar\om]$. 
By \eqref{stima toro finale}, going back to the variables before the rescaling \eqref{rescaling kdv quadratica}, we get 
$ \tilde \Theta_\infty = O( \e^{5-4b}) $, 
$ \tilde y_\infty = O( \e^{5-2b} ) $, 
$ \tilde z_\infty = O( \e^{5-3b} ) $. 

\begin{remark} \label{comm3}
The way to link the amplitude-rescaling \eqref{rescaling kdv quadratica} with the diophantine constant 
$ \g = \e^{2+a} $ in \eqref{omdio} is not unique. 
 
The choice $ \e^{2b} < \g   $ (i.e. ``$ b > 1 $ large'')  reduces to study 
the Hamiltonian $ H_\e $ in \eqref{formaHep} as a
perturbation of an isochronous system 
(as in \cite{Ku}, \cite{k1}, \cite{Po3}).  
We can take  $ b = 4 / 3 $ in order to minimize the size of the perturbation
$ P = O( \e^{7/3}) $,  
estimating uniformly all the terms in the last two lines of \eqref{formaHep}.
As a counterpart 
we have  to regard  in \eqref{formaHep} the constants 
$ \a := \a (\xi ) \in \R^\nu $ (or $ \xi $ in   \eqref{def A eps}) as independent variables. 
This is the perspective described for example in  \cite{BB13}.  
Then the Nash-Moser scheme produces iteratively a sequence of  
$ \xi_n = \xi_n (\om) $ and embeddings  
$ \vphi \mapsto i_n (\vphi) := (\theta_n (\vphi), y_n (\vphi), z_n (\vphi) )$
at the same time. 

The case  $ \e^{2b} > \g $ (i.e. ``$ b \geq 1 $ small''), in particular if $ b = 1 $, 
reduces to study  the Hamiltonian $ H_\e $ in \eqref{formaHep} as a
perturbation of a non-isochronous system {\`a la} Arnold-Kolmogorov (note that 
the quadratic Hamiltonian  in \eqref{twist-pezzo}
satisfies the usual Kolmorogov non-degeneracy condition).
In this case,  the constant $ \xi_j $ in  \eqref{def A eps}  and the average of $ |j| y_j (\vphi) $ have the same size and therefore the same role.
Then we may consider $ \xi_j $ as fixed, and tune the average of the action component $  y_j (\vphi) $
in order to solve the linear equation \eqref{equazione psi hat}, which corresponds to the angle component. 
We use the invertible (averaged) ``twist''-matrix \eqref{media-twist} to impose that 
the right hand side in \eqref{equazione psi hat} has zero average. 

The intermediate case $\e^{2b} = \g $, adopted in this paper (as well as in \cite{bbm}), 
has the advantage to avoid the introduction of the 
$ \xi(\om) $ as an independent variable,   
but it also enables to estimate uniformly the sizes of  the components of 
$  (\Theta (\vphi) , y (\vphi) , z (\vphi) ) $ with no distinctions.   
\end{remark}

Now we prove tame estimates for the composition operator induced by the Hamiltonian vector fields $ X_{\cal N} $ and  $ X_P $ in \eqref{operatorF}, which are used in the next sections. 
Since the functions $ y \mapsto \sqrt{\xi + \e^{2(b - 1)}|j| y} $, $\theta \mapsto e^{\ii \theta}$ 
are analytic  for $\e$ small enough and $|y| \leq C$,  the composition 
Lemma \ref{lemma:composition of functions, Moser} implies that, 
for all  $ \Theta, y \in H^s(\T^\nu, \R^\nu )$ with $\| \Theta \|_{s_0}$, $\| y \|_{s_0} \leq 1$, setting $\theta(\ph) := \ph + \Theta (\ph)$, one has the tame estimate
$$ 
\|  v_\e (\theta (\vphi) ,y(\vphi) ) \|_s \leq_s 1 + \| \Theta  \|_s + \| y \|_s  \, . 
$$ 
Hence the map $ A_\e $ in \eqref{def A eps} satisfies, for all 
$   \| {\mathfrak I} \|_{s_0}^\Lipg  \leq 1 $ (see \eqref{componente periodica})
\be \label{stima Aep}
 \| A_\e (\theta (\vphi),y(\vphi),z(\vphi)) \|_s^{\Lipg} \leq_s \e (1 + \| {\mathfrak I} \|_s^\Lipg) \, . 
\ee 
In the following lemma we collect tame estimates for the Hamiltonian vector fields 
$ X_{\cal N} $, $ X_P $,  $ X_{H_\e} $ (see \eqref{Hamiltoniana Heps KdV}, \eqref{def P1})
whose proof is a direct application of classical tame product and composition estimates.

\begin{lemma}\label{lemma quantitativo forma normale}
Let $ \fracchi(\ph) $ in \eqref{componente periodica} satisfy
$ \| {\mathfrak I} \|_{s_0 + 3}^\Lipg \leq C \e^{5-2b} \gamma^{-1} = C \e^{5-4b}$. 
Then, writing in short $\| \ \|_s$ to indicate $\| \ \|_s^\Lipg$, one has
\begin{alignat*}{2}
\| \partial_y P(i) \|_s 
& \leq_s \e^3 + \e^{2b} \| {\mathfrak I}\|_{s+3} 
&
\| \partial_\theta P(i) \|_s
& \leq_s \e^{5-2b} (1 + \| {\mathfrak I} \|_{s+3}) 
\\ 
\| \nabla_z P(i) \|_s
& \leq_s \e^{4-b} + \e^{6-3b} \| {\mathfrak I} \|_{s+3} 
&
\| X_P(i)\|_s 
& \leq_s \e^{5-2b} + \e^{2b} \| {\mathfrak I}\|_{s+3}  
\\
\| \partial_{\theta} \partial_y P(i)\|_s 
& \leq_s \e^3 + \e^{5-2b} \| {\mathfrak I}\|_{s+3} 
& \qquad 
\| \partial_y \nabla_z P(i)\|_s  
& \leq_s \e^{2+b} +  \e^{2b} \| {\mathfrak I} \|_{s+3}   
\\ 
& & 
\| \partial_{yy} P(i) - \e^{2b} \mathbb{A} D_S \|_s 
& \leq_s \e^{1+2b} + \e^3 \| \fracchi \|_{s+3} 
\end{alignat*}
($\mathbb{A}, D_S$ are defined in \eqref{mappa freq amp})
and, for all  $ \widehat \imath :=  (\widehat \Theta, \widehat y, \widehat z) $, 
\begin{align}
\label{D yii}
& \| \partial_y d_{i} X_P(i)[\widehat \imath \,] \|_s 
\leq_s \e^{2b} \big( \| \widehat \imath \,\|_{s + 3} 
+ \| {\mathfrak I}\|_{s + 3} \| \widehat \imath \,\|_{s_0 + 3}\big)  
\\
\label{tame commutatori} 
& \| d_i X_{H_\e}(i) [\widehat \imath \, ] + (0,0, \partial_{xxx} \hat z)\|_s  
\leq_s \e^2 \big( \| \widehat \imath \,\|_{s + 3} 
+ \|{\mathfrak I} \|_{s + 3} \| \widehat \imath \,\|_{s_0 + 3} \big) 
\\
\label{parte quadratica da P}
& \| d_i^2 X_{H_\e}(i) [\widehat \imath, \widehat \imath \,]\|_s 
\leq_s \e^2 \big( \| \widehat \imath \,\|_{s + 3} \| \widehat \imath \,\|_{s_0 + 3} 
+ \| {\mathfrak I}\|_{s + 3} \| \widehat \imath \,\|_{s_0 + 3}^2 \big) \, .
\end{align}
\end{lemma}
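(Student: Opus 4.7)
The plan is to derive every estimate by reading off the explicit structure of $P$ from \eqref{def P1} and then differentiating term by term, invoking the tame product inequalities from Lemma \ref{prodest} (applied in Sobolev spaces rather than matrix spaces — the same proof works), the composition Lemma \ref{lemma:composition of functions, Moser}, and the bound \eqref{stima Aep} on $A_\e$. The preliminary computation I would do is to record the size and $y$-derivatives of $v_\e(\theta,y)$: since $v_\e(\theta,y)=\sum_{j\in S}\sqrt{\xi_j+\e^{2(b-1)}|j|y_j}\,e^{\mathrm{i}\theta_j}e^{\mathrm{i}jx}$, and $\sqrt{\,\cdot\,}$ is real analytic near the positive reals $\xi_j\in[1,2]$, Lemma \ref{lemma:composition of functions, Moser} yields $\|v_\e(\theta,y)\|_s\leq_s 1+\|\Theta\|_s+\|y\|_s$, and each $\partial_y$ pays a factor $\e^{2(b-1)}$, while each $\partial_\theta$ is of order $1$. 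Under the running assumption $\|{\mathfrak I}\|_{s_0+3}^{\Lipg}\leq C\e^{5-4b}$, the $s_0+3$-norm of $v_\e$ and all its $\theta,y$-derivatives is $O(1)$.

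For the estimates on partial derivatives of $P$, the dominant contribution to $\partial_y P$ comes from the twist quadratic $\tfrac{3\varsigma}{2}\e^{2b}(I-2U)(D_S y)\cdot(D_S y)$, which is linear in $y$ after differentiation and produces exactly the $\e^{2b}\|\fracchi\|_{s+3}$ term; the remaining four pieces of \eqref{def P1} involve at least two factors of $z$ or at least one small factor of $\e^{2(b-1)}y$, and placing all but one factor in the $s_0$-norm via the interpolation/product inequalities, together with the smallness hypothesis, yields the $\e^3$ remainder. The bound on $\partial_\theta P$ is obtained similarly: the two $z^2$-bilinear terms have coefficients of size $\e^2$ and produce an $\e^2\|z\|_{s_0}^2\leq \e^{2+2(5-4b)}\ll \e^{5-2b}$ contribution, the $\e^{1+b}\int v_\e z^3$ term and the ${\cal H}_{\geq 5}$ term are directly of size $\e^{5-2b}$. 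For $\nabla_z P$ each of the four $z$-polynomial pieces is differentiated in $z$, losing one $z$-factor and therefore gaining one size factor between $\e^{4-b}$ and $\e^{6-3b}$; the ${\cal H}_{\geq 5}$-piece is handled by expanding $\e^{-2b}{\cal H}_{\geq 5}(\e v_\e+\e^b z)$ as a sum of monomials of total degree $\geq 5$ in $(\e v_\e,\e^b z)$ using the vanishing \eqref{order5} of $f$. The mixed bounds $\partial_\theta\partial_y P$, $\partial_y\nabla_z P$, $\partial_{yy}P-\e^{2b}\mathbb A D_S$ are consequences of the same term-by-term differentiation: the leading $\partial_{yy}$ comes exactly from the twist quadratic and gives $\e^{2b}\mathbb A D_S$, while all other pieces deliver the stated remainders after a standard powers-of-$\e$ count.

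For the three tame estimates \eqref{D yii}--\eqref{parte quadratica da P} on $d_i X_P$, $d_iX_{H_\e}$, and $d_i^2X_{H_\e}$, I would use that $X_P=(\partial_y P,-\partial_\theta P,\partial_x\nabla_z P)$ so each $d_iX_P[\widehat \imath\,]$ is a bilinear expression in $\fracchi$ and $\widehat \imath$ whose bounds follow from the Hessian-of-$P$ bounds just derived and the bilinear tame product rule; the single subtraction $(0,0,\partial_{xxx}\hat z)$ in \eqref{tame commutatori} isolates the only unbounded piece in $d_i X_{\cal N}$, coming from $\tfrac12\int z_x^2\,dx$, while the residual from $N(\theta)z$ in \eqref{Hamiltoniana Heps KdV} contributes $O(\e^2)$ because $v_\e^2(\theta,0)$ is bounded. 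The second-order estimate \eqref{parte quadratica da P} is obtained analogously by differentiating twice. The loss of three derivatives in every estimate is intrinsic: $\partial_x\nabla_z P$ involves $\partial_x$ applied to $\partial_u f-\partial_x(\partial_{u_x}f)$ composed with $A_\e$, i.e.\ three derivatives of $z$.

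The main bookkeeping difficulty — and the only real obstacle — is propagating the $\Lipg$ norm correctly through the chain, because the amplitude $\xi=\e^{-2}\mathbb A^{-1}[\om-\bar\om]$ defined in \eqref{linkxiomega} is only $\e^{-2}$-Lipschitz in $\om$, so each factor $\sqrt{\xi_j}$ inside $v_\e$ costs $\e^{-2}$ in the Lipschitz semi-norm. One must verify that the choice $\g=\e^{2+a}=\e^{2b}$ in \eqref{omdio}--\eqref{link gamma b} is exactly what is needed so that the $\g$-weighted Lipschitz semi-norm in \eqref{def norma Lipg} absorbs these $\e^{-2}$ losses, and that all the $\e$-exponents claimed in the statement of the lemma survive unchanged after the replacement $\|\cdot\|_s\leadsto\|\cdot\|_s^{\Lipg}$. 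Apart from this bookkeeping, no new analytic ingredient is required beyond Lemmata \ref{lemma:composition of functions, Moser}--\ref{lemma:utile} and the standard tame algebra inequalities in $H^s(\T^{\nu+1})$.
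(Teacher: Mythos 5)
Your proposal is correct and follows exactly the route the paper intends: the paper gives no written proof of this lemma beyond the remark that it is ``a direct application of classical tame product and composition estimates'' together with the preliminary bound \eqref{stima Aep}, and your term-by-term differentiation of \eqref{def P1} with the power count $\partial_y v_\e = O(\e^{2(b-1)})$, $\nabla\mH_{\geq 5}(A_\e)=O(\e^4)$ is precisely that application (e.g.\ the $\e^3$, $\e^{5-2b}$, $\e^{4-b}$ constants all come from the $\e^{-2b}\mH_{\geq5}(A_\e)$ piece, as you indicate). Your closing observation on the $\Lipg$ bookkeeping is also the right one: $\g\,|\xi|^{\lip}\leq C\e^{2+a}\e^{-2}=C\e^{a}\leq C$, so the weighted Lipschitz seminorm does not alter any exponent.
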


In the sequel we also use that, by the diophantine condition \eqref{omdio}, the operator $ {\cal D}_\om^{-1} $  (see \eqref{Domega})
is defined for all functions $ u  $ with zero $ \vphi $-average, and satisfies 
\be\label{Dom inverso}
 \| {\cal D}_\om^{-1} u \|_s \leq C \g^{-1} \| u \|_{s+ \tau} \, , \quad  \| {\cal D}_\om^{-1} u \|_s^{\Lipg} \leq C \g^{-1} \| u \|_{s+ 2 \tau+1}^{\Lipg} \, . 
\ee

\section{Approximate inverse}\label{costruzione dell'inverso approssimato}

In order to implement a convergent Nash-Moser scheme that leads to a solution of 
$ \mF(i, \zeta) = 0 $, 
we now construct an \emph{approximate right inverse} (which satisfies tame estimates) of the linearized operator 
\begin{equation}\label{operatore linearizzato}
d_{i, \zeta}{\cal F}(i_0, \zeta_0)[\widehat \imath\,, \widehat{\zeta} ] 
= {\cal D}_\om \widehat \imath - d_i X_{H_\e} ( i_0 (\vphi) ) [\widehat \imath ] + (0, \widehat \zeta, 0 ) \,,  
\end{equation}
see Theorem \ref{thm:stima inverso approssimato}. Note that 
$ d_{i, \zeta} {\cal F}(i_0, \zeta_0 ) $ is independent of $ \zeta_0 $ (see \eqref{operatorF}).

The notion of approximate right inverse is introduced in \cite{Z1}. It denotes a linear operator 
which is an \emph{exact} right inverse  at a solution $ (i_0, \zeta_0) $  of $ {\cal F}(i_0, \zeta_0) = 0 $.
We  implement the general strategy in \cite{BB13} 
which reduces the search of an approximate right inverse of \eqref{operatore linearizzato} 
to the search of an approximate inverse on the normal directions only. 

It is well known that an invariant torus $ i_0 $ with diophantine flow 
is isotropic (see e.g. \cite{BB13}), namely the pull-back $ 1$-form $ i_0^* \Lambda $ is closed, 
where $ \Lambda $ is the Liouville 1-form in \eqref{Lambda 1 form}. 
This is tantamount to say that the 2-form $ \cal W $ (see \eqref{2form}) vanishes on the torus $ i_0 (\T^\nu )$,  
because 
$ i_0^* {\cal W} =  i_0^* d \Lambda  = d i_0^* \Lambda $. 
For an ``approximately invariant'' torus $ i_0 $ the 1-form $ i_0^* \Lambda$ is only  ``approximately closed''.
In order to make this statement quantitative we consider
\begin{equation}\label{coefficienti pull back di Lambda}
\begin{aligned}
i_0^* \Lambda = {} & {\mathop \sum}_{k = 1}^\nu a_k (\vphi) d \vphi_k \,,   \\
a_k(\vphi) := {} & - ( [\pa_\ph \teta_0 (\vphi)]^T y_0 (\vphi) )_k 
+ \frac12 \big( \partial_{\vphi_k} z_0(\ph),  \partial_{x}^{-1} z_0(\ph) \big)_{L^2(\T)}
\end{aligned}
\end{equation}
and we quantify how small is 
\begin{equation} \label{def Akj} 
i_0^* {\cal W} 
= d \, i_0^* \Lambda 
= \mathop{\sum}_{1 \leq k < j \leq \nu} A_{k j}(\vphi) d \vphi_k \wedge d \vphi_j\,,
\quad 
A_{k j} := \partial_{\vphi_k} a_j - \partial_{\vphi_j} a_k.
\end{equation}
Along this section we will always assume the following hypothesis 
(which will be verified at each step of the Nash-Moser iteration):

\medskip

\noindent
$\bullet$ {\sc Assumption.} 
The map $\omega\mapsto i_0(\omega)$ is a Lipschitz function defined on some subset $\Omega_o \subset \Omega_\e$, where $\Omega_\e$ is defined in \eqref{Omega epsilon}, and, for some $ \mu := \mu (\t, \nu) >  0 $,   
\begin{gather} \label{ansatz 0}
\| {\mathfrak I}_0  \|_{s_0+\mu}^{\Lipg} 
\leq C \e^{5-2b} \g^{-1} 
= C \e^{5-4b}, \qquad 
\| Z \|_{s_0 + \mu}^{\Lipg} \leq C \e^{5-2b}, 
\\ \notag 
\gamma = \e^{2 + a} = \e^{2b}\,, \qquad 
b := 1 + (a/2) \,, \qquad 
a \in (0, 1/6), 
\end{gather}
where $\fracchi_0(\ph) := i_0(\ph) - (\ph,0,0)$, and 
\begin{equation} \label{def Zetone}
Z(\vphi) :=  (Z_1, Z_2, Z_3) (\vphi) := {\cal F}(i_0, \zeta_0) (\vphi) =
\om \cdot \pa_\vphi i_0(\vphi) - X_{H_{\e, \zeta_0}}(i_0(\vphi)) 
\end{equation}
is the ``error'' function. 

\begin{lemma}[Lemma 6.1 in \cite{bbm}] \label{zeta = 0}
$ |\zeta_0|^{\Lipg} \leq C \| Z \|_{s_0}^{\Lipg}$ \!\!\!. 
If $ {\cal F}(i_0, \zeta_0) = 0 $, then $ \zeta_0 = 0 $, 
and the torus $i_0(\ph)$ is invariant for $X_{H_\e}$.
\end{lemma}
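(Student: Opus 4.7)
The plan is to isolate $\zeta_0$ from the averaged second component of the equation $\mathcal{F}(i_0,\zeta_0)=Z$ and then to exploit the Hamiltonian conservation law to show that the ``error'' in that average is controlled by $\|Z\|_{s_0}$ alone.

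First, I would extract $\zeta_0$ directly. The second component of \eqref{operatorF} evaluated at $(i_0,\zeta_0)$ reads
\[
\mathcal{D}_\omega y_0(\varphi) + \partial_\theta H_\epsilon(i_0(\varphi)) + \zeta_0 = Z_2(\varphi).
\]
Since $y_0$ is $\mathbb{T}^\nu$-periodic, $\mathcal{D}_\omega y_0$ is a total derivative and has zero $\varphi$-average, so averaging gives
\[
\zeta_0 = \langle Z_2 \rangle_\varphi - \langle \partial_\theta H_\epsilon(i_0) \rangle_\varphi.
\]
The first summand is obviously bounded by $\|Z\|_{s_0}$, so the game is to control $\langle \partial_\theta H_\epsilon(i_0)\rangle_\varphi$ by $\|Z\|_{s_0}$ as well.

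The tool is energy conservation $dH_\epsilon \cdot X_{H_\epsilon} \equiv 0$. Rearranging $\mathcal{F}(i_0,\zeta_0)=Z$ as $\mathcal{D}_\omega i_0 = X_{H_\epsilon}(i_0) - (0,\zeta_0,0) + Z$ and applying the chain rule,
\[
\mathcal{D}_\omega\bigl[H_\epsilon(i_0(\varphi))\bigr] = dH_\epsilon(i_0)\cdot\mathcal{D}_\omega i_0 = -\zeta_0 \cdot \partial_y H_\epsilon(i_0) + dH_\epsilon(i_0)\cdot Z.
\]
The left-hand side averages to zero, so
\[
\zeta_0 \cdot \langle \partial_y H_\epsilon(i_0)\rangle_\varphi = \langle dH_\epsilon(i_0)\cdot Z\rangle_\varphi,
\]
and averaging the first component of $\mathcal{F}=Z$ gives $\langle \partial_y H_\epsilon(i_0)\rangle = \omega - \langle Z_1\rangle$. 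Since $\omega$ is $\epsilon^2$-close to $\bar\omega \in \mathbb{N}^\nu$ and $\|\mathfrak{I}_0\|$ is small, Lemma \ref{lemma quantitativo forma normale} bounds $\|dH_\epsilon(i_0)\|_{s_0}$ by a constant, yielding a sharp control of the scalar $\zeta_0\cdot\omega$ by $\|Z\|_{s_0}$.

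To upgrade this to the full vector bound $|\zeta_0|^{\Lipg} \leq C\|Z\|_{s_0}^{\Lipg}$, I would combine the vector identity for $\zeta_0$ with the fact that $H_\epsilon(\theta,y,z)$ is $2\pi$-periodic in $\theta$, so $\int_{\mathbb{T}^\nu}\partial_\theta H_\epsilon(\theta,y,z)\,d\theta = 0$ pointwise in $(y,z)$. Writing $\theta_0(\varphi) = \varphi + \Theta_0(\varphi)$ with $\Theta_0$ small (by the ansatz \eqref{ansatz 0}) and changing variables $\psi = \theta_0(\varphi)$, the quantity $\langle \partial_\theta H_\epsilon(i_0)\rangle_\varphi$ becomes a ``pullback discrepancy'' whose leading term vanishes and whose remainder is linear in $(\Theta_0, y_0, z_0)$; the latter in turn are controlled by $\|Z\|_{s_0}$ via the three components of $\mathcal{F}(i_0,\zeta_0) - (0,\zeta_0,0) = Z$, combined with the estimate \eqref{Dom inverso} on $\mathcal{D}_\omega^{-1}$ and the tame bounds of Lemma \ref{lemma quantitativo forma normale}. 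The Lipschitz-in-$\omega$ part comes by applying the same identity at two values of $\omega$ and subtracting, using the Lipschitz content of Lemma \ref{lemma quantitativo forma normale}.

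Finally, the second assertion is immediate: applying the estimate with $Z \equiv 0$ forces $\zeta_0 = 0$, after which $\mathcal{F}(i_0,0) = 0$ reads $\mathcal{D}_\omega i_0 = X_{H_\epsilon}(i_0)$, meaning $i_0(\mathbb{T}^\nu)$ is invariant under $X_{H_\epsilon}$. The main subtlety I expect to encounter is the passage from the scalar bound on $\zeta_0\cdot\omega$ produced by energy conservation to the full vector estimate on $\zeta_0\in\mathbb{R}^\nu$: this is where the $\theta$-periodicity of $H_\epsilon$ together with the smallness of $\mathfrak{I}_0$ enforced by \eqref{ansatz 0} are essential, and where one must be careful to keep track of how the residual $Z$ propagates through the change of variables on the base torus.
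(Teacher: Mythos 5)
Your first two steps are sound and go in the right direction: averaging the second component of $\mathcal F(i_0,\zeta_0)=Z$ does give $\zeta_0=\langle Z_2\rangle-\langle \partial_\theta H_\e(i_0)\rangle$, and pairing the equation with $dH_\e(i_0)$ correctly produces $\zeta_0\cdot\langle\partial_y H_\e(i_0)\rangle=\langle dH_\e(i_0)\cdot Z\rangle$, i.e.\ control of the single scalar $\zeta_0\cdot\om$ by $\|Z\|_{s_0}$. The gap is in the upgrade to the full vector bound. First, the assertion that $(\Theta_0,y_0,z_0)$ are ``controlled by $\|Z\|_{s_0}$ via the three components of $\mathcal F(i_0,\zeta_0)-(0,\zeta_0,0)=Z$'' is false: the equations are nonlinear, and e.g.\ $\mD_\om\Theta_0=\partial_yH_\e(i_0)-\om+Z_1$ contains $\partial_yP(i_0)=O(\e^3)$, which is not small in terms of $Z$. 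Along the Nash--Moser iteration $\|{\mathfrak I}_0\|$ stays of size $\e^{5-4b}$ (the a priori ansatz \eqref{ansatz 0}) while $\|Z\|_{s_0}\to0$ superexponentially, so no bound of ${\mathfrak I}_0$ by $\|Z\|_{s_0}$ can hold. Second, after the change of variables $\psi=\theta_0(\vphi)$ the identity $\int_{\T^\nu}\partial_\theta H_\e(\theta,y,z)\,d\theta=0$ does not apply, because $y_0,z_0$ still depend on $\vphi=\theta_0^{-1}(\psi)$; the resulting discrepancy is of size $O(\e^3\|{\mathfrak I}_0\|)$, not $O(\|Z\|_{s_0})$. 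Your route would therefore only yield $|\zeta_0|\le C(\|Z\|_{s_0}+\e^{\kappa})$ for some $\kappa>0$, which neither gives the stated estimate nor forces $\zeta_0=0$ at an exact solution --- and the exact vanishing is precisely what the lemma is for.

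The correct completion is the vector-valued version of your energy argument: instead of contracting the equation only with $\mD_\om i_0=\sum_k\om_k\partial_{\vphi_k}i_0$, use the $\nu$ identities $\big\langle\partial_{\vphi_k}\big[H_\e(i_0(\vphi))\big]\big\rangle=0$, $k=1,\dots,\nu$, i.e.
\[
0=\big\langle\partial_{\theta_k}H_\e(i_0)\big\rangle
+\big\langle[\partial_{\vphi_k}\Theta_0]\cdot\partial_\theta H_\e(i_0)\big\rangle
+\big\langle[\partial_{\vphi_k}y_0]\cdot\partial_y H_\e(i_0)\big\rangle
+\big\langle(\partial_{\vphi_k}z_0,\nabla_z H_\e(i_0))_{L^2}\big\rangle .
\]
Substituting $\partial_\theta H_\e(i_0)=-\mD_\om y_0-\zeta_0+Z_2$, $\partial_yH_\e(i_0)=\om+\mD_\om\Theta_0-Z_1$, $\partial_x\nabla_zH_\e(i_0)=\mD_\om z_0-Z_3$, the terms $\langle[\partial_{\vphi_k}\Theta_0]\cdot\zeta_0\rangle$ and $\langle[\partial_{\vphi_k}y_0]\cdot\om\rangle$ vanish by periodicity, and the quadratic terms cancel exactly by integration by parts ($\langle\partial_{\vphi_k}f\,\mD_\om g\rangle=\langle\mD_\om f\,\partial_{\vphi_k}g\rangle$, and $\langle(\partial_{\vphi_k}z_0,\pa_x^{-1}\mD_\om z_0)_{L^2}\rangle=0$) --- the same algebra that underlies the isotropy of diophantine invariant tori. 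What remains is linear in $Z$ with coefficients bounded via \eqref{ansatz 0}, giving $|\langle\partial_{\theta_k}H_\e(i_0)\rangle|\le C\|Z\|_{s_0}$ for every $k$ and hence $|\zeta_0|\le C\|Z\|_{s_0}$; the Lipschitz bound and the conclusion $\zeta_0=0$ at a zero of $\mathcal F$ then follow as you indicate.
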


Now we estimate the size of $ i_0^* {\cal W} $ in terms of $ Z $.
From \eqref{coefficienti pull back di Lambda}, \eqref{def Akj} one has  
$\| A_{kj} \|_s^\Lipg \leq_s \| \fracchi_0 \|_{s+2}^\Lipg$. 
Moreover, $A_{kj}$ also satisfies the following bound.

\begin{lemma}[Lemma 6.2 in \cite{bbm}]
The coefficients $A_{kj} (\vphi) $ in \eqref{def Akj} satisfy 
\begin{equation}\label{stima A ij}
\| A_{k j} \|_s^{\Lipg} \leq_s \gamma^{-1} \big(\| Z \|_{s+2\t+2}^{\Lipg}
+ \| Z \|_{s_0+1}^{\Lipg} \|  {\mathfrak I}_0 \|_{s+ 2 \tau + 2}^{\Lipg} \big)\,.
\end{equation}
\end{lemma}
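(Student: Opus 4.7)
The plan exploits the fact that an exactly invariant torus of a Hamiltonian vector field with diophantine flow is automatically isotropic, so that an exact solution would yield $A_{kj}\equiv 0$ identically. Hence $A_{kj}$ is entirely controlled by the error $Z$ from \eqref{def Zetone}. First, by \eqref{def Akj} each $A_{kj}$ is a sum of $\vphi$-derivatives and therefore has zero $\vphi$-average. Applying \eqref{Dom inverso}, it suffices to show
\begin{equation*}
\| \mathcal{D}_\om A_{kj}\|_{s+2\t+1}^{\Lipg} \leq_s \| Z\|_{s+2\t+2}^{\Lipg} + \| Z\|_{s_0+1}^{\Lipg} \| \fracchi_0\|_{s+2\t+2}^{\Lipg},
\end{equation*}
because then $\| A_{kj}\|_s^{\Lipg} \leq C\g^{-1}\| \mathcal{D}_\om A_{kj}\|_{s+2\t+1}^{\Lipg}$ gives the claim.

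The computation of $\mathcal{D}_\om A_{kj}$ is the algebraic heart of the argument. Since $i_0^* \mW = d(i_0^*\Lm)$ is exact on $\T^\nu$, Cartan's magic formula gives
\begin{equation*}
\mathcal L_{\om\cdot\pa_\vphi}(i_0^*\mW) = d\bigl(\iota_{\om\cdot\pa_\vphi}(i_0^*\mW)\bigr),
\end{equation*}
and for any tangent vector $\hat v$ at $\vphi\in\T^\nu$ one has
\begin{equation*}
\bigl[\iota_{\om\cdot\pa_\vphi}(i_0^*\mW)\bigr](\hat v) = \mW\bigl(\om\cdot\pa_\vphi i_0(\vphi),\, di_0(\vphi)[\hat v]\bigr).
\end{equation*}
Substituting $\om\cdot\pa_\vphi i_0 = X_{H_{\e,\zeta_0}}(i_0) + Z$ and using the Hamiltonian identity $\mW(X_{H_{\e,\zeta_0}},\,\cdot\,) = - dH_{\e,\zeta_0}$, the Hamiltonian contribution is the exact 1-form $- d(H_{\e,\zeta_0}\circ i_0)$, which vanishes after another $d$ is applied. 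Setting $\beta_k(\vphi) := \mW(Z(\vphi), \pa_{\vphi_k} i_0(\vphi))$, this leaves the clean identity
\begin{equation*}
\mathcal{D}_\om A_{kj} = \pa_{\vphi_k}\beta_j - \pa_{\vphi_j}\beta_k.
\end{equation*}

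Finally, from the explicit form \eqref{2form} each $\beta_k$ is bilinear in $(Z,\fracchi_0)$ (after writing $\pa_{\vphi_k}\theta_0 = \delta_{ik} + \pa_{\vphi_k}\Theta_0$), with the $z$-component paired through the smoothing operator $\pa_x^{-1}$. Standard tame product and interpolation estimates, combined with the smallness of $\|\fracchi_0\|_{s_0+\mu}^{\Lipg}$ built into the ansatz \eqref{ansatz 0} (which absorbs the low-Sobolev factor into the constant $C(s)$), yield the bilinear estimate above; combining with the inversion of $\mathcal{D}_\om$ gives \eqref{stima A ij}. The main obstacle is verifying the algebraic cancellation cleanly — namely that the Hamiltonian piece $\mW(X_{H_{\e,\zeta_0}}(i_0),\,\cdot\,)$ is exact as a 1-form on $\T^\nu$ and therefore disappears under $d$. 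Once this structural fact is in place, the remaining work is tame-product bookkeeping for the bilinear term $\mW(Z,\pa_{\vphi_k} i_0)$, the Lipschitz-in-$\om$ dependence being handled in the standard way since all pieces (including $\pa_x^{-1}$) are bounded operators on the relevant spaces.
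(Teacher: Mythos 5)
Your argument is correct and is essentially the proof of Lemma 6.2 in \cite{bbm} (going back to \cite{BB13}): compute $\mathcal{D}_\om A_{kj}$, use the Hamiltonian structure to cancel the contribution of $X_{H_{\e,\zeta_0}}$, and invert $\mathcal{D}_\om$ on the zero-average function $A_{kj}$ via \eqref{Dom inverso}; your Cartan-formula derivation of $\mathcal{D}_\om A_{kj}=\partial_{\vphi_k}\beta_j-\partial_{\vphi_j}\beta_k$ is a coordinate-free rephrasing of the direct differentiation of $\mathcal{W}(\partial_{\vphi_k}i_0,\partial_{\vphi_j}i_0)$ combined with the infinitesimal symplecticity of $d_iX_{H_{\e,\zeta_0}}$ used there. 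One bookkeeping point: to land on $\|{\mathfrak I}_0\|_{s+2\tau+2}$ rather than $\|{\mathfrak I}_0\|_{s+2\tau+3}$ you must first expand $\partial_{\vphi_k}\beta_j-\partial_{\vphi_j}\beta_k=\mathcal{W}(\partial_{\vphi_k}Z,\partial_{\vphi_j}i_0)-\mathcal{W}(\partial_{\vphi_j}Z,\partial_{\vphi_k}i_0)$ (the terms $\mathcal{W}(Z,\partial^2_{\vphi}i_0)$ cancel by symmetry of second derivatives) and only then apply the tame product estimate.
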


As in \cite{BB13}, 
we first modify the approximate torus $ i_0 $ to obtain an isotropic torus $ i_\d $ which is 
still approximately invariant. We denote the Laplacian $ \Delta_\vphi := \sum_{k=1}^\nu \partial_{\vphi_k}^2 $.  

\begin{lemma}[Isotropic torus] \label{toro isotropico modificato}  
The torus $ i_\delta(\vphi) := (\theta_0(\vphi), y_\delta(\vphi), z_0(\vphi) ) $ defined by 
\begin{equation}\label{y 0 - y delta}
y_\d := y_0 +  [\pa_\ph \theta_0(\vphi)]^{- T}  \rho(\vphi) \, , \qquad 
\rho_j(\vphi) := \Delta_\vphi^{-1} {\mathop\sum}_{ k = 1}^\nu \partial_{\vphi_j} A_{k j}(\vphi) 
\end{equation}
is isotropic. 
If \eqref{ansatz 0} holds, then, for some $ \s := \s(\nu,\t) $,   
\begin{align} 
\label{2015-2}
\| y_\delta - y_0 \|_s^{\Lipg} 
& \leq_s \| \fracchi_0 \|_{s+\s}^{\Lipg} \,,  
\\
\| y_\delta - y_0 \|_s^{\Lipg} 
&  \leq_s  \gamma^{-1}  \big\{ \| Z \|_{s+\s}^\Lipg  + \| Z \|_{s_0+\s}^\Lipg \| \fracchi_0 \|_{s+\s}^\Lipg \big\} \,,
\label{stima y - y delta}
\\
\label{stima toro modificato}
\| {\cal F}(i_\delta, \zeta_0) \|_s^{\Lipg} 
& \leq_s  \| Z \|_{s + \s}^{\Lipg} + \| \fracchi_0 \|_{s+\s}^\Lipg   \| Z \|_{s_0 + \s}^{\Lipg} \,,  
\\
\label{derivata i delta}
\| \pa_i [ i_\d][ \widehat \imath ] \|_s & \leq_s \| \widehat \imath \|_s +  \| {\mathfrak I}_0\|_{s + \s} \| \widehat \imath  \|_s \, .
\end{align}
\end{lemma}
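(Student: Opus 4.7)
The plan is to derive everything from a direct computation of the pullback $i_\d^* \Lm$. Writing the Liouville 1-form \eqref{Lambda 1 form} along $i_\d = (\theta_0, y_\d, z_0)$ and using $y_\d = y_0 + [\pa_\vphi \theta_0]^{-T} \rho$, one finds
$$
i_\d^* \Lm = i_0^* \Lm - \rho(\vphi) \cdot d \vphi,
$$
since the correction to $y_0$ contributes $-[\pa_\vphi \theta_0]^T [\pa_\vphi \theta_0]^{-T} \rho \cdot d\vphi = - \rho \cdot d\vphi$ to the first summand in \eqref{coefficienti pull back di Lambda}, while the $z_0$-part is unchanged. Taking the exterior derivative,
$$
i_\d^* \mW = i_0^* \mW - d(\rho \cdot d \vphi) = \sum_{k<j} \bigl( A_{kj} - \pa_{\vphi_k} \rho_j + \pa_{\vphi_j} \rho_k \bigr) \, d \vphi_k \wedge d \vphi_j .
$$
Isotropy of $i_\d$ thus reduces to the identity $\pa_{\vphi_k} \rho_j - \pa_{\vphi_j} \rho_k = A_{kj}$, which I would verify by substituting \eqref{y 0 - y delta} and using the cocycle relations $\pa_{\vphi_m} A_{kj} + \pa_{\vphi_k} A_{jm} + \pa_{\vphi_j} A_{mk} = 0$, which in turn follow from $d(i_0^* \mW) = i_0^* d\mW = 0$, after inverting $\Delta_\vphi$ on zero-average functions.

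For the Sobolev bounds, I first note that $\|A_{kj}\|_s^\Lipg \leq_s \|\fracchi_0\|_{s+2}^\Lipg$ follows by differentiating \eqref{coefficienti pull back di Lambda}--\eqref{def Akj} and applying tame product estimates, while the sharper bound \eqref{stima A ij} involving $Z$ is given by the preceding lemma. Since $\Delta_\vphi^{-1} \pa_{\vphi_k}$ is bounded on $H^s(\T^\nu)$, one gets $\|\rho\|_s^\Lipg \leq_s \|A_{kj}\|_s^\Lipg$. The ansatz \eqref{ansatz 0} keeps $\pa_\vphi \Theta_0$ small in $L^\infty$, so $\pa_\vphi \theta_0 = I + \pa_\vphi \Theta_0$ is invertible by Neumann series, and Lemma \ref{lemma:utile} together with a tame product argument bounds $[\pa_\vphi \theta_0]^{-T}$ in $H^s$ in terms of $\|\fracchi_0\|_{s+1}^\Lipg$. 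Combining these yields \eqref{2015-2} and \eqref{stima y - y delta} with a fixed derivative loss $\s = \s(\nu,\tau)$.

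To prove \eqref{stima toro modificato}, I would write
$$
\mF(i_\d, \zeta_0) - \mF(i_0, \zeta_0) = - \int_0^1 d_i X_{H_\e}\bigl( i_0 + s (i_\d - i_0)\bigr) [(0, y_\d - y_0, 0)] \, ds,
$$
and note that $\mF(i_0, \zeta_0) = Z$ gives the leading piece in the claimed bound. The $\theta$-component of the integrand equals $- \pa_{yy} H_\e \cdot (y_\d - y_0)$, and Lemma \ref{lemma quantitativo forma normale} gives $\|\pa_{yy} H_\e\|_s \leq_s \e^{2b}$; combined with \eqref{stima y - y delta} and the crucial identification $\g = \e^{2b}$ from \eqref{link gamma b}, this produces a term of size $\e^{2b} \g^{-1} \|Z\|_{s+\s}^\Lipg = \|Z\|_{s+\s}^\Lipg$, absorbing the small-divisor loss exactly. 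The $y$- and $z$-components are handled analogously via the mixed derivative bounds of Lemma \ref{lemma quantitativo forma normale} and the quadratic estimate \eqref{parte quadratica da P}. Estimate \eqref{derivata i delta} follows by differentiating \eqref{y 0 - y delta} termwise and again invoking the tame product estimates for $[\pa_\vphi \theta_0]^{-T}$. The main obstacle is the algebraic bookkeeping in the isotropy step: one must match the specific formula for $\rho$ against the cocycle identity for $A_{kj}$, keeping all sign and index conventions aligned with \eqref{coefficienti pull back di Lambda}--\eqref{def Akj} and with the distinct finite-dimensional tangential and infinite-dimensional normal contributions to the symplectic structure \eqref{2form}.
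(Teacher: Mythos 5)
Your treatment of the isotropy claim and of the bounds \eqref{2015-2}, \eqref{stima y - y delta}, \eqref{derivata i delta} follows the standard Berti--Bolle argument that the paper itself imports from \cite{bbm} (the pullback computation $i_\d^*\Lambda = i_0^*\Lambda - \rho\cdot d\vphi$, the cocycle identity from $d(i_0^*{\cal W})=0$, and the tame estimates for $[\pa_\vphi\theta_0]^{-T}$ and $\Delta_\vphi^{-1}\pa_\vphi A_{kj}$), and that part is fine. The gap is in your derivation of \eqref{stima toro modificato}. Since $ {\cal F}(i,\zeta_0) = {\cal D}_\om i - X_{H_\e}(i) + (0,\zeta_0,0)$, the fundamental theorem of calculus gives
$$
{\cal F}(i_\d,\zeta_0) - {\cal F}(i_0,\zeta_0) = {\cal D}_\om (i_\d - i_0) - \int_0^1 d_i X_{H_\e}\big(i_0 + s(i_\d-i_0)\big)[(0,y_\d-y_0,0)]\,ds ,
$$
and your formula drops the transport term $ {\cal D}_\om(i_\d-i_0) = (0,{\cal D}_\om(y_\d-y_0),0)$. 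This is not a cosmetic omission: it is precisely the term for which the naive estimate fails. Bounding it through \eqref{stima y - y delta} yields $\| {\cal D}_\om(y_\d-y_0)\|_s \leq_s \g^{-1}\|Z\|_{s+\s+1} + \dots$, i.e.\ a factor $\g^{-1}$ in front of the top-order term $\|Z\|_{s+\s}$, which is exactly what \eqref{stima toro modificato} forbids (and such a loss would propagate into \eqref{stima inverso approssimato 2} and degrade the Nash--Moser smallness condition).

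The correct treatment goes back to the identity underlying \eqref{stima A ij}: along the approximate torus one has $ {\cal D}_\om A_{kj} = {\cal W}(\pa_\vphi Z\,\underline{e}_k,\, \pa_\vphi i_0\,\underline{e}_j) + {\cal W}(\pa_\vphi i_0\,\underline{e}_k,\, \pa_\vphi Z\,\underline{e}_j)$; indeed \eqref{stima A ij} is obtained from it by applying $ {\cal D}_\om^{-1}$, which is where its $\g^{-1}$ and the $2\tau+2$ derivative loss come from. Consequently $ {\cal D}_\om A_{kj}$ — hence $ {\cal D}_\om\rho$, because $ {\cal D}_\om$ commutes with $\Delta_\vphi^{-1}\pa_\vphi$ — is controlled directly by $\|Z\|_{s+\s}$ \emph{without} the $\g^{-1}$ loss, and only then does $ {\cal D}_\om(y_\d-y_0) = ({\cal D}_\om[\pa_\vphi\theta_0]^{-T})\rho + [\pa_\vphi\theta_0]^{-T}{\cal D}_\om\rho$ fit into \eqref{stima toro modificato}. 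The rest of your argument for \eqref{stima toro modificato} — namely that $d_iX_{H_\e}[(0,\hat y,0)] = \pa_y X_P[\hat y] = O(\e^{2b})$, so that the $\g^{-1}$ of \eqref{stima y - y delta} is exactly cancelled by $\g=\e^{2b}$ — is correct, and is in fact the one improvement over \cite{bbm} that the paper's own (one-paragraph, citation-based) proof singles out.
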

In the paper we denote equivalently the differential by $ \partial_i $ or  $ d_i $. Moreover we denote 
by $ \s := \s(\nu, \tau ) $ possibly different (larger) ``loss of derivatives''  constants. 

\begin{proof} 
It is sufficient to closely follow the proof of Lemma 6.3 of \cite{bbm}. 
We mention the only difference: equation (6.11) of \cite{bbm} is 
$\| {\cal F}(i_\delta, \zeta_0) \|_s^{\Lipg} 
\leq_s  \| Z \|_{s + \s}^{\Lipg} + \e^{2b-1} \g^{-1} \| \fracchi_0 \|_{s+\s}^\Lipg 
\| Z \|_{s_0 + \s}^{\Lipg}$, with a big factor $\e^{2b-1} \g^{-1} = \e^{-1}$ more with respect 
to the present bound \eqref{stima toro modificato}. 
In \eqref{stima toro modificato} there is no such a factor, because, 
by the estimates for $\pa_\theta \pa_y P, \pa_{yy}P, \pa_y \gr_z P$ 
in Lemma \ref{lemma quantitativo forma normale}, here we have
$\| \partial_y X_P (i) \|_s \leq_s \e^{2b} (1 + \| {\mathfrak I}\|_{s+3})$.
Hence \eqref{2015-2}, \eqref{stima y - y delta}, \eqref{ansatz 0} imply that
\begin{equation}\label{XP delta - XP}
\|X_ P(i_\delta ) - X_P(i_0 )\|_s 
\leq_s \|Z \|_{s + \s} + \|  {\mathfrak I}_0 \|_{s + \s} \|Z \|_{s_0 + \s}\,. 
\end{equation}
Then the proof goes on as in \cite{bbm}, without the large factor $\e^{2b-1} \g^{-1}$. 
\end{proof} 

In order to find an approximate inverse of the linearized operator $d_{i, \zeta} {\cal F}(i_\delta )$ 
we introduce a suitable set of symplectic coordinates nearby the isotropic torus $ i_\d $. 
We consider the map
$ G_\delta : (\psi, \eta, w) \to (\theta, y, z)$ of the phase space $\T^\nu \times \R^\nu \times H_S^\bot$ defined by
\begin{equation}\label{trasformazione modificata simplettica}
\begin{pmatrix} \theta \\ y \\ z \end{pmatrix} 
:= G_\delta \begin{pmatrix} \psi \\ \eta \\ w \end{pmatrix}  
:= \begin{pmatrix}
\theta_0(\psi) \\
y_\delta (\psi) + [\pa_\psi \theta_0(\psi)]^{-T} \eta + \big[ (\pa_\teta \tilde{z}_0) (\theta_0(\psi)) \big]^T \partial_x^{-1} w \\
z_0(\psi) + w
\end{pmatrix} 
\end{equation}
where $\tilde{z}_0 (\theta) := z_0 (\theta_0^{-1} (\theta))$. 
It is proved in \cite{BB13} that $ G_\delta $ is symplectic, using that the torus $ i_\d $ is isotropic 
(Lemma \ref{toro isotropico modificato}).
In the new coordinates,  $ i_\delta $ is the trivial embedded torus
$ (\psi , \eta , w ) = (\psi , 0, 0 ) $.  
The transformed Hamiltonian $ K := K(\psi, \eta, w, \zeta_0) $  
is (recall \eqref{hamiltoniana modificata})
\begin{align} \label{KHG}
K & := H_{\e, \zeta_0} \circ G_\d  \\ 
& = \, \theta_0(\psi) \cdot \zeta_0 
+ K_{00}(\psi) + K_{10}(\psi) \cdot \eta 
+ (K_{0 1}(\psi), w)_{L^2(\T)} 
+ \tfrac12 K_{2 0}(\psi)\eta \cdot \eta \nonumber \\ 
& \quad + \big( K_{11}(\psi) \eta , w \big)_{L^2(\T)} 
+ \tfrac12 \big(K_{02}(\psi) w , w \big)_{L^2(\T)} + K_{\geq 3}(\psi, \eta, w)  \notag 
\end{align}
where $ K_{\geq 3} $ collects the terms at least cubic in the variables $ (\eta, w )$.
At any fixed $\psi $, 
the Taylor coefficient $K_{00}(\psi) \in \R $,  
$K_{10}(\psi) \in \R^\nu $,  
$K_{01}(\psi) \in H_S^\bot$ (it is a function of $ x  \in \T $),
$K_{20}(\psi) $ is a $\nu \times \nu$ real matrix, 
$K_{02}(\psi)$ is a linear self-adjoint operator of $ H_S^\bot $ and 
$K_{11}(\psi) : \R^\nu \to H_S^\bot$. Note that the above Taylor coefficients do not
depend on the parameter $ \zeta_0 $.

The Hamilton equations associated to \eqref{KHG}  are 
\begin{equation}\label{sistema dopo trasformazione inverso approssimato}
\begin{cases}
\dot \psi \hspace{-30pt} & = K_{10}(\psi) +  K_{20}(\psi) \eta + 
K_{11}^T (\psi) w + \partial_{\eta} K_{\geq 3}(\psi, \eta, w)
\\
\dot \eta \hspace{-30pt} & = - [\partial_\psi \theta_0(\psi)]^T \zeta_0 - 
\partial_\psi K_{00}(\psi) - [\partial_{\psi}K_{10}(\psi)]^T  \eta - 
[\partial_{\psi} K_{01}(\psi)]^T  w  
\\
& \quad - \partial_\psi 
\{ \frac12 K_{2 0}(\psi)\eta \cdot \eta + ( K_{11}(\psi) \eta , w )_{L^2(\T)} + 
\frac12 ( K_{02}(\psi) w , w )_{L^2(\T)} 
\\
& \quad + K_{\geq 3}(\psi, \eta, w) \}
\\
\dot w \hspace{-30pt} & = \partial_x \big( K_{01}(\psi) + 
K_{11}(\psi) \eta +  K_{0 2}(\psi) w + \nabla_w K_{\geq 3}(\psi, \eta, w) \big) 
\end{cases} 
\end{equation}
where $ [\partial_{\psi}K_{10}(\psi)]^T $ is the $ \nu \times \nu $ transposed matrix 
and the operators $ [\partial_{\psi}K_{01}(\psi)]^T $
and $ K_{11}^T(\psi) : {H_S^\bot \to \R^\nu} $ are defined by the duality relation 
$( \partial_{\psi} K_{01}(\psi) [\hat \psi ],  w)_{L^2}$ 
$= \hat \psi \cdot [\partial_{\psi}K_{01}(\psi)]^T w $,
for all $\hat \psi \in \R^\nu$, $w \in H_S^\bot $, 
and similarly for $ K_{11} $. 
Explicitly, for all  $ w \in H_S^\bot $, 
and denoting $\underline{e}_k$ the $k$-th versor of $\R^\nu$, 
\[ 
K_{11}^T(\psi) w 
= \sum_{k=1}^\nu \big(K_{11}^T(\psi) w \cdot \underline{e}_k\big) \underline{e}_k   
= \sum_{k=1}^\nu  
\big( w, K_{11}(\psi) \underline{e}_k  \big)_{L^2(\T)}  \underline{e}_k  \, \in \R^\nu \, .  
\] 
In the next lemma we estimate the coefficients $ K_{00}, K_{10}, K_{01} $ 
of the Taylor expansion \eqref{KHG}.
Note that on an exact solution we have $ Z  = 0 $ and therefore
$ K_{00} (\psi) = {\rm const} $, $ K_{10}  = \om $ and $ K_{01}  = 0 $. 

\begin{lemma} \label{coefficienti nuovi}
Assume \eqref{ansatz 0}. Then there is $ \s := \s(\tau, \nu)$  such that 
\[ 
\|  \partial_\psi K_{00} \|_s^{\Lipg}, 
\| K_{10} - \om  \|_s^{\Lipg}, 
\| K_{0 1} \|_s^{\Lipg} 
\leq_s \| Z \|_{s + \s}^{\Lipg} + \| Z \|_{s_0 + \s}^{\Lipg} \| {\mathfrak I}_0 \|_{s+\s}^{\Lipg}\,.
\]
\end{lemma}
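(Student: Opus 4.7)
The plan is to read off $K_{00}, K_{10}, K_{01}$ from the identification of the pullback vector field $DG_\d^{-1} X_{H_{\e,\z_0}}$ at the trivial torus $(\psi,0,0)$ with the Hamilton equations \eqref{sistema dopo trasformazione inverso approssimato} specialized at $(\eta,w)=(0,0)$, and then to bound each piece using the error estimate \eqref{stima toro modificato} on $\mathcal F(i_\d,\z_0)$ together with the smallness of $\zeta_0$ from Lemma \ref{zeta = 0}.

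First I would note that $G_\d(\psi,0,0)=i_\d(\psi)$ and $DG_\d(\psi,0,0)[\om,0,0]=\om\!\cdot\!\pa_\psi i_\d(\psi)$, which by definition of $\mathcal F(i_\d,\z_0)=\om\!\cdot\!\pa_\psi i_\d-X_{H_{\e,\z_0}}(i_\d)$ gives the key identity
\[
DG_\d(\psi,0,0)^{-1}X_{H_{\e,\z_0}}(i_\d(\psi))=(\om,0,0)-DG_\d(\psi,0,0)^{-1}\mathcal F(i_\d,\z_0)(\psi)\,.
\]
Comparing the three components of this identity with the right-hand side of \eqref{sistema dopo trasformazione inverso approssimato} at $\eta=w=0$ yields the explicit formulas
\begin{align*}
K_{10}(\psi)-\om & =-\big[DG_\d(\psi,0,0)^{-1}\mathcal F(i_\d,\z_0)(\psi)\big]_{1}\,,\\
\pa_\psi K_{00}(\psi) & =-[\pa_\psi\teta_0(\psi)]^{T}\z_0+\big[DG_\d(\psi,0,0)^{-1}\mathcal F(i_\d,\z_0)(\psi)\big]_{2}\,,\\
\pa_x K_{01}(\psi) & =-\big[DG_\d(\psi,0,0)^{-1}\mathcal F(i_\d,\z_0)(\psi)\big]_{3}\,.
\end{align*}
Since $K_{01}(\psi)\in H_S^\bot$ has zero $x$-average, $\pa_x^{-1}$ inverts the last equation with a bounded tame operator.

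Next I would collect tame estimates. The matrix $DG_\d^{-1}$ at $(\psi,0,0)$ can be written explicitly from \eqref{trasformazione modificata simplettica} in terms of $\pa_\psi\teta_0$, $\pa_\psi y_\d$, $\pa_\psi z_0$ and $\pa_\teta\tilde z_0\circ\teta_0$; applying the composition/product lemmas \ref{lemma:composition of functions, Moser}--\ref{lemma:utile} together with the hypothesis \eqref{ansatz 0} and the bound \eqref{2015-2} on $y_\d-y_0$ gives
\[
\big\|DG_\d(\cdot,0,0)^{-1}h\big\|_s^{\Lipg}\lesssim_s \|h\|_s^{\Lipg}+\|\fracchi_0\|_{s+\s}^{\Lipg}\|h\|_{s_0}^{\Lipg}
\]
for a suitable $\s=\s(\tau,\nu)$. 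Plugging in $h=\mathcal F(i_\d,\z_0)$ and using \eqref{stima toro modificato} together with the smallness of $\|\fracchi_0\|_{s_0+\s}^{\Lipg}$ in \eqref{ansatz 0} produces the desired bound for $K_{10}-\om$ and for the second components of the right-hand side; dividing by $\pa_x$ on $H_S^\bot$ (a bounded operation for zero-mean functions) yields the same bound for $K_{01}$. The extra term $[\pa_\psi\teta_0]^T\z_0$ entering $\pa_\psi K_{00}$ is absorbed by Lemma \ref{zeta = 0}, which gives $|\z_0|^{\Lipg}\leq C\|Z\|_{s_0}^{\Lipg}$, together with the tame estimate $\|\pa_\psi\teta_0\|_s^{\Lipg}\leq 1+\|\fracchi_0\|_{s+1}^{\Lipg}$.

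The Lipschitz part requires no new idea: each building block ($G_\d$, $DG_\d^{-1}$, $\pa_x^{-1}$, $\mathcal F(i_\d,\z_0)$) has already been constructed/estimated in Lipschitz norm. I do not foresee a serious obstacle: the real work has been done in Lemma \ref{toro isotropico modificato} and Lemma \ref{zeta = 0}; what remains here is essentially algebraic identification followed by standard tame bookkeeping for the explicit map $G_\d$. The mildest subtlety is to make sure that the factor $[\pa_\psi\teta_0]^T\z_0$ in the formula for $\pa_\psi K_{00}$ is controlled by $\|Z\|_{s+\s}^{\Lipg}+\|Z\|_{s_0+\s}^{\Lipg}\|\fracchi_0\|_{s+\s}^{\Lipg}$ and not by a larger quantity; this works because $|\z_0|^{\Lipg}\lesssim \|Z\|_{s_0}^{\Lipg}$ is multiplied by a factor that is $O(1)+O(\|\fracchi_0\|_{s+\s}^{\Lipg})$.
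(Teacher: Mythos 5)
Your proposal is correct and follows essentially the same route as the paper, which simply defers to Lemma 6.4 of \cite{bbm}: one reads off $K_{10}-\om$, $\pa_\psi K_{00}+[\pa_\psi\theta_0]^T\zeta_0$ and $-\pa_x K_{01}$ as the components of $DG_\d(\vphi,0,0)^{-1}\mathcal F(i_\d,\zeta_0)$ via the conjugation identity \eqref{trasfo imp}, and then concludes with the tame bound \eqref{DG delta} for $DG_\d^{-1}$, the estimate \eqref{stima toro modificato} for $\mathcal F(i_\d,\zeta_0)$, Lemma \ref{zeta = 0} for $\zeta_0$, and the boundedness of $\pa_x^{-1}$ on the zero-mean space $H_S^\bot$. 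Your closing remark about the absence of any extra large factor is precisely the point the paper itself emphasizes (the improved \eqref{stima toro modificato} removes the $\e^{2b-1}\g^{-1}$ present in the KdV case).
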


\begin{proof} Follow the proof of Lemma 6.4 in \cite{bbm}. 
The fact that here there is no factor $\e^{2b-1} \g^{-1}$ 
is a consequence of the better estimate \eqref{stima toro modificato} for $\mF(i_\d,\zeta_0)$ compared to the analogous estimate in \cite{bbm}. 
\end{proof}

\begin{remark} \label{rem:KAM normal form}  
If $ {\cal F} (i_0, \zeta_0) = 0 $ then $\zeta_0 = 0$  by 
Lemma \ref{zeta = 0}, and Lemma \ref{coefficienti nuovi} implies that
\eqref{KHG} simplifies to the normal form 
$$ 
K  = const + \om \cdot \eta + \frac12 K_{2 0}(\psi)\eta \cdot \eta 
+ ( K_{11}(\psi) \eta , w )_{L^2(\T)} 
+ \frac12 ( K_{02}(\psi) w , w )_{L^2(\T)} + K_{\geq 3} \, . 
$$ 
\end{remark}

We now estimate  $ K_{20}, K_{11}$ in \eqref{KHG}. 
The norm of $K_{20}$ is the sum of the norms of its matrix entries.  

\begin{lemma} \label{lemma:Kapponi vari}
Assume \eqref{ansatz 0}. Then 
\begin{align}\label{stime coefficienti K 20 11 bassa}
\|K_{20} - \e^{2b} \mathbb{A} D_S \|_s^{\Lipg} 
& \leq_s \e^{2b+1} + \e^{2b} \| {\mathfrak I}_0\|_{s + \s}^{\Lipg} \,,
\\ 
\label{stime coefficienti K 11 alta}  
\| K_{11} \eta \|_s^{\Lipg} 
& \leq_s \e^{5-2b} \| \eta \|_s^{\Lipg} 
+ \e^{2b} \| {\mathfrak I}_0\|_{s + \s}^{\Lipg} \| \eta \|_{s_0}^{\Lipg} \,, 
\\
\label{stime coefficienti K 11 alta trasposto}  
\| K_{11}^T w \|_s^{\Lipg} 
& \leq_s \e^{5-2b} \| w \|_{s + 2}^{\Lipg} 
+ \e^{2b}  \| {\mathfrak I}_0\|_{s + \s}^{\Lipg} \| w \|_{s_0 + 2}^{\Lipg} \, . 
\end{align}
In particular 
$ \| K_{20} - \e^{2b} \mathbb{A} D_S \|_{s_0 }^{\Lipg} \leq C \e^{5-2b}$, and 
$$
\| K_{11} \eta \|_{s_0}^{\Lipg} \leq C \e^{5-2b} \| \eta \|_{s_0}^{\Lipg} , \quad  
\| K_{11}^T w \|_{s_0}^{\Lipg} \leq C \e^{5-2b} \| w \|_{s_0+2}^{\Lipg} \,.
$$ 
\end{lemma}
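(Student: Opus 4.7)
The plan is to compute the Taylor coefficients $K_{20}$, $K_{11}$, $K_{11}^T$ in \eqref{KHG} by applying the chain rule to $K = H_{\e,\zeta_0}\circ G_\d$ at $(\psi,0,0)$, and then insert the tame bounds provided by Lemma \ref{lemma quantitativo forma normale}, Lemma \ref{toro isotropico modificato} and the ansatz \eqref{ansatz 0}.

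First I would differentiate the map $G_\d$ in \eqref{trasformazione modificata simplettica}, obtaining at $(\eta,w) = (0,0)$ the Jacobian components
$$ \pa_\eta \theta = 0,\quad \pa_\eta y = [\pa_\psi\theta_0(\psi)]^{-T},\quad \pa_\eta z = 0,\quad \pa_w\theta = 0,\quad \pa_w y = [(\pa_\teta\tilde z_0)(\theta_0(\psi))]^T\pa_x^{-1},\quad \pa_w z = I. $$
Because the normal form $\mN$ in \eqref{Hamiltoniana Heps KdV} is affine in $y$ and its $z$-quadratic part is independent of $y$, only $P$ contributes to the mixed second derivatives involving $y$. Differentiating $K = H_{\e,\zeta_0}\circ G_\d$ twice and evaluating at $(\eta,w) = (0,0)$ yields the explicit formula
$$ K_{20}(\psi) = [\pa_\psi\theta_0(\psi)]^{-1}\,(\pa_{yy}P)(i_\d(\psi))\,[\pa_\psi\theta_0(\psi)]^{-T}, $$
and analogously explicit expressions for $K_{11}(\psi)\eta$ and $K_{11}^T(\psi)w$ as sums of a main term built from $(\pa_y\nabla_z P)(i_\d)$ and a cross term built from $(\pa_{yy}P)(i_\d)$ composed with $[(\pa_\teta\tilde z_0)(\theta_0)]^T\pa_x^{-1}$ (respectively its transpose for $K_{11}^T$).

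Next I would estimate each factor. By Lemma \ref{lemma quantitativo forma normale}, $\|\pa_{yy}P(i_\d) - \e^{2b}\mathbb{A}D_S\|_s^\Lipg \leq_s \e^{1+2b} + \e^3\|\fracchi_\d\|_{s+3}^\Lipg$, and Lemma \ref{toro isotropico modificato} together with \eqref{ansatz 0} gives $\|\fracchi_\d\|_s^\Lipg \leq_s \|\fracchi_0\|_{s+\s}^\Lipg$. The Jacobian factor $[\pa_\psi\theta_0]^{-T}$ equals $I$ modulo a term of size $\|\fracchi_0\|_{s+\s}$ by Lemma \ref{lemma:utile}, so conjugation yields \eqref{stime coefficienti K 20 11 bassa} after applying the product inequalities of Lemma \ref{prodest}. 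For $K_{11}(\psi)\eta$ I would use $\|\pa_y\nabla_z P(i_\d)\|_s^\Lipg \leq_s \e^{2+b} + \e^{2b}\|\fracchi_0\|_{s+\s}^\Lipg$, noting that the choice \eqref{link gamma b} forces $b \geq 1$, whence $\e^{2+b}\leq\e^{5-2b}$, while the ansatz \eqref{ansatz 0} gives $\e^{2b}\|\fracchi_0\|_{s_0+\mu}^\Lipg \leq \e^{2b}\cdot\e^{5-4b} = \e^{5-2b}$, so the $\fracchi_0$-factor is absorbed at low regularity. The cross term contains $\|(\pa_\teta\tilde z_0)(\theta_0)\|_s^\Lipg \leq_s \|z_0\|_{s+\s}^\Lipg\leq_s\|\fracchi_0\|_{s+\s}^\Lipg$ (Lemmas \ref{lemma:composition of functions, Moser}--\ref{lemma:utile}) multiplied by $\|\pa_{yy}P(i_\d)\|_s^\Lipg\leq_s\e^{2b}$, giving a contribution $\e^{2b}\|\fracchi_0\|_{s+\s}^\Lipg$ at high regularity and $\e^{5-2b}$ at $s_0$; Lemma \ref{prodest} then splits the derivative loss in the form demanded by \eqref{stime coefficienti K 11 alta}. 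The estimate \eqref{stime coefficienti K 11 alta trasposto} for $K_{11}^T$ follows from the dual formulas: transposition exchanges the roles of $\eta$ and $w$, and pairing $(\pa_y\nabla_z P)(i_\d)$ and $(\pa_{yy}P)(i_\d)\cdot(\pa_\teta\tilde z_0)(\theta_0)$ against $w$ in $L^2_x$ (combined with the remaining $x$-dependent factors) accounts for the mild loss of two $x$-derivatives in $w$.

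The main obstacle is not conceptual but bookkeeping: one must track, in every term, how the tame product of Lemma \ref{prodest} splits the regularity between $\fracchi_0$ (high) and $\eta$ or $w$ (low), and at each step convert the low-regularity smallness of $\fracchi_0$ given by the ansatz into the correct power of $\e$. The identity $\e^{2b}\cdot\e^{5-4b} = \e^{5-2b}$ and the inequality $\e^{2+b}\leq\e^{5-2b}$ (both amounting to $b\geq 1$, which holds by \eqref{link gamma b}) are the arithmetic hinges that make the claimed sizes line up.
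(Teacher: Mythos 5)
Your proposal is correct and follows essentially the same route as the paper, whose proof of this lemma is simply a reference to Lemma 6.6 of \cite{bbm}: there, too, one differentiates $K=H_{\e,\zeta_0}\circ G_\d$ twice at $(\psi,0,0)$, isolates $\pa_{yy}P$ and $\pa_y\nabla_z P$ (since $\mN$ contributes nothing to these mixed derivatives), and feeds in the tame bounds of Lemma \ref{lemma quantitativo forma normale} together with \eqref{2015-2} and \eqref{ansatz 0}, using $b>1$ and $\e^{2b}\cdot\e^{5-4b}=\e^{5-2b}$ exactly as you do.
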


\begin{proof}
See the proof of Lemma 6.6 in \cite{bbm}. 
\end{proof}

Consider the linear change of variables 
$(\widehat \theta, \widehat y, \widehat z) 
= D G_\delta(\vphi, 0, 0) [\widehat \psi, \widehat \eta, \widehat w]$, 
where $D G_\d(\ph,0,0)$ is obtained by linearizing $G_\d$ 
in \eqref{trasformazione modificata simplettica} at $(\ph,0,0)$, 
and it is represented by the matrix 
\begin{equation}\label{DGdelta}
D G_\delta(\vphi, 0, 0) 
= \begin{pmatrix}
\pa_\psi \theta_0(\vphi) & 0 & 0  \\
\pa_\psi y_\delta(\vphi) & \quad [\pa_\psi \theta_0(\vphi)]^{-T} & \quad  
- [(\pa_\theta \tilde{z}_0)(\theta_0(\vphi))]^T \partial_x^{-1} \\
\pa_\psi z_0(\vphi) & 0 & I
\end{pmatrix}.
\end{equation}

The linearized operator $d_{i, \zeta}{\cal F}(i_\delta, \zeta_0)$ 
transforms (approximately, see \eqref{verona 2}) into the operator obtained linearizing 
\eqref{sistema dopo trasformazione inverso approssimato} 
at $(\psi, \eta , w, \zeta ) = (\vphi, 0, 0, \zeta_0 )$
(with $ \partial_t \rightsquigarrow {\cal D}_\om $), 
which is the linear operator 
\[
B[\widehat \psi, \widehat \eta, \widehat w, \widehat \zeta] 
= \begin{pmatrix} 
B_1 [\widehat \psi, \widehat \eta, \widehat w, \widehat \zeta] \\
B_2 [\widehat \psi, \widehat \eta, \widehat w, \widehat \zeta] \\
B_3 [\widehat \psi, \widehat \eta, \widehat w, \widehat \zeta] 
\end{pmatrix},
\]
where
\begin{align} \label{lin idelta}
B_1 & := {\cal D}_\om \widehat \psi - \partial_\psi K_{10}(\vphi)[\widehat \psi \, ] - 
K_{2 0}(\vphi)\widehat \eta - K_{11}^T (\vphi) \widehat w, 
\\ \notag
B_2 & := {\cal D}_\om  \widehat \eta + [\partial_\psi \theta_0(\vphi)]^T \widehat \zeta 
+ \pa_\psi [\partial_\psi \theta_0(\vphi)]^T [ \widehat \psi, \zeta_0] 
+ \partial_{\psi\psi} K_{00}(\vphi)[\widehat \psi]  
\\ \notag
& \qquad + [\partial_\psi K_{10}(\vphi)]^T \widehat \eta 
+ [\partial_\psi  K_{01}(\vphi)]^T \widehat w,   
\\ \notag
B_3 & := {\cal D}_\om  \widehat w 
- \pa_x \{ \pa_\psi K_{01}(\vphi)[\widehat \psi] + K_{11}(\vphi) \widehat \eta + K_{02}(\vphi) \widehat w \}.
\end{align}

\begin{lemma}[Lemma 6.7 in \cite{bbm}] \label{lemma:DG}
Assume \eqref{ansatz 0} and let 
$ \widehat \imath := (\widehat \psi, \widehat \eta, \widehat w)$. 
Then 
\begin{gather} \label{DG delta}
\|DG_\delta(\vphi,0,0) [\widehat \imath] \|_s + \|DG_\delta(\vphi,0,0)^{-1} [\widehat \imath] \|_s 
\leq_s \| \widehat \imath \|_{s} +  \| {\mathfrak I}_0 \|_{s + \s}  \| \widehat \imath \|_{s_0}\,,
\\ 
\| D^2 G_\delta(\vphi,0,0)[\widehat \imath_1, \widehat \imath_2] \|_s 
\leq_s  \| \widehat \imath_1\|_s \| \widehat \imath_2 \|_{s_0} 
+ \| \widehat \imath_1\|_{s_0} \| \widehat \imath_2 \|_{s} 
+  \| {\mathfrak I}_0  \|_{s + \s}   \|\widehat \imath_1 \|_{s_0} \| \widehat \imath_2\|_{s_0}
\notag 
\end{gather}
for some $\s := \s(\nu,\t)$. 
The same estimates hold 
for the  $\| \ \|_s^{\Lipg}$ norm.
\end{lemma}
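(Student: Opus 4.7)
The plan is to follow the strategy of Lemma~6.7 in \cite{bbm}, carrying out a block-by-block analysis of the matrix representation \eqref{DGdelta}. First I would write $DG_\delta(\varphi,0,0) = J + R$, where $J$ is the block-identity obtained by formally setting $\mathfrak{I}_0 = 0$ (so that $\theta_0(\varphi) = \varphi$, $y_\delta \equiv 0$ and $z_0 \equiv 0$) and $R$ collects the remaining entries, each built from the pieces $\Theta_0 := \theta_0 - \varphi$, $y_\delta$, $z_0$ of the embedding, together with the derived objects $[\partial_\psi \theta_0(\varphi)]^{-T}$ and $(\partial_\theta \tilde z_0)(\theta_0(\varphi))$.

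Then I would estimate each block separately under the smallness hypothesis \eqref{ansatz 0}. The entries $\partial_\psi \Theta_0$, $\partial_\psi z_0$ are controlled directly by $\|\mathfrak{I}_0\|_{s+\sigma}^{\Lipg}$, while $\partial_\psi y_\delta$ is handled by Lemma \ref{toro isotropico modificato}, which bounds $y_\delta - y_0$ with the same size as $\mathfrak{I}_0$. The transposed inverse $[\partial_\psi \theta_0(\varphi)]^{-T} = [I + \partial_\psi \Theta_0(\varphi)]^{-T}$ is estimated by a Neumann series together with Lemma \ref{lemma:utile}. For the mixed block $[(\partial_\theta \tilde z_0)(\theta_0(\varphi))]^T \partial_x^{-1}$, the chain rule applied to $\tilde z_0 = z_0 \circ \theta_0^{-1}$ yields the identity $(\partial_\theta \tilde z_0)(\theta_0(\varphi)) = \partial_\psi z_0(\varphi)\,[\partial_\psi \theta_0(\varphi)]^{-1}$, which avoids composing with $\theta_0^{-1}$ and reduces the estimate to a tame product via Lemma \ref{prodest}. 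Assembling these block estimates with the standard ``algebra plus interpolation'' mechanism yields the first bound in \eqref{DG delta} under \eqref{ansatz 0}.

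For the inverse $DG_\delta(\varphi,0,0)^{-1}$, I would observe that the matrix is block lower-triangular except for the single $(y,w)$ off-diagonal entry and compute the inverse in closed form by block elimination. Its entries are again of the schematic form ``identity plus products of the same ingredients'' $[\partial_\psi\theta_0]^{\pm 1}$, $\partial_\psi y_\delta$, $\partial_\psi z_0$, $(\partial_\theta \tilde z_0)(\theta_0(\varphi))$, hence admit the same tame bound. For the bilinear second derivative, I would differentiate \eqref{trasformazione modificata simplettica} twice in $(\psi,\eta,w)$ and evaluate at $(\varphi,0,0)$: the only nontrivial nonlinear dependences are through $\theta_0(\psi)$, $y_\delta(\psi)$, $z_0(\psi)$, through $[\partial_\psi\theta_0(\psi)]^{-T}$ acting on $\eta$, and through $(\partial_\theta \tilde z_0)(\theta_0(\psi))$ acting on $\partial_x^{-1} w$. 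Each is handled by product, composition and change-of-variable tame estimates, which automatically produce the symmetric bilinear structure $\|\widehat\imath_1\|_s\|\widehat\imath_2\|_{s_0} + \|\widehat\imath_1\|_{s_0}\|\widehat\imath_2\|_s + \|\mathfrak{I}_0\|_{s+\sigma}\|\widehat\imath_1\|_{s_0}\|\widehat\imath_2\|_{s_0}$.

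The one genuinely delicate point is controlling the composed-and-inverted block $(\partial_\theta \tilde z_0)(\theta_0(\varphi))$ in the Lipschitz-in-$\omega$ norm without losing derivatives: the direct route through $\theta_0^{-1}$ via Lemma \ref{lemma:utile} costs one derivative more than allowed, while the chain-rule rewriting $\partial_\psi z_0\,[\partial_\psi\theta_0]^{-1}$ is a clean tame product of factors already accounted for in $\mathfrak{I}_0$. All the estimates transfer from $\|\cdot\|_s$ to $\|\cdot\|_s^{\Lipg}$ because every elementary bound invoked from Lemmas \ref{lemma:composition of functions, Moser}, \ref{lemma:utile} and \ref{prodest} is stated in the Lipschitz-in-parameter setting, at the cost of the loss of derivatives $\sigma = \sigma(\nu,\tau)$ already built into the hypothesis \eqref{ansatz 0}.
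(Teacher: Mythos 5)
Your argument is correct and is essentially the proof of Lemma~6.7 in \cite{bbm} that the paper invokes: a block-by-block tame estimate of the explicit matrix \eqref{DGdelta} (and of its block-triangular inverse), with the key simplification $(\partial_\theta \tilde z_0)(\theta_0(\varphi)) = \partial_\psi z_0(\varphi)\,[\partial_\psi\theta_0(\varphi)]^{-1}$ and a Neumann series for $[\partial_\psi\theta_0]^{-T}$, all under the smallness \eqref{ansatz 0}. No gaps.
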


In order to construct an approximate inverse of \eqref{lin idelta} it is sufficient to solve the equation
\begin{equation}\label{operatore inverso approssimato} 
{\mathbb D} [\widehat \psi, \widehat \eta, \widehat w, \widehat \zeta ] := 
  \begin{pmatrix}
{\cal D}_\om \widehat \psi - 
K_{20}(\vphi) \widehat \eta - K_{11}^T(\vphi) \widehat w\\
{\cal D}_\om  \widehat \eta + [\partial_\psi \theta_0(\vphi)]^T \widehat \zeta  \\
{\cal D}_\om \widehat w  - \partial_x K_{11}(\vphi)\widehat \eta -  \partial_x K_{0 2}(\vphi) \widehat w 
\end{pmatrix} =
\begin{pmatrix}
g_1 \\ g_2 \\ g_3 
\end{pmatrix}
\end{equation}
which is obtained by neglecting in $B_1, B_2, B_3$ in \eqref{lin idelta} 
the terms $ \partial_\psi K_{10} $, $ \partial_{\psi \psi} K_{00} $, $ \partial_\psi K_{00} $, $ \partial_\psi K_{01} $ and
$ \pa_\psi [\partial_\psi \theta_0(\vphi)]^T [ \cdot , \zeta_0] $
(these terms are naught at a solution by Lemmata \ref{coefficienti nuovi} and \ref{zeta = 0}). 

First we solve the second equation in \eqref{operatore inverso approssimato}, namely 
$ {\cal D}_\om  \widehat \eta = g_2  -  [\partial_\psi \theta_0(\vphi)]^T \widehat \zeta $. 
We choose $ \widehat \zeta $ so that the $\vphi$-average of the right hand side 
is zero, namely 
\begin{equation}\label{fisso valore di widehat zeta}
\widehat \zeta = \langle g_2 \rangle 
\end{equation}
(we denote $ \langle g \rangle := (2 \pi)^{- \nu} \int_{\T^\nu} g (\vphi) d \vphi $). 
Note that the $\ph$-averaged matrix $ \langle [\partial_\psi \theta_0 ]^T  \rangle$ 
$ = \langle I + [\pa_\psi \Theta_0]^T \rangle = I $ because $\theta_0(\ph) = \ph + \Theta_0(\ph)$ and 
$\Theta_0(\ph)$ is a periodic function.
Therefore 
\begin{equation}\label{soleta}
\widehat \eta := {\cal D}_\om^{-1} \big(
g_2 - [\partial_\psi \theta_0(\ph) ]^T \langle g_2 \rangle \big) + 
\langle \widehat \eta \rangle \, , \quad \langle \widehat \eta \rangle \in \R^\nu \, , 
\end{equation}
where the average $\la \widehat \eta \ra$ will be fixed below. 
Then we consider the third equation
\begin{equation}\label{cal L omega}
{\cal L}_\om \widehat w = g_3 + \partial_x K_{11}(\vphi) \widehat \eta\,,  \
\quad  {\cal L}_\om := \om \cdot \partial_\vphi -  \partial_x K_{0 2}(\vphi) \, .
\end{equation}

\noindent
$\bullet$ {\sc Inversion assumption.} 
{\it There exists a set $ \Omega_\infty \subset \Omega_o$ such that  
for all $ \omega \in \Omega_\infty $, 
for every function $ g \in H^{s+\mu}_{S^\bot} (\T^{\nu+1}) $ 
there exists a solution $ h :=  {\cal L}_\om^{- 1} g  \in H^{s}_{S^\bot} (\T^{\nu+1}) $ 
of the linear equation $ {\cal L}_\om h = g $, which satisfies}
\begin{equation}\label{tame inverse}
\| {\cal L}_\om^{- 1} g \|_s^{\Lipg} \leq C(s) \g^{-1} 
\big( \| g \|_{s + \mu}^{\Lipg} + \e^2 \g^{-1} \| {\mathfrak I}_0 \|_{s + \mu}^{\Lipg}  
\|g \|_{s_0}^{\Lipg}  \big) 
\end{equation}
\emph{for some $ \mu := \mu (\tau, \nu) >  0 $}.

\medskip

By the above assumption  there exists a solution 
\begin{equation}\label{normalw}
\widehat w := {\cal L}_\om^{-1} [ g_3 + \partial_x K_{11}(\vphi) \widehat \eta \, ] 
\end{equation}
of \eqref{cal L omega}. 
Finally, we solve the first equation in \eqref{operatore inverso approssimato}, 
which, substituting \eqref{soleta}, \eqref{normalw}, becomes
\begin{equation}\label{equazione psi hat}
{\cal D}_\om \widehat \psi  = 
g_1 +  M_1(\vphi) \langle \widehat \eta \rangle + M_2(\vphi) g_2 + M_3(\vphi) g_3 - 
M_2(\vphi)[\pa_\psi \theta_0]^T \langle g_2 \rangle \,,
\end{equation}
where
\be \label{cal M2}
\begin{aligned}
M_1(\vphi) & := K_{2 0}(\vphi) + K_{11}^T(\vphi) \mL_\omega^{-1} \pa_x K_{11}(\vphi)\,, \quad
M_2(\vphi) :=  M_1 (\vphi)  {\cal D}_\om^{-1} \, , \\
M_3(\vphi) & :=  K_{11}^T (\vphi) {\cal L}_\om^{-1} \, .  
\end{aligned}
\ee
To solve equation \eqref{equazione psi hat} we have to choose 
$\langle \widehat \eta \rangle$ such that the right hand side in \eqref{equazione psi hat} has zero average.  
By Lemma \ref{lemma:Kapponi vari} and \eqref{ansatz 0}, the $\ph$-averaged matrix 
\be\label{media-twist} 
\langle M_1 \rangle = \e^{2 b} \mathbb{A} D_S + O(\e^{5-2b}) \, . 
\ee
Therefore, for $ \e $ small,  $\langle M_1 \rangle$ is invertible and $\langle M_1 \rangle^{-1} = O(\e^{-2 b}) = O(\gamma^{- 1})$ 
(recall \eqref{link gamma b}). Thus we define 
\begin{equation}\label{sol alpha}
\langle \widehat \eta \rangle  := - \langle M_1 \rangle^{-1} 
[ \langle g_1 \rangle + \langle M_2 g_2 \rangle + \langle M_3 g_3 \rangle - 
\langle M_2 [\pa_\psi \theta_0]^T   \rangle  \langle g_2 \rangle ].
\end{equation}
With this choice of $\langle \widehat \eta \rangle$, equation \eqref{equazione psi hat} has the solution
\begin{equation}\label{sol psi}
\widehat \psi :=
{\cal D}_\om^{-1} [ g_1 + M_1(\vphi) \langle \widehat \eta \rangle + M_2(\vphi) g_2 + M_3(\vphi) g_3 -
M_2(\vphi)[\pa_\psi \theta_0]^T \langle g_2 \rangle ].
\end{equation}
In conclusion, we have constructed 
a solution  $(\widehat \psi, \widehat \eta, \widehat w, \widehat \zeta)$ of the linear system \eqref{operatore inverso approssimato}.

\begin{proposition}\label{prop: ai}
Assume \eqref{ansatz 0} and \eqref{tame inverse}. 
Then, $\forall \om \in \Omega_\infty $, $ \forall g := (g_1, g_2, g_3) $,
 the system \eqref{operatore inverso approssimato} has a solution 
$ {\mathbb D}^{-1} g := (\widehat \psi, \widehat \eta, \widehat w, \widehat \zeta ) $
where $(\widehat \psi, \widehat \eta, \widehat w, \widehat \zeta)$ are defined in 
\eqref{sol psi}, \eqref{soleta}, \eqref{sol alpha}, \eqref{normalw}, \eqref{fisso valore di widehat zeta},  and satisfy
\begin{equation} \label{stima T 0 b}
\| {\mathbb D}^{-1} g \|_s^{{\rm Lip}(\gamma)}  
\leq_s \gamma^{-1} \big( \| g \|_{s + \mu}^{{\rm Lip}(\gamma)}  
+ \e^2 \gamma^{-1}  \| {\mathfrak I}_0  \|_{s + \mu}^{{\rm Lip}(\gamma)}  
\| g \|_{s_0 + \mu}^{{\rm Lip}(\gamma)} \big).
\end{equation}
\end{proposition}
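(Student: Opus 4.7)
The existence of a solution is already established by the explicit formulas \eqref{fisso valore di widehat zeta}, \eqref{sol alpha}, \eqref{soleta}, \eqref{normalw}, \eqref{sol psi} constructed immediately before the statement. Thus the entire task is to prove the tame bound \eqref{stima T 0 b}, and the plan is to estimate the five pieces $\widehat\zeta$, $\langle\widehat\eta\rangle$, $\widehat\eta$, $\widehat w$, $\widehat\psi$ in this order, each feeding into the next.

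The trivial step is $|\widehat\zeta|^{\Lipg}\lesssim \|g_2\|_{s_0}^{\Lipg}$. For $\langle\widehat\eta\rangle$, I would first establish tame estimates for the auxiliary operators $M_1,M_2,M_3$ defined in \eqref{cal M2}. Using Lemma \ref{lemma:Kapponi vari} together with the inversion assumption \eqref{tame inverse}, and applying Lemma \ref{prodest}-type product bounds, one obtains for $j=1,2,3$
\begin{equation*}
\|M_j h\|_s^{\Lipg} \lesssim_s \|h\|_{s+\mu'}^{\Lipg} + \e^2\g^{-1}\|\fracchi_0\|_{s+\mu'}^{\Lipg}\|h\|_{s_0+\mu'}^{\Lipg},
\end{equation*}
for a suitable $\mu'$, after absorbing the small factors $\e^{5-2b}$ coming from $K_{11},K_{11}^T$ and the factor $\e^{2b}=\g$ from $K_{20}$ against the $\g^{-1}$ produced by $\mathcal{L}_\om^{-1}$ and $\mathcal{D}_\om^{-1}$. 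By \eqref{media-twist} and Neumann inversion, $\langle M_1\rangle^{-1}$ exists for $\e$ small and is $O(\e^{-2b})=O(\g^{-1})$, and it depends on $\omega$ only through $\fracchi_0$, so the Lipschitz part satisfies $|\langle M_1\rangle^{-1}|^{\lip}\lesssim \g^{-2}\|\fracchi_0\|_{s_0+\s}^{\lip}$. Plugging these bounds into \eqref{sol alpha} yields the required estimate for $|\langle\widehat\eta\rangle|^{\Lipg}$.

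Next, $\widehat\eta$ follows from \eqref{soleta} by applying \eqref{Dom inverso} (which costs $\g^{-1}$ and loses $2\t+1$ derivatives) to the zero-average function $g_2-[\pa_\psi\theta_0]^T\langle g_2\rangle$, and adding the bound already obtained for $\langle\widehat\eta\rangle$. For $\widehat w$ defined in \eqref{normalw}, I would apply the inversion assumption \eqref{tame inverse} to $g_3+\pa_x K_{11}(\ph)\widehat\eta$, using \eqref{stime coefficienti K 11 alta} to control $\pa_x K_{11}\widehat\eta$ in terms of $\widehat\eta$ (with its favorable $\e^{5-2b}$ prefactor) and the previously obtained bound on $\widehat\eta$. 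Finally, for $\widehat\psi$ in \eqref{sol psi}, I would again apply $\mathcal{D}_\om^{-1}$ via \eqref{Dom inverso} to the right-hand side, which is already controlled by the earlier estimates for $M_1,M_2,M_3$, $\langle\widehat\eta\rangle$, and the $g_i$.

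The main obstacle is purely bookkeeping: tracking how the factors $\g^{-1}$ from $\mathcal{D}_\om^{-1}$, $\mathcal{L}_\om^{-1}$, and $\langle M_1\rangle^{-1}$ combine with the smallness factors $\e^{5-2b}$ from $K_{11}, K_{11}^T$ and $\e^{2b}=\g$ from the main part of $K_{20}$, so as to end up with exactly one overall $\g^{-1}$ in front of the main term and $\e^2\g^{-2}$ in front of the $\|\fracchi_0\|$-correction (and no worse). The choice $\g=\e^{2b}$ in \eqref{link gamma b} is precisely what makes these cancellations balance; one also needs to use the interpolation/product estimates of Lemma \ref{prodest} and Lemma \ref{lemma:composition of functions, Moser} to split norms tamely, so that the $\|\fracchi_0\|$-dependent tails all factor through a single $\|g\|_{s_0+\mu}$ with the remaining Sobolev index concentrated on $\fracchi_0$. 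With these cancellations verified, collecting the five estimates yields \eqref{stima T 0 b} for a common loss $\mu:=\mu(\t,\nu)$.
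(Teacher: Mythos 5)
Your proposal follows essentially the same route as the paper's proof: estimate $M_1,M_2,M_3$ via Lemma \ref{lemma:Kapponi vari} and \eqref{tame inverse}, use $\langle M_1\rangle^{-1}=O(\g^{-1})$ for $\langle\widehat\eta\rangle$, then bound $\widehat\eta$, $\widehat w$, $\widehat\psi$ in that order with \eqref{Dom inverso} and the inversion assumption, tracking the cancellation between $\g^{-1}$ losses and the smallness factors $\e^{2b}=\g$ and $\e^{5-2b}$. The bookkeeping you describe is exactly what the paper carries out, so the proposal is correct.
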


\begin{proof}
Recalling \eqref{cal M2}, by  Lemma \ref{lemma:Kapponi vari}, \eqref{tame inverse}, \eqref{ansatz 0}  we get $ \| M_2 h \|_{s_0} + \| M_3 h \|_{s_0} $ 
$\leq C \| h \|_{s_0 + \s} $. 
Then, by \eqref{sol alpha} and $\langle M_1 \rangle^{-1} = O(\e^{-2 b}) = O(\gamma^{-1}) $, 
we deduce 
$ |\langle \widehat \eta\rangle|^{\Lipg} \leq C\gamma^{-1} \| g \|_{s_0+ \s}^{\Lipg} $
and  \eqref{soleta}, \eqref{Dom inverso} imply 
$ \| \widehat \eta \|_s^{\Lipg} \leq_s \gamma^{-1} \big( \| g \|_{s + \s}^\Lipg $ 
$ + \| \fracchi_0 \|_{s + \s } \| g \|_{s_0}^\Lipg  \big)$. 
The bound \eqref{stima T 0 b} is sharp for $ \widehat w $ because $ {\cal L}_\om^{-1} g_3 $ in  \eqref{normalw}
is estimated using \eqref{tame inverse}. Finally $  \widehat \psi $ 
satisfies \eqref{stima T 0 b} using
\eqref{sol psi},  \eqref{cal M2}, \eqref{tame inverse}, \eqref{Dom inverso} and Lemma \ref{lemma:Kapponi vari}. 
\end{proof}

Let $\widetilde{G}_\delta (\psi, \eta, w, \zeta) := ( G_\delta (\psi, \eta, w), \zeta)$.  
Let $\| (\psi, \eta, w, \zeta) \|_s^\Lipg$ denote the maximum between $\| (\psi, \eta, w) \|_s^\Lipg$
and $| \zeta |^\Lipg$. 
We prove that the operator 
\begin{equation}\label{definizione T} 
{\bf T}_0 := (D { \widetilde G}_\delta)(\vphi,0,0) \circ {\mathbb D}^{-1} \circ (D G_\delta) (\vphi,0,0)^{-1}
\end{equation}
is an approximate right inverse for $d_{i,\zeta} {\cal F}(i_0 )$. 

\begin{theorem} {\bf (Approximate inverse)} \label{thm:stima inverso approssimato}
Assume \eqref{ansatz 0} and the inversion assumption \eqref{tame inverse}. 
Then there exists $ \mu := \mu (\tau, \nu) >  0 $ such that, for all $ \om \in \Om_\infty $, 
for all $ g := (g_1, g_2, g_3) $,  
the operator $ {\bf T}_0 $ defined in \eqref{definizione T} satisfies  
\begin{equation}\label{stima inverso approssimato 1}
\| {\bf T}_0 g \|_{s}^{{\rm Lip}(\gamma)}  
\leq_s  \gamma^{-1} \big(\| g \|_{s + \mu}^{{\rm Lip}(\gamma)}  
+ \e^2 \gamma^{-1}  \| {\mathfrak I}_0 \|_{s + \mu}^{{\rm Lip}(\gamma)}  
\| g \|_{s_0 + \mu}^{{\rm Lip}(\gamma)}  \big). 
\end{equation}
The operator $\mathbf{T}_0$ 
is an approximate inverse of $d_{i, \zeta} {\cal F}(i_0 )$, namely 
\begin{align} \label{stima inverso approssimato 2} 
& \| ( d_{i, \zeta} {\cal F}(i_0) \circ {\bf T}_0 - I ) g \|_s^{{\rm Lip}(\gamma)}  
\\ & \leq_s \gamma^{- 1}\| {\cal F}(i_0, \zeta_0) \|_{s_0 + \mu}^\Lipg \| g \|_{s + \mu}^\Lipg  
\notag 
\\ & \quad 
+ \gamma^{- 1} \big\{ \| {\cal F}(i_0, \zeta_0) \|_{s + \mu}^\Lipg 
+ \e^2 \g^{-1} \| {\cal F}(i_0, \zeta_0) \|_{s_0 + \mu}^\Lipg \| {\mathfrak I}_0 \|_{s + \mu}^\Lipg \big\} \| g \|_{s_0 + \mu}^\Lipg \,. \nonumber
\end{align}
\end{theorem}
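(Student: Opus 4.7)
The argument is the mKdV implementation of the Berti--Bolle symplectic decoupling scheme \cite{BB13}; it runs in two main steps, following closely the scheme in \cite{bbm}.

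\emph{Step 1: tame bound on $\mathbf{T}_0 g$.} By the very definition \eqref{definizione T}, $\mathbf{T}_0$ is the composition of three operators: $(DG_\delta)(\vphi,0,0)^{-1}$, $\mathbb{D}^{-1}$, and $(D\widetilde G_\delta)(\vphi,0,0)$. The first and the third satisfy the tame estimate of Lemma \ref{lemma:DG}, while for $\mathbb{D}^{-1}$ I have at disposal the bound \eqref{stima T 0 b} of Proposition \ref{prop: ai} that was proved under the inversion assumption \eqref{tame inverse}. Composing the three estimates, using the interpolation inequalities, and replacing the loss $\|\fracchi_\delta\|_{s+\s}^\Lipg$ by $\|\fracchi_0\|_{s+\s}^\Lipg$ via \eqref{2015-2}, yields \eqref{stima inverso approssimato 1} with a possibly larger $\mu$.

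\emph{Step 2: the approximate inverse property.} I split the error as
$$
d_{i,\zeta}\mathcal{F}(i_0)\circ \mathbf{T}_0 - I \;=\; \bigl[d_{i,\zeta}\mathcal{F}(i_0) - d_{i,\zeta}\mathcal{F}(i_\delta)\bigr]\circ \mathbf{T}_0 \;+\; \bigl[d_{i,\zeta}\mathcal{F}(i_\delta)\circ \mathbf{T}_0 - I\bigr] \;=:\; E_1 + E_2.
$$
For $E_1$, the mean-value theorem combined with the second-derivative bound \eqref{parte quadratica da P} and the estimate \eqref{stima y - y delta} for $i_\delta - i_0$ gives a bound proportional to $\gamma^{-1}\|Z\|_{s+\s}^\Lipg$ (plus interpolation terms) applied to $\mathbf{T}_0 g$; inserting \eqref{stima inverso approssimato 1} produces a contribution of exactly the form demanded in \eqref{stima inverso approssimato 2}. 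For $E_2$, I use that $G_\delta$ is symplectic (this is the content of Lemma \ref{toro isotropico modificato} together with the isotropy of $i_\delta$): the conjugation $(DG_\delta)(\vphi,0,0)^{-1}\circ d_{i,\zeta}\mathcal{F}(i_\delta)\circ DG_\delta(\vphi,0,0)$ equals the linearization at $(\vphi,0,0)$ of the Hamiltonian system \eqref{sistema dopo trasformazione inverso approssimato} generated by $K$, namely the operator $B$ of \eqref{lin idelta}, up to a remainder proportional to $\mathcal{F}(i_\delta,\zeta_0)$. The operator $\mathbb{D}$ defined in \eqref{operatore inverso approssimato} differs from $B$ precisely by the terms involving $\partial_\psi K_{00}$, $\partial_{\psi\psi}K_{00}$, $\partial_\psi K_{10}$, $\partial_\psi K_{01}$ and $\partial_\psi[\partial_\psi\theta_0]^{T}[\,\cdot\,,\zeta_0]$. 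By Lemma \ref{coefficienti nuovi} each of the first four is controlled by $\|Z\|_{s+\s}^\Lipg + \|Z\|_{s_0+\s}^\Lipg\|\fracchi_0\|_{s+\s}^\Lipg$, while the last is bounded using $|\zeta_0|^\Lipg\leq_s\|Z\|_{s_0}^\Lipg$ from Lemma \ref{zeta = 0}. Combining these with the estimate \eqref{stima toro modificato} for $\mathcal{F}(i_\delta,\zeta_0)$, the tame bounds for $DG_\delta^{\pm 1}$ in Lemma \ref{lemma:DG} and the estimate \eqref{stima T 0 b} for $\mathbb{D}^{-1}$ delivers the bound \eqref{stima inverso approssimato 2} for $E_2$, completing the proof.

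\emph{Main obstacle.} The genuinely nontrivial ingredient is the symplectic-geometric identity used in Step~2, which exactly identifies $(DG_\delta)^{-1}\,d_{i,\zeta}\mathcal{F}(i_\delta)\,DG_\delta$ with the Hamiltonian linearization $B$ up to a remainder of size $\|\mathcal{F}(i_\delta,\zeta_0)\|$. This relies crucially on the isotropy of the modified torus $i_\delta$ established in Lemma \ref{toro isotropico modificato}, and is precisely the point where the abstract framework of \cite{BB13} replaces the usual direct (non symplectic) estimate. Once this identity and the quantitative lemmas \ref{zeta = 0}, \ref{toro isotropico modificato}, \ref{coefficienti nuovi}, \ref{lemma:Kapponi vari}, \ref{lemma:DG} are available, the remaining bookkeeping is routine tame calculus driven by the vector field estimates of Lemma \ref{lemma quantitativo forma normale}; the only care needed is to keep track of the Lipschitz-in-$\omega$ seminorms and to absorb all losses of derivatives into the single parameter $\mu=\mu(\tau,\nu)$.
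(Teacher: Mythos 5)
Your overall architecture coincides with the paper's: \eqref{stima inverso approssimato 1} is obtained by composing \eqref{DG delta} with \eqref{stima T 0 b}, and the error is decomposed into the term coming from replacing $i_0$ by $i_\d$, the conjugation identity under the symplectic map $G_\d$ (which identifies $(DG_\d)^{-1}\, d_{i,\zeta}\mathcal{F}(i_\d)\, DG_\d$ with the operator $B$ of \eqref{lin idelta} up to a remainder proportional to $\mathcal{F}(i_\d,\zeta_0)$), and the terms $R_Z$ dropped in passing from $B$ to $\mathbb{D}$, each controlled by Lemmata \ref{zeta = 0}, \ref{toro isotropico modificato}, \ref{coefficienti nuovi}, \ref{lemma:DG}. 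This is exactly the route of the paper.

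There is, however, one step that as written does not deliver the claimed inequality: your treatment of $E_1=[d_{i,\zeta}\mathcal{F}(i_0)-d_{i,\zeta}\mathcal{F}(i_\d)]\circ\mathbf{T}_0$ via the full second-derivative bound \eqref{parte quadratica da P}. That bound is only of size $\e^2$, and \eqref{stima y - y delta} for $y_\d-y_0$ carries a factor $\g^{-1}$, so the mean-value argument gives $\|E_1'[\widehat\imath\,]\|_s\leq_s \e^2\g^{-1}\|Z\|\,\|\widehat\imath\,\|+\dots=\e^{-a}\|Z\|\,\|\widehat\imath\,\|+\dots$; after composing with $\mathbf{T}_0$ this exceeds the first line of \eqref{stima inverso approssimato 2} by the divergent factor $\e^{2}\g^{-1}=\e^{-a}$, so the contribution is \emph{not} of the form demanded. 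The paper avoids this loss by exploiting that $X_{\mathcal N}$ is independent of $y$ and that $i_\d-i_0$ is supported purely in the $y$-component, so that the difference equals $\int_0^1\partial_y d_i X_P(\theta_0,y_0+s(y_\d-y_0),z_0)[y_\d-y_0,\widehat\imath\,]\,ds$, and the dedicated estimate \eqref{D yii} for $\partial_y d_i X_P$ is of size $\e^{2b}=\g$, which exactly cancels the $\g^{-1}$ of \eqref{stima y - y delta} and yields \eqref{stima parte trascurata 1} with no spurious factor. Replacing \eqref{parte quadratica da P} by \eqref{D yii} at this single point repairs the argument; the rest of your proposal then goes through as in the paper.
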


\begin{proof} 
In this proof we denote $\| \ \|_s$ instead of $\| \ \|_s^{\Lipg}$. 
The bound \eqref{stima inverso approssimato 1} 
follows from \eqref{definizione T}, \eqref{stima T 0 b},  \eqref{DG delta}.
By \eqref{operatorF}, since $ X_\mN $ does not depend on $ y $,  
and $ i_\d $ differs from $ i_0 $ only for the $ y$ component,  
we have 
\begin{align} \label{verona 0}
& d_{i, \zeta} {\cal F}(i_0 )[\, \widehat \imath, \widehat \zeta \, ]  
- d_{i, \zeta} {\cal F}(i_\delta ) [\, \widehat \imath, \widehat \zeta \, ]
= d_i X_P (i_\delta)  [\, \widehat \imath \, ]  - d_i X_P (i_0) [\, \widehat \imath \, ]
\\ 
& = \int_0^1 \pa_y d_i X_P (\theta_0, y_0 + s(y_\delta -y_0), z_0) [y_\d -y_0, \widehat \imath\,] ds 
=: {\cal E}_0 [\, \widehat \imath, \widehat \zeta \, ]. \nonumber
\end{align}
By \eqref{D yii},  \eqref{2015-2}, \eqref{stima y - y delta}, \eqref{ansatz 0}, 
we estimate
\begin{equation}\label{stima parte trascurata 1}
\| {\cal E}_0 [\, \widehat \imath, \widehat \zeta \, ] \|_s \leq_s 
\| Z \|_{s_0 + \s} \| \widehat \imath \|_{s + \s} 
+ (\| Z \|_{s + \s} + \| Z \|_{s_0 + \s} \| \fracchi_0 \|_{s+\s}) \| \widehat \imath \|_{s_0 + \s} 
\end{equation}
where $Z := \mF(i_0, \zeta_0)$ (recall \eqref{def Zetone}). 
 Note that $\mE_0[\widehat \imath, \widehat \zeta]$ is, in fact, independent of $\widehat \zeta$.
Denote the set of variables $  (\psi, \eta, w) =: {\mathtt u} $. 
Under the transformation $G_\delta $, the nonlinear operator ${\cal F}$ in \eqref{operatorF} transforms into 
\be \label{trasfo imp}
{\cal F}(G_\delta(  {\mathtt u} (\vphi) ), \zeta ) 
= D G_\delta( {\mathtt u}  (\vphi) ) \big(  {\cal D}_\om {\mathtt u} (\vphi) - X_K ( {\mathtt u} (\vphi), \zeta)  \big), 
\ee
where $K = H_{\e, \zeta} \circ G_\delta$, 
see \eqref{KHG}-\eqref{sistema dopo trasformazione inverso approssimato}. 
Differentiating  \eqref{trasfo imp} at the trivial torus 
$ {\mathtt u}_\delta (\vphi) = G_\delta^{-1}(i_\delta) (\vphi) = (\ph, 0 , 0 ) $, 
at $ \zeta = \zeta_0 $, 
in the direction $(\widehat {\mathtt u}, \widehat \zeta\,)$
$= (D G_\d ({\mathtt u}_\d)^{-1} [\, \widehat \imath \, ], \widehat \zeta) 
= D {\widetilde G}_\d ({\mathtt u}_\d)^{-1} [\, \widehat \imath , \widehat \zeta \, ] $, 
we get
\begin{align} \label{verona 2}
d_{i , \zeta} {\cal F}(i_\delta ) [\, \widehat \imath, \widehat \zeta \, ]
=  & D G_\delta( {\mathtt u}_\delta) 
\big( {\cal D}_\om \widehat {\mathtt u} 
- d_{\mathtt u, \zeta} X_K( {\mathtt u}_\delta, \zeta_0) [\widehat {\mathtt u}, \widehat \zeta \, ] 
\big) 
+ {\cal E}_1 [ \, \widehat \imath , \widehat \zeta \, ]\,,
\\
\label{E1}
{\cal E}_1 [\, \widehat \imath , \widehat \zeta \, ] 
:=  & 
D^2 G_\delta( {\mathtt u}_\delta) \big[ D G_\delta( {\mathtt u}_\delta)^{-1} {\cal F}(i_\delta, \zeta_0), \,  D G_\d({\mathtt u}_\d)^{-1} 
[ \, \widehat \imath \,  ] \big] \,, 
\end{align}
where  $ d_{\mathtt u, \zeta} X_K( {\mathtt u}_\delta, \zeta_0) $ is expanded in \eqref{lin idelta}.
In fact, ${\cal E}_1$ is independent of $\widehat \zeta$. 
We split  
\[ 
{\cal D}_\om \widehat {\mathtt u} 
- d_{\mathtt u, \zeta} X_K( {\mathtt u}_\delta, \zeta_0) [\widehat {\mathtt u}, \widehat \zeta] 
= \mathbb{D} [\widehat {\mathtt u}, \widehat \zeta \, ] + R_Z [ \widehat {\mathtt u}, \widehat \zeta \, ], 
\]
where $ {\mathbb D} [\widehat {\mathtt u}, \widehat \zeta] $ is defined in \eqref{operatore inverso approssimato} and 
$R_Z [  \widehat \psi, \widehat \eta, \widehat w, \widehat \zeta]$ is defined by difference, 
so that its first component is $ - \pa_\psi K_{10}(\vphi) [\widehat \psi ]$, 
its second component is 
\[
\pa_\psi [\pa_\psi \theta_0(\vphi)]^T [\widehat \psi, \zeta_0] 
+ \pa_{\psi \psi} K_{00} (\vphi) [\widehat \psi] 
+ [\pa_\psi K_{10}(\vphi)]^T \widehat \eta 
+ [\pa_\psi K_{01}(\vphi)]^T \widehat w,
\]
and its third component is 
$- \pa_x \{ \pa_{\psi} K_{01}(\vphi)[\widehat \psi] \} $
(in fact, $R_Z$ is independent of $\widehat \zeta$). 
By \eqref{verona 0} and \eqref{verona 2}, 
\begin{equation} \label{E2}
\begin{aligned}
d_{i, \zeta} {\cal F}(i_0 ) 
& = D G_\delta({\mathtt u}_\delta) \circ {\mathbb D} \circ D {\widetilde G}_\delta 
({\mathtt u}_\delta)^{-1} 
+ {\cal E}_0 + {\cal E}_1 + \mE_2, \\
\mE_2 & := D G_\delta( {\mathtt u}_\delta) \circ R_Z \circ D {\widetilde G}_\delta 
({\mathtt u}_\delta)^{-1}.
\end{aligned}
\end{equation}
By Lemmata \ref{coefficienti nuovi}, \ref{lemma:DG}, \ref{zeta = 0}, 
and \eqref{stima toro modificato}, \eqref{ansatz 0},
the terms $\mE_1, \mE_2$ satisfy the same bound \eqref{stima parte trascurata 1} as $\mE_0$.
Thus the sum $\mE := \mE_0 + \mE_1 + \mE_2$ satisfies \eqref{stima parte trascurata 1}.
Applying $ {\bf T}_0 $ defined in \eqref{definizione T} to the right in \eqref{E2}, 
since $ {\mathbb D} \circ  {\mathbb D}^{-1} = I $ (see Proposition \ref{prop: ai}), 
we get $d_{i, \zeta} {\cal F}(i_0 ) \circ {\bf T}_0  - I 
= \mE \circ {\bf T}_0$. 
Then \eqref{stima inverso approssimato 2} follows from 
\eqref{stima inverso approssimato 1} and the bound \eqref{stima parte trascurata 1} for $\mE$. 
\end{proof}

\section{The linearized operator in the normal directions}\label{linearizzato siti normali}

The goal of this section is to write an explicit expression of the linearized operator 
$\mL_\om$ defined in \eqref{cal L omega}, see Proposition \ref{prop:lin}. 
To this aim, we compute $ \frac12 ( K_{02}(\psi) w, w )_{L^2(\T)} $, $ w \in H_S^\bot$, 
which collects all the terms of 
$(H_\e \circ G_\d)(\psi, 0, w)$ that are quadratic in $w$, see \eqref{KHG}.
We first recall some preliminary lemmata. 

\begin{lemma}[Lemma 7.1-\cite{bbm}] \label{lemma astratto potente}
Let $ H $ be a Hamiltonian function of class $ C^2 (  H^1_0(\T_x), \R )$ 
and consider a map
$ \Phi(u) := u + \Psi(u) $ satisfying $\Psi (u) = \Pi_E \Psi(\Pi_E u)$, for all $ u $, 
where $E$ is a finite dimensional subspace as in \eqref{def E finito}. Then 
\begin{equation}\label{lint2}
\pa_u [\nabla ( H \circ \Phi)] (u)  [h] = (\partial_u  \nabla H )(\Phi(u)) [h] + {\cal R}(u)[h]\,,
\end{equation}
where  $ {\cal R}(u) $ 
has the ``finite dimensional'' form 
\begin{equation}\label{forma buona resto}
{\cal R}(u)[h] =  {\mathop\sum}_{|j| \leq C} \big( h , g_j(u) \big)_{L^2(\T)} \chi_j(u) 
\end{equation}
with $ \chi_j (u) = e^{\ii j x} $ or  $ g_j(u) = e^{\ii j x} $. 
The remainder in \eqref{forma buona resto} is 
$ {\cal R} (u) =  {\cal R}_0 (u) + {\cal R}_1 (u) + {\cal R}_2 (u) $ with
\begin{align}\label{resti012} 
{\cal R}_0 (u) & :=  (\partial_u \nabla H)(\Phi(u)) \pa_u \Psi (u), \qquad  
{\cal R}_1 (u) :=  [\partial_{u }\{ \Psi'(u)^T\}] [ \cdot , \nabla H(\Phi(u)) ], \nonumber \\
\,  {\cal R}_2 (u)  & :=  [\pa_u \Psi (u)]^T (\partial_u \nabla H)(\Phi(u)) \pa_u \Phi(u). 
\end{align}
\end{lemma}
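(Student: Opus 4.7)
The plan is to derive the identity \eqref{lint2} by two applications of the chain rule, and then to verify the finite-rank structure \eqref{forma buona resto} by exploiting the factorization of $d\Psi(u)$ through the finite-dimensional space $E$.

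First I would write $\nabla(H \circ \Phi)(u) = [d\Phi(u)]^T \nabla H(\Phi(u))$, which follows from $d(H \circ \Phi)(u)[h] = (\nabla H(\Phi(u)), d\Phi(u)h)_{L^2(\T)}$ by taking adjoints. Differentiating again in $u$ along $h$, via the Leibniz rule on the product of the operator $[d\Phi(u)]^T$ and the vector $\nabla H(\Phi(u))$, and using $d\Phi(u) = I + d\Psi(u)$, I would obtain
$$
\partial_u[\nabla(H \circ \Phi)](u)[h] = (\partial_u \nabla H)(\Phi(u))[h] + \mathcal{R}_0(u)[h] + \mathcal{R}_1(u)[h] + \mathcal{R}_2(u)[h],
$$
with the three remainders matching exactly \eqref{resti012}: $\mathcal{R}_0$ arising from differentiating the argument of $\nabla H$ along the $d\Psi(u)[h]$ piece of $d\Phi(u)[h]$; $\mathcal{R}_1$ from differentiating the operator $[d\Psi(u)]^T$ in $u$ and then applying it to $\nabla H(\Phi(u))$; and $\mathcal{R}_2$ from the left factor $[d\Psi(u)]^T$ acting on $(\partial_u \nabla H)(\Phi(u))\,d\Phi(u)[h]$. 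This first step is a purely mechanical computation.

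The substantive content is then to check that each $\mathcal{R}_i$ has the claimed finite-rank form. The key observation is that the hypothesis $\Psi(u) = \Pi_E \Psi(\Pi_E u)$ implies $d\Psi(u) = \Pi_E \, d\Psi(\Pi_E u)\, \Pi_E$, so $d\Psi(u)$ has range in $E$ and sees $h$ only through $\Pi_E h$. Expanding in the Fourier basis of $E$, one may write
$$
d\Psi(u)[h] = \sum_{|j| \leq C} \big(h, f_j(u)\big)_{L^2(\T)}\, e^{\ii j x}, \qquad [d\Psi(u)]^T[w] = \sum_{|j| \leq C} \big(w, e^{\ii j x}\big)_{L^2(\T)}\, f_j(u),
$$
for suitable functions $f_j(u) \in E$. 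Substituting these expansions into $\mathcal{R}_0$ places $e^{\ii j x}$ inside the factor $\chi_j(u) = (\partial_u \nabla H)(\Phi(u))[e^{\ii j x}]$; then, expanding $f_j(u) \in E$ once more in Fourier, the double sum can be reindexed so that in every summand the pure exponential appears in the $g$-slot (with $|j|\le C$), the remaining data being absorbed into $\chi_j(u)$. The same manipulation, combined with the symmetry of $(\partial_u \nabla H)(\Phi(u))$, handles $\mathcal{R}_2$, only with the roles of $g$ and $\chi$ swapped. For $\mathcal{R}_1$ one differentiates the expansion of $[d\Psi(u)]^T$ in $u$, noting that $\partial_u f_j(u)[h] \in E$ because $f_j(u) \in E$ for every $u$, which lets one again expand in Fourier and exhibit a pure exponential as one of the two factors in each summand.

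I expect no real obstacle here: the argument is essentially bookkeeping. The only point that requires some care is that the decomposition \eqref{forma buona resto} allows, for each summation index, to choose which of $g_j$ or $\chi_j$ is the pure exponential; the finite-dimensional projection structure $d\Psi(u) = \Pi_E\, d\Psi(\Pi_E u)\, \Pi_E$ is precisely what guarantees that such a choice is always available, and that the sum involves only finitely many indices $|j| \le C$.
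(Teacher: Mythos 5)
Your computation is correct and is exactly the intended argument: differentiating $\nabla(H\circ\Phi)(u)=[d\Phi(u)]^T\nabla H(\Phi(u))$ by the Leibniz rule produces precisely the three remainders $\mathcal R_0,\mathcal R_1,\mathcal R_2$ of \eqref{resti012}, and the factorization $d\Psi(u)=\Pi_E\,d\Psi(\Pi_E u)\,\Pi_E$ (so that $d\Psi$ and its adjoint, as well as their $u$-derivatives, see $h$ only through the finitely many pairings $(h,e^{\ii jx})$ and take values in $E$) yields the form \eqref{forma buona resto} after the Fourier reindexing you describe. This coincides with the proof of Lemma 7.1 in \cite{bbm}, to which the present paper defers.
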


\begin{lemma}[Lemma 7.3 in \cite{bbm}] \label{remark : decay forma buona resto}
Let $ {\cal R} $ be an operator of the form
\begin{equation}\label{forma buona con gli integrali}
{\cal R} h = \sum_{|j| \leq C } \int_0^1 \big(h\,,\,g_j(\tau) \big)_{L^2(\T)} \chi_j (\tau)\,d \tau\,,
\end{equation}
where the functions $g_j(\tau),\,\chi_j(\tau) \in H^s$, $\tau \in [0, 1]$ depend in a Lipschitz way on the parameter $\omega$. 
Then its matrix $s$-decay norm (see \eqref{matrix decay norm}-\eqref{matrix decay norm Lip}) satisfies 
$$ 
| {\cal R} |_s^\Lipg 
\leq_s \sum_{|j| \leq C} \sup_{\tau \in [0,1]} 
\big( \| \chi_j(\tau) \|_s^\Lipg \| g_j(\tau) \|_{s_0}^\Lipg 
+ \| \chi_j(\tau) \|_{s_0}^\Lipg \| g_j(\tau) \|_s^\Lipg \big). 
$$ 
\end{lemma}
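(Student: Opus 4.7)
My plan is to reduce the estimate to a bound on a single rank-one operator $h \mapsto (h, g)_{L^2(\T)}\chi$, and then to express that operator as a short composition whose $s$-decay norm is already controlled by the existing multiplicative/interpolation estimates of Lemma \ref{prodest}.

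First, I would observe that the $s$-decay norm \eqref{matrix decay norm} is a weighted $\ell^2$-norm of fiberwise suprema of matrix entries, so Minkowski's integral inequality yields, for any $\tau$-family of matrices $T(\tau)$,
\begin{equation*}
\Big| \int_0^1 T(\tau)\, d\tau \Big|_s^\Lipg \leq \int_0^1 |T(\tau)|_s^\Lipg\, d\tau \leq \sup_{\tau \in [0,1]} |T(\tau)|_s^\Lipg\,.
\end{equation*}
Applied to $T_j(\tau) h := (h, g_j(\tau))_{L^2(\T)} \chi_j(\tau)$ and combined with the triangle inequality over $|j| \leq C$, this reduces the lemma to the single-term estimate
\begin{equation*}
|T_j(\tau)|_s^\Lipg \leq_s \|\chi_j(\tau)\|_s^\Lipg \|g_j(\tau)\|_{s_0}^\Lipg + \|\chi_j(\tau)\|_{s_0}^\Lipg \|g_j(\tau)\|_s^\Lipg \,,
\end{equation*}
holding for each fixed $j$ and $\tau$.

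To prove the single-term bound, I would abbreviate $g := g_j(\tau)$, $\chi := \chi_j(\tau)$, and introduce the $x$-averaging projection
\begin{equation*}
(P_0 u)(\vphi, x) := \frac{1}{2\pi} \int_\T u(\vphi, y)\, dy\,,
\end{equation*}
regarded as an operator on $H^s(\T^{\nu+1})$ whose outputs are constant in $x$. A direct Fourier computation gives the factorization
\begin{equation*}
T_j(\tau) = 2 \pi\, M_\chi \circ P_0 \circ M_g\,,
\end{equation*}
where $M_f$ denotes multiplication by $f$. Now $M_f$ is T\"oplitz in the full index $(l,k) \in \Z^{\nu+1}$ with entries $(M_f)_{i_1}^{i_2} = \hat f_{i_1 - i_2}$, so the observation after Lemma \ref{prodest} gives $|M_f|_s^\Lipg = \|f\|_s^\Lipg$. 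The projection $P_0$ has matrix entries $(P_0)_{(l_1, k_1)}^{(l_2, k_2)} = \delta_{l_1, l_2}\, \delta_{k_1, 0}\, \delta_{k_2, 0}$, which are nonzero only on the diagonal fiber $i_1 - i_2 = 0$; hence $|P_0|_s = 1$ for every $s$, with no $\omega$-dependence.

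Finally, applying the sub-multiplicative/interpolation estimate of Lemma \ref{prodest} twice -- first to split $M_\chi$ off from $P_0 M_g$, and then to split $P_0$ off from $M_g$ -- gives
\begin{equation*}
|T_j(\tau)|_s^\Lipg \leq_s |M_\chi|_s^\Lipg |P_0 M_g|_{s_0}^\Lipg + |M_\chi|_{s_0}^\Lipg |P_0 M_g|_s^\Lipg \leq_s \|\chi\|_s^\Lipg \|g\|_{s_0}^\Lipg + \|\chi\|_{s_0}^\Lipg \|g\|_s^\Lipg\,,
\end{equation*}
which is exactly the required single-term estimate. I do not expect any serious obstacle in this argument: the only nontrivial checks are the rank-one factorization (a one-line Fourier computation) and the uniform-in-$s$ bound $|P_0|_s \leq 1$ (immediate from the extremely concentrated support of $P_0$'s matrix in the fiber index). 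The Lipschitz-in-$\omega$ version requires no separate argument because $P_0$ is parameter-independent and Lemma \ref{prodest} is formulated directly in the $\Lipg$-norm.
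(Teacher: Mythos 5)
Your argument is correct. The paper itself does not prove this lemma but defers to Lemma 7.3 of \cite{bbm}, whose proof computes the matrix entries of the rank-one operator $h \mapsto (h,g)_{L^2(\T)}\chi$ directly (they are a convolution of the Fourier coefficients of $\chi$ and $g$ in the time index, concentrated on the zero $x$-mode in the column/row indices) and then invokes exactly the two facts you use: $|M_f|_s^\Lipg = \|f\|_s^\Lipg$ and the interpolation inequality of Lemma \ref{prodest}. Your factorization $T = 2\pi\, M_\chi \circ P_0 \circ M_g$ with $|P_0|_s = 1$ is a clean repackaging of that same computation rather than a genuinely different route; it has the small advantage of making the Lipschitz-in-$\omega$ part and the final interpolation completely automatic. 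Two harmless points you could make explicit: the factorization lives on all of $H^s(\T^{\nu+1})$ while $\mathcal{R}$ acts on a subspace with nonzero Fourier indices, but the $s$-decay norm of a submatrix is trivially bounded by that of the full matrix; and depending on whether $(\cdot,\cdot)_{L^2(\T)}$ is the bilinear or sesquilinear pairing one should take $M_g$ or $M_{\bar g}$, which does not affect the norms.
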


\subsection{Composition with the map $G_\d$} \label{section:appr}

In the sequel we use the fact that $\fracchi_\d := \fracchi_\d (\ph ; \om) := i_\d (\ph; \om ) - (\ph,0,0) $ satisfies, by \eqref{2015-2} and \eqref{ansatz 0},  
\begin{equation}\label{ansatz delta}
\| {\mathfrak I}_\d \|_{s_0+\mu}^{\Lipg} 
\leq C\e^{5 - 2b} \gamma^{-1} = C \e^{5-4b}.
\end{equation}
In this section we study the Hamiltonian 
$ K := H_\e \circ G_\d = \e^{-2b} \mH \circ A_\e \circ G_\d $ 
defined in \eqref{KHG}, \eqref{def H eps}. 
Recalling \eqref{def A eps}, \eqref{trasformazione modificata simplettica}, 
$A_\e \circ G_\d$ has the form  
\begin{equation}  \label{A eps G delta}
A_\e (G_\d(\psi, \eta, w))
= \e v_\e \big( \theta_0(\psi), \, y_\d(\psi) + L_1(\psi) \eta + L_2(\psi) w \big)
+ \e^b (z_0(\psi) + w)
\end{equation}
where $v_\e$ is defined in \eqref{def A eps}, and 
\be\label{L1 L2}
L_1(\psi) := [\pa_\psi \theta_0(\psi)]^{-T} \, , \quad
L_2(\psi) := \big[ (\pa_\teta \tilde{z}_0) (\theta_0(\psi)) \big]^T \partial_x^{-1} \, . 
\ee
By Taylor's formula, we develop \eqref{A eps G delta} in $w$ at 
$(\eta,w)=(0,0)$, and we get
$$
(A_\e \circ G_\d)(\psi, 0, w) 
= T_\delta(\psi) + T_1(\psi) w + T_2(\psi)[w,w] + T_{\geq 3}(\psi, w) \, , 
$$ 
where 
\begin{equation}\label{T0}
T_\delta(\psi) := A_\e(G_\d(\psi, 0, 0))
= \e v_\d(\psi) + \e^b z_0(\psi), \ \ 
v_\d (\psi) := v_\e(\theta_0(\psi), y_\d(\psi))
\end{equation}
is the approximate isotropic torus in the phase space $  H^1_0 (\T) $ (it corresponds to $ i_\d $ in Lemma \ref{toro isotropico modificato}),  
\[
T_1(\psi) w := \e^{2b-1} U_1 (\psi) w + \e^b w, \quad 
T_2(\psi)[w,w] := \e^{4b - 3} U_2(\psi)[w,w]
\]
\begin{align}
U_1(\psi) w 
& = \sum_{j \in S} \frac{|j| [ L_2(\psi) w ]_j \, e^{\ii [\theta_0(\psi)]_j}} 
{2 \sqrt{ \xi_j + \e^{2(b-1)} |j| [ y_\d(\psi) ]_j }} \,  e^{\ii jx},
\label{T1} \\
U_2(\psi)[w,w] 
& = - \sum_{j \in S} \frac{j^2 [ L_2(\psi) w ]_j^2 \, e^{\ii [\theta_0(\psi)]_j}} 
{8 \{ \xi_j + \e^{2(b-1)} |j| [ y_\d(\psi) ]_j \}^{3/2} } \,  e^{\ii jx},
\label{T2}
\end{align}
and $T_{\geq 3}(\psi, w)$ collects all the terms of order at least cubic in $w$. 
The terms  $U_1, U_2 = O(1)$ in $\e$.
Moreover, using that $ L_2 (\psi) $ in \eqref{L1 L2} vanishes as $ z_0 = 0 $,  
they satisfy
\begin{equation}\label{extra piccolezza}
\begin{aligned}
\| U_1 w \|_s 
& \leq_s \| \fracchi_\d \|_s \| w \|_{s_0}   +  \| \fracchi_\d \|_{s_0}  \| w \|_s  \,, \\
\| U_2 [w,w] \|_s 
& \leq_s \| \fracchi_\d \|_s \| \fracchi_\d \|_{s_0} \| w \|_{s_0}^2 
+ \| \fracchi_\d \|_{s_0}^2 \| w \|_{s_0} \| w \|_s  
\end{aligned}
\end{equation}
and also in the  $ \|  \ \|_s^\Lipg $-norm.
We expand $ {\cal H} $ by Taylor's formula 
\[
\mH(u+h) = \mH(u) + ( (\gr \mH)(u), h )_{L^2(\T)} 
+ \tfrac12 ( (\pa_u \gr \mH)(u) [h], h )_{L^2(\T)} 
+ O(h^3).
\] 
Specifying at $u = T_\delta(\psi)$ and $ h = T_1(\psi) w + T_2(\psi)[w,w] + T_{\geq 3}(\psi,w)$, 
we obtain that the sum of all the components of $ K = \e^{-2b} (\mH \circ A_\e \circ G_\d)(\psi, 0, w) $ 
that are quadratic in $w$ is  
$$
\begin{aligned}
\tfrac12 ( K_{02}w, w )_{L^2(\T)} 
& = \e^{-2b} ( (\gr \mH)(T_\delta ), T_2 [w,w] )_{L^2(\T)} \\ 
& \quad + \e^{-2b} \tfrac12 ( (\pa_u \gr \mH)(T_\delta ) [T_1 w], T_1 w )_{L^2(\T)} \,. 
\end{aligned}
$$
Inserting the expressions \eqref{T1}, \eqref{T2} in the last equality we get
\begin{align} \label{K02}
K_{02}(\psi) w 
& = (\pa_u \gr \mH)(T_\delta) [w]  
+ 2 \e^{b-1} (\pa_u \gr \mH)(T_\delta) [U_1 w] 
\\ & \quad \,
+ \e^{2(b-1)} U_1^T (\pa_u \gr \mH)(T_\delta) [U_1 w]  
+ 2 \e^{2b- 3} U_2[w, \cdot]^T (\gr \mH)(T_\delta). \notag
\end{align}

\begin{lemma}\label{dopo l'approximate inverse}
The operator $K_{02}$ reads 
\begin{equation}\label{piccolezza resti}
 ( K_{02}(\psi) w, w )_{L^2(\T)} 
=  ( (\pa_u \gr \mH)(T_\delta) [w], w )_{L^2(\T)} 
+  ( R(\psi) w, w )_{L^2(\T)}
\end{equation}
where $R(\psi)w $ has the ``finite dimensional'' form 
\begin{equation}\label{forma buona resto con psi}
R(\psi) w  =  {\mathop\sum}_{|j| \leq C} \big( w , g_j(\psi) \big)_{L^2(\T)} \chi_j(\psi).
\end{equation}
The functions $g_j, \chi_j$ satisfy, for some $\sigma := \sigma (\nu, \tau) > 0$, 
\begin{align} \label{piccolo FBR}
& \| g_j \|_s^\Lipg \| \chi_j \|_{s_0}^\Lipg + \| g_j \|_{s_0}^\Lipg \| \chi_j \|_s^\Lipg 
\leq_s \e^{b+1} \| {\mathfrak I}_\delta \|_{s + \sigma}^\Lipg \,,
\\
& \| \partial_i g_j [\widehat \imath ]\|_s \| \chi_j \|_{s_0} 
+ \| \partial_i g_j [\widehat \imath ]\|_{s_0} \| \chi_j \|_{s} 
+ \| g_j \|_{s_0} \| \partial_i \chi_j [\widehat \imath ] \|_s 
+ \| g_j \|_{s} \| \partial_i \chi_j [\widehat \imath ]\|_{s_0}  
\notag \\ & 
\leq_s \e^{b + 1} ( \| \widehat \imath \|_{s + \sigma}
+ \| {\mathfrak I}_\delta\|_{s + \sigma}  \|\widehat \imath \|_{s_0 + \sigma}) \,,  
\label{derivata piccolo FBR}
\end{align}
where $i = (\theta, y, z)$ (see \eqref{embedded torus i}) and 
$\widehat \imath = (\widehat \theta, \widehat y, \widehat z)$.
\end{lemma}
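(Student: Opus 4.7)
The starting point is the explicit decomposition \eqref{K02}, which exhibits $K_{02}(\psi) w$ as the sum of the ``main part'' $(\pa_u \gr \mH)(T_\delta)[w]$ plus three remainder contributions built from $U_1, U_2$ and $\gr \mH(T_\delta)$ or $(\pa_u \gr \mH)(T_\delta)$. I take $R(\psi)$ to be the (symmetrized) operator on $H_S^\bot$ whose quadratic form matches exactly the sum of these three remainders, so that \eqref{piccolezza resti} holds by construction.

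The key structural observation is that $U_1, U_2$ have \emph{finite-dimensional range contained in $H_S$}. Indeed, since $\pa_x^{-1}$ is anti-self-adjoint on mean-zero functions, $[L_2(\psi) w]_j = (w, h_j(\psi))_{L^2}$ with $h_j(\psi) := - \pa_x^{-1} (\pa_{\theta_j} \widetilde z_0)(\theta_0(\psi))$, so that \eqref{T1}--\eqref{T2} give
$$
U_1(\psi) w = \sum_{j \in S} (w, h_j(\psi))_{L^2}\, c_j(\psi) e^{\ii jx},
\qquad
U_2(\psi)[w,v] = \sum_{j \in S} (w,h_j)(v,h_j)\, d_j(\psi)\, e^{\ii jx},
$$
with scalar functions $c_j, d_j$ of size $O(1)$ in $\e$, and $\| h_j \|_s^\Lipg \lesssim \| z_0 \|_{s+1}^\Lipg \lesssim \| {\mathfrak I}_\delta \|_{s+\sigma}^\Lipg$. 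Inserting these formulas into the three remainders of \eqref{K02} puts each term into the advertised form $\sum_{|j|\leq C}(w, g_j) \chi_j$.

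The central quantitative issue is that $(\pa_u \gr \mH)(T_\delta) = -\pa_{xx} + (\pa_u \gr \mH_{\geq 4})(T_\delta)$ contains the constant-coefficient piece $-\pa_{xx}$, which is $O(1)$ in $\e$ and is \emph{a priori} far too large to fit the $\e^{b+1}$ smallness of \eqref{piccolo FBR}. This is resolved by the projection trick: since $R(\psi)$ maps $H_S^\bot$ into itself, one has $R(\psi) = \Pi_S^\bot R(\psi) \Pi_S^\bot$, and since $-\pa_{xx}$ is Fourier-diagonal it preserves $H_S$; applied to $U_1 w$ or $U_2[w,w]$, which live in $H_S$, its output lies in $H_S$ and is annihilated by $\Pi_S^\bot$. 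Hence in each surviving remainder only $(\pa_u \gr \mH_{\geq 4})(T_\delta)$ (for terms (a),(b)) or $\gr \mH_{\geq 4}(T_\delta)$ (for term (c)) effectively appears. Since $T_\delta = \e v_\delta + \e^b z_0 = O(\e)$ and $\mH_{\geq 4}$ vanishes to order four at the origin, Lemma \ref{lemma:composition of functions, Moser} yields the tame bounds $\|(\pa_u \gr \mH_{\geq 4})(T_\delta)[e^{\ii jx}]\|_s^\Lipg \lesssim_s \e^2 (1 + \| {\mathfrak I}_\delta \|_{s+\sigma}^\Lipg)$ and $\| \gr \mH_{\geq 4}(T_\delta) \|_s^\Lipg \lesssim_s \e^3 (1 + \| {\mathfrak I}_\delta \|_{s+\sigma}^\Lipg)$.

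Combining the explicit prefactors $\e^{b-1}, \e^{2b-2}, \e^{2b-3}$ from \eqref{K02} with the $\e^2$ (resp.\ $\e^3$ against the extra $[L_2 v]_j$ in (c)) from the projected factor, plus Lemma \ref{prodest} and the ansatz \eqref{ansatz delta} (which in terms (b), (c) absorbs one of the two $\| {\mathfrak I}_\delta \|_{s_0+\sigma}^\Lipg \leq C \e^{5-4b}$ factors coming from the pair of $U_1$'s or the product $U_2 \cdot \gr\mH$), the product $\|g_j\|_s \|\chi_j\|_{s_0} + \|g_j\|_{s_0}\|\chi_j\|_s$ is bounded by $\e^{b+1} \| {\mathfrak I}_\delta \|_{s+\sigma}^\Lipg$, yielding \eqref{piccolo FBR}. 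Estimate \eqref{derivata piccolo FBR} follows by differentiating the defining formulas for $c_j, d_j, h_j, T_\delta$ with respect to $i$, using \eqref{derivata i delta} and Lemma \ref{lemma quantitativo forma normale} in the same tame bookkeeping, and the Lipschitz-in-$\omega$ version is identical. The main obstacle is the projection trick: without it, the dominant $-\pa_{xx}$ component of $(\pa_u \gr \mH)(T_\delta)$ would leave a remainder too large for the KAM reducibility of Section~\ref{operatore linearizzato sui siti normali} to handle.
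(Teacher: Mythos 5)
Your overall architecture matches the paper's: the three remainder terms in \eqref{K02} have the form \eqref{forma buona resto con psi} because $U_1 = \Pi_S U_1$ and $U_2 = \Pi_S U_2$, and the contributions of $\mH_{\geq 4} := \mH_4 + \mH_{\geq 5} = O(u^4)$ are estimated exactly as you do, using $\|T_\d\|_s \lesssim \e(1+\|\fracchi_\d\|_s)$ and \eqref{extra piccolezza}. You also correctly identified the central danger, namely the $O(1)$ piece $-\pa_{xx} = \pa_u\gr H_2$.

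However, your resolution of that danger --- the ``projection trick'' --- is wrong for two of the three remainder terms, and this is a genuine gap. The projection $\Pi_S^\bot$ acts only on the \emph{final} output of each remainder term. For $2\e^{b-1}(\pa_u\gr\mH)(T_\d)[U_1 w]$ the operator $-\pa_{xx}$ is indeed outermost, its output lies in $H_S$, and the term dies when tested against $w\in H_S^\bot$; fine. But in $\e^{2(b-1)}U_1^T(\pa_u\gr\mH)(T_\d)U_1$ the outermost operator is $U_1^T$, whose range is spanned by the functions $\pa_x^{-1}(\pa_{\theta_k}\tilde z_0)(\theta_0(\psi))\in H_S^\bot$: the quadratic form is $\e^{2(b-1)}(-\pa_{xx}U_1w,\,U_1w)_{L^2}=\e^{2(b-1)}\|\pa_x U_1 w\|_{L^2}^2$, which is \emph{not} annihilated by any projection. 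Likewise, in $2\e^{2b-3}U_2[w,\cdot]^T\gr\mH(T_\d)$ the pairing $(\gr H_2(T_\d),U_2[w,w])_{L^2}=(-\pa_{xx}(\e v_\d+\e^b z_0),U_2[w,w])_{L^2}$ retains the nonzero tangential contribution $-\e(\pa_{xx}v_\d,U_2[w,w])_{L^2}$. Each of these two surviving $H_2$-contributions has size $\e^{2b-2}\|\fracchi_\d\|^2\sim\e^{2-3a}$, which exceeds the required bound $\e^{b+1}\|\fracchi_\d\|_{s+\s}\sim\e^{3-3a/2}$ by a factor of order $\e^{-1}$, so your argument does not reach \eqref{piccolo FBR}. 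The correct mechanism is an \emph{exact cancellation between these two terms}: since $H_2(\e v_\e(\theta,y))=\mathrm{const}+\e^{2b}\sum_{j\in S^+}j^3 y_j$ is affine in the actions, and $y\mapsto y_\d(\psi)+L_2(\psi)w$ is affine in $w$, the composition $\e^{-2b}H_2(A_\e(G_\d(\psi,0,w)))$ has no quadratic-in-$w$ part beyond $\frac12\int_\T w_x^2\,dx$ (this is how the paper argues, via \eqref{shape H2}, \eqref{trasformazione modificata simplettica}, \eqref{L1 L2}); equivalently $\e^{4b-3}(\gr H_2(\e v_\d),U_2[w,w])+\tfrac12\e^{4b-2}(\pa_u\gr H_2\,[U_1w],U_1w)=0$. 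You need this identity (or the equivalent observation about $H_2\circ A_\e$ being affine in $y$) to remove the $H_2$ contribution from terms (b) and (c); the projection alone does not do it.
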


\begin{proof}
Since $ U_1 = \Pi_S U_1 $ and  $ U_2 = \Pi_S U_2 $, 
the last three terms in \eqref{K02} have all the form \eqref{forma buona resto con psi}.
We have to prove that they are also small in size. 
 
By \eqref{shape H2}, \eqref{trasformazione modificata simplettica}, \eqref{L1 L2}, 
the only term in $\e^{-2b} H_2(A_\e(G_\delta(\psi, \eta, w)))$ 
that is quadra\-tic in $w$ is $\frac12 \int_\T w_x^2 \, dx$, 
so this is the only contribution to \eqref{K02} coming from $H_2$. 

It remains to consider all the terms coming from $\mH_{\geq 4} := \mH_4 + {\cal H}_{\geq 5} 
= O(u^4)$.
The term $\e^{b - 1} \partial_u \nabla {\cal H}_{\geq 4}(T_\delta) U_1$, 
the term $\e^{2(b - 1)} U_1^T (\pa_u \nabla {\cal H}_{\geq 4})(T_\delta) U_1$ 
and the term $\e^{2 b - 3} U_2^T \nabla {\cal H}_{\geq 4}(T_\delta) $
have all the form \eqref{forma buona resto con psi} and, 
using the inequality $ \| T_\delta \|_s^\Lipg \leq \e (1 + \| \fracchi_\d \|_s^\Lipg) $,
\eqref{extra piccolezza} and \eqref{ansatz 0}, 
the bound \eqref{piccolo FBR} holds. 
By \eqref{derivata i delta} and using explicit formulae \eqref{L1 L2}-\eqref{T2} we get \eqref{derivata piccolo FBR}.
\end{proof}

The conclusion of this section is that,  after the composition with 
the action-angle variables, the rescaling \eqref{rescaling kdv quadratica}, 
and the transformation $ G_\delta $, the linearized operator 
to analyze is $w \mapsto (\pa_u \gr \mH)(T_\delta) [w] $, $w \in H_S^\bot$, 
up to finite dimensional operators which have the form \eqref{forma buona resto con psi} 
and size \eqref{piccolo FBR}.

\subsection{The linearized operator in the normal directions}
\label{forma-linear-normal}

In view of \eqref{piccolezza resti} we now compute  
$  ( (\pa_u \gr \mH)(T_\delta) [w], w )_{L^2(\T)} $, $ w \in H_S^\bot $, where
$ \mH  = H \circ \Phi_B $ and $\Phi_B $ is the Birkhoff map of Proposition \ref{prop:weak BNF}.
We recall that $\Phi_B(u) = u + \Psi(u)$ where $\Psi$ satisfies \eqref{finito finito} and $\Psi(u) = O(u^3)$.  
It is convenient to estimate separately the terms in 
\be\label{mH H2H3H5}
\mH  = H \circ \Phi_B = H_2 \circ \Phi_B + H_4 \circ \Phi_B + H_{\geq 5} \circ \Phi_B 
\ee
where $ H_2, H_4, H_{\geq 5}$ are defined in \eqref{H iniziale KdV}.

We first consider $ H_{\geq 5} \circ \Phi_B $. 
By \eqref{H iniziale KdV} we get
$ \nabla H_{\geq 5}(u) = \pi_0[ (\partial_u f)(x, u, u_x) ]$ 
$- \pa_x \{ (\pa_{u_x} f)(x, u,u_x) \} $ where
$ \pi_0  $ is the operator defined in \eqref{def:pi0}. 
Since $ \Phi_B $ has the form \eqref{finito finito}, 
Lemma \ref{lemma astratto potente} (at $ u = T_\delta $, see \eqref{T0}) implies that
\begin{align} 
\pa_u \nabla ( H_{\geq 5} \circ \Phi_B ) (T_\delta)  [h] 
& = 
(\pa_u \gr H_{\geq 5})(\Phi_B(T_\delta)) [h] + {\cal R}_{H_{\geq 5}}(T_\delta)[h] 
\notag \\ 
& = \partial_x (r_1(T_\delta) \partial_x h ) + r_0(T_\delta) h + {\cal R}_{H_{\geq 5}}(T_\delta)[h] 
\label{der grad struttura separata5}
\end{align}
where the multiplicative functions $r_0(T_\d)$, $r_1(T_\d)$ are
\begin{align} 
\label{r0r1 def}
r_0 (T_\delta) & := \s_0(\Phi_B(T_\delta)), \quad 
r_1 (T_\delta) := \s_1(\Phi_B(T_\delta)), \\
\s_0(u) & :=  (\pa_{uu} f)(x, u, u_x) - \pa_x \{ (\partial_{u u_x} f)(x, u, u_x) \}, \notag \\
\s_1(u) & := -  (\pa_{u_x u_x} f)(x, u, u_x), \notag
\end{align}
the remainder $ {\cal R}_{H_{\geq 5}}(u) $ has the form \eqref{forma buona resto}
with $\chi_j = e^{\ii jx}$ or $g_j = e^{\ii jx}$ and, using \eqref{resti012}, 
it satisfies, for some $ \sigma := \sigma (\nu, \tau) > 0$, 
\begin{align} \label{2015-1}
& \| g_j \|_s^\Lipg \| \chi_j \|_{s_0}^\Lipg + \| g_j \|_{s_0}^\Lipg \| \chi_j \|_s^\Lipg 
\leq_s \e^5 (1 + \| \fracchi_\d \|_{s+2}^\Lipg ),  \\
& \| \partial_i g_j [\widehat \imath ]\|_s \| \chi_j \|_{s_0} 
+ \| \partial_i g_j [\widehat \imath ]\|_{s_0}  \| \chi_j \|_{s} 
+ \| g_j \|_{s_0} \| \partial_i \chi_j [\widehat \imath ] \|_s 
+ \| g_j \|_{s} \| \partial_i \chi_j [\widehat \imath ]\|_{s_0}  \notag \\ 
& \leq_s  \e^5 ( \| \widehat \imath \|_{s+\s} 
+ \| \fracchi_\d \|_{s+2} \| \widehat \imath \|_{s_0 + 2} ).  \notag
\end{align}
Now we consider the contributions from $ H_2 \circ \Phi_B$ and $H_4 \circ \Phi_B $.
By Lemma \ref{lemma astratto potente} and the expressions of $ H_2, H_4 $ in \eqref{H iniziale KdV} we deduce that
\begin{align} \label{2015-2bis}
\pa_u \nabla ( H_2 \circ \Phi_B) (T_\delta) [h] 
& = - \partial_{xx} h + {\cal R}_{H_2}(T_\delta)[h] \,, \\
\pa_u \nabla ( H_4 \circ \Phi_B) (T_\delta) [h] 
& = -3\varsigma (\Phi_B (T_\delta))^2 h + {\cal R}_{H_4}(T_\delta)[h] \,,
\label{2015-3}
\end{align}
where 
$ {\cal R}_{H_2}(u) $, $ {\cal R}_{H_4}(u) $ have the form \eqref{forma buona resto}. 
By \eqref{resti012}, they have size 
${\cal R}_{H_2}(T_\delta) = O(\e^2)$, ${\cal R}_{H_4}(T_\delta) = O(\e^4)$. 
More precisely, the functions $g_j, \chi_j$ in $\mR_{H_4}(T_\d)$ satisfy the bounds in \eqref{2015-1} 
with $\e^5$ replaced by $\e^4$. 
Regarding $\mR_{H_2}(T_\d)$, we need to find an exact formula for the terms of order $\e^2$. 

The sum of \eqref{der grad struttura separata5}, \eqref{2015-2bis} and \eqref{2015-3} gives a formula for 
$\pa_u \nabla \mH(T_\d)[h]$, where the terms of form \eqref{forma buona resto} and order $\e^2$ 
are confined in $\mR_{H_2}(T_\d)$.
On the other hand, recalling \eqref{widetilde cal H}, $\mH = H_2 + \mH_4 + \mH_{\geq 5}$, 
and $\pa_u \nabla H_2(T_\d) = -\pa_{xx}$, while $\pa_u \nabla \mH_{\geq 5}(T_\d) = O(\e^3)$. 
Therefore all the terms of order $\e^2$ in $\pa_u \nabla \mH(T_\d)$ 
can only come from $\pa_u \nabla \mH_4(T_\d)$. 
Using formula \eqref{H3tilde} for $\mH_4$, we calculate
\[
\Pi_S^\bot \big( \pa_u \nabla \mH_4(T_\d)[h] \big) 
= - 3\varsigma \Pi_S^\bot (T_\d^2 h) \quad \forall h \in H_{S^\bot}^s \,.
\]
Hence all the terms of order $\e^2$ in $\Pi_S^\bot (\pa_u \nabla \mH(T_\d)[h] )$
are contained in the term $- 3\varsigma \Pi_S^\bot (T_\d^2 h)$ 
(and the term $- 3\varsigma \Pi_S^\bot (T_\d^2 h)$ is included in 
$-3\varsigma \Pi_S^\bot[ (\Phi_B (T_\delta))^2 h]$  
because $\Phi_B(T_\d) = T_\d + \Psi(T_\d)$). 
As a consequence, $\Pi_S^\bot \mR_{H_2}(T_\d)$ is of size $O(\e^3)$, 
and its functions $g_j, \chi_j$ (see \eqref{forma buona resto}) satisfy \eqref{2015-1} 
with $\e^5$ replaced by $\e^3$.    

By Lemma \ref{dopo l'approximate inverse} and the results of this section we deduce: 

\begin{proposition}\label{prop:lin}
Assume \eqref{ansatz delta}. 
Then the Hamiltonian operator $ {\cal L}_\om $ has the form, 
$ \forall h \in H_{S^\bot}^s ( \T^{\nu+1}) $,
\be\label{Lom KdVnew}
{\cal L}_\om h  :=   \mD_\om h - \partial_x K_{02} h   
= \Pi_S^\bot \big( \mD_\om h + \partial_{xx} (a_1 \partial_x h) 
+ \pa_x ( a_0 h ) - \partial_x \mR_* h \big)  
\ee
where $ {\mR}_* := \mR_{H_2}(T_\d) + \mR_{H_4}(T_\d) + \mR_{H_{\geq 5}}(T_\d) + R(\psi) $ 
(with $R(\psi)$  defined in Lemma \ref{dopo l'approximate inverse}, 
and $\mR_{H_2}(T_\d)$, $\mR_{H_4}(T_\d)$, $\mR_{H_{\geq 5}}(T_\d)$ defined in 
\eqref{der grad struttura separata5}, \eqref{2015-2bis}, \eqref{2015-3}), 
the functions 
\begin{equation}\label{a1p1p2}
a_1  := 1 - r_1 ( T_\delta ) \, , \qquad 
a_0 := 3\varsigma (\Phi_B(T_\d))^2 - r_0(T_\d) \,, \qquad 
\end{equation}
$ r_0, r_1 $ are defined in \eqref{r0r1 def}, 
and $ T_\delta $ in \eqref{T0}.
They satisfy 
\begin{align} 
\| a_1 -1 \|_s^\Lipg + \| a_0 - 3\varsigma T_\d^2 \|_s^\Lipg 
& \leq_s \e^3 ( 1 + \| {\mathfrak I}_\d \|_{s+\s}^\Lipg ) \,,  
\label{n 10}
\\
\| \pa_i a_1[\widehat \imath ] \|_s + \| \pa_i (a_0 - 3\varsigma T_\d^2) [\widehat \imath ] \|_s  
& \leq_s \e^3 ( \| \widehat \imath \|_{s+\s} + \| {\mathfrak I}_\delta \|_{s+\s} 
\| \widehat \imath \|_{s_0+\s} ) 
\label{n 11}  
\end{align}
where $ \fracchi_\d (\ph) := (\theta_0(\ph) - \ph, y_\d(\ph), z_0(\ph)) $ corresponds to $T_\delta$. 
The remainder 
$ {\cal R}_* $ has the form \eqref{forma buona resto}, 
and its coefficients $g_j, \chi_j$ satisfy bounds \eqref{piccolo FBR}-\eqref{derivata piccolo FBR}.
\end{proposition}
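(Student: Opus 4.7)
The plan is to assemble Proposition \ref{prop:lin} directly from the computations already developed in this section, so the proof will be essentially a bookkeeping exercise. Lemma \ref{dopo l'approximate inverse} already reduces $K_{02}$ to $(\pa_u \nabla \mH)(T_\delta)$ plus a finite dimensional remainder $R(\psi)$ of the form \eqref{forma buona resto con psi}, and via the splitting \eqref{mH H2H3H5} the formulas \eqref{der grad struttura separata5}, \eqref{2015-2bis}, \eqref{2015-3} give the contributions of $H_2 \circ \Phi_B$, $H_4 \circ \Phi_B$, $H_{\geq 5} \circ \Phi_B$ to $(\pa_u \nabla \mH)(T_\delta)$.

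The first step is therefore to sum these three expansions, obtaining
\begin{equation*}
(\pa_u \nabla \mH)(T_\delta)[h] = -\pa_{xx} h - 3\varsigma (\Phi_B(T_\delta))^2 h + \pa_x(r_1(T_\delta) \pa_x h) + r_0(T_\delta) h + \widetilde{\mR}\,[h]
\end{equation*}
where $\widetilde{\mR} := \mR_{H_2}(T_\delta) + \mR_{H_4}(T_\delta) + \mR_{H_{\geq 5}}(T_\delta)$ has the finite dimensional shape \eqref{forma buona resto}. Inserting this into $\mL_\omega = \Pi_S^\bot(\mD_\omega - \pa_x K_{02})$ and distributing $-\pa_x$ across the summands, the identities
\begin{equation*}
\pa_{xxx} h - \pa_{xx}(r_1(T_\delta) \pa_x h) = \pa_{xx}\big((1 - r_1(T_\delta))\pa_x h\big) = \pa_{xx}(a_1 \pa_x h)
\end{equation*}
and
\begin{equation*}
3\varsigma\, \pa_x\big((\Phi_B(T_\delta))^2 h\big) - \pa_x(r_0(T_\delta) h) = \pa_x(a_0 h)
\end{equation*}
match the target \eqref{Lom KdVnew} with the stated $a_1, a_0$ and with $\mR_* = \widetilde{\mR} + R(\psi)$.

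For the coefficient estimates I would observe that \eqref{order5}, differentiated twice in $u_x$ or once in $u$ and once in $u_x$, yields $\pa_{u_x u_x} f = O((|u|+|u_x|)^3)$ and $\sigma_0 = \pa_{uu} f - \pa_x \{\pa_{u u_x} f\} = O((|u|+|u_x|)^3)$ near the origin, so composing with $\Phi_B(T_\delta) = T_\delta + O(T_\delta^3) = O(\e)$ through the tame composition estimates of Lemma \ref{lemma:composition of functions, Moser}, and using $\| T_\delta \|_s^\Lipg \leq_s \e(1 + \| \fracchi_\d \|_s^\Lipg)$ together with \eqref{ansatz delta}, gives $\| r_0(T_\delta) \|_s^\Lipg + \| r_1(T_\delta) \|_s^\Lipg \leq_s \e^3(1 + \| \fracchi_\d \|_{s+\s}^\Lipg)$. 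Since $\Phi_B(T_\delta) - T_\delta = \Psi(T_\delta) = O(T_\delta^3) = O(\e^3)$ by \eqref{finito finito}, one has $(\Phi_B(T_\delta))^2 - T_\delta^2 = O(\e^4)$ with the corresponding tame bound, yielding \eqref{n 10}. The derivative bound \eqref{n 11} follows analogously by differentiating in $i$ and using \eqref{derivata i delta} together with the chain rule. Finally, the structural claim on $\mR_*$ and the bounds on its coefficients $g_j, \chi_j$ are just the union of \eqref{2015-1} (and its analogues with $\e^5$ replaced by $\e^4$ for $\mR_{H_4}(T_\delta)$ and by $\e^3$ for $\Pi_S^\bot \mR_{H_2}(T_\delta)$, as discussed just before the statement) with \eqref{piccolo FBR}-\eqref{derivata piccolo FBR}. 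There is no hard step: the substantive work went into constructing $\Phi_B$ and $G_\delta$ and into Lemma \ref{lemma astratto potente}, which guarantees that the corrections arising from these transformations are confined to the benign finite dimensional shape \eqref{forma buona resto con psi}.
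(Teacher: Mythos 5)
Your proposal is correct and follows essentially the same route as the paper, which states Proposition \ref{prop:lin} as a direct consequence of Lemma \ref{dopo l'approximate inverse} and the expansions \eqref{der grad struttura separata5}, \eqref{2015-2bis}, \eqref{2015-3}; your assembly of the sum, the sign bookkeeping giving $\partial_{xx}(a_1\partial_x h)$ and $\partial_x(a_0 h)$, and the sources of the coefficient and remainder estimates all match the paper's derivation.
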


\begin{remark} \label{rem:lm M2-3}
For $ K = H + \lm M^2$, $ \lambda = 3 \varsigma / 4 $,  
the coefficient $a_0$ in \eqref{a1p1p2} 
becomes 
\[
a_0 = 3 \varsigma \pi_0 \big[ (\Phi_B (T_\d))^2 \big] - r_0(T_\d),
\]
where $ \pi_0$ is defined in \eqref{def:pi0}. 
Thus the space average of $ a_0$ has size $O(\e^3)$. 
\end{remark}

Bound \eqref{piccolo FBR} imply, by Lemma \ref{remark : decay forma buona resto}, estimates 
for the $ s $-decay norms of ${\cal R}_*$.   
The linearized operator $ {\cal L}_\om := {\cal L}_\om (\om, i_\d (\om))$ 
depends on the parameter $ \om $ both directly  and also through the dependence on the 
torus $i_\d (\om )$.
We have estimated also the partial derivative $ \pa_i $ with respect to  the variables $ i $ (see \eqref{embedded torus i}) in order to control, along the nonlinear Nash-Moser iteration, 
the Lipschitz variation of the eigenvalues of $ {\cal L}_\om $ with respect to $ \om $
and the approximate solution $ i_\d $.

\section{Reduction of the linearized operator in the normal directions}
\label{operatore linearizzato sui siti normali}

The goal of this section is to conjugate the Hamiltonian linear operator $ {\cal L}_\om $ in \eqref{Lom KdVnew} 
to the constant coefficients linear operator $ {\cal L}_\infty $ defined in \eqref{Lfinale}. 
The proof is obtained applying different kind of symplectic transformations. 
We shall always assume \eqref{ansatz delta}.

\subsection{Space reduction at the order $ \pa_{xxx} $ }\label{step1}

As a first step, we symplectically conjugate the operator 
$ {\cal L}_\om $ in \eqref{Lom KdVnew} to  $ {\cal L}_1 $ in 
\eqref{cal L1 Kdv},  which has the coefficient of $\partial_{xxx}$ independent on the space variable. 
Because of the Hamiltonian structure, this step also eliminates the terms $ O( \pa_{xx} )$.
 
We look for a  $ \vphi $-dependent family of symplectic diffeomorphisms $\Phi (\vphi) $ of $ H_S^\bot $ 
which differ from 
\begin{equation}\label{primo cambio di variabile modi normali}
{\cal A}_{\bot} := \Pi_S^\bot {\cal A} \Pi_S^\bot  \, ,  \quad  
({\cal A} h)(\vphi,x) := (1 + \beta_x(\vphi,x)) h(\vphi,x + \beta(\vphi,x)) \, , 
\end{equation}
up to a small ``finite dimensional'' remainder, see  \eqref{forma buona resto cambio di variabile hamiltoniano}.
For each $ \varphi \in \T^\nu $, the map $ {\cal A}(\vphi) $ is 
a symplectic map of the phase space, see Remark 3.3 in \cite{BBM}. 
If  $ \| \b \|_{W^{1,\infty}} < 1/2$, then $ {\cal A} $ is invertible (see Lemma \ref{lemma:utile}), 
and its inverse and  adjoint maps are 
\begin{equation}\label{cambio di variabile inverso}
\begin{aligned}
({\cal A}^{-1} h)(\vphi,y) 
& = (1 + \tilde{\beta}_y(\vphi,y)) h(\vphi, y + \tilde{\beta}(\vphi,y)) \,, 
\\
({\cal A}^T h) (\vphi,y) 
& = h(\vphi, y + \tilde{\beta}(\vphi,y)) 
\end{aligned}
\end{equation}
where
$ x = y + \tilde{\b} (\vphi, y) $ is the inverse diffeomorphism (of $\T$) of $ y = x + \b (\vphi, x) $. 

The restricted map $ {\cal A}_\bot (\vphi): H_S^\bot \to H_S^\bot $ is not symplectic.  
We have already observed in the introduction that
$ {\cal A }(\vphi ) $  
is the time-$1$ flow map of the linear Hamiltonian PDE \eqref{transport-free}.
The equation \eqref{transport-free} is a linear transport equation, 
whose charactheristic curves are the solutions of the ODE 
$$
\frac{d}{d\tau} x  = - b(\vphi, \tau, x) \, .
$$
To obtain a symplectic transformation close to $\mA_\bot$, 
we define a symplectic map $\Phi $ of $ H_S^\bot $  as the time 1 flow of the Hamiltonian PDE \eqref{problemi di cauchy}.
The linear operator $ \Pi_S^\bot \partial_x (b(\tau, x) u) $ is the Hamiltonian vector field generated by 
the quadratic Hamiltonian $ \frac12 \int_{\T} b(\tau, x) u^2 dx  $ restricted to $ H_S^\bot $. 
The flow of \eqref{problemi di cauchy}  
is well defined in the Sobolev spaces $ H^s_{S^\bot} (\T_x) $ for $ b(\vphi, \tau, x) $ smooth enough, 
by standard theory of linear hyperbolic PDEs (see e.g.\ section 0.8 in \cite{Taylor}). 
The difference between the time 1 flow map $ \Phi $ and $ {\cal A}_\bot $ is a  ``finite-dimensional'' remainder
of size $O(\b)$. 

\begin{lemma}[Lemma 8.1 of \cite{bbm}]
\label{modifica simplettica cambio di variabile}
For $ \| \beta \|_{W^{s_0 + 1,\infty}} $ small, 
there exists an invertible symplectic transformation 
$\Phi = {\cal A}_\bot + {\cal R}_\Phi$ of $H_{S^\bot}^s$, 
where $ {\cal A}_\bot $ is defined in \eqref{primo cambio di variabile modi normali} 
and $ {\cal R}_\Phi $ is a ``finite-dimensional'' remainder 
\begin{equation}\label{forma buona resto cambio di variabile hamiltoniano}
{\cal R}_\Phi h= \sum_{j \in S} \int_0^1 (h, g_j (\tau)  )_{L^2(\T)} \, \chi_j (\tau) \, d \tau 
+ \sum_{j \in S} \big(h, \psi_j \big)_{L^2(\T)} e^{\ii j x}
\end{equation}
for some functions $ \chi_j (\tau), g_j (\tau) , \psi_j \in H^s $ satisfying 
for all $\t \in [0,1]$
\begin{equation}\label{stime forma buona resto cambio di variabile hamiltoniano}
\| \psi_j\|_s + \| g_j(\tau)\|_s \leq_s \| \beta\|_{W^{s + 2, \infty}}\,,
\quad \| \chi_j(\tau)\|_s \leq_s  1 + \| \beta \|_{W^{s + 1, \infty}} \,.
\end{equation}
Moreover 
\begin{equation}\label{stime Phi Phi -1}
\| \Phi h \|_s + \| \Phi^{-1} h \|_s 
\leq_s \| h \|_s + \| \beta \|_{W^{s + 2, \infty}} \| h \|_{s_0}  
\quad \forall h \in H^s_{S^\bot} \,. 
\end{equation}
\end{lemma}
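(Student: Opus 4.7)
My plan is to take $\Phi(\tau)$ to be the flow on $H_S^\bot$ of the projected transport equation \eqref{problemi di cauchy}, and to measure its distance from $\mathcal{A}_\bot$ by a Duhamel argument. The generator $u \mapsto \Pi_S^\bot \pa_x(b(\vphi,\tau,x) u)$ is the Hamiltonian vector field on $(H_S^\bot, \Omega_{S^\bot})$ associated with the quadratic functional $\tfrac12 \int_\T b(\vphi,\tau,x)\, u^2\, dx$ restricted to $H_S^\bot$, hence the time-$1$ map $\Phi := \Phi(1)$ is automatically symplectic. Well-posedness of \eqref{problemi di cauchy} in every $H^s_{S^\bot}$ follows for $\|\b\|_{W^{s_0+1,\infty}}$ small by standard linear hyperbolic theory: after one integration by parts, using $\int_\T \pa_x(b u)\, u\, dx = -\tfrac12 \int_\T b_x u^2\, dx$, the commutator $[\pa_x^s, \Pi_S^\bot \pa_x(b \cdot)]$ yields no top-order loss, so a tame Gronwall estimate gives $\|\Phi(\tau,\sigma) h\|_s \leq \|h\|_s + C(s)\|\b\|_{W^{s+1,\infty}}\|h\|_{s_0}$ uniformly in $(\tau,\sigma) \in [0,1]^2$, and the same bound propagates to the Lipschitz-in-$\omega$ seminorm.

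For the comparison with $\mathcal{A}_\bot$, let $\mathcal{A}(\tau)$ denote the time-$\tau$ flow of the unprojected transport equation \eqref{transport-free} on all of $H^1_0(\T)$. Given $h \in H_S^\bot$, both $\Phi(\tau)h$ and $\mathcal{A}_\bot(\tau)h = \Pi_S^\bot \mathcal{A}(\tau) h$ equal $h$ at $\tau=0$, and writing $\mathcal{R}_\Phi(\tau) := \Phi(\tau) - \mathcal{A}_\bot(\tau)$ one computes
\begin{align*}
\pa_\tau \mathcal{R}_\Phi(\tau) h
& = \Pi_S^\bot \pa_x(b \Phi(\tau) h) - \Pi_S^\bot \pa_x(b \mathcal{A}(\tau) h) \\
& = \Pi_S^\bot \pa_x\bigl[b \, \mathcal{R}_\Phi(\tau) h\bigr] - \Pi_S^\bot \pa_x\bigl[b \, \Pi_S \mathcal{A}(\tau) h\bigr],
\end{align*}
the last term being of finite rank because $\Pi_S w = \sum_{j \in S}(2\pi)^{-1}(w, e^{\ii jx})_{L^2}\, e^{\ii jx}$. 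Duhamel, together with the identity $(\mathcal{A}(\sigma)h, e^{\ii jx})_{L^2} = (h, \Pi_S^\bot \mathcal{A}(\sigma)^T e^{\ii jx})_{L^2}$ (valid since $h \in H_S^\bot$), yields
\[
\mathcal{R}_\Phi(1) h = - \sum_{j\in S} \int_0^1 \tfrac{1}{2\pi}\bigl(h,\, \Pi_S^\bot \mathcal{A}(\sigma)^T e^{\ii jx}\bigr)_{L^2}\; \Phi(1,\sigma) \Pi_S^\bot \pa_x\bigl(b(\sigma) e^{\ii jx}\bigr)\, d\sigma,
\]
which is of the form \eqref{forma buona resto cambio di variabile hamiltoniano}; a $\sigma$-independent remainder, arising when one splits $\Phi(1,\sigma)\Pi_S^\bot \pa_x(b(\sigma) e^{\ii jx})$ into its $\sigma=1$ value plus a corrector (or, equivalently, when one separates the $\Pi_S$ and $\Pi_S^\bot$ parts of $\mathcal{A}(1,\sigma) e^{\ii jx}$), integrates to give the second sum $\sum_{j\in S}(h,\psi_j)_{L^2} e^{\ii jx}$.

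The bounds \eqref{stime forma buona resto cambio di variabile hamiltoniano} now follow by applying Lemma \ref{lemma:utile} to the change of variable $y = x + \b(\vphi,x)$ and its inverse (to estimate $\mathcal{A}$, $\mathcal{A}^T$ in $H^s$), combined with the tame estimate for $\Phi(1,\sigma)$ obtained in the first paragraph; the one-derivative loss on $\b$ in the bounds for $g_j$ and $\psi_j$, resp.\ two-derivative loss for $\chi_j$, is due to the factors $e^{\ii jx}$ moved through $\mathcal{A}^T(\sigma)$, resp.\ through $\pa_x(b(\sigma)\cdot) \circ \Phi(1,\sigma)$. The operator estimate \eqref{stime Phi Phi -1} for $\Phi$ then follows from $\Phi = \mathcal{A}_\bot + \mathcal{R}_\Phi$: the $\mathcal{A}_\bot$ piece is controlled by Lemma \ref{lemma:utile}, while $\mathcal{R}_\Phi$ is controlled via Lemma \ref{remark : decay forma buona resto}, which converts the finite-rank form into an $s$-decay (hence operator) bound. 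Invertibility of $\Phi$ is free by reversing time in \eqref{problemi di cauchy}, and $\Phi^{-1}$ is treated identically.

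The main technical obstacle is the high-Sobolev energy estimate for the projected flow $\Phi(\tau,\sigma)$ with no loss of derivatives: after applying $\pa_x^s$, the dangerous commutator with $b\pa_x$ must be absorbed via an integration by parts, using the skew-symmetry of $\pa_x$ together with the fact that $[\Pi_S^\bot, b\cdot]$ is a finite-rank smoothing operator, so that the projector does not produce any top-order contribution. Once this tame Gronwall bound is in hand, every subsequent step is a routine application of the composition and product estimates already stated earlier in the paper.
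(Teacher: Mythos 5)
Your construction is correct and is essentially the proof of Lemma 8.1 in \cite{bbm}: $\Phi$ is the time-one flow of the projected transport equation \eqref{problemi di cauchy}, symplectic because its generator is the Hamiltonian vector field of $\frac12\int_\T b\,u^2\,dx$ restricted to $H_S^\bot$, with tame bounds from the standard hyperbolic energy method, and the finite-rank remainder extracted by Duhamel. The one point where you deviate is the direction of the Duhamel argument: you integrate the finite-rank difference of the two generators against the \emph{projected} propagator $\Phi(1,\sigma)$, whereas the reference proof treats $-\Pi_S\pa_x(b\,\cdot)$ as a perturbation of the unprojected equation and integrates against $\mathcal{A}(1)\mathcal{A}(\sigma)^{-1}$. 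Your variant keeps every term in $H_S^\bot$ and therefore produces only the integral part of \eqref{forma buona resto cambio di variabile hamiltoniano}; in the reference version the second sum $\sum_{j\in S}(h,\psi_j)_{L^2}e^{\ii jx}$, whose range lies in $H_S$, comes from the piece $\Pi_S\mathcal{A}(1)h$ that must be subtracted when passing from $\mathcal{A}(1)$ to $\mathcal{A}_\bot$. Your attempt to manufacture that second sum by splitting off a $\sigma$-independent part of $\chi_j(\sigma)$ is both unnecessary and, as written, incorrect in detail (it would yield a rank-one term with range in $H_S^\bot$, not along $e^{\ii jx}$); since the lemma only asserts the existence of \emph{some} $g_j,\chi_j,\psi_j$ with the stated bounds, taking $\psi_j=0$ in your representation is perfectly admissible. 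The rest checks out: the identity $(\mathcal{A}(\sigma)h,e^{\ii jx})_{L^2}=(h,\Pi_S^\bot\mathcal{A}(\sigma)^T e^{\ii jx})_{L^2}$ for $h\in H_S^\bot$, the smallness $\Pi_S^\bot\mathcal{A}(\sigma)^T e^{\ii jx}=O(\beta)$ (because $\Pi_S^\bot e^{\ii jy}=0$ for $j\in S$) which gives the required bound on $g_j$, and the derivation of \eqref{stime Phi Phi -1} from either the flow estimate or the decomposition $\mathcal{A}_\bot+\mathcal{R}_\Phi$ together with Lemma \ref{lemma:utile}.
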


We conjugate  $ {\cal L}_\om $  in  \eqref{Lom KdVnew} 
via the symplectic map $ \Phi = {\cal A}_\bot + {\cal R}_\Phi $
of Lemma \ref{modifica simplettica cambio di variabile}. 
Using the splitting $ \Pi_S^{\bot} = I - \Pi_S $, we compute 
\begin{equation}\label{LAbot}
{\cal L}_\om \Phi =  \Phi {\cal D}_\omega + 
\Pi_S^\bot {\cal A} \big( 
b_3 \partial_{yyy}  + b_2 \partial_{yy}   + b_1 \partial_{y}  + b_0   \big) \Pi_S^\bot +  {\cal R}_I \,,
\end{equation}
where the coefficients $b_i(\ph,y)$, $i=0,1,2,3$, are
\begin{align} \label{step1: b3KdV} 
b_3
& := {\cal A}^T [ a_1 ( 1 + \b_x)^3 ], 
\quad 
b_2 := {\cal A}^T \big[ 2 (a_1)_x (1 + \beta_x )^2 + 6 a_1  \beta_{xx} (1 + \beta_x )\big],  
\\ 
b_1
& := {\cal A}^T \big[ ({\cal D}_\om \beta) + \frac{3 a_1 \beta_{xx}^2 }{1 + \beta_x} 
+ 4 a_1 \beta_{xxx} + 6 (a_1)_x \beta_{xx} 
+ ((a_1)_{xx} + a_0) (1 + \beta_x) \big],  
\notag \\ 
b_0
& := {\cal A}^T \Big[ \frac{1}{1 + \b_x} 
\Big( \mD_\om \beta_x + a_1 \beta_{xxxx} + 2 (a_1)_{x} \beta_{xxx} 
+ ((a_1)_{xx} + a_0) \b_{xx} \Big)
+ (a_0)_x \Big], 
\notag
\end{align}
and the remainder
\begin{align}
{\cal R}_I := {} & 
- \Pi_S^\bot  \big( a_1 \partial_{xxx} 
+ 2 (a_1)_x \partial_{xx}  + ( (a_{1})_{xx} + a_0)\partial_x  
+ (a_0)_x \big)  \Pi_{S} {\cal A}  \Pi_S^\bot \,  \nonumber\\
& - \Pi_S^\bot \partial_x \mR_{*} {\cal A}_\bot  
+ [{\cal D}_\omega, {\cal R}_\Phi] + ({\cal L}_\omega - {\cal D}_\omega) {\cal R}_\Phi\,. 
\label{calR1KdV}
\end{align}
The commutator $[{\cal D}_\omega, {\cal R}_\Phi] $ has the form \eqref{forma buona resto cambio di variabile hamiltoniano} with ${\cal D}_\om g_j$ or ${\cal D}_\om \chi_j$, ${\cal D}_\om \psi_j$ instead of $\chi_j$, $g_j$, $\psi_j$ respectively. Also the last term $({\cal L}_\omega - {\cal D}_\omega) {\cal R}_\Phi$ in \eqref{calR1KdV} has the form \eqref{forma buona resto cambio di variabile hamiltoniano} (note that ${\cal L}_\omega - {\cal D}_\omega$ does not contain derivatives with respect to $\vphi$). By \eqref{LAbot}, and decomposing $ I = \Pi_S + \Pi_S^\bot $, we get 
\begin{align} \label{L A bot finaleKdV}
{\cal L}_\om \Phi 
= {} & \Phi  ( {\cal D}_\omega  +  b_3 \partial_{yyy}  + b_2 \partial_{yy}   + b_1 \partial_{y}  + b_0 ) \Pi_S^\bot
+ {\cal R}_ {II} \,,
\\
\label{calR2KdV}
{\cal R}_{II}  
:= {} &  \{\Pi_S^\bot ({\cal A} - I) \Pi_{S} - {\cal R}_\Phi \} 
( b_3 \partial_{yyy}  + b_2 \partial_{yy}   + b_1 \partial_{y}  + b_0 ) \Pi_S^\bot   + {\cal R}_I \,. 
\end{align}
Now we choose the function $ \beta = \b (\vphi, x)  $  such that 
\begin{equation}\label{choice beta}
a_1(\vphi, x) (1 + \b_x (\vphi, x))^3 = b_3 (\vphi) 
\end{equation}
so that the coefficient $ b_3 $ in \eqref{step1: b3KdV} depends only on $ \vphi $
(note that $ {\cal A}^T [b_3 (\vphi)]$ $= b_3 (\vphi)$). 
The only solution of \eqref{choice beta} with zero space average is (see e.g.\ \cite{BBM}-section 3.1)
$\b := \partial_x^{-1} \rho_0$, 
where $\rho_0 := b_3 (\vphi)^{1/3} (a_1 (\vphi, x))^{-1/3} - 1$, and 
\begin{equation}\label{sol beta}
b_3 (\vphi) = \Big( \frac{1}{2 \pi} \int_{\T} (a_1 (\vphi, x))^{-1/3} dx \Big)^{-3}. 
\end{equation}
Applying  the symplectic map  $ \Phi^{-1} $ in \eqref{L A bot finaleKdV} 
we  obtain the Hamiltonian operator (see Definition \ref{operatore Hamiltoniano})
\begin{equation}\label{cal L1 Kdv}
{\cal L}_1 := \Phi^{-1} {\cal L}_\om \Phi
= \Pi_S^\bot \big( \om \cdot \partial_\vphi + b_3(\vphi) \partial_{yyy} 
+ b_1 \partial_y + b_0 \big) \Pi_S^\bot +  {\mathfrak R}_1 
\end{equation}
where $ {\mathfrak R}_1 := \Phi^{-1} {\cal R}_{II} $. 
Note that the term $b_2 \pa_{yy}$ has disappeared from \eqref{cal L1 Kdv} because, 
by the Hamiltonian nature of $ {\cal L}_1 $, the coefficient $ b_2 = 2 (b_3)_y $ (see \cite{BBM}-Remark 3.5) 
and therefore, by \eqref{sol beta}, $ b_2  =  2 (b_3)_y = 0 $. 

\begin{lemma}[Lemma 8.2 of \cite{bbm}]  \label{cal R3}
The operator $ {\mathfrak R}_1 $ in \eqref{cal L1 Kdv} has the form \eqref{forma buona con gli integrali}.
\end{lemma}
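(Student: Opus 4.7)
The plan is to work summand-by-summand through the expressions \eqref{calR1KdV} for $\mR_I$ and \eqref{calR2KdV} for $\mR_{II}$, verifying that every piece already has the ``finite-dimensional'' shape \eqref{forma buona con gli integrali}, and then checking that this shape is stable under composition on the left with $\Phi^{-1}$, so that $\mathfrak{R}_1 = \Phi^{-1} \mR_{II}$ inherits the form.

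The central observation is an algebraic closure property for the class \eqref{forma buona con gli integrali}: if $\mR$ has that form and $B$, $C$ are bounded operators on the relevant Sobolev spaces, then
$$
B\, \mR\, C\, h \;=\; \sum_{|j|\leq C'} \int_0^1 (h,\, C^{T} g_j(\tau))_{L^2(\T)}\, (B \chi_j(\tau))\, d\tau,
$$
so the form is preserved with updated kernels $B\chi_j(\tau)$ and $C^{T} g_j(\tau)$. In particular, any operator whose range is contained in the finite-dimensional space $\mathrm{span}\{e^{\ii jx} : |j|\leq C\}$ automatically fits \eqref{forma buona con gli integrali} (with $\tau$-independent integrands, namely $\chi_j(\tau)=e^{\ii jx}$), and any operator that factors through $\Pi_S$ has finite-dimensional range by definition.

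With this in hand, one inspects the four kinds of summands. (i) The first term in \eqref{calR1KdV} contains the central projector $\Pi_S$: the piece $\Pi_S \mA \Pi_S^\bot$ has range in the finite-dimensional space $H_S$, and post-composing with the space-differential operator $-\Pi_S^\bot(a_1 \pa_{xxx} + 2(a_1)_x \pa_{xx} + ((a_1)_{xx}+a_0)\pa_x + (a_0)_x)$ only modifies the $\chi_j$. (ii) The second term $-\Pi_S^\bot \pa_x \mR_* \mA_\bot$ inherits the structure from $\mR_*$, which by Proposition \ref{prop:lin} is already of the form \eqref{forma buona resto}; pre- and post-composition are absorbed by the closure property. (iii) For $[\mD_\om, \mR_\Phi]$ and $(\mL_\om-\mD_\om)\mR_\Phi$, use that $\mR_\Phi$ is of the form \eqref{forma buona resto cambio di variabile hamiltoniano} (Lemma \ref{modifica simplettica cambio di variabile}), that $\mD_\om$ is a derivation acting only on the $\vphi$-dependence of the kernels, and that $\mL_\om - \mD_\om$ contains no $\vphi$-derivatives; both operators therefore retain the structure, with $\chi_j, g_j$ replaced by derivatives/transforms thereof. (iv) In \eqref{calR2KdV}, both $\Pi_S^\bot(\mA - I)\Pi_S$ (finite-dimensional range, since it factors through $\Pi_S$) and $\mR_\Phi$ (of the form \eqref{forma buona resto cambio di variabile hamiltoniano}) are of the right type, and right-composition with $(b_3 \pa_{yyy} + b_2 \pa_{yy} + b_1 \pa_y + b_0)\Pi_S^\bot$ simply replaces the $g_j$ by $(b_3 \pa_{yyy} + \ldots)^{T} g_j$.

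Finally, $\Phi^{-1}$ is bounded on each $H^s_{S^\bot}(\T^{\nu+1})$ by \eqref{stime Phi Phi -1}, so applying it on the left to $\mR_{II}$ preserves the form \eqref{forma buona con gli integrali} via the closure property, yielding $\mathfrak{R}_1$ as required. I do not expect any serious analytic obstacle: the content of the lemma is purely structural, and the only mild care is in bookkeeping — tracking the smoothness and Lipschitz dependence of the resulting $\chi_j(\tau), g_j(\tau)$ produced at each stage, which follow from \eqref{stime forma buona resto cambio di variabile hamiltoniano}, the estimates \eqref{piccolo FBR}--\eqref{derivata piccolo FBR} on $\mR_*$, and (later, for $s$-decay norms) Lemma \ref{remark : decay forma buona resto}.
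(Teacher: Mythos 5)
Your proposal is correct and follows essentially the same route as the cited proof (Lemma 8.2 of \cite{bbm}): a term-by-term verification that each summand of $\mR_I$ and $\mR_{II}$ either factors through the explicit finite-rank projector $\Pi_S = \sum_{j\in S}(\,\cdot\,,e^{\ii jx})_{L^2}\,e^{\ii jx}/2\pi$ or through an operator already of the form \eqref{forma buona resto cambio di variabile hamiltoniano} or \eqref{forma buona resto}, combined with the stability of the form \eqref{forma buona con gli integrali} under left/right composition (which replaces $\chi_j\mapsto B\chi_j$, $g_j\mapsto C^T g_j$) and under $[\mD_\om,\cdot\,]$. The only slight looseness is the blanket claim that finite-dimensional range alone implies the form — what actually matters, and what you in fact use in every case, is the explicit factorization through $\Pi_S$, which guarantees the representing functions $g_j,\chi_j$ lie in $H^s$.
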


Since $a_1 = 1 + O(\e^3)$ and $a_0 = 3\varsigma T_\d^2 + O(\e^3)$ (see \eqref{n 10}-\eqref{n 11} for the precise estimates), by the usual composition estimates we deduce the following lemma. 

\begin{lemma} \label{lemma:stime coeff mL1}
There is $\s = \s(\t,\nu) > 0$ such that 
\begin{align} 
& \| \b \|_s^\Lipg + \| b_3 -1 \|_s^\Lipg 
+ \| b_1 - 3 \varsigma T_\d^2 \|_s^\Lipg + \| b_0 - 3 \varsigma (T_\d^2)_x \|_s^\Lipg 
\notag \\ 
& \leq_s \e^3 (1 + \| \fracchi_\d \|_{s+\s}^\Lipg), 
\label{n 1}
\\
& \| \pa_i \b [\widehat \imath] \|_s 
+ \| \pa_i b_3 [\widehat \imath] \|_s 
+ \| \pa_i(b_1 - 3 \varsigma T_\d^2) [\widehat \imath] \|_s
+ \| \pa_i(b_0 - 3 \varsigma (T_\d^2)_x) [\widehat \imath] \|_s 
\notag \\ 
& \leq_s \e^3 \big( \| \widehat \imath \|_{s+\s} + \| {\mathfrak I}_\d \|_{s+\s} \| \widehat \imath \|_{s_0+\s} \big), 
\label{n 2}
\end{align}
where $T_\d$ is defined in \eqref{T0}. 
The transformations $\Phi$, $\Phi^{-1}$ satisfy
\begin{align}
\label{stima cal A bot}
\|\Phi h \|_s^{\Lipg} + \|\Phi^{-1} h \|_s^{\Lipg}
& \leq_s \| h \|_{s + 1}^{\Lipg} + \| {\mathfrak I}_\delta \|_{s + \sigma}^{\Lipg} \| h \|_{s_0 + 1}^{\Lipg} 
\\
\label{stima derivata cal A bot}
\| \pa_i (\Phi h) [\widehat \imath] \|_s + \| \pa_i (\Phi^{-1} h) [\widehat \imath] \|_s 
& \leq_s 
\| h \|_{s + \sigma} \| \widehat \imath \|_{s_0 + \sigma} + 
\| h\|_{s_0 + \sigma} \| \widehat \imath \|_{s + \sigma} 
\\ & \qquad 
+ \| {\mathfrak I}_\delta\|_{s + \sigma} \| h\|_{s_0 + \sigma} \| \widehat \imath \|_{s_0 + \sigma}\,.
\notag 
\end{align}
Moreover the remainder ${\mathfrak R}_1$ has the form \eqref{forma buona con gli integrali},
where the functions $\chi_j(\tau)$, $g_j(\tau)$ satisfy the estimates 
\eqref{piccolo FBR}-\eqref{derivata piccolo FBR}  
uniformly in $\tau \in [0, 1]$.
\end{lemma}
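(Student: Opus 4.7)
The plan is to derive each claim from the explicit formulas already established, using Proposition \ref{prop:lin} as the input and the composition/change-of-variable Lemmata \ref{lemma:composition of functions, Moser}, \ref{lemma:utile}, together with Lemma \ref{modifica simplettica cambio di variabile}, as the workhorses. Throughout, $\sigma$ denotes a loss of regularity depending on $\nu,\tau$ which may grow from line to line.

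First I would estimate $\b$ and $b_3-1$. By \eqref{sol beta} and \eqref{n 10}, writing $a_1 = 1 + (a_1-1)$ with $\|a_1-1\|_s^\Lipg \leq_s \e^3(1+\|\fracchi_\d\|_{s+\s}^\Lipg)$, the analytic function $t \mapsto t^{-1/3}$ applied via Lemma \ref{lemma:composition of functions, Moser} gives $\|a_1^{-1/3}-1\|_s^\Lipg \leq_s \e^3(1+\|\fracchi_\d\|_{s+\s}^\Lipg)$, and integration in $x$ yields the same bound for $b_3(\vphi)-1$. Since $\rho_0 = (b_3)^{1/3} a_1^{-1/3}-1$ is then of the same size and $\b = \pa_x^{-1}\rho_0$, the first part of \eqref{n 1} follows. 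The derivative bound \eqref{n 2} for $\b$ and $b_3$ is obtained by differentiating $\b = \pa_x^{-1}((b_3)^{1/3} a_1^{-1/3}-1)$ in $i$ and invoking \eqref{n 11}.

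Next, for $b_0$ and $b_1$ I would use the explicit expressions \eqref{step1: b3KdV}. Each $b_i$ is $\mA^T$ applied to a polynomial in $a_1,a_0,\b$ and their $x$-derivatives (and $\mD_\omega\b$); the dominant non-small contributions are $\mA^T[(a_1)_{xx} + a_0]$ for $b_1$ and $\mA^T[(a_0)_x]$ for $b_0$, all the other summands being at least quadratic in the small quantities $\b,\, a_1-1,\, a_0-3\varsigma T_\d^2$, hence already $O(\e^6)$ or smaller. Using Lemma \ref{lemma:utile} to control $\mA^T h = h\circ(\mathrm{id}+\tilde\b)$, one has $\|\mA^T h - h\|_s^\Lipg \leq_s \|\b\|_{s+\s}^\Lipg\|h\|_{s_0+1}^\Lipg + \|\b\|_{s_0+\s}^\Lipg\|h\|_{s+1}^\Lipg$, so $\mA^T[3\varsigma T_\d^2] = 3\varsigma T_\d^2 + O(\e^5)$ and similarly for the $x$-derivative; adding these up produces \eqref{n 1} for $b_1,b_0$. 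The derivative bounds \eqref{n 2} then follow by the chain rule from \eqref{n 11} and the $\pa_i$-bound on $\b$.

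For \eqref{stima cal A bot}-\eqref{stima derivata cal A bot}, I apply Lemma \ref{modifica simplettica cambio di variabile}: \eqref{stime Phi Phi -1} gives the sup-estimate, and combined with the Lipschitz estimate on $\b$ from \eqref{n 1} one upgrades to $\Lipg$ and obtains \eqref{stima cal A bot}. The variation $\pa_i(\Phi h)[\widehat\imath]$ is computed from the defining flow equation \eqref{problemi di cauchy}; Duhamel's formula together with the tame bound on $\pa_i\b[\widehat\imath]$ from \eqref{n 2} and the composition estimates yield \eqref{stima derivata cal A bot}. The inverse map $\Phi^{-1}$ is the time-$(-1)$ flow of the same equation and is handled identically.

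Finally, to show that $\mathfrak{R}_1 = \Phi^{-1}\mR_{II}$ has the form \eqref{forma buona con gli integrali}, I track the summands in \eqref{calR2KdV}, \eqref{calR1KdV}: (i) $\Pi_S^\bot(\mA-I)\Pi_S$ and $\mR_\Phi$ are finite-rank of the form \eqref{forma buona resto cambio di variabile hamiltoniano}, so composing them with the differential operator $b_3\pa_{yyy}+b_1\pa_y+b_0$ on the right preserves the form and only enlarges the functions $g_j(\tau)$ by derivatives controlled by \eqref{n 1}; (ii) $\Pi_S^\bot(\ldots)\Pi_S\mA\Pi_S^\bot$ in \eqref{calR1KdV} is finite-rank because of the middle $\Pi_S$; (iii) $\Pi_S^\bot\pa_x\mR_*\mA_\bot$ is of the required form since $\mR_*$ is, by Proposition \ref{prop:lin}; (iv) $[\mD_\omega,\mR_\Phi]$ and $(\mL_\omega-\mD_\omega)\mR_\Phi$ retain the form \eqref{forma buona resto cambio di variabile hamiltoniano} because $\mL_\omega-\mD_\omega$ is a differential operator in $x$ only. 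Applying $\Phi^{-1}$ on the left preserves the form (it acts on the $\chi_j$ factors). The size bounds \eqref{piccolo FBR}-\eqref{derivata piccolo FBR} for the resulting $g_j,\chi_j$ follow by multiplying the bounds \eqref{stime forma buona resto cambio di variabile hamiltoniano} by the already-established bounds on $b_0,b_1,b_3,\b$ and those in Proposition \ref{prop:lin}. The main bookkeeping obstacle is precisely this last step — keeping the $s$-versus-$s_0$ interpolation structure of the estimates \eqref{piccolo FBR}-\eqref{derivata piccolo FBR} intact through the several compositions, which is standard but tedious.
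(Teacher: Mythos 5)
Your proposal is correct and follows essentially the same route as the paper, which itself proves this lemma only by invoking ``the usual composition estimates'' together with \eqref{n 10}--\eqref{n 11}, the explicit formulas \eqref{step1: b3KdV}, \eqref{sol beta}, and Lemma \ref{modifica simplettica cambio di variabile}. One small inaccuracy: in \eqref{step1: b3KdV} the terms $ {\cal D}_\om \b $ and $ 4 a_1 \b_{xxx} $ (and $ {\cal D}_\om \b_x $, $ a_1 \b_{xxxx} $ in $ b_0 $) are linear, not quadratic, in $ \b $, hence only $ O(\e^3) $ rather than $ O(\e^6) $ --- which is still within the bound \eqref{n 1}, so the conclusion is unaffected.
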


\subsection{Time reduction at the order $ \pa_{xxx}$ }\label{step2}

The goal of this section is to get a constant coefficient in front of 
$ \partial_{yyy} $, using a quasi-periodic reparametrization of time. 
We consider the change of variable 
\begin{equation} \label{def B}
(B w)(\vphi, y) := w(\vphi + \omega \alpha(\vphi), y), 
\qquad  
( B^{-1} h)(\vartheta  , y ) := h(\vartheta + \omega \tilde{\alpha}(\vartheta), y)\,,
\end{equation}
where $\T^\nu \to \T^\nu$, $\vartheta \mapsto \vphi = \vartheta + \omega \tilde{\alpha}(\vartheta )$ 
is the inverse diffeomorphism of $ \vartheta =  \vphi + \omega \alpha(\vphi) $ in $\T^\nu$.
By conjugation, the differential operators become 
\begin{equation}  \label{anche def rho}
B^{-1} \om \cdot \partial_\ph B = \rho(\vartheta)\,   \omega \cdot \partial_{\vartheta} ,
\quad 
B^{-1}  \partial_y B = \partial_y, 
\quad 
\rho := B^{-1} (1 + \omega \cdot \partial_{\varphi} \a).
\end{equation}
By \eqref{cal L1 Kdv}, using also that $ B $ and $ B^{-1} $ commute with $ \Pi_S^\bot $, 
the conjugate operator $B^{-1} {\cal L}_1 B$ is equal to
\begin{equation} \label{secondo coniugio siti normali Kdv}
\Pi_S^\bot [ \rho  \omega \cdot \partial_{\vartheta} 
 + (B^{-1} b_3)  \partial_{yyy}  
 + ( B^{-1}  b_1 ) \partial_y + ( B^{-1} b_0 ) ] \Pi_S^\bot 
 + B^{-1} {\mathfrak R}_1 B.
\end{equation}
We choose $ \alpha $ such that 
$(B^{-1}b_3 )(\vartheta ) = m_3 \rho (\vartheta)$ for some constant $m_3 \in \R$, 
namely 
\begin{equation}\label{B3solu}
b_3 (\vphi) = m_3 ( 1 + \om \cdot \partial_\vphi \a (\vphi) ) 
\end{equation}
(recall  \eqref{anche def rho}). 
The unique solution with zero average of \eqref{B3solu} is
\begin{equation}  \label{def alpha m3}
\a (\vphi) := \frac{1}{m_3} ( \om \cdot \partial_\vphi )^{-1} ( b_3 - m_3 ) (\vphi) , 
\qquad 
m_3 := \frac{1}{(2 \pi)^\nu} \int_{\T^\nu} b_3 (\vphi) d \vphi  \,. 
\end{equation}
Hence, by \eqref{secondo coniugio siti normali Kdv}, 
\begin{alignat}{2} \label{L2 Kdv}
& B^{-1} {\cal L}_1 B = \rho {\cal L}_2, &
& {\cal L}_2 := \Pi_S^\bot ( \omega \cdot \partial_{\vartheta} + m_3 \partial_{yyy} 
+ c_1 \partial_y + c_0 ) \Pi_S^\bot +  {\mathfrak R}_2
\\
\label{tilde c1 Kdv}
& c_1 := \rho^{-1} (B^{-1} b_1 ), \quad &
& c_0 :=  \rho^{-1} (B^{-1}  b_0 ), \quad 
{\mathfrak R}_2 := \rho^{-1} B^{-1}{\mathfrak R}_1 B \, .
\end{alignat}
The transformed operator ${\cal L}_2$ in \eqref{L2 Kdv} is still Hamiltonian, 
because the re\-para\-metrization of time preserves the Hamiltonian structure 
(see Section 2.2 and Remark 3.7 in \cite{BBM}).

\begin{lemma} \label{lemma:stime coeff mL2}
There is $ \s = \s(\nu,\t) > 0 $ (possibly larger than $ \s $ in Lemma \ref{lemma:stime coeff mL1}) such that 
\begin{align} \label{stima m3}
| m_3 - 1 |^\Lipg 
& \leq  C \e^3, \qquad  
| \pa_i m_3 [\widehat \imath ]| 
\leq C \e^3 \| \widehat \imath \|_{s_0 + \s}  
\\
\notag  
\| \a \|_s^\Lipg  
& \leq_s \e^3 \g^{-1} (1 + \| \fracchi_\d \|_{s + \s}^\Lipg ) 
\\
\notag 
\| \pa_i \a [\widehat \imath] \|_s
& \leq_s \e^3 \g^{-1} ( \| \widehat \imath \|_{s+\s} 
+ \| {\mathfrak I}_\d \|_{s+\s} \| \widehat \imath \|_{s_0+\s} )
\\
\notag
\| \rho -1 \|_s^\Lipg 
& \leq_s \e^3 (1 + \| \fracchi_\d \|_{s+\s}^\Lipg )
\\
\notag 
\| \pa_i \rho [\widehat \imath] \|_s 
& \leq_s \e^3 ( \| \widehat \imath \|_{s+\s} + \| \fracchi_\d \|_{s+\s} \| \widehat \imath \|_{s_0 +\s} ) 
\end{align}
\vspace{-23pt}
\begin{align}
\label{n 5}
& \| c_1 - 3\varsigma T_\d^2 \|_s^\Lipg + \| c_0 - 3\varsigma (T_\d^2)_x \|_s^\Lipg 
\leq_s \e^5 \g^{-1} (1 + \| \fracchi_\d \|_{s+\s}^\Lipg ), 
\\
\notag 
& \| \pa_i (c_1 - 3 \varsigma T_\d^2) [\widehat \imath] \|_s 
+ \| \pa_i (c_0 - 3 \varsigma (T_\d^2)_x) [\widehat \imath] \|_s 
\\ 
\notag & \hspace{57mm} 
\leq_s \e^5 \g^{-1} ( \| \widehat \imath \|_{s+\s} 
+ \| \fracchi_\d \|_{s+\s} \| \widehat \imath \|_{s_0 +\s} ). 
\end{align}
The transformations $B$, $B^{-1}$ satisfy the estimates \eqref{stima cal A bot}, \eqref{stima derivata cal A bot}.
The remainder $ \mathfrak{R}_2 $ has the form \eqref{forma buona con gli integrali}, and the functions $g_j(\tau)$, $\chi_j(\tau)$ satisfy the estimates \eqref{piccolo FBR}-\eqref{derivata piccolo FBR} for all $\tau \in [0,1]$. 
\end{lemma}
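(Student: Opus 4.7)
The plan is to derive the claimed bounds for $m_3$, $\a$, $\rho$, $c_1$, $c_0$, $B^{\pm 1}$ and $\mathfrak{R}_2$ directly from their defining formulas \eqref{def B}--\eqref{tilde c1 Kdv} and \eqref{def alpha m3}, combined with the estimates on $b_3, b_1, b_0$ already established in Lemma \ref{lemma:stime coeff mL1} and the tame composition/change-of-variables bounds recalled in Lemmata \ref{lemma:composition of functions, Moser}--\ref{lemma:utile}. I will proceed in the order $m_3 \rightsquigarrow \a \rightsquigarrow B, B^{-1} \rightsquigarrow \rho \rightsquigarrow c_1, c_0 \rightsquigarrow \mathfrak{R}_2$, so that each new quantity is controlled in terms of the previous ones.

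First, since $m_3 = \langle b_3 \rangle_\vphi$ by \eqref{def alpha m3}, the bounds $|m_3-1|^\Lipg \leq C\e^3$ and $|\partial_i m_3[\widehat\imath\,]| \leq C\e^3 \|\widehat\imath\,\|_{s_0+\s}$ follow at once from \eqref{n 1}--\eqref{n 2} evaluated at $s = s_0$. Then $\a = m_3^{-1} \mD_\om^{-1}(b_3 - m_3)$ has zero $\vphi$-average, so the diophantine estimate \eqref{Dom inverso} applies, costing $\g^{-1}$ and a $\tau$-loss of derivatives that I absorb into $\s$; together with $\|b_3 - m_3\|_s^\Lipg \leq_s \e^3(1 + \|\fracchi_\d\|_{s+\s}^\Lipg)$ this gives the claimed $\e^3 \g^{-1}$ bound on $\|\a\|_s^\Lipg$, and the derivative-in-$i$ estimate comes by differentiating \eqref{def alpha m3} and invoking \eqref{n 2}. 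The estimates on $B, B^{-1}$ then follow from Lemma \ref{lemma:utile} applied with $p = \om \a$: by \eqref{link gamma b} and the $\a$-bound one has $\|\om \a\|_{W^{1,\infty}}^\Lipg \ll 1/2$, so the hypothesis is met and the two tame bounds \eqref{stima cal A bot}, \eqref{stima derivata cal A bot} (with $\Phi^{\pm1}$ replaced by $B^{\pm1}$) come straight from the change-of-variables lemma.

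For $\rho$ the key identity is \eqref{B3solu}, which states $1 + \om \cdot \partial_\vphi \a = b_3 / m_3$; combined with \eqref{anche def rho} it yields $\rho = B^{-1}(b_3/m_3)$, whence (using $B^{-1} 1 = 1$)
\[
\rho - 1 = B^{-1}\!\Big(\frac{b_3 - m_3}{m_3}\Big),
\]
and the $\e^3$ bound (without any $\g^{-1}$) follows from \eqref{n 1} and the tame bound on $B^{-1}$ just established; the small factor already sits inside $b_3 - m_3$. The main subtlety of the lemma is the improved size $\e^5 \g^{-1}$ for $c_1 - 3\varsigma T_\d^2$ and $c_0 - 3\varsigma (T_\d^2)_x$, rather than the naive $\e^3$ one would get from $b_1 - 3\varsigma T_\d^2 = O(\e^3)$. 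To capture this I use the decomposition
\[
c_1 - 3\varsigma T_\d^2
= (\rho^{-1}-1)\, 3\varsigma T_\d^2
+ \rho^{-1}\bigl( B^{-1}(3\varsigma T_\d^2) - 3\varsigma T_\d^2\bigr)
+ \rho^{-1} B^{-1}(b_1 - 3\varsigma T_\d^2),
\]
and analogously for $c_0$. The first summand is of size $\|\rho-1\|\,\|T_\d^2\| \lesssim \e^3 \cdot \e^2 = \e^5$; the second, by the change-of-variables difference in Lemma \ref{lemma:utile}, is bounded by $\|\om\a\|_\infty \|T_\d^2\|_{s+1} \lesssim \e^3 \g^{-1}\cdot \e^2 = \e^5 \g^{-1}$; the third is $O(\e^3)$ by \eqref{n 1}, and $\e^3 \leq \e^5\g^{-1}$ since $\g = \e^{2+a} < \e^2$. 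Derivatives in $i$ are handled by differentiating the same decomposition and applying \eqref{n 2}. The hard part is really this quantitative tracking: without it one would lose the $O(\e^5 \g^{-1})$ smallness that the linear Birkhoff normal form of section \ref{BNF:step1} needs as its starting point.

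Finally, since $B$ and $B^{-1}$ act only on the torus variable $\vphi$, they preserve the finite-rank structure \eqref{forma buona con gli integrali} of $\mathfrak{R}_1$: a direct computation shows
\[
\mathfrak{R}_2 h = \rho^{-1} B^{-1} \mathfrak{R}_1 B h
= \sum_{|j| \leq C} \int_0^1 \bigl( h, (B^{-1} g_j)(\tau) \bigr)_{L^2(\T)} \,\rho^{-1} (B^{-1} \chi_j)(\tau)\, d\tau,
\]
which is again of the form \eqref{forma buona con gli integrali} with new profiles $g_j' = B^{-1} g_j$ and $\chi_j' = \rho^{-1} B^{-1} \chi_j$. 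The estimates \eqref{piccolo FBR}--\eqref{derivata piccolo FBR} on $g_j', \chi_j'$ then follow from the corresponding bounds on the old $g_j, \chi_j$ (furnished by Lemma \ref{lemma:stime coeff mL1}) together with the tame controls on $B, B^{-1}, \rho^{-1}$ proved in the preceding paragraphs, and their derivatives in $i$.
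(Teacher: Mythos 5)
Your proposal is correct and follows essentially the same route as the paper's (very terse) proof: the bounds on $m_3,\a,\rho$ come straight from \eqref{def alpha m3}, \eqref{B3solu}, \eqref{n 1}--\eqref{n 2} and \eqref{Dom inverso}, and the key point — that the improved size $\e^5\g^{-1}$ in \eqref{n 5} is produced by the difference $B^{-1}(T_\d^2)-T_\d^2\simeq (T_\d^2)_\vphi\,\a=O(\e^2\cdot\e^3\g^{-1})$, while the remaining terms are $O(\e^3)\le\e^5\g^{-1}$ — is exactly the observation the paper makes. Your explicit three-term decomposition of $c_1-3\varsigma T_\d^2$ and the conjugation formula for $\mathfrak R_2$ are just a spelled-out version of the same argument.
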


\begin{proof} To estimate $\| \a \|_s^\Lipg$ we also differentiate 
\eqref{def alpha m3} with respect to the parameter $ \om $.  
Note that $c_1 - 3 \varsigma B^{-1}(T_\d^2) = O(\e^3)$, 
and similarly $c_0 - 3 \varsigma B^{-1}((T_\d^2)_x)$ $= O(\e^3)$. 
The factor $\e^5 \g^{-1}$ in the last two inequalities 
comes from the estimate of the difference 
$B^{-1}(T_\d^2) - T_\d^2 \simeq (T_\d^2)_\ph \a = O(\e^2 \e^3 \g^{-1})$.
\end{proof}

\subsection{Translation of the space variable}\label{step3}

In this section we remove the space average from the coefficient in front of $ \partial_y $. 
Consider the change of the space variable $ z = y + p(\vartheta ) $ which induces on 
$ H^s_{S^\bot} (\T^{\nu+1}) $ the operators
\begin{equation}\label{gran tau}
({\cal T} w)(\vartheta, y ) := w(\vartheta , y + p(\vartheta)) \, , \quad 
({\cal T}^{-1} h) (\vartheta ,z ) = h(\vartheta, z - p(\vartheta))
\end{equation}
(which are a particular case of those used in section \ref{step1}). 
The differential operators become
$ {\cal T}^{-1} \omega \cdot \partial_{\vartheta} {\cal T} $ 
$ =  \omega \cdot \partial_{\vartheta} 
+ \{ \omega \cdot \partial_{\vartheta}p (\vartheta) \} \partial_z $, 
$ {\cal T}^{-1} \partial_{y} {\cal T}  =  \partial_{z} $. 
Since $\mT, \mT^{-1}$ commute with $ \Pi_S^\bot $, we get
\begin{align} \label{L3 KdV}
\mL_3 & := {\cal T}^{-1}{\cal L}_2 {\cal T} 
= \Pi_S^\bot (\omega \cdot \partial_{\vartheta} + m_3 \partial_{zzz} + d_1 \partial_z + d_0 ) \Pi_S^\bot 
+ {\mathfrak R}_3 \,,
\\
\label{d1d0R3}
d_1 & := ({\cal T}^{-1} c_1)   + \omega \cdot \partial_{\vartheta} p  \, , 
\qquad 
d_0 :=  {\cal T}^{-1}  c_0   \, , 
\qquad 
{\mathfrak R}_3 :=   {\cal T}^{-1} {\mathfrak R}_2 {\cal T}.
\end{align}
We choose 
\begin{equation} \label{def m1 p Kdv}
m_1 :=  \frac{1}{(2\pi)^{\nu + 1}} \int_{\T^{\nu + 1}}   c_1 d\vartheta dy \, , \quad 
p := (\omega \cdot \partial_\vartheta)^{-1} 
\Big( m_1 -  \frac{1}{2 \pi} \int_{\T}  c_1  d y \Big) \, ,
\end{equation}
so that
\begin{equation} \label{n 19}
\frac{1}{2 \pi} \int_{\T} d_1 (\vartheta, z) \, dz = m_1 \quad \forall \vartheta \in \T^\nu. 
\end{equation}
Recalling \eqref{n 5}, we analyze the space average of $c_1$ in more detail. 
To avoid ambiguity between the space variable $y \in \T$ and the action $y_\d : \T^\nu \to \R^\nu$ of \eqref{T0}, 
we rename $x \in \T$ the space variable, and $\ph \in \T^\nu$ the variable on the torus (time variable). 
Let 
\begin{equation}\label{def bar pi}
{\bar v} (\vphi, x) := {\mathop \sum}_{j \in S} \sqrt{\xi_j} e^{\ii \ell (j) \cdot \vphi } e^{\ii j x}, 
\end{equation}
where $\ell : S \to \Z^\nu$ is the odd injective map (see \eqref{tang sites})
\be\label{def:function-ell}
\ell(\bar \jmath_i) := e_i \, , \quad 
\ell(-\bar \jmath_i) := - e_i \,  , \quad i = 1,\ldots,\nu
\ee
and $e_i = (0,\ldots,1, \ldots,0)$ denotes the $i$-th vector of the canonical basis of $\R^\nu$.
In view of the next linear Birkhoff normal form step (whose goal is to normalize the term of size $\e^2$), 
we observe that the component of order $\e^2$ in $T_\d^2$ (see \eqref{T0}) is $\e^2 \bar v^2$, 
with
\begin{equation} \label{n 18}
\begin{aligned}
\| T_\d^2 - \e^2 \bar v^2 \|_s^\Lipg 
& \leq_s \e^2 \| \fracchi_\d \|_{s+\s}^\Lipg \,, \\
\| \pa_i (T_\d^2 - \e^2 \bar v^2)[\widehat \imath\,] \|_s 
& \leq_s \e^2 ( \| \widehat \imath \|_{s+\s} + \| \fracchi_\d \|_{s+\s} \| \widehat \imath \|_{s_0+\s} ) \,.
\end{aligned}
\end{equation}
Moreover, from \eqref{T0}, since $(v_\d, z_0)_{L^2(\T)} = 0$, 
and $(\theta_0)_{-j} = - (\theta_0)_j$ for all $j \in S$, 
we have 
\[ 
\int_\T T_\d^2 \, dx 
= \e^2 \int_\T v_\d^2 \, dx + \e^{2b} \int_\T z_0^2 \, dx
= \e^2 \sum_{j \in S} \xi_j + \e^{2b} \sum_{j \in S} |j| (y_\d)_j + \e^{2b} \int_\T z_0^2 \, dx.
\] 
We define 
\begin{equation} \label{n 20}
\widetilde d_1 := d_1 - 3\varsigma \e^2 \bar v^2, \quad 
\widetilde d_0 := d_0 - 3\varsigma \e^2 (\bar v^2)_x, 
\end{equation}
and note that, by \eqref{n 19} and \eqref{def bar pi},  
\begin{equation} \label{c(xi)}
\frac{1}{2\pi} \int_\T \tilde d_1 \, dx 
= m_1 - \frac{3\varsigma \e^2}{2\p} \int_\T \bar v^2 \, dx 
= m_1 - \e^2 c(\xi), \quad  
c(\xi) := 3\varsigma \sum_{j \in S} \xi_j \,.
\end{equation}
Using the explicit formulae above, and Lemma \ref{remark : decay forma buona resto} for the estimate of $\mathfrak R_3$, we get the following bounds.

\begin{lemma} \label{lemma:stime coeff mL3}
There is $ \s := \s(\nu,\t) > 0 $ (possibly larger than in Lemma \ref{lemma:stime coeff mL2}) 
such that 
\begin{align} \label{stima m1}
| m_1 - \e^2 c(\xi) |^\Lipg 
& \leq C \e^5 \g^{-1}, 
\quad 
| \partial_i m_1 [\widehat \imath ]| 
\leq C \e^{2b} \| \widehat{\imath} \|_{s_0 + \sigma} 
\\ 
\notag  
\| p \|_s^\Lipg
& \leq_s \e^5 \g^{-2} + \| \fracchi_\d \|_{s + \s}^\Lipg\,, 
\\
\notag 
\| \partial_i p [\widehat \imath] \|_s
& \leq_s \| \widehat \imath \|_{s+\s} 
+ \e^5 \g^{-2} \| {\mathfrak I}_\d \|_{s+\s} \| \widehat \imath \|_{s_0+\s} \,, 
\\ 
\label{tilde d1 d0 KdV}
\| \widetilde d_k  \|_s^\Lipg
& \leq_s \e^7 \g^{-2} + \e^2 \| \fracchi_\d \|_{s + \s}^\Lipg \,, 
\quad k = 0,1,
\\
\notag 
\| \partial_i \widetilde d_k [\widehat \imath] \|_s 
& \leq_s \e^5 \g^{-1} (\| \widehat \imath \|_{s + \s} 
+ \| {\mathfrak I}_\d \|_{s + \sigma} \| \widehat \imath \|_{s_0 + \s} ) \,, 
\quad k=0,1.
\end{align}
The matrix $ s $-decay norm (see \eqref{matrix decay norm}) of the operator ${\mathfrak R}_3$ satisfies 
\begin{equation} \label{nuove R3}
\begin{aligned}
|{\mathfrak R}_3|_s^{\Lipg} 
& \leq_s \e^{1+b} \| {\mathfrak I}_\d \|_{s+\sigma}^\Lipg \,, \\
| \pa_i {\mathfrak R}_3 [\widehat \imath] |_s 
& \leq_s \e^{1+b} ( \| \widehat \imath \|_{s+\s} + \| {\mathfrak I}_\d \|_{s+\s} \| \widehat \imath \|_{s_0+\s} )\,.
\end{aligned}
\end{equation}
The transformations ${\cal T}$, ${\cal T}^{-1}$ satisfy \eqref{stima cal A bot}, \eqref{stima derivata cal A bot}.
\end{lemma}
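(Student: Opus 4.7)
The plan is to derive all the bounds by assembling the estimates of Lemma \ref{lemma:stime coeff mL2} and \eqref{n 18} with the explicit formulas \eqref{def m1 p Kdv}, \eqref{n 20}, via the decomposition
\[
c_1 = 3\varsigma \e^2 \bar v^2 + (c_1 - 3\varsigma T_\d^2) + 3\varsigma(T_\d^2 - \e^2 \bar v^2),
\]
in which the first summand is explicit (see \eqref{def bar pi}) and the last two are small by \eqref{n 5} and \eqref{n 18}; the ansatz \eqref{ansatz delta} is used to control the $\|\fracchi_\d\|$ factors. The analogous decomposition for $c_0$ involves $3\varsigma \e^2 (\bar v^2)_x$.

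For $m_1$, taking the full $(\vartheta,y)$-average of the decomposition and using the identity $\frac{1}{2\pi}\int_\T \bar v^2 dx = \sum_{j \in S}\xi_j$ (immediate from \eqref{def bar pi}) together with \eqref{c(xi)}, the principal term contributes exactly $\e^2 c(\xi)$, so the bound $|m_1 - \e^2 c(\xi)|^\Lipg \leq C \e^5 \g^{-1}$ follows from \eqref{n 5} and \eqref{n 18} at $s = s_0$. For $\pa_i m_1[\widehat \imath]$, the $\bar v^2$ term is $i$-independent, and the naive bound from \eqref{n 18} gives only $O(\e^2 \|\widehat\imath\|)$. The sharper factor $\e^{2b}$ comes from the tangential cancellation
\[
\pa_\theta \int_\T v_\d^2 \, dx = 0, \qquad \pa_y \int_\T v_\d^2\, dx = O(\e^{2(b-1)}),
\]
which holds because $\int_\T v_\d^2 dx = \sum_{j \in S}(\xi_j + \e^{2(b-1)}|j|(y_\d)_j)$ is independent of $\theta$ and depends on $y$ only through the rescaled factor $\e^{2(b-1)} = \e^a$; multiplying by $\e^2$ gives $\e^{2+a} = \e^{2b}$.

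For $p$, the function $g := m_1 - \frac{1}{2\pi}\int_\T c_1\,dy$ has zero $\vartheta$-average by construction, and the $\bar v^2$ contribution to $g$ vanishes (being constant in $\vartheta$), so $\|g\|_s^\Lipg \leq_s \e^5 \g^{-1} + \e^2\|\fracchi_\d\|_{s+\s}^\Lipg$. Applying $(\om \cdot \pa_\vartheta)^{-1}$ via \eqref{Dom inverso} costs a factor $\g^{-1} = \e^{-2b}$, yielding $\|p\|_s^\Lipg \leq_s \e^5 \g^{-2} + \e^2\g^{-1}\|\fracchi_\d\|_{s+\s}^\Lipg$, which gives the stated bound after absorbing the factor $\e^2 \g^{-1} < 1$. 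The bound for $\pa_i p[\widehat\imath]$ is obtained identically from the $\pa_i$-versions of \eqref{n 5} and \eqref{n 18}. For $\tilde d_k$, writing
\[
\tilde d_1 = \mT^{-1}(c_1 - 3\varsigma \e^2 \bar v^2) + 3\varsigma \e^2(\mT^{-1}\bar v^2 - \bar v^2) + \om\cdot\pa_\vartheta p,
\]
(and analogously for $\tilde d_0$), the three summands are controlled respectively by the $c_1$-decomposition, by Lemma \ref{lemma:utile} combined with the $p$-estimate, and by direct differentiation of $p$. For $\mathfrak R_3 = \mT^{-1}\mathfrak R_2 \mT$, the finite-dimensional form \eqref{forma buona con gli integrali} is preserved with functions $\mT^{\pm 1} g_j$ and $\mT^{\pm 1}\chi_j$ whose $\| \ \|_s$ norms are bounded by Lemma \ref{lemma:utile}, so Lemma \ref{remark : decay forma buona resto} converts \eqref{piccolo FBR}--\eqref{derivata piccolo FBR} into the matrix decay norm bound \eqref{nuove R3}. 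Finally, the transformation bounds for $\mT^{\pm 1}$ follow directly from Lemma \ref{lemma:utile} applied with the translation $p(\vartheta)$.

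The only real obstacle is the sharp $\e^{2b}$ factor in the $\pa_i m_1$ bound: achieving it requires exploiting the explicit tangential structure of $v_\d$ and the rescaling \eqref{rescaling kdv quadratica}, which together force the $\theta$-derivative of $\int v_\d^2 dx$ to vanish identically and the $y$-derivative to carry the extra $\e^{2(b-1)}$ factor. Everything else is routine bookkeeping combining the bounds already produced by Lemmas \ref{lemma:stime coeff mL1}--\ref{lemma:stime coeff mL2} with the standard composition/change-of-variable estimates of Lemmata \ref{lemma:composition of functions, Moser}--\ref{lemma:utile} and the small-divisor bound \eqref{Dom inverso}.
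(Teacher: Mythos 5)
Your overall strategy coincides with the paper's: its proof of this lemma is compressed into the sentence ``using the explicit formulae above, and Lemma \ref{remark : decay forma buona resto} for the estimate of $\mathfrak R_3$'', and those formulae are exactly your decomposition of $c_1$, the identity $\frac{1}{2\pi}\int_\T \bar v^2\,dx=\sum_{j\in S}\xi_j$, the bounds \eqref{n 5}, \eqref{n 18}, and the change-of-variable Lemma \ref{lemma:utile}. Your treatment of $\widetilde d_k$, of $\mathfrak R_3$ and of $\mT^{\pm1}$ is fine.

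There is, however, a genuine gap in your derivation of the first inequality in \eqref{stima m1} and of the bound for $p$, and it originates in the false claim ``$\e^2\g^{-1}<1$'': since $\g=\e^{2+a}$ with $a>0$, one has $\e^2\g^{-1}=\e^{-a}>1$. Concretely: (i) the contribution of \eqref{n 18} to $m_1-\e^2c(\xi)$ at $s=s_0$ is $\e^2\|\fracchi_\d\|_{s_0+\s}^\Lipg\leq C\e^{7-4b}=C\e^{3-2a}$, which is \emph{larger} than the target $\e^5\g^{-1}=\e^{3-a}$ because $b>1$; (ii) your estimate $\|g\|_s^\Lipg\leq_s\e^5\g^{-1}+\e^2\|\fracchi_\d\|_{s+\s}^\Lipg$ yields, after \eqref{Dom inverso}, only $\|p\|_s^\Lipg\leq_s\e^5\g^{-2}+\e^{-a}\|\fracchi_\d\|_{s+\s}^\Lipg$, which exceeds the stated bound. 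The missing ingredient is precisely the identity displayed in the paper just before the lemma,
\[
\int_\T T_\d^2\,dx=\e^2\sum_{j\in S}\xi_j+\e^{2b}\sum_{j\in S}|j|(y_\d)_j+\e^{2b}\int_\T z_0^2\,dx ,
\]
which shows that the \emph{space average} of $3\varsigma(T_\d^2-\e^2\bar v^2)$ has size $\e^{2b}\|\fracchi_\d\|$, not merely $\e^2\|\fracchi_\d\|$: the $\theta_0$--dependence drops out of the average, the cross term $v_\d z_0$ integrates to zero, and the residual $y$--dependence carries the rescaling factor $\e^{2(b-1)}=\e^a$. Since only the space average of $c_1$ enters $m_1$ and $p$, this gains exactly the factor $\e^a$ you are missing: $\e^{2b}\|\fracchi_\d\|_{s_0+\s}\leq C\e^{5-2b}=C\e^5\g^{-1}$ closes (i), and $\g^{-1}\e^{2b}=1$ closes (ii); the same accounting is needed for $\pa_i p[\widehat\imath]$, whose stated bound has no small prefactor on $\|\widehat\imath\|_{s+\s}$ and is obtained by dividing $\e^{2b}\|\widehat\imath\|_{s+\s}$ (not $\e^2\|\widehat\imath\|_{s+\s}$) by $\g=\e^{2b}$. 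You do write down this cancellation, but you invoke it only for $\pa_i m_1$; it must be used for the sup bounds on $m_1-\e^2c(\xi)$ and on $p$ as well, which is the reason the paper computes $\int_\T T_\d^2\,dx$ explicitly at that point.
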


\begin{remark} \label{rem:lm M2-4}
When $ K = H + \lambda M^2 $, $ \lambda = 3 / 4 $, 
the constant coefficient $m_1$ in \eqref{def m1 p Kdv} becomes of size 
\begin{equation} \label{m1 IF}
| m_1 |^\Lipg \leq C \e^5 \g^{-1}.
\end{equation}
The inequality \eqref{m1 IF} is the key difference between the cases $H + (3\varsigma/4) M^2$ and $H$
(compare \eqref{m1 IF} with \eqref{stima m1}, where $m_1$ contains 
the non-perturbative term $\e^2 c(\xi)$). 
\end{remark}

It is sufficient to estimate $ \mathfrak R_3 $ (which has the form \eqref{forma buona con gli integrali})
only in the $ s $-decay norm (see \eqref{nuove R3}) 
because the next transformations will preserve it. 
Such norms will be used in the reducibility scheme of section \ref{subsec:mL0 mL5}.

\subsection{Linear Birkhoff normal form} \label{BNF:step1}

Now we normalize the terms of order $ \e^2 $ of $ {\cal L}_3 $.
This step is different from the reducibility steps 
that we shall perform in section \ref{subsec:mL0 mL5}:  
the diophantine constant $\g$ in \eqref{omdio} is $ \g = o(\e^2 ) $, 
and therefore the terms of order $ \e^2 $ are not perturbative, 
because $\e^2 \g^{-1}$ is not small (in fact, it is big). 
The reduction of this section is possible thanks to the special form of the term 
$ \e^2 {\cal B} $ defined in \eqref{def cal B1 B2}: 
the harmonics of $ \e^2 \mB $  
corresponding to a possible small divisor
are naught, except $\mB_j^j(0)$, see  Lemma \ref{lem:bnf 1}.
Note that, since the previous linear transformations $ \Phi $, $ B $, $ {\cal T} $ are 
$ O(\e^5 \g^{-2} ) $-close to the identity, 
the terms of order $ \e^2 $ in $ {\cal L}_3 $ are the same as in the original linearized operator.

First, we collect all the terms of order $ \e^2 $ in the operator $ {\cal L}_3 $ in \eqref{L3 KdV}. 
We have 
$$
\mL_3 = \Pi_S^\bot ( \om  \cdot \pa_\vphi
 + m_3 \partial_{xxx} + \e^2 {\cal B} 
 + {\tilde d}_1 \partial_x + {\tilde d}_0 ) \Pi_S^\bot 
 + {\mathfrak R}_3 
$$
where $ \widetilde d_1, \widetilde d_0, {\mathfrak R}_3 $ are defined in 
\eqref{n 20}, \eqref{d1d0R3} and (recall \eqref{def bar pi})
\begin{equation} \label{def cal B1 B2}
{\cal B} h := 3 \varsigma \bar v^2 \pa_x h + 3 \varsigma (\bar v^2)_x h 
= \pa_x (3 \varsigma \bar v^2 h ).
\end{equation}
Note that ${\cal B}$ is the linear Hamiltonian vector field of $ H_{S}^\bot $
generated by the Hamiltonian $ z \mapsto \frac{3\varsigma}{2} \int_\T \bar v^2 z^2 \, dx $.

We transform $ {\cal L}_3 $ by a symplectic operator 
$ \Phi_2 : H_{S^\bot}^s(\T^{\nu + 1}) \to H_{S^\bot}^s(\T^{\nu + 1}) $ of the form 
\begin{equation}\label{Phi_1}
\Phi_2 := {\rm exp}(\e^2 A) = I_{H_S^\bot} + \e^2 A + \e^4 \widehat A, \quad 
\widehat A := \sum_{k \geq 2} \frac{\e^{2(k-2)}}{k!} A^k \,,
\end{equation}
where $ A(\vphi) h = {\mathop \sum}_{j,j' \in S^c} A_j^{j'}(\vphi) h_{j'} e^{\ii j x} $
is a Hamiltonian vector field.
The map $ \Phi_2 $ is symplectic, because it is the time 1 flow of a Hamiltonian vector field. 
We calculate 
\begin{multline} \label{L3 new KdV}
{\cal L}_3 \Phi_2 - \Phi_2 \Pi_S^\bot ( {\cal D}_\om + m_3 \partial_{xxx} ) \Pi_S^\bot \\
= \e^2 \Pi_S^\bot \{ \mB + ({\cal D}_\om A) + m_3 [\partial_{xxx}, A] \} \Pi_S^\bot  
+ \Pi_S^\bot \tilde d_1 \pa_x \Pi_S^\bot + R_3
\end{multline}
where 
\begin{align} \label{def R3}
R_3 := \, & \e^4 \Pi_S^\bot \{ (\mD_\om \widehat A) + m_3 [\pa_{xxx}, \widehat A] 
+ \mB (A + \e^2 \widehat A) \} \Pi_S^\bot \\
\notag & + \Pi_S^\bot \tilde d_1 \pa_x \Pi_S^\bot (\Phi_2 - I)
+ (\Pi_S^\bot \tilde d_0 \Pi_S^\bot + \mathfrak R_3 ) \Phi_2 \,.
\end{align}

\begin{remark}
$ R_3 $ has no longer the form \eqref{forma buona con gli integrali}. 
However $ R_3 = O( \partial_x^0 ) $ 
because $A = O(\pa_x^{-1})$ (see Lemma \ref{lemma:Dx A bounded}), 
and therefore $\Phi_2 - I_{H_S^\bot} = O(\partial_x^{-1})$. 
Moreover the matrix decay norm of $ R_3 $ is $ o(\e^2) $. 
\end{remark}

In order to normalize the term of order $\e^2$ 
of \eqref{L3 new KdV}, we develop 
$A_j^{j'}(\ph) = \sum_{l \in \Z^\nu} A_j^{j'}(l) e^{\ii l \cdot \ph}$, 
and for each $j, j' \in S^c$, $l \in \Z^\nu$, 
we choose 
\begin{equation}\label{cal A 1}
A_j^{j'}(l) := \begin{cases}
- \dfrac{\mB_j^{j'}(l)}{\ii (\omega \cdot l + m_3( j'^3 - j^3))} & \text{if} \  \bar{\omega} \cdot l + j'^3 - j^3 \neq 0 \, ,  \\
0 & \text{otherwise}.
\end{cases}
\end{equation}
This definition is well posed. 
Indeed, by \eqref{def cal B1 B2} and \eqref{def bar pi}, 
\begin{equation}\label{def cal B1 b}
{\cal B}_{j}^{j'}(l) := 3 \varsigma \ii j 
\sum_{\begin{subarray}{c} j_1, j_2 \in S \\ j_1 + j_2 = j - j' \\ \ell(j_1) + \ell(j_2) = l \end{subarray}} \sqrt{\xi_{j_1} \xi_{j_2}} \,. 
\end{equation}
In particular $ {\cal B}_{j}^{j'}(l) = 0 $ unless $ |l| \leq 2 $. 
For $|l| \leq 2$ and $ \bar{\omega} \cdot l + j'^3 - j^3 \neq 0 $, 
the denominators in \eqref{cal A 1} satisfy 
\begin{align}
|\omega \cdot l +m_3( j'^3 - j^3)| 
& = | m_3 (\bar \omega \cdot l + j'^3 - j^3) + ( \om - m_3 \bar \om ) \cdot l | 
\nonumber \\ & 
\geq  |m_3| | \bar \omega \cdot l + j'^3 - j^3 |
- | \om - m_3 \bar \om | |l| \geq  1/2   
\label{BNFdeno}
\end{align}
for $ \e $ small, because $ |\bar{\omega} \cdot l + j'^3 - j^3| \geq 1 $
($\bar{\omega} \cdot l + j'^3 - j^3$ is a nonzero integer), 
$ \om = \bar \om + O(\e^2) $ and by \eqref{stima m3}. 

\begin{remark} 
\label{rem:Ham solving homolog}
The operator $A$ defined in \eqref{cal A 1} is  Hamiltonian, 
because $\mB$ is Hamiltonian. 
The reason is a general fact: the denominators $ \d_{l,j,k} := \ii (\omega \cdot l + m_3( k^3 - j^3)) $ 
satisfy $ \overline{ \d_{l,j,k} } = \d_{-l,k,j} $ and an operator $G(\ph)$ is self-adjoint if and only if its matrix elements satisfy $ \overline{ G_j^k(l) } = G_k^j(-l) $, see \cite{BBM}-Remark 4.5. 
Alternatively, we could solve the homological equation of this Birkhoff step directly for the Hamiltonian function 
whose flow generates $ \Phi_2 $.
\end{remark}

By the definition \eqref{cal A 1}, 
the term of order $\e^2$ in \eqref{L3 new KdV} is zero 
on the Fourier indices $(l,j,j')$ such that $\bar \om \cdot l + j'^3 - j^3 \neq 0$, 
while it is equal to $\e^2 \mB_j^{j'}(l)$ for $(l,j,j')$ such that $\bar \om \cdot l + j'^3 - j^3 = 0$. 
Now we prove that the only nonzero components of $\mB$ that remain in \eqref{L3 new KdV} are $\mB_j^j(0)$.

\begin{lemma} \label{lem:bnf 1}
If $\bar\om \cdot l + j'^3 - j^3 = 0$ and $\mB_j^{j'}(l) \neq 0$, then $l=0$ and $j=j'$. 
\end{lemma}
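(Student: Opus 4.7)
My plan is to reduce the resonance condition $\bar\om \cdot l + j'^3 - j^3 = 0$ to the algebraic identity in Lemma \ref{lemma:interi}, and then rule out all cases except $l=0$, $j=j'$ using the fact that $j, j' \in S^c$.

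First, I would exploit the explicit expression \eqref{def cal B1 b} for $\mB_j^{j'}(l)$: its non-vanishing forces the existence of $j_1, j_2 \in S$ (with $\xi_{j_1}, \xi_{j_2} > 0$) such that
\[
j_1 + j_2 = j - j', \qquad \ell(j_1) + \ell(j_2) = l.
\]
Next, I would observe the key linearization: from the definition \eqref{def:function-ell} of $\ell$ and the form \eqref{bar omega} of $\bar\om$, one has $\bar\om \cdot \ell(k) = k^3$ for every $k \in S$ (checking the two signs $k = \pm\bar\jmath_i$ separately). Therefore $\bar\om \cdot l = j_1^3 + j_2^3$, and the resonance hypothesis rewrites as
\[
j_1^3 + j_2^3 + (-j)^3 + j'^3 = 0,
\]
while simultaneously $j_1 + j_2 + (-j) + j' = 0$.

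The heart of the argument is then to apply Lemma \ref{lemma:interi} to the quadruple $(a,b,c,d) := (j_1, j_2, -j, j')$: the two vanishing symmetric conditions above force $(a+b)(a+c)(b+c) = 0$, i.e., one of the three pairs sums to zero. I would then treat the three cases:
\begin{itemize}
\item $a+b = j_1+j_2 = 0$ gives $j-j' = 0$, hence $j = j'$, and $l = \ell(j_1)+\ell(-j_1) = 0$ since $\ell$ is odd;
\item $a+c = j_1 - j = 0$ would give $j = j_1 \in S$, contradicting $j \in S^c$;
\item $b+c = j_2 - j = 0$ would give $j = j_2 \in S$, again contradicting $j \in S^c$.
\end{itemize}
Only the first case survives, yielding the claimed conclusion.

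There is no serious obstacle here: the only thing one must be careful about is the identity $\bar\om \cdot \ell(k) = k^3$, which is where the very specific definition of $\ell$ (odd, sending $\bar\jmath_i$ to the canonical basis vector $e_i$) is used in an essential way, and which explains why this cheap algebraic argument is enough to normalise the $\e^2$ terms without invoking any Melnikov-type condition on $\om$. Notice also that this lemma does \emph{not} yet use the non-degeneracy hypothesis \eqref{scelta siti} on $S$; that condition will only be needed later, when removing resonant terms $\mB_j^j(0)$ (i.e.\ the diagonal contribution that here remains untouched).
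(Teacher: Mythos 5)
Your proof is correct and follows essentially the same route as the paper: extract $j_1,j_2\in S$ from the non-vanishing of $\mB_j^{j'}(l)$, use $\bar\om\cdot\ell(k)=k^3$ to turn the resonance into $j_1^3+j_2^3+j'^3-j^3=0$, and apply Lemma \ref{lemma:interi} to conclude $j_1+j_2=0$. The only (immaterial) difference is the choice of pairing in Lemma \ref{lemma:interi}: the paper rules out the factors $j_1+j'$ and $j_2+j'$ using $j'\in S^c$ and the symmetry of $S$, while you rule out $j_1-j$ and $j_2-j$ directly from $j\in S^c$.
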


\begin{proof}
If $\mB_j^{j'}(l) \neq 0$, then, by \eqref{def cal B1 b}, there exist $j_1, j_2 \in S$ such that 
$j_1 + j_2 = j - j'$ and $\ell(j_1) + \ell(j_2) = l$. 
Hence, recalling \eqref{bar omega} and \eqref{def:function-ell}, 
\[
0 = \bar \om \cdot l + j'^3 - j^3 
= \bar\om \cdot \ell(j_1) + \bar\om \cdot \ell(j_2) + j'^3 - j^3 
= j_1^3 + j_2^3 + j'^3 - j^3.
\]
This equality, together with $j_1 + j_2 + j' - j = 0$, 
implies that $(j_1 + j_2) (j_1 + j') (j_2 + j') = 0$ by Lemma \ref{lemma:interi}. 
Since $j_1, j_2 \in S$, $j' \in S^c$, the set $S$ is symmetric, and $0 \notin S$,
we deduce that the factors $j_1 + j'$ and $j_2+j'$ are nonzero. 
Hence $j_1 + j_2 = 0$, and therefore $l=\ell(j_1) + \ell(-j_1) = 0$. 
\end{proof}

Thus, the only nonzero term of order $\e^2$ in \eqref{L3 new KdV} is $\mB_j^j(0)$. 
By \eqref{def cal B1 b}, we calculate 
$\mB_j^j(0) = \ii j c(\xi)$, where $c(\xi)$ is defined in \eqref{c(xi)}. 
Hence, by \eqref{cal A 1}, Lemma \ref{lem:bnf 1} and \eqref{c(xi)}, 
the term of order $\e^2$ in \eqref{L3 new KdV} is 
\be\label{primo termine BNF1}
\e^2 \Pi_S^\bot \{ \mB + ({\cal D}_\om A) + m_3 [\partial_{xxx}, A] \} \Pi_S^\bot 
= \e^2 c(\xi) \pa_x \Pi_S^\bot \,. 
\ee

\begin{remark} \label{rem:lm M2-5}
When $ K = H + \lambda M^2 $, $ \lambda = 3 \varsigma / 4 $,  the operator in \eqref{def cal B1 B2} becomes 
$\mB h = \pa_x (3 \varsigma \pi_0(\bar v^2) h)$. Hence $\mB_j^j(0) = 0$, 
and the right-hand side term in \eqref{primo termine BNF1} is zero, 
namely the first step of linear Birkhoff normal form  
completely eliminates all the terms of order $\e^2$. 
\end{remark}

We now estimate the transformation $ A $.

\begin{lemma}\label{lem: A decay} 
$(i)$ For all $l \in \Z^\nu$, $j,j' \in S^c$, 
\begin{equation}\label{Acoef}
| A_j^{j'}(l)| \leq C (| j | +  | j'  |)^{-1}\, , \quad 
| A_j^{j'}(l)|^{\rm lip} \leq \e^{-2} (|j| + |j'|)^{-1} \,.
\end{equation}
$(ii)$ $ (A_1)_j^{j'}(l) = 0$ for all $l \in \Z^\nu$, $j,j' \in S^c$ such that $|j - j'| > 2 C_S $, 
where $C_S := \max\{ |j| : j \in S \}$. 
\end{lemma}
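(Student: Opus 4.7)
The plan is to read off both statements directly from the explicit formulas \eqref{cal A 1} and \eqref{def cal B1 b}, working separately with the numerator $\mB_j^{j'}(l)$ and the small divisor $\delta := \omega\cdot l + m_3(j'^3-j^3)$.

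First I would dispatch part $(ii)$, which is the cheap one. By \eqref{def cal B1 b}, the Fourier matrix element $\mB_j^{j'}(l)$ vanishes unless there exist $j_1,j_2\in S$ with $j_1+j_2=j-j'$. Since every element of $S$ has modulus at most $C_S$, any nonzero $\mB_j^{j'}(l)$ forces $|j-j'|\le 2C_S$. When $\mB_j^{j'}(l)=0$ the definition \eqref{cal A 1} gives $A_j^{j'}(l)=0$, proving $(ii)$.

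For part $(i)$, I would work under the assumption $A_j^{j'}(l)\ne 0$ (the other case is trivial), so that both $\mB_j^{j'}(l)\ne 0$ and $\bar\omega\cdot l+j'^3-j^3\ne 0$. From \eqref{def cal B1 b} I pick up two crucial geometric facts: $|l|\le 2$ (because $l=\ell(j_1)+\ell(j_2)$ with $|\ell(j_k)|=1$), and Lemma~\ref{lem:bnf 1} combined with the case analysis forces $j\neq j'$, hence $|j^3-j'^3|\geq j^2+jj'+j'^2\ge \tfrac12(|j|+|j'|)^2$. The numerator is bounded by $|\mB_j^{j'}(l)|\le C|j|$ (finite sum, $|\sqrt{\xi_{j_1}\xi_{j_2}}|$ bounded). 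For the denominator, in the high-frequency regime $(|j|+|j'|)^2\ge 8|\omega|_\infty$ I use $m_3=1+O(\e^3)$ and $|\omega\cdot l|\le C$ to absorb the time part into the space part, obtaining $|\delta|\geq c(|j|+|j'|)^2$; in the low-frequency regime, \eqref{BNFdeno} gives $|\delta|\ge\tfrac12$, and $(|j|+|j'|)^{-1}$ is bounded below. Combining the two estimates yields $|A_j^{j'}(l)|\le C|j|/(|j|+|j'|)^2\lesssim(|j|+|j'|)^{-1}$.

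The Lipschitz bound is the main obstacle, since $\xi$ depends on $\omega$ through \eqref{linkxiomega} with $|\partial_\omega\xi|\sim\e^{-2}$, and this is exactly where the prefactor $\e^{-2}$ in \eqref{Acoef} will appear. I would write
\[
A_j^{j'}(l)(\omega_1)-A_j^{j'}(l)(\omega_2)
=-\frac{\mB(\omega_1)-\mB(\omega_2)}{\ii\,\delta(\omega_1)}
+\frac{\mB(\omega_2)\bigl(\delta(\omega_1)-\delta(\omega_2)\bigr)}{\ii\,\delta(\omega_1)\delta(\omega_2)}
\]
and estimate term by term. The first piece uses $|\mB|^{\rm lip}\le C\e^{-2}|j|$ (from $\partial_\omega\sqrt{\xi_{j_1}\xi_{j_2}}$) and the same denominator bound as above, producing the desired $\e^{-2}(|j|+|j'|)^{-1}$. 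The second piece is the delicate one: $|\delta|^{\rm lip}\le|l|+|m_3|^{\rm lip}|j^3-j'^3|$, where $|m_3|^{\rm lip}\lesssim\e^3\gamma^{-1}=\e^{1-a}$ from \eqref{stima m3}. Here I would crucially exploit $(ii)$: since $|j-j'|\le 2C_S$, the cubic difference satisfies $|j^3-j'^3|\lesssim(|j|+|j'|)^2$ rather than the naive $(|j|+|j'|)^3$, so that after dividing by $|\delta|^2\gtrsim(|j|+|j'|)^4$ the second piece is $\lesssim(|j|+|j'|)^{-1}$, which is absorbed into $\e^{-2}(|j|+|j'|)^{-1}$ for small $\e$. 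The low-frequency regime is again handled by \eqref{BNFdeno} with all bounds uniform.
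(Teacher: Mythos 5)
Your proposal is correct and follows essentially the same route as the paper: vanishing of $\mB_j^{j'}(l)$ for $|l|>2$ and for $j-j'$ outside $S+S$, the case analysis reducing to $j\neq j'$, the lower bound $|\om\cdot l+m_3(j'^3-j^3)|\geq c(|j|+|j'|)^2$ (combined with \eqref{BNFdeno} for low frequencies), and the Lipschitz bound via the quotient rule with the $\e^{-2}$ coming from $\partial_\om\xi$ in \eqref{linkxiomega} and $|j-j'|\leq 2C_S$ controlling the cubic difference. Your treatment of the Lipschitz estimate is in fact more explicit than the paper's one-line remark, and the only blemishes are cosmetic (the constant in $j^2+jj'+j'^2\geq\tfrac14(|j|+|j'|)^2$, and the fact that $j\neq j'$ follows directly from the two clauses of \eqref{cal A 1} rather than from Lemma \ref{lem:bnf 1}).
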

 
\begin{proof}
$(i)$ As already observed, for all $|l| > 2$ one has $ \mB_j^{j'}(l) = 0$, and therefore $ A_j^{j'}(l) = 0$. 
For $|l| \leq 2$, $ j \neq j' $, one has (since $ | \om | \leq |\bar \om | + 1 $) 
\[
|\om \cdot l + m_3 (j'^3 - j^3)| 
\geq |m_3||j'^3 - j^3| - |\om \cdot l| 
\geq \tfrac14 (j'^2 + j^2) - 2 |\om| 
\geq \tfrac18  (j'^2 + j^2) 
\]
for $(j'^2 + j^2) \geq C$, for some constant $C$. 
Since also \eqref{BNFdeno} holds, we deduce that, for all $ j \neq j' $, 
\begin{equation} \label{lower bound}
A_j^{j'}(l) \neq 0 
\quad \Rightarrow \quad
|\om \cdot l + m_3 (j'^3 - j^3)| \geq c ( | j | +  | j' | )^2 \, . 
\end{equation}
On the other hand, if $ j = j' \in S^c$, and $l \neq 0$, then $\mB_j^{j'}(l) = 0$, and therefore 
$A_j^{j'}(l) = 0$. For $j=j'$ and $l=0$ we also have $A_j^{j'}(l) = 0$ because $\bar \om \cdot l + j'^3 - j^3 = 0$.
Hence \eqref{lower bound} holds for all $ j, j'  $.
By \eqref{cal A 1}, \eqref{lower bound},  \eqref{def cal B1 b}  we deduce the first 
bound in \eqref{Acoef}. 
The Lipschitz bound follows similarly (use also $ |j - j'| \leq 2 C_S $).  
$(ii)$ follows by \eqref{cal A 1}-\eqref{def cal B1 b}. 
\end{proof}

The previous lemma means that $ A = O(| \partial_x|^{-1})$. 
More precisely, we deduce the following bound. 

\begin{lemma}[Lemma 8.19 of \cite{bbm}] \label{lemma:Dx A bounded}
$ | A \pa_x  |_s^\Lipg + | \pa_x A |_s^\Lipg \leq C(s) $.
\end{lemma}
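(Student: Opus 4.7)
The plan is to reduce the estimate to a direct computation using the definition of the $s$-decay norm for Töplitz-in-time matrices in \eqref{matrix decay norm}, exploiting the two special features of $A$ provided by Lemma \ref{lem: A decay}: the matrix entries have \emph{bounded support} in the indices $(l, j_1 - j_2)$, and they decay like $(|j_1| + |j_2|)^{-1}$.

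First I would write down the matrix entries of $A \partial_x$ and $\partial_x A$: since $\partial_x$ is the Fourier multiplier $j \mapsto \ii j$, we have $(A \partial_x)_{j_1}^{j_2}(l) = \ii j_2 A_{j_1}^{j_2}(l)$ and $(\partial_x A)_{j_1}^{j_2}(l) = \ii j_1 A_{j_1}^{j_2}(l)$. By Lemma \ref{lem: A decay}(i), both are bounded in absolute value by $C |j_1|/(|j_1|+|j_2|) \leq C$ and $C|j_2|/(|j_1|+|j_2|) \leq C$ respectively, uniformly in $l, j_1, j_2 \in S^c$. Thus the factor of $\partial_x$ is exactly compensated by the gain $A = O(|\partial_x|^{-1})$.

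Next, Lemma \ref{lem: A decay}(ii) and the observation made right after \eqref{def cal B1 b} (namely $\mB_j^{j'}(l) = 0$ for $|l| > 2$, which propagates to $A$ via \eqref{cal A 1}) say that $(A \partial_x)_{j_1}^{j_2}(l) = 0$ whenever $|l| > 2$ or $|j_1 - j_2| > 2 C_S$, and the same for $\partial_x A$. Therefore, in the definition \eqref{matrix decay norm} of the $s$-decay norm applied to these Töplitz-in-time matrices, the sum over $(l, j) \in \Z^\nu \times \Z$ with $j := j_1 - j_2$ is supported in the finite box $\{|l| \leq 2\} \times \{|j| \leq 2 C_S\}$. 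Hence
\[
|A \partial_x|_s^2
\leq \sum_{|l|\leq 2,\ |j|\leq 2 C_S} \langle l, j \rangle^{2s}\,
\Big( \sup_{j_1 - j_2 = j} |j_2|\, |A_{j_1}^{j_2}(l)| \Big)^{\!2}
\leq C(s),
\]
with the same bound for $|\partial_x A|_s$, by the pointwise bound of the previous paragraph and the fact that $\langle l, j \rangle^{2s} \leq C(s)$ on this finite set.

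Finally, for the Lipschitz seminorm I would use the second bound in \eqref{Acoef}, namely $|A_j^{j'}(l)|^{\mathrm{lip}} \leq \e^{-2}(|j|+|j'|)^{-1}$, together with $\g = \e^{2+a}$ from \eqref{omdio}. Multiplying by $|j'|$ or $|j_1|$ again gives a uniform bound, and the Lipschitz contribution to the norm is controlled by $\g \e^{-2} = \e^a \leq 1$ for $\e$ small. Combining sup and Lipschitz parts yields $|A \partial_x|_s^{\Lipg} + |\partial_x A|_s^{\Lipg} \leq C(s)$. There is no real obstacle: the only point to watch is that the pointwise bound $|j| |A_{j_1}^{j_2}(l)|, |j_2||A_{j_1}^{j_2}(l)| \leq C$ genuinely holds for \emph{all} $j_1, j_2 \in S^c$, which is exactly what Lemma \ref{lem: A decay}(i) guarantees (note $|j_1|+|j_2| \geq 1$ since $0 \notin S^c$ is replaced by $|j|\geq 1$ for $j \in S^c$).
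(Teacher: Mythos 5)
Your proof is correct and follows the same route the paper relies on (the cited Lemma 8.19 of \cite{bbm}): the entries of $A\pa_x$ and $\pa_x A$ are uniformly bounded because the multiplier $|j_1|$ or $|j_2|$ is compensated by the decay $|A_{j_1}^{j_2}(l)|\leq C(|j_1|+|j_2|)^{-1}$ of Lemma \ref{lem: A decay}, and the support restriction $|l|\leq 2$, $|j_1-j_2|\leq 2C_S$ makes the sum in \eqref{matrix decay norm} finite, with the Lipschitz part controlled by $\g\e^{-2}=\e^{a}$. The only (harmless) slip is that you attached $|j_1|$ to $A\pa_x$ and $|j_2|$ to $\pa_x A$ when it is the other way around; both quotients are $\leq 1$, so nothing changes.
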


It follows that the symplectic map $ \Phi_2 $ in \eqref{Phi_1} is invertible for $ \e $ small, 
with inverse
\begin{equation}\label{A1 check}
\begin{aligned}
& \Phi_2^{-1} = {\rm exp}(-\e^2 A) =  I_{H_S^\bot} + \e^2 {\check A} \,, \quad 
{\check A} := {\sum}_{n \geq 1} \frac{\e^{2n-2}}{n!} \, (-A)^n \,, \\
& | {\check A} \pa_x |_s^\Lipg + | \pa_x {\check A} |_s^\Lipg \leq C(s) \,. 
\end{aligned}
\end{equation}
By \eqref{L3 new KdV} and \eqref{primo termine BNF1} we get the Hamiltonian operator
\begin{align} \label{bernardino1}
{\cal L}_4 & := \Phi_2^{-1} {\cal L}_3 \Phi_2
= \Pi_S^\bot \big( {\cal D}_\om + m_3 \partial_{xxx} + (\e^2 c(\xi) + {\tilde d}_1) \pa_x \big) \Pi_S^\bot
+ R_4 \,,
\\
\label{bernardino 2}
R_4 & := (\Phi_2^{-1} - I) \Pi_S^\bot (\e^2 c(\xi) + {\tilde d}_1) \pa_x \Pi_S^\bot 
+ \Phi_2^{-1} R_3 \,.
\end{align}

\begin{lemma} \label{lemma:R4}
There is $ \s = \s(\nu,\t) > 0 $ (possibly larger than in Lemma \ref{lemma:stime coeff mL3}) 
such that 
\begin{equation} \label{stima Lip R4} 
\begin{aligned}
| R_4 |_s^\Lipg
& \leq_s \e^7 \g^{-2} + \e^2 \| \fracchi_\d \|_{s+\s}^\Lipg \,, 
\\
| \partial_i R_4 [\widehat \imath] |_s 
& \leq_s \e^{1+b} \| \widehat \imath \|_{s + \s} 
+ \e^2 \| \fracchi_\d \|_{s + \s} \| \widehat \imath \|_{ s_0 + \s} \,. 
\end{aligned}
\end{equation}
\end{lemma}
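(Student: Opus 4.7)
The plan is to expand $R_4$ according to the definition \eqref{bernardino 2}, together with the explicit form of $R_3$ in \eqref{def R3}, and then bound each summand using three ingredients: (a) the regularizing nature of $A$ quantified by $|A\pa_x|_s^\Lipg + |\pa_x A|_s^\Lipg \leq C(s)$ from Lemma \ref{lemma:Dx A bounded}, together with the corresponding bounds for $\check A$ from \eqref{A1 check}; (b) the size estimates of $\tilde d_0, \tilde d_1$ and $\mathfrak R_3$ provided by Lemma \ref{lemma:stime coeff mL3}; (c) the algebra/interpolation inequalities of Lemma \ref{prodest}. A small numerology check shows that $\e^4 \leq \e^7 \g^{-2}$ (since $\g^{-2} = \e^{-4-2a}$) and that $\e^{1+b} = \e^{2+a/2} \leq \e^2$, so both smallness constants on the right-hand side of \eqref{stima Lip R4} are accessible.

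For the first summand in \eqref{bernardino 2}, I would write $\Phi_2^{-1} - I = \e^2 \check A$ and rewrite, for $c := \e^2 c(\xi) + \tilde d_1$, the composition as
\[
\check A \cdot c \cdot \pa_x \;=\; (\check A \pa_x)\, c \;-\; \check A\, c_x\,,
\]
so that $\pa_x$ is always adjacent to $\check A$; Lemma \ref{prodest} combined with (a), $|c|_s \leq \e^2 + \|\tilde d_1\|_s^\Lipg$, and the estimate \eqref{tilde d1 d0 KdV}, gives a bound of order $\e^2(\e^2 + \|\tilde d_1\|_{s+1}^\Lipg)$, which is dominated by the right-hand side of \eqref{stima Lip R4}. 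For $\Phi_2^{-1} R_3$ I would bound $|\Phi_2^{-1}|_s^\Lipg \lesssim 1$ and estimate $R_3$ piece by piece. The $\e^4$-piece $\e^4 \Pi_S^\bot\{(\mD_\om \widehat A) + m_3[\pa_{xxx}, \widehat A] + \mB(A + \e^2\widehat A)\}\Pi_S^\bot$ is $O(\e^4)$ in decay norm: since $\widehat A = \tfrac12 A^2 + O(\e^2)$ is regularizing of order $-2$, the commutator $[\pa_{xxx}, A^k]$ for $k\geq 2$ is decay-bounded after expanding via $[\pa_{xxx},AB] = [\pa_{xxx},A]B + A[\pa_{xxx},B]$ and pairing each loose derivative with an adjacent $A$ using (a) and the finite-band property of Lemma \ref{lem: A decay}(ii); similarly $\mB A = 3\varsigma\pa_x(\bar v^2 \cdot) A$ is $O(1)$ in decay norm because $\pa_x A$ is decay-bounded. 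The term $\Pi_S^\bot \tilde d_1 \pa_x \Pi_S^\bot (\Phi_2 - I) = \e^2 \Pi_S^\bot \tilde d_1 \pa_x \Pi_S^\bot A$ is handled by the same commutator trick as the first summand, and $(\Pi_S^\bot \tilde d_0 \Pi_S^\bot + \mathfrak R_3)\Phi_2$ is bounded directly by \eqref{tilde d1 d0 KdV}, \eqref{nuove R3}.

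The $\pa_i$-estimate follows by the Leibniz rule applied to each factor. Differentiating the multiplicative functions $c, \tilde d_0, \tilde d_1$ contributes at most $\e^5 \g^{-1} \leq \e^{1+b}$ (using $a < 1/6$) by Lemma \ref{lemma:stime coeff mL3}; differentiating $A$ or $\check A$ is also admissible because their matrix entries \eqref{cal A 1} depend on $i$ only through $m_3$, and $|\pa_i m_3[\widehat \imath]| \leq C\e^3 \|\widehat\imath\|_{s_0+\s}$ by \eqref{stima m3}; finally, $\pa_i \mathfrak R_3[\widehat\imath]$ supplies the dominant $\e^{1+b}$-contribution by \eqref{nuove R3}. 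The main technical obstacle is the $\e^4$-piece of $R_3$: every $\pa_x$ coming from the dispersion $\pa_{xxx}$ or from $\tilde d_1 \pa_x$ must be absorbed into an adjacent $A$ or $\check A$ in order to produce a bounded $s$-decay norm; once the commutators are organized so that Lemmas \ref{lemma:Dx A bounded}, \ref{lem: A decay}(ii) apply to each factor, the remaining estimates reduce to routine applications of Lemma \ref{prodest}.
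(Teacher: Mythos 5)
Your proposal is correct and follows essentially the same route as the paper: the paper's proof of this lemma is a one-line citation of \eqref{def R3}, \eqref{Phi_1}, \eqref{tilde d1 d0 KdV}, \eqref{nuove R3}, \eqref{stima m3} and Lemma \ref{lemma:Dx A bounded}, and your argument is simply a detailed working-out of exactly those ingredients (including the correct numerology $\e^4 \leq \e^{3-2a}$, $\e^5\g^{-1} \leq \e^{1+b}$, and the commutator trick that pairs every loose $\pa_x$ with an adjacent $A$ or $\check A$ via Lemmas \ref{lemma:Dx A bounded} and \ref{lem: A decay}(ii)).
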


\begin{proof} 
Use \eqref{def R3}, \eqref{Phi_1}, \eqref{tilde d1 d0 KdV}, \eqref{nuove R3}, \eqref{stima m3}
and Lemma \ref{lemma:Dx A bounded}. 
\end{proof}

\subsection{Space reduction at the order $ \pa_x $}
\label{step5}

The goal of this section is to transform $ {\cal L}_5 $ in \eqref{bernardino1} 
so that the coefficient of $ \pa_x $ becomes constant.
We conjugate $ {\cal L}_4 $ via  a symplectic map of the form 
\begin{equation} \label{def descent}
{\cal S} := \exp(\Pi_S^\bot (w \partial_x^{-1})) \Pi_S^\bot 
= \Pi_S^\bot \big( I  + w   \partial_x^{-1} \big) \Pi_S^\bot + \widehat {\cal S}\,,
\end{equation}
where $\widehat {\cal S} := \sum_{k \geq 2} 
\frac{1}{k!} [\Pi_S^\bot (w \partial_x^{-1})]^k \Pi_S^\bot$
and $ w : \T^{\nu+1} \to \R $ is a function. 
Note that the linear operator $\Pi_S^\bot (w \partial_x^{-1}) \Pi_S^\bot$ 
is the Hamiltonian vector field  generated by the Hamiltonian 
 $ - \frac12 \int_\T w (\partial_x^{-1} h)^2\,dx$, $h \in H_S^\bot$.
We calculate
\begin{multline*} 
{\cal L}_4 {\cal S} - {\cal S} \Pi_S^\bot
( {\cal D}_\om + m_3 \partial_{xxx}  + m_1 \partial_x ) \Pi_S^\bot \\ 
= \Pi_S^\bot ( 3 m_3 w_x + \e^2 c(\xi) + \tilde{d}_1 - m_1 ) \partial_x \Pi_S^\bot + \tilde R_5 \,,
\end{multline*}
\vspace{-20pt}
\begin{align*}
\tilde R_5
& := \Pi_S^\bot \{ ( 3 m_3 w_{xx} + (\e^2 c(\xi) + \tilde d_1 - m_1) \Pi_S^\bot w ) \pi_0 
\\ & \qquad 
+ ( ({\cal D}_\om w) + m_3 w_{xxx} + (\e^2 c(\xi) + \tilde d_1) \Pi_S^\bot w_x ) \pa_x^{-1}
\\ & \qquad 
+ ({\cal D}_\omega \widehat{\cal S}) 
+ m_3 [\partial_{xxx}, \widehat{\cal S}] 
+ (\e^2 c(\xi) + \tilde d_1) \partial_x \widehat{\cal S} 
- m_1 \widehat{\cal S} \pa_x 
+ R_4 {\cal S} \} \Pi_S^\bot \,,
\end{align*}
where $\tilde R_5$ collects all the terms of order at most $\pa_x^0$. 
By \eqref{c(xi)}, we solve $ 3 m_3 w_x$ $+ \e^2 c(\xi) + \tilde{d}_1 - m_1 = 0 $  
by choosing $w := - (3 m_3)^{-1} \partial_x^{-1} ( \e^2 c(\xi) + \tilde{d}_1 - m_1 )$.
For $ \e $ small the operator $ {\cal S} $ is invertible, and we get 
\begin{equation}\label{def L6}
\mL_5 := \mS^{-1} \mL_4 \mS 
= \Pi_S^\bot ( {\cal D}_\om + m_3 \partial_{xxx} + m_1 \partial_x ) \Pi_S^\bot + R_5 \,, 
\quad R_5 :=  {\cal S}^{-1} \tilde R_5 \, .
\end{equation}
Since $ {\cal S} $ is symplectic, ${\cal L}_5$ is Hamiltonian (recall Definition \ref{operatore Hamiltoniano}).
By \eqref{tilde d1 d0 KdV}, \eqref{stima m1}, \eqref{stima m3}, one has
$\| w \|_s^\Lipg \leq_s  \e^7 \g^{-2} + \e^2 \| {\mathfrak I}_\delta\|_{s + \sigma}^\Lipg $.

\begin{lemma}\label{lemma L6}
 There is $ \s = \s(\nu,\t) > 0 $ (possibly larger than in Lemma \ref{lemma:R4}) such that 
\begin{align*}
|{\cal S}^{\pm 1} - I|_s^\Lipg 
& \leq_s \e^7 \g^{-2} + \e^2 \| {\mathfrak I}_\d \|_{s+\s}^\Lipg\,, \\
|\partial_i {\cal S}^{\pm 1} [\widehat \imath ]|_s  
& \leq_s \e^{2b} \| \widehat \imath\|_{s+\s} 
+ \e^5 \g^{-1} \|{\mathfrak I}_\d \|_{s + \s} \| \widehat \imath \|_{s_0 + \sigma} \,.
\end{align*}
The remainder $R_5$ satisfies the same estimates \eqref{stima Lip R4} as $R_4$. 
\end{lemma}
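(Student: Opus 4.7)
The plan is to first bound $w$, then propagate those bounds through the exponential series defining $\mathcal{S}$, and finally estimate each piece of $\widetilde R_5$ before composing with $\mathcal{S}^{-1}$.

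\emph{Bounds on $w$.} From the defining formula $w = -(3m_3)^{-1}\partial_x^{-1}(\varepsilon^2 c(\xi) + \widetilde d_1 - m_1)$, together with \eqref{tilde d1 d0 KdV}, \eqref{stima m1} and \eqref{stima m3}, tame product and composition estimates yield
\[
\|w\|_s^{\Lipg} \leq_s \varepsilon^7 \gamma^{-2} + \varepsilon^2 \|\fracchi_\delta\|_{s+\sigma}^{\Lipg},
\qquad
\|\partial_i w[\widehat \imath\,]\|_s \leq_s \varepsilon^{2b}\|\widehat \imath\|_{s+\sigma} + \varepsilon^5\gamma^{-1}\|\fracchi_\delta\|_{s+\sigma}\|\widehat \imath\|_{s_0+\sigma},
\]
the leading contribution in $\|w\|_s$ coming from $\partial_x^{-1}(\widetilde d_1 - m_1 + \varepsilon^2 c(\xi))$ and, for the derivative in $i$, from $\partial_i m_1$.

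\emph{Estimates on $\mathcal{S}^{\pm 1}$.} The operator $\partial_x^{-1}$ is a Fourier multiplier of symbol $1/(\ii j)$, bounded on $H_{S^\bot}^s$. Hence by Definition \ref{def:norms} one checks $|w\,\partial_x^{-1}|_s^{\Lipg} \leq C(s) \|w\|_s^{\Lipg}$ (and the analogous derivative bound). Applying the algebra property of Lemma \ref{prodest} term by term in the exponential series $\mathcal{S}^{\pm 1} = \sum_{k\geq 0} (\pm 1)^k (\Pi_S^\bot w \partial_x^{-1})^k/k!$, one gets
\[
|\mathcal{S}^{\pm 1} - I|_s^{\Lipg} \leq_s \|w\|_s^{\Lipg} + \varepsilon^c \|w\|_s^{\Lipg}\|w\|_{s_0}^{\Lipg}
\]
with a small convergence factor $\varepsilon^c$ from the smallness of $\|w\|_{s_0}^{\Lipg}$ under \eqref{ansatz delta}. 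Inserting the bounds on $w$ yields the first estimate. The derivative bound on $\partial_i \mathcal{S}^{\pm 1}[\widehat\imath\,]$ is obtained by differentiating the series term-by-term, using again Lemma \ref{prodest} and the bound on $\partial_i w[\widehat\imath\,]$; note that the loss $\varepsilon^{2b}$ (rather than $\varepsilon^5\gamma^{-1}$) on the principal $\|\widehat\imath\|_{s+\sigma}$-piece comes from $\partial_i m_1$.

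\emph{Estimate on $R_5$.} Writing $\widetilde R_5 = \widetilde R_5^{(0)} + \widetilde R_5^{(-1)} + \widetilde R_5^{(\geq 2)} + R_4 \mathcal{S}$, where
\[
\widetilde R_5^{(0)} := \Pi_S^\bot\bigl(3m_3 w_{xx} + (\varepsilon^2 c(\xi) + \widetilde d_1 - m_1)\Pi_S^\bot w\bigr)\pi_0
\]
is of order $\partial_x^0$,
\[
\widetilde R_5^{(-1)} := \Pi_S^\bot\bigl(\mathcal{D}_\omega w + m_3 w_{xxx} + (\varepsilon^2 c(\xi) + \widetilde d_1)\Pi_S^\bot w_x\bigr)\partial_x^{-1}
\]
is of order $\partial_x^{-1}$, and $\widetilde R_5^{(\geq 2)}$ collects $(\mathcal{D}_\omega \widehat{\mathcal{S}}) + m_3[\partial_{xxx},\widehat{\mathcal{S}}] + (\varepsilon^2 c(\xi) + \widetilde d_1)\partial_x \widehat{\mathcal{S}} - m_1 \widehat{\mathcal{S}}\partial_x$, one estimates each piece separately. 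The multiplication-operator part $\widetilde R_5^{(0)}$ and $\widetilde R_5^{(-1)}$ are bounded via the $w$-estimates, using that $\pi_0$ and $\partial_x^{-1}$ are bounded Fourier multipliers and that the coefficient of the term proportional to $\partial_x$ has been killed by the very choice of $w$ (so no $w_x \partial_x$ is present). The piece $\widetilde R_5^{(\geq 2)}$ is of order $\partial_x^{-2}$: each summand contains at least two factors $\partial_x^{-1}$ and is controlled by $\|w\|_s^2$ via Lemma \ref{prodest}, which already produces a higher-order smallness. Finally $R_4 \mathcal{S}$ is controlled using Lemma \ref{lemma:R4} combined with the bound on $\mathcal{S}$ just proved. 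Composing with $\mathcal{S}^{-1}$ via Lemma \ref{prodest} one more time yields both bounds in \eqref{stima Lip R4} for $R_5 = \mathcal{S}^{-1}\widetilde R_5$.

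\emph{Main obstacle.} The only non-routine point is ensuring that the bound on $R_5$ does \emph{not} degrade with respect to that on $R_4$: the worst a priori terms in $\widetilde R_5$ are of size $\varepsilon^5 \gamma^{-1}$ (coming from products of $\widetilde d_1 - m_1$ with $w$), which forces the verification that all the $\pa_x$-order terms really cancel thanks to the choice of $w$, and that the remaining contributions fit within the sizes $\varepsilon^7 \gamma^{-2}$ and $\varepsilon^2 \|\fracchi_\delta\|_{s+\sigma}$ respectively. This is exactly the role of the identity $3 m_3 w_x + \varepsilon^2 c(\xi) + \widetilde d_1 - m_1 \equiv 0$.
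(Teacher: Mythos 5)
Your proposal is correct and follows essentially the same (routine) route the paper takes: the paper itself only records the bound $\|w\|_s^\Lipg \leq_s \e^7\g^{-2}+\e^2\|\fracchi_\d\|_{s+\s}^\Lipg$ and leaves the propagation through the exponential series and through the explicit decomposition of $\tilde R_5$ to the reader, which is exactly what you carry out, including the key observation that the choice of $w$ kills the $\pa_x$-order term and that the remaining pieces fit within the sizes of \eqref{stima Lip R4}. The only minor imprecision is calling $\widetilde R_5^{(\geq 2)}$ ``of order $\pa_x^{-2}$'': the summands $m_3[\pa_{xxx},\widehat{\cal S}]$, $(\e^2 c(\xi)+\tilde d_1)\pa_x\widehat{\cal S}$ and $m_1\widehat{\cal S}\pa_x$ are only of order $\pa_x^0$ (some of the $\pa_x^{-1}$ factors are consumed by the commutation/composition, at the cost of extra $x$-derivatives on $w$ absorbed in $\s$), but they remain quadratic in $w$, which is all that is needed for the stated bounds.
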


\subsection{KAM reducibility and inversion of $ {\cal L}_{\om} $} \label{subsec:mL0 mL5}

The coefficients $ m_3, m_1 $  of the operator $ {\cal L}_5 $ in \eqref{def L6} are constants, 
and the remainder $ R_5 $ is a bounded operator of order $ \partial_x^0 $ with  small matrix decay 
norm, see \eqref{decay R6}.
Then we  can diagonalize $ {\cal L}_5 $ by applying the iterative KAM reducibility 
Theorem 4.2 in \cite{BBM} along the sequence of scales  
\begin{equation}\label{defN}
N_n := N_{0}^{\chi^n}, \quad n = 0,1,2,\ldots,  
\quad  \chi := 3/2, \quad N_0  > 0 \, .
\end{equation}
In section \ref{sec:NM}, the initial $ N_0 $ will (slightly) increase to infinity 
as $ \e \to 0 $, see \eqref{nash moser smallness condition}. 
The required smallness condition (see (4.14) in \cite{BBM}) is (written in the present notations)
\be\label{R6resto}
N_0^{C_0} | R_5 |_{s_0 + \beta}^{\Lipg} \g^{-1} \leq 1 
\ee
where $ \b := 7 \tau + 6  $ (see (4.1) in \cite{BBM}), 
$ \tau $ is the diophantine exponent in \eqref{omdio} and \eqref{Omegainfty}, 
and the constant $ C_0 := C_0 (\t, \nu ) > 0 $  is fixed in
Theorem 4.2 in \cite{BBM}.
By Lemma \ref{lemma L6}, the remainder $ R_5 $ satisfies the bound \eqref{stima Lip R4}, 
and using \eqref{ansatz delta} we get (recall \eqref{link gamma b})
\be \label{decay R6}
| R_5|_{s_0 + \beta}^{\Lipg} \leq C \e^7 \g^{-2} = C \e^{3-2a},  
\quad 
| R_5 |_{s_0 + \beta}^{\Lipg} \g^{-1} \leq C \e^7 \g^{-3} = C \e^{1 - 3 a}. 
\ee
We use that  $ \mu $ in \eqref{ansatz delta}  is assumed to satisfy $ \mu  \geq \s + \b $ where $ \s := \s (\tau, \nu ) $ 
is given in Lemma \ref{lemma L6}.  

\begin{theorem} \label{teoremadiriducibilita} 
{\bf (Reducibility)}
Assume that  $\omega \mapsto i_\d (\omega)
$ is a Lipschitz function defined on some subset $\Omega_o \subset \Omega_\e $ (recall  \eqref{Omega epsilon}), satisfying 
\eqref{ansatz delta} with  $ \mu \geq \s + \b $, 
where $ \s := \s (\tau, \nu) $ is given in Lemma \ref{lemma L6} and $ \b  := 7 \tau + 6 $. 
Then there exists $ \delta_{0} \in (0,1) $  such that, if
\begin{equation}\label{condizione-kam}
N_0^{C_0}  \e^7 \g^{-3} = N_0^{C_0}  \e^{1 - 3 a}
\leq \delta_{0} \, , \quad \g := \e^{2b}:= \e^{2 + a} \, , \quad a \in (0,1/6) \, , 
\end{equation}
then:

$(i)$ {\bf (Eigenvalues)}.
For all $ \omega \in \Omega_\e $ there exists a sequence 
\begin{equation} \label{espressione autovalori}
\mu_j^\infty(\omega) := \mu_j^\infty(\omega, i_\d (\om)) 
:=  \ii \big( - {\tilde m}_3 (\omega) j^3 +  {\tilde m}_1(\omega)  j \big)  
+ r_j^\infty(\omega), \quad j \in  S^c \, ,  
\end{equation}
where $ {\tilde m}_3, {\tilde m}_1$  coincide with the coefficients $m_3, m_1$ of $ {\cal L}_5 $ in \eqref{def L6} for all $ \omega \in \Omega_o $, and 
\begin{equation} \label{autofinali}
\begin{aligned}
| {\tilde m}_3 - 1 |^\Lipg 
& \leq C \e^3, \qquad  
| {\tilde m}_1 - \e^2 c(\xi) |^\Lipg \leq C \e^5 \g^{-1}, \\
| r^{\infty}_j |^\Lipg 
& \leq C \e^{3 - 2 a} 
\quad \forall j \in  S^c
\end{aligned}
\end{equation}
for some  $ C > 0 $
(and $c(\xi)$ is defined in \eqref{c(xi)}).
All the eigenvalues $\mu_j^{\infty}$ are purely imaginary. 
We define, for convenience, $\mu_0^\infty (\om) := 0$. 

\smallskip

$(ii)$ {\bf (Conjugacy)}.
For all $\omega$ in the set
\begin{align}  
\Omega_\infty^{2\g} := \Omega_\infty^{2\g} (i_\d) 
& := \Big\{ \omega \in \Omega_o : \, 
| \ii\omega \cdot l + \mu^{\infty}_j (\omega) - \mu^{\infty}_{k} (\omega) |
\geq \frac{2 \gamma | j^{3} - k^{3} |}{ \langle l \rangle^{\tau}}  
\notag \\ 
& \hspace{120pt}
\forall l \in \Z^{\nu}, \ \forall j ,k \in S^c \cup \{0\} \Big\} 
\label{Omegainfty}
\end{align}
there is a real, bounded, invertible linear operator $\Phi_\infty(\omega) : H^s_{S^\bot} (\T^{\nu+1}) \to 
H^s_{S^\bot} (\T^{\nu+1}) $, with bounded inverse 
$\Phi_\infty^{-1}(\omega)$, that conjugates $\mL_6$ in \eqref{def L6} to constant coefficients, namely
\begin{equation}\label{Lfinale}
\begin{aligned}
{\cal L}_{\infty}(\omega)
& := \Phi_{\infty}\inv(\omega) \circ \mL_5(\omega) \circ  \Phi_{\infty}(\omega)
=  \om \cdot \partial_{\vphi} + {\cal D}_{\infty}(\omega),  \\
{\cal D}_{\infty}(\omega)
& := {\rm diag}_{j \in S^c} \{ \mu^{\infty}_{j}(\omega) \} \, .
\end{aligned}
\end{equation}
The transformations $\Phi_\infty, \Phi_\infty\inv$ are close to the identity in matrix decay norm,
with
\begin{equation} \label{stima Phi infty}
| \Phi_{\infty} - I |_{s,\Omega_\infty^{2\g}}^{{\rm Lip}(\gamma)}
+ | \Phi_{\infty}^{- 1} - I |_{s,\Omega_\infty^{2\g}}^\Lipg
\leq_s \e^7 \g^{-3} + \e^2 \g^{-1} \| {\mathfrak I}_\delta \|_{s + \sigma}^\Lipg .
\end{equation}
Moreover $\Phi_{\infty}, \Phi_{\infty}^{-1}$  are symplectic, and 
$\mL_\infty $ is a Hamiltonian operator.
\end{theorem}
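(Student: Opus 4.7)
The plan is to deduce the theorem directly from the abstract KAM reducibility theorem (Theorem 4.2) of \cite{BBM}, which is designed exactly for Hamiltonian operators of the form $\om \cdot \pa_\vphi + \mD + R$ with $\mD$ a diagonal Fourier multiplier and $R$ a small remainder in $s$-decay norm. The hypotheses of that theorem reduce, in our notation, to the smallness bound \eqref{R6resto}. This is immediate from Lemma \ref{lemma L6} and \eqref{decay R6}: one has $|R_5|_{s_0+\b}^\Lipg \g^{-1} \leq C\e^{1-3a}$, so \eqref{condizione-kam} gives $N_0^{C_0}|R_5|_{s_0+\b}^\Lipg\g^{-1} \leq 1$, provided $\mu$ in \eqref{ansatz delta} is taken at least $\s+\b$.

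The iterative scheme underlying that abstract theorem runs along the sequence \eqref{defN} and solves, at step $n$, the homological equation $[\om \cdot \pa_\vphi + \mD_n,\Psi_n] = -\Pi_{N_n}(R_n - [R_n])$, where $[R_n]$ denotes the $\vphi$-average on the diagonal. Entry-wise,
\[
(\Psi_n)_j^k(l) = -\frac{(R_n)_j^k(l)}{\ii\om \cdot l + \mu_j^{(n)} - \mu_k^{(n)}}, \qquad \langle l\rangle \leq N_n, \ (l,j,k) \neq (0,j,j),
\]
which is well-defined on the Cantor set cut out by the $n$-th Melnikov conditions with constant slightly larger than $2\g$. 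The intersection of these sets contains $\Omega_\infty^{2\g}$ defined in \eqref{Omegainfty}. The transformation $e^{\Psi_n}$ is symplectic and each $\Psi_n$ is chosen Hamiltonian, using that the denominators satisfy the reality symmetry noted in remark \ref{rem:Ham solving homolog}, so the conjugated operators $\mL_{n+1} = e^{-\Psi_n}\mL_n e^{\Psi_n}$ remain Hamiltonian. The new diagonal part is $\mD_{n+1} = \mD_n + [R_n]$, producing eigenvalues $\mu_j^{(n+1)} = \mu_j^{(n)} + (R_n)_j^j(0)$ that converge telescopically. The quadratic step bound $|R_{n+1}|_{s_0} \leq_s N_n^{2\t+1}\g^{-1}|R_n|_{s_0}^2 + N_n^{-\b}|R_n|_s$, combined with the tame high-norm estimate on $|R_{n+1}|_s$ and Lemma \ref{prodest}, drives $|R_n|_{s_0} \leq C\e^{3-2a}N_n^{-\b}$ and makes the infinite product $\Phi_\infty = \prod_n e^{\Psi_n}$ converge in all matrix decay norms $|\cdot|_s^\Lipg$, yielding \eqref{Lfinale} and \eqref{stima Phi infty}.

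To obtain the eigenvalue formula \eqref{espressione autovalori} with $\tilde m_3 = m_3$ and $\tilde m_1 = m_1$ on $\Omega_o$, one identifies the diagonal part of $\mL_5$ (constant coefficients $m_3, m_1$) with the initial diagonal $\mD_0$ and collects all the further corrections $\sum_n (R_n)_j^j(0)$ into $r_j^\infty$. The estimates \eqref{autofinali} then combine the bounds on $m_3, m_1$ from Lemmata \ref{lemma:stime coeff mL2}--\ref{lemma:stime coeff mL3} with the summable tail coming from $|R_n|_{s_0}^\Lipg \leq C\e^{3-2a}N_n^{-\b}$. The eigenvalues are purely imaginary because each $R_n$ is Hamiltonian, so $(R_n)_j^j(0) \in \ii\R$.

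The main obstacle, beyond the bookkeeping of the tame scheme, is that $\Omega_\infty^{2\g}$ must be defined in terms of the $\mu_j^\infty$ regarded as Lipschitz functions on the whole of $\Omega_\e$, not merely on the parameter set $\Omega_o$ where $i_\d$ is a priori defined; this is needed so that \eqref{Omegainfty} makes sense and so that the Melnikov conditions can be imposed uniformly at each step of the outer Nash-Moser iteration. The remedy, already built into Theorem 4.2 of \cite{BBM}, is to extend each $\mu_j^{(n)}$ from $\Omega_o$ to $\Omega_\e$ via Kirszbraun's theorem, preserving the Lipschitz norm at each step, and to define $\mu_j^\infty$ as the limit of these extensions. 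Reality is preserved under Lipschitz extension (apply Kirszbraun componentwise on the imaginary axis), so \eqref{espressione autovalori} holds on all of $\Omega_\e$, while the identifications $\tilde m_3 = m_3$, $\tilde m_1 = m_1$ continue to hold on $\Omega_o$ by uniqueness of the iteration there.
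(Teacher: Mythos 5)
Your proposal is correct and follows essentially the same route as the paper: the theorem is obtained by applying the abstract KAM reducibility Theorem 4.2 of \cite{BBM}, whose smallness hypothesis \eqref{R6resto} is verified exactly as you do via Lemma \ref{lemma L6} and \eqref{decay R6}, with the eigenvalues extended to all of $\Omega_\e$ by Kirszbraun's theorem (valid for $\om\in\R^\nu$). The additional detail you supply on the inner iterative scheme is a faithful recollection of the mechanism inside the cited abstract theorem rather than a different argument.
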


\begin{proof}
The proof closely follows the one of Theorem 4.1 in \cite{BBM}, 
which is based on Theorem 4.2, Corollaries 4.1, 4.2 and Lemmata 4.1, 4.2 of \cite{BBM}.
Here $\om \in \R^\nu$, while in \cite{BBM} the parameter $\lm \in \R$, 
but Kirszbraun's Theorem on Lipschitz extension also holds in $\R^\nu $.  
The bound \eqref{stima Phi infty} follows by Corollary 4.1 of \cite{BBM} and the estimate of 
$ R_5 $ in Lemma \ref{lemma L6} above.

To adapt the proof of \cite{BBM} to the present case, 
the only changes in the statement of Theorem 4.2 of \cite{BBM} are: 
$\e^{3-2a}$ instead of $\e$ in (4.18) of \cite{BBM}, 
and $\e^{1+b}$ instead of $\e$ in (4.23), (4.25) and (4.26) of \cite{BBM}.
The factor $\e^{1+b}$ comes from the bound for $\pa_i R_5$, 
see Lemma \ref{lemma L6} and \eqref{stima Lip R4}. 
\end{proof}

\begin{remark}
Theorem 4.2 in \cite{BBM} 
also provides the Lipschitz dependence of the (approximate) 
eigenvalues $ \mu_j^n $ with respect to the unknown $ i_0 (\vphi) $,  
which is used for the measure estimate (Lemma \ref{matteo 10}).
 \end{remark}

All the parameters $ \omega \in \Omega_\infty^{2 \gamma} $ satisfy  (specialize \eqref{Omegainfty} for $ k = 0 $) 
\begin{equation}\label{prime di melnikov}
|\ii \omega \cdot l + \mu_j^\infty(\omega)| \geq 
2 \g | j |^3 \langle l \rangle^{-\tau} \, , \quad \forall l \in \Z^\nu , \ j \in S^c, 
\end{equation}
and the diagonal operator $ {\cal L}_\infty $ is invertible.  

In the following theorem we verify the inversion assumption \eqref{tame inverse} for ${\cal L}_\om $. 

\begin{theorem}\label{inversione linearized normale} {\bf (Inversion of $ {\cal L}_\om $)}
Assume the hypotheses of Theorem \ref{teoremadiriducibilita} and \eqref{condizione-kam}. 
Then there exists $ \s_1 := \s_1 ( \t, \nu ) >  0 $ such that, 
$ \forall \omega \in \Omega^{2 \gamma}_\infty(i_\d )$ (see \eqref{Omegainfty}),
for any function $ g \in H^{s+\s_1}_{S^\bot} (\T^{\nu+1})  $ 
the equation  ${\cal L}_\omega h = g$ 
has a solution $h = {\cal L}_\omega^{-1} g \in H^s_{S^\bot} (\T^{\nu+1})$, satisfying
\begin{align}\label{stima inverso linearizzato normale}
\| {\cal L}_\omega^{-1} g \|_s^{{\rm Lip}(\gamma)} 
& \leq_s \gamma^{-1} \big( \| g \|_{s +\sigma_1}^{{\rm Lip}(\gamma)} 
+ \e^2 \gamma^{-1} \| {\mathfrak I}_0 \|_{s + \sigma_1}^\Lipg
\| g \|_{s_0}^{{\rm Lip}(\gamma)} \big) \, .  
\end{align}
\end{theorem}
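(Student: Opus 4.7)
The plan is to invert $\mathcal L_\omega$ by reversing the chain of symplectic conjugations performed in sections \ref{step1}--\ref{subsec:mL0 mL5}, inverting the fully diagonal operator $\mathcal L_\infty$ at the end, and then composing tame estimates. Concretely, combining \eqref{cal L1 Kdv}, \eqref{L2 Kdv}, \eqref{L3 KdV}, \eqref{bernardino1}, \eqref{def L6} and \eqref{Lfinale}, for $\omega \in \Omega_\infty^{2\gamma}(i_\delta)$ one has the factorization
\[
\mathcal L_\omega
= \Phi \, B \, \rho \, \mathcal T \, \Phi_2 \, \mathcal S \, \Phi_\infty
\, \mathcal L_\infty \,
\Phi_\infty^{-1} \, \mathcal S^{-1} \, \Phi_2^{-1} \, \mathcal T^{-1} \, B^{-1} \, \Phi^{-1},
\]
so the candidate inverse is
\[
\mathcal L_\omega^{-1} g
= \Phi \, B \, \mathcal T \, \Phi_2 \, \mathcal S \, \Phi_\infty
\, \mathcal L_\infty^{-1} \, \Phi_\infty^{-1} \, \mathcal S^{-1} \, \Phi_2^{-1} \, \mathcal T^{-1} \, \rho^{-1} \, B^{-1} \, \Phi^{-1} g.
\]

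The first step is to invert $\mathcal L_\infty = \omega\cdot\partial_\varphi + \mathcal D_\infty$. Expanding on the exponential basis, for $g \in H^{s+\tau}_{S^\bot}$ with $\widehat g(l,j)$ the $(l,j)$-Fourier coefficient, the formal solution has coefficients $\widehat h(l,j) = \widehat g(l,j)/(\ii\omega\cdot l + \mu_j^\infty(\omega))$. The second Melnikov conditions \eqref{Omegainfty} specialized at $k=0$ (that is, \eqref{prime di melnikov}) yield $|\ii\omega\cdot l + \mu_j^\infty| \geq 2\gamma\langle l\rangle^{-\tau}$ for all $l\in\Z^\nu$, $j\in S^c$, giving the sup-estimate $\|\mathcal L_\infty^{-1} g\|_s \leq C\gamma^{-1}\|g\|_{s+\tau}$. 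For the Lipschitz semi-norm I differentiate the quotient in $\omega$ and use the Lipschitz bounds on $\mu_j^\infty$ provided by \eqref{autofinali} together with $|\ii\omega\cdot l+\mu_j^\infty|\geq 2\gamma\langle l\rangle^{-\tau}$: a standard computation (as in \cite{BBM}) yields $\|\mathcal L_\infty^{-1} g\|_s^{\Lipg}\leq C\gamma^{-1}\|g\|_{s+2\tau+1}^{\Lipg}$ with possibly an extra factor controlled by the $\e^2$-size of the Lipschitz variation of the eigenvalues, which accounts for the $\e^2\gamma^{-1}$ correction in \eqref{stima inverso linearizzato normale}.

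The second step is to compose with the outer transformations. Each of $\Phi^{\pm 1}$, $B^{\pm 1}$, $\mathcal T^{\pm 1}$, $\Phi_2^{\pm 1}$, $\mathcal S^{\pm 1}$, $\Phi_\infty^{\pm 1}$, as well as the scalar multiplication by $\rho^{-1}$, satisfies a tame estimate of the form
\[
\|V h\|_s^{\Lipg}\leq_s \|h\|_{s+\sigma_0}^{\Lipg}+\|\mathfrak I_\delta\|_{s+\sigma_0}^{\Lipg}\|h\|_{s_0+\sigma_0}^{\Lipg},
\]
for some fixed $\sigma_0=\sigma_0(\tau,\nu)$, by the bounds collected in Lemmata \ref{lemma:stime coeff mL1}, \ref{lemma:stime coeff mL2}, \ref{lemma:stime coeff mL3}, \ref{lemma L6} and Theorem \ref{teoremadiriducibilita} (together with the interpolation inequality and the algebra property). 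Composing a bounded number of such operators (the composition lemma is classical and gives the analogous tame bound with an enlarged loss $\sigma_1$) produces, for the whole product, an inequality of the same shape with loss $\sigma_1=\sigma_1(\tau,\nu)$. Applying this first to $g$ with $V=\Phi^{-1}B^{-1}\rho^{-1}\mathcal T^{-1}\Phi_2^{-1}\mathcal S^{-1}\Phi_\infty^{-1}$, then inserting $\mathcal L_\infty^{-1}$, and finally applying the tame bound for $V'=\Phi\,B\,\mathcal T\,\Phi_2\,\mathcal S\,\Phi_\infty$, the $\gamma^{-1}$ from $\mathcal L_\infty^{-1}$ factors out and one gets
\[
\|\mathcal L_\omega^{-1}g\|_s^{\Lipg}\leq_s \gamma^{-1}\bigl(\|g\|_{s+\sigma_1}^{\Lipg}+\|\mathfrak I_\delta\|_{s+\sigma_1}^{\Lipg}\|g\|_{s_0+\sigma_1}^{\Lipg}\bigr).
\]
Finally I pass from $\mathfrak I_\delta$ to $\mathfrak I_0$: since $\mathfrak I_\delta-\mathfrak I_0=(0,y_\delta-y_0,0)$, estimate \eqref{2015-2} yields $\|\mathfrak I_\delta\|_s^{\Lipg}\leq_s \|\mathfrak I_0\|_{s+\sigma}^{\Lipg}$, which (after absorbing $\sigma$ into $\sigma_1$) gives the claimed bound \eqref{stima inverso linearizzato normale}. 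The extra factor $\e^2\gamma^{-1}$ tracks the Lipschitz dependence of the eigenvalues $\mu_j^\infty$ and of the coefficients of $\mathcal L_\omega$ on the torus $i_0$, and arises naturally when combining the Lipschitz bounds of each of the conjugating transformations.

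The main technical obstacle is bookkeeping, rather than a deep step: one must verify that the loss $\sigma_1$ stays bounded (independently of $s$) when composing all transformations, and that the small-divisor loss is concentrated in the single factor $\mathcal L_\infty^{-1}$, so that the final bound carries only one inverse power of $\gamma$ in the principal term. The key inputs that make this possible are the smallness condition \eqref{condizione-kam} (which ensures $\Phi_\infty^{\pm 1}=I+o(1)$ in $s$-decay norm by \eqref{stima Phi infty}) and the fact that the transport-type maps $\Phi,B,\mathcal T,\mathcal S,\Phi_2$ are quantitative perturbations of the identity whose tame constants depend polynomially on $\|\mathfrak I_\delta\|_{s_0+\sigma}^{\Lipg}$, which is small by \eqref{ansatz delta}.
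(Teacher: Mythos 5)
Your proposal is correct and follows the same route as the paper, which simply defers to the proof of Theorem 8.16 in \cite{bbm}: factor $ {\cal L}_\om $ through the chain of conjugations of sections \ref{step1}--\ref{subsec:mL0 mL5}, invert the diagonal operator $ {\cal L}_\infty $ using the first Melnikov conditions \eqref{prime di melnikov}, compose the tame estimates of the transformations, and finally replace $ {\mathfrak I}_\d $ by $ {\mathfrak I}_0 $ via \eqref{2015-2}. One small imprecision: your displayed intermediate bound should already carry the factor $ \e^2 \g^{-1} $ (which is large, $ = \e^{-a} $) in front of $ \| {\mathfrak I}_\d \|_{s+\s_1} \| g \|_{s_0+\s_1} $, since that factor enters through the high-norm estimate \eqref{stima Phi infty} for $ \Phi_\infty^{\pm 1} $ rather than only through the Lipschitz variation of the eigenvalues; your final statement, however, does include it, so the conclusion stands.
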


\begin{proof} See the proof of Theorem 8.16 in \cite{bbm}. 
\end{proof}

\section{The Nash-Moser nonlinear iteration}\label{sec:NM}
 
In this section we prove Theorem \ref{main theorem}. It will be a consequence of the Nash-Moser Theorem \ref{iterazione-non-lineare} below.

Consider the finite-dimensional subspaces
\[
E_n := \big\{ \fracchi (\vphi) = ( \Theta, y, z )(\vphi)  : \, \Theta = \Pi_n \Theta, \ y = \Pi_n y, \ z = \Pi_n z \big\}
\]
where $ N_n := N_0^{\chi^n} $ are introduced  in \eqref{defN}, and $ \Pi_n $ are the projectors 
(which, with a small abuse of notation, we denote with the same symbol)
\begin{equation} \label{Pin def}
\Pi_n \Theta (\ph) :=  \sum_{|l| < N_n} \Theta_l e^{\ii l \cdot \ph}, \quad 
\Pi_n z(\ph,x) := \sum_{|(l,j)| < N_n} z_{lj} e^{\ii (l \cdot \ph + jx)}, 
\end{equation}
where $\Theta (\ph) = \sum_{l \in \Z^\nu} \Theta_l e^{\ii l \cdot \ph}$
and $z(\ph,x) = \sum_{l \in \Z^\nu,  j \in S^c} z_{lj} e^{\ii (l \cdot \ph + jx)}$ 
(for $\Pi_n y(\ph)$ similar definition as for $\Pi_n \Theta(\ph)$).
We define $ \Pi_n^\bot := I - \Pi_n $.  
The classical smoothing properties hold: for all $\alpha , s \geq 0$, 
\begin{equation}\label{smoothing-u1}
\begin{aligned}
\|\Pi_{n} \fracchi \|_{s + \alpha}^\Lipg 
& \leq N_{n}^{\alpha} \| \fracchi \|_{s}^\Lipg  
\quad \forall \fracchi (\om) \in H^{s},   
\\
\|\Pi_{n}^\bot \fracchi \|_{s}^\Lipg 
& \leq N_{n}^{-\alpha} \| \fracchi \|_{s + \alpha}^\Lipg  
\quad \forall \fracchi (\om) \in H^{s + \alpha}.
\end{aligned}
\end{equation}
We define the constants
\begin{alignat}{3} \label{costanti nash moser}
& \mu_1 := 3 \mu + 9\,,\quad &
& \alpha := 3 \mu_1 + 1\,,\quad &
& \alpha_1 := (\alpha - 3 \mu)/2 \,, 
 \\
& \kappa := 3 \big(\mu_1 + \rho^{-1} \big)+ 1\,,\qquad &
& \beta_1 := 6 \mu_1+  3 \rho^{-1} + 3 \, ,  \qquad &
& 0 < \rho < \frac{1 - 3 a}{C_1(2 + 3 a)}\,, \label{def rho}
\end{alignat}
where $ \mu := \mu (\tau, \nu) $ is the ``loss of regularity'' defined in Theorem \ref{thm:stima inverso approssimato} 
(see \eqref{stima inverso approssimato 1}) and $ C_1 $ is fixed below. 

\begin{theorem}\label{iterazione-non-lineare} 
{\bf (Nash-Moser)} Assume that $ f \in C^q $ with 
$ q > s_0 + \b_1 + \mu + 3 $. 
Let $ \tau \geq \nu + 2 $. Then there exist $ C_1 > \max \{ \mu_1 + \a, C_0 \} $
(where $ C_0 := C_0 (\tau, \nu) $  is the one in Theorem \ref{teoremadiriducibilita}),  
$ \delta_0 := \d_0 (\tau, \nu) > 0 $ such that, if
\begin{equation}\label{nash moser smallness condition}  
N_0^{C_1}  \e^{b_* + 2} \gamma^{-2}< \d_0\,, \quad 
\gamma:= \e^{2 + a} = \e^{2b} \,,\quad 
N_0 := (\e^4 \g^{-3})^\rho\,,\quad 
b_* := 5 - 2 b \, , 
\end{equation}
then, for all $ n \geq 0 $: 
\begin{itemize}
\item[$({\cal P}1)_{n}$] 
there exists a function 
$(\fracchi_n, \zeta_n) : {\cal G}_n \subseteq \Omega_\e \to E_{n-1} \times \R^\nu$, 
$\omega \mapsto (\fracchi_n(\omega), \zeta_n(\omega))$,  
$ (\fracchi_0, \zeta_0) := 0 $, $ E_{-1} := \{ 0 \} $,   
satisfying $ | \zeta_n |^\Lipg \leq C \|{\cal F}(U_n) \|_{s_0}^\Lipg $, 
\begin{equation}\label{ansatz induttivi nell'iterazione}
\| \fracchi_n \|_{s_0 + \mu }^{{\rm Lip}(\gamma)} 
\leq C_* \e^{b_*} \gamma^{-1}\,, \quad 
\| {\cal F}(U_n)\|_{s_0 + \mu + 3}^{{\rm Lip}(\gamma)} \leq C_*\e^{b_*} \,, 
\end{equation}
where $U_n := (i_n, \zeta_n)$ with $i_n(\ph) = (\ph,0,0) + \fracchi_n(\ph)$.
The sets ${\cal G}_{n} $ are defined inductively by: 
\begin{multline} \label{def:Gn+1}
\begin{aligned}
{\cal G}_{0} 
& := \Big\{\omega \in \Omega_\e \, : \, |\omega \cdot l| \geq \frac{2 \g}{\langle l \rangle^{\tau}}
\, \ \forall l \in \Z^\nu \setminus \{0\} \Big\} \,, 
\\
{\cal G}_{n+1} 
& :=  \Big\{ \omega  \in {\cal G}_{n} \, : \, |\ii \omega \cdot l + \mu_j^\infty ( i_n) -
\mu_k^\infty ( i_n )| \geq \frac{2\gamma_{n} |j^{3}-k^{3}|}{\left\langle l\right\rangle^{\tau}}
\end{aligned}
\\ 
\forall j , k \in S^c \cup \{0\}, \  l \in \Z^{\nu} \Big\}\,,
\end{multline}
where $ \gamma_{n}:=\gamma (1 + 2^{-n}) $ and $\mu_j^\infty(\omega) := \mu_j^\infty(\omega, i_n(\omega)) $ 
are defined in \eqref{espressione autovalori} (and  $ \mu_0^\infty(\omega) = 0 $).

The difference $\widehat {\mathfrak I}_n := {\mathfrak I}_n - {\mathfrak I}_{n - 1} $ 
(where we set $ \widehat \fracchi_0 := 0 $) is defined on $\mG_n$, and it satisfies
\begin{equation}  \label{Hn}
\| \widehat {\mathfrak I}_1 \|_{ s_0 + \mu}^{\Lipg} \leq C_* \e^{b_*} \gamma^{-1} \, , \quad 
\| \widehat {\mathfrak I}_n \|_{ s_0 + \mu}^{\Lipg} \leq C_* \e^{b_*} \gamma^{-1} N_{n - 1}^{-\alpha_1} \quad \forall n > 1.
\end{equation}

\item[$({\cal P}2)_{n}$]   $ \| {\cal F}(U_n) \|_{ s_{0}}^{{\rm Lip}(\gamma)} \leq C_* \e^{b_*} N_{n - 1}^{- \alpha}$ 
where we set $N_{-1} := 1$.
\item[$({\cal P}3)_{n}$] \emph{(High norms).} 
\ $  \| \fracchi_n \|_{ s_{0}+ \beta_1}^{{\rm Lip}(\gamma)} \leq C_* \e^{b_*} \gamma^{-1}  N_{n - 1}^{\kappa} $ and  
$ \|{\cal F}(U_n ) \|_{ s_{0}+\beta_1}^{{\rm Lip}(\gamma)} \leq C_* \e^{b_*}   N_{n - 1}^{\kappa} $.

\item[$({\cal P}4)_{n}$] \emph{(Measure).} 
The measure of the ``Cantor-like'' sets $ {\cal G}_n $ satisfies
\begin{equation}\label{Gmeasure}
 | \Omega_\e \setminus {\cal G}_0 | \leq C_* \e^{2(\nu - 1)} \g \, , \quad  
\big| {\cal G}_n \setminus {\cal G}_{n+1} \big|  \leq  C_* \e^{2(\nu - 1)} \g N_{n - 1}^{-1}  \, . 
\end{equation}
\end{itemize}
All the Lip norms are defined on $ {\cal G}_{n} $, namely  $\| \ \|_s^{{\rm Lip}(\gamma)} = \| \ \|_{s,\mG_n}^{{\rm Lip}(\gamma)} $.\end{theorem}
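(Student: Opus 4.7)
\medskip

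\noindent \textbf{Proof plan.} The plan is a standard Nash--Moser scheme: at each step we invert the linearized operator on a Cantor-like set of parameters via Theorems \ref{thm:stima inverso approssimato} and \ref{inversione linearized normale}, apply a smoothed Newton correction, and control the new error using the quadratic Taylor remainder together with the tame estimates of Lemma \ref{lemma quantitativo forma normale}. The base case $n=0$ is trivial: $\fracchi_0 = 0$, $\zeta_0 = 0$ and $\cG_0$ is defined by \eqref{def:Gn+1}; the bounds in $({\cal P}1)_0$--$({\cal P}3)_0$ follow from $\| \cF(\vphi,0,0)\|_s^\Lipg \leq_s \e^{b_*}$, which is Lemma \ref{lemma quantitativo forma normale} for $\fracchi = 0$. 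The measure bound $|\Omega_\e \setminus \cG_0| \leq C\e^{2(\nu-1)}\g$ is a standard Diophantine count, using that $|\Omega_\e| \sim \e^{2\nu}$ and $\tau \geq \nu + 2$.

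For the inductive step, assuming $({\cal P}1)_n$--$({\cal P}4)_n$, first I would check that on $\cG_{n+1}$ the inversion assumption \eqref{tame inverse} is satisfied for $\mL_\om(i_n)$: indeed $\cG_{n+1} \subseteq \Omega_\infty^{2\g}(i_n)$ by construction, so Theorem \ref{inversione linearized normale} applies. Hence the approximate inverse $\mathbf{T}_n := \mathbf{T}_0(i_n)$ of Theorem \ref{thm:stima inverso approssimato} is defined on $\cG_{n+1}$. The Newton correction is
\begin{equation*}
\widehat U_{n+1} := ( \widehat \fracchi_{n+1}, \widehat \zeta_{n+1}) := - \widetilde\Pi_n \, \mathbf{T}_n \, \Pi_n \cF(U_n),
\end{equation*}
where $\widetilde \Pi_n := (\Pi_n, \mathrm{Id})$ acts as $\Pi_n$ on the torus component and as the identity on $\zeta$, and I set $U_{n+1} := U_n + \widehat U_{n+1}$. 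By Taylor expansion
\begin{equation*}
\cF(U_{n+1}) = \cF(U_n) + d\cF(U_n)[\widehat U_{n+1}] + Q_n, \qquad Q_n := \int_0^1 (1-t)\, d^2\cF(U_n + t \widehat U_{n+1})[\widehat U_{n+1}, \widehat U_{n+1}]\, dt,
\end{equation*}
and using $d\cF(U_n)\circ \mathbf{T}_n = \mathrm{Id} + (d\cF(U_n) \circ \mathbf{T}_n - \mathrm{Id})$, the new error splits into a linear smoothing error $\Pi_n^\bot \cF(U_n) + d\cF(U_n)[(\widetilde\Pi_n - \mathrm{Id})\mathbf{T}_n \Pi_n \cF(U_n)]$, an approximate-inverse error controlled by \eqref{stima inverso approssimato 2} and proportional to $\|\cF(U_n)\|_{s_0+\mu}^\Lipg$, and the quadratic term $Q_n$ estimated by \eqref{parte quadratica da P}. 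The estimates $({\cal P}2)_{n+1}$ at low norm $s_0$ and $({\cal P}3)_{n+1}$ at high norm $s_0+\beta_1$ then follow by combining the smoothing inequalities \eqref{smoothing-u1}, the tame bounds in Theorem \ref{thm:stima inverso approssimato}, and the choice of exponents in \eqref{costanti nash moser}--\eqref{def rho}; the key is the super-exponential gain $N_n^{-\alpha} = N_{n-1}^{-\alpha \chi}$ which beats all the losses $N_{n-1}^\mu$ from the approximate inverse and $N_{n-1}^{\kappa}$ from the high-norm divergence, thanks to the choice $\chi = 3/2$, $\alpha = 3\mu_1 + 1$, $\alpha_1 = (\alpha - 3\mu)/2$. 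The bounds on $\widehat \fracchi_{n+1}$ in \eqref{Hn} and on $\fracchi_{n+1}$ in $({\cal P}1)_{n+1}$ follow by telescoping from $({\cal P}2)_n$ and the estimate on $\mathbf{T}_n$.

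For the measure estimate $({\cal P}4)_{n+1}$, I decompose $\cG_n \setminus \cG_{n+1}$ into the sub-level sets where one of the resonance conditions in \eqref{def:Gn+1} fails. Using the Lipschitz dependence of $\mu_j^\infty(\om, i_n)$ on $\om$ (Theorem \ref{teoremadiriducibilita}) together with the explicit form \eqref{espressione autovalori} $\mu_j^\infty = \ii(-\tilde m_3 j^3 + \tilde m_1 j) + r_j^\infty$ with $\tilde m_3 = 1 + O(\e^3)$, one shows that each resonance set has measure $\lesssim \g\, |j^3-k^3|\langle l\rangle^{-\tau-1} \cdot |\Omega_\e|/\text{(Lipschitz derivative)}$; summing over $l, j, k$ with the restriction enforced by the first Melnikov condition and the constraint $|j^3-k^3|\leq C(|\om\cdot l| + \g)$ produces a convergent series thanks to $\tau \geq \nu + 2$, and the factor $N_{n-1}^{-1}$ in \eqref{Gmeasure} arises because resonances with $|l| \leq N_{n-1}$ are already excluded (they coincide for $i_n$ and $i_{n-1}$ up to an error controlled by \eqref{Hn}, and hence were handled at step $n$).

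The main obstacle, as usual in these schemes, is the interplay of the constants $\mu_1, \alpha, \kappa, \beta_1, \rho$: they must be chosen so that simultaneously (i) the super-exponential decay of the low-norm error survives the losses coming from $\mathbf{T}_n$, (ii) the high-norm divergence rate $\kappa$ is slow enough that the quadratic term $Q_n$ is still controlled after smoothing, and (iii) the measure series converges. A secondary subtlety is verifying the inversion assumption on $\cG_{n+1}$: one must show the eigenvalues $\mu_j^\infty(i_n)$ computed on the ``wrong'' torus $i_n$ still satisfy the Melnikov conditions with the loss $\g_n - \g_{n+1} = \g 2^{-n-1}$ absorbing the Lipschitz variation $|\mu_j^\infty(i_n) - \mu_j^\infty(i_{n-1})|$, which by Theorem \ref{teoremadiriducibilita} is bounded by $\e^{1+b}\|\widehat \fracchi_n\|_{s_0+\s}^\Lipg$ times a factor involving $|j^3-k^3|$ --- this is what dictates the choice of $\alpha_1$ in \eqref{Hn} and ultimately of $\rho$ in \eqref{def rho}. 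Once these exponent inequalities are verified, assembling $i_\infty := \lim_n i_n$ on $\cC_\e := \bigcap_n \cG_n$ and passing to the limit gives Theorem \ref{main theorem}, since $\|\cF(U_n)\|_{s_0} \to 0$ by $({\cal P}2)_n$, Lemma \ref{zeta = 0} forces $\zeta_\infty = 0$, and \eqref{Gmeasure} yields the measure bound \eqref{stima in misura main theorem}.
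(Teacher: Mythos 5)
Your overall architecture matches the paper's: base case from Lemma \ref{lemma quantitativo forma normale}, Newton step with smoothed approximate inverse (which the paper itself largely defers to \cite{bbm}, modulo replacing the factor $\e$ by $\e^2$ in the quadratic Taylor remainder because the mKdV nonlinearity is cubic), and a measure estimate exploiting that resonances with $|l|\leq N_{n-1}$ are already excluded at the previous step. However, there is one genuine gap in your measure argument, and it is precisely the point where the whole paper uses the non-degeneracy hypothesis \eqref{scelta siti} on the tangential sites.

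You claim that ``using the Lipschitz dependence of $\mu_j^\infty(\om,i_n)$ on $\om$'' each resonant set $R_{ljk}(i_n)$ has measure $\lesssim \g\langle l\rangle^{-\tau}\e^{2(\nu-1)}$ and that summing converges. This works only for $j^2+k^2$ large: there one restricts $\phi(\om)=\ii\om\cdot l+\mu_j^\infty-\mu_k^\infty$ to the direction $\hat l$ and uses $|l|\geq C_1|j^3-k^3|$ to get a transversality constant $\gtrsim|j^3-k^3|$ (Lemmata \ref{matteo 4} and \ref{lemma:res alti}). For the finitely many pairs with $j^2+k^2\leq C_0$ this argument breaks down, because the correction $\tilde m_1(j-k)$ with $\tilde m_1\simeq\e^2 c(\xi)=6\varsigma\,\mathbb{A}^{-T}\vec 1\cdot[\om-\bar\om]$ has Lipschitz constant of order $1$ in $\om$ (not $O(\e)$), so $\phi$ need not be monotone in the $\hat l$ direction; a priori $\phi$ could be nearly constant and nearly zero on all of $\Omega_\e$, making $R_{ljk}$ of full measure. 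The paper resolves this by writing $\phi(\om)=a_{jk}+b_{ljk}\cdot\om+q_{jk}(\om)$ with $|q_{jk}|^{\sup}\leq C\e^{3-2a}$ and showing (Lemma \ref{lemma:a jk}) that $a_{jk}=-\ii(j-k)\bigl(j^2+jk+k^2-\tfrac{2}{2\nu-1}\sum_i\bar\jmath_i^{\,2}\bigr)\neq 0$ \emph{exactly because of hypothesis \eqref{scelta siti}}; this forces $|b_{ljk}|\geq\delta_1>0$ on $R_{ljk}$, and the measure estimate is then carried out in the direction $\hat b_{ljk}$ (Lemma \ref{lemma:res bassi}). Your proposal never invokes \eqref{scelta siti}, so this case is not covered; without it the claimed bound on $|R_{ljk}(i_n)|$ for low modes is unjustified (and indeed false in general, which is why the paper needs either \eqref{scelta siti} or the modified Hamiltonian $K=H+\tfrac{3\varsigma}{4}M^2$ of Remark \ref{rem:lm M2-6}).
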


\begin{proof}
To simplify notations, in this proof we denote $\| \, \|^{{\rm Lip}(\g)}$ by $\| \, \|$. 

\smallskip

{\sc Step 1:} \emph{Proof of} $({\cal P}1, 2, 3)_0$. 
Recalling \eqref{operatorF} we have $ \| {\cal F}( U_0 ) \|_s$ 
$= \| {\cal F}(\vphi, 0 , 0, 0 ) \|_s$ 
$= \| X_P(\vphi, 0 , 0 ) \|_s \leq_s \e^{5-2b} $ 
by Lemma \ref{lemma quantitativo forma normale}. 
Hence (recall that $ b_* := 5 - 2 b $) the smallness conditions in
$({\cal P}1)_0$-$({\cal P}3)_0$ hold taking $ C_* := C_* (s_0 + \b_1) $ large enough.

\smallskip

{\sc Step 2:} \emph{Assume that $({\cal P}1,2,3)_n$ hold for some $n \geq 0$, and prove $({\cal P}1,2,3)_{n+1}$.}
The proof of this step closely follows Step 2 in the proof of Theorem 9.1 of \cite{bbm}. 
We just mention the main changes: here it is convenient to define
\begin{equation}\label{riscalamenti nash moser}
w_n := \e^2 \g^{-2}  \|{\cal F}(U_n) \|_{s_0}\,,\quad 
B_n := \e^2 \g^{-1}\| \fracchi_n \|_{s_0 + \beta_1} + \e^2 \g^{-2}  \|{\cal F}(U_n) \|_{s_0 + \beta_1} \,,
\end{equation}
while the corresponding quantities defined in (9.18) of \cite{bbm} have $\e$ instead of $\e^2$ 
(and then, with definition \eqref{riscalamenti nash moser}, 
the bounds (9.19) of \cite{bbm} are also valid here without changes).
In the present case, the estimates (9.20)-(9.21) of \cite{bbm} for the quadratic Taylor remainder 
have to be adapted by replacing the factor $\e$ with $\e^2$. 
The reason for this improvement is that the nonlinearity in the mKdV equation is cubic, 
whereas in the KdV equation considered in \cite{bbm} the nonlinearity is just quadratic. 

\begin{remark} \label{rem:one power less} 
Since the KdV, respectively mKdV,  
nonlinearity is quadratic, respectively cubic, 
the smallness condition required in \cite{bbm} for the convergence of the Nash-Moser scheme is stronger than 
for Theorem \ref{iterazione-non-lineare}: it is 
 $ \e \| {\cal F}(\vphi, 0, 0 ) \|_{s_0+ \mu} \g^{-2} \ll 1 $ instead of  $ \e^2 \| {\cal F}(\vphi, 0, 0 ) \|_{s_0+ \mu} \g^{-2} \ll 1 $. 
As a consequence less steps of Birkhoff normal form are required
(namely less monomials to work out in the original Hamiltonian) 
to reach the sufficient smallness  $\mF(U_0) = O( \e^{5-2b}) $ 
to make the Nash-Moser scheme to converge
(in \cite{bbm} it is needed $\mF(U_0) = O( \e^{6-2b}) $).
\end{remark}

{\sc Step 3:} \emph{Prove $({\cal P}4)_n$ for all $n \geq 0$.} 
For all $n \geq 0$, the difference $\mG_n \setminus \mG_{n+1}$ 
is the union over $l \in \Z^\nu$, $j,k \in S^c \cup \{ 0 \}$ 
of the sets $R_{ljk}(i_n)$,
where 
\begin{equation}\label{resonant sets}
R_{ljk}(i_n) := \big\{ \omega \in {\cal G}_n \, : \, |\ii \omega \cdot l + \mu_j^\infty (i_{n}) -
\mu_k^\infty (i_{n})| < 2\gamma_{n} |j^{3}-k^{3}|\left\langle l\right\rangle^{- \tau}\big\}\,.
\end{equation}
Since $R_{ljk}(i_n) = \emptyset$ for $j = k$, 
in the sequel we assume that $j \neq k$.

\begin{lemma} \label{matteo 10}
For $n \geq 1$, $|l| \leq N_{n - 1}$, one has the inclusion 
$R_{ljk}(i_n) \subseteq R_{ljk}(i_{n - 1}) $. 
\end{lemma}

\begin{proof}
The proof closely follows the one of Lemma 5.2 in \cite{BBM}. 
The differences are that here the vector $\omega$ is not confined along a fixed direction, 
here we have $N_{n-1}$ instead of $N_n$,
and the factor $\e$ in (5.28) and (5.33) of \cite{BBM} is replaced here by $\e^7 \g^{-2} = \e^{3-2a}$. 

In the proof we use \eqref{Hn}, \eqref{decay R6}, \eqref{stima m3}, \eqref{stima m1}, 
and the bounds (4.25), (4.26), (4.34) of \cite{BBM} adapted to the present case 
(the bounds (4.25), (4.26) of \cite{BBM} hold here with $\e^{1+b}$ instead of $\e$, 
as already pointed out in the proof of Theorem \ref{teoremadiriducibilita}; 
the bound (4.34) of \cite{BBM} holds here with no change). 
\end{proof}

By definition, $ R_{ljk} (i_n) \subseteq {\cal G}_n $ (see \eqref{resonant sets}).
By Lemma \ref{matteo 10}, for $n \geq 1$ and $ |l| \leq N_{n-1} $ 
we also have $R_{ljk}(i_n) \subseteq R_{ljk}(i_{n - 1}) $. 
On the other hand, $ R_{ljk}(i_{n-1}) \cap {\cal G}_{n} = \emptyset $ (see \eqref{def:Gn+1}). 
As a consequence, $ R_{ljk} (i_n) = \emptyset $ for all $ |l| \leq N_{n-1} $,
and
\begin{equation} \label{Gn Gn+1}
{\cal G}_n \setminus {\cal G}_{n+1} \subseteq 
\bigcup_{\begin{subarray}{c} j, k \in S^c \cup \{0\} \\ 
|l| > N_{n-1} \end{subarray}} 
R_{ljk} ( i_n)  \quad \forall n \geq 1. 
\end{equation}

\begin{lemma}\label{matteo 4}
Let  $n \geq 0$. If $R_{ljk}(i_n) \neq \emptyset$, 
then $|l| \geq C_1 |j^3 - k^3| \geq \frac12 C_1 (j^2 + k^2) $ for some constant $C_1 > 0$
(independent of $l,j,k,n,i_n,\om$).
\end{lemma}

\begin{proof}
Follow the proof of Lemma 5.3 of \cite{BBM}, also using \eqref{autofinali}.  
Note that $|\om| \leq 2 |\bar\om|$ for all $\om \in \Omega_\e$, for $\e$ small enough, 
by \eqref{Omega epsilon} and \eqref{mappa freq amp}.
\end{proof}

Now we study the measure of the resonant sets $R_{ljk}(i_n)$ defined in \eqref{resonant sets}. 
We have to analyze in more details the sublevels of the function 
\be\label{def:phi}
\om \mapsto \phi(\om) :=
\ii \omega \cdot l + \mu_j^\infty (\om) - \mu_k^\infty (\om),
\ee
appearing in \eqref{resonant sets} 
($\phi$ also depends on $l,j,k,i_n$). 

\begin{lemma} \label{lemma:res alti}
There exists $C_0 > 0$ such that for all $j \neq k$, with $j^2 + k^2 > C_0$,
the set $R_{ljk}(i_n)$ has Lebesgue measure
$|R_{ljk}(i_n)| \leq C \e^{2(\nu-1)} \g \la l \ra^{-\t}$.
\end{lemma}

\begin{proof}
For $l \neq 0$, decompose $\om = s \hat l + v$, where $\hat l := l / |l|$, $s \in \R$, and $l \cdot v = 0$ (so that $\om \cdot l = s |l|$). 
Let $\psi(s) := \phi(s \hat l + v)$. 
The eigenvalues $\mu_j^\infty$ are given in \eqref{espressione autovalori}.
By \eqref{c(xi)} and \eqref{linkxiomega}, 
$
\e^2 |c(\xi)|^\lip \leq C_2 
$
for some constant $C_2 >  0 $ depending only on the set $S$ of the tangential sites.
Then, by \eqref{autofinali} and \eqref{def norma Lipg}, 
\begin{align*}
|\tilde m_3(s_1) - \tilde m_3(s_2)| & \leq C \e^3 \g^{-1} |s_1 - s_2|,  
\\  
|\tilde m_1(s_1) - \tilde m_1(s_2)| 
& 
\leq (C_2 + C \e^5 \g^{-2})|s_1 - s_2| 
\leq 2 C_2 |s_1 - s_2|,
\\
|r_j^\infty(s_1) - r_j^\infty(s_2)| 
& \leq C \e^{3-2a} \g^{-1} |s_1 - s_2|
\end{align*} 
for some $C > 0$ and $\e$ small enough, where, with a slight abuse of notations, we have written 
$$
\tilde m_i(s) = \tilde m_i (s \hat l + v)\,, \quad i = 1, 3 \quad \text{and} \quad r_j^\infty(s) = r_j^\infty(s \hat l + v)\,, \quad j \in S^c\,.
$$
By \eqref{espressione autovalori} and Lemma \ref{matteo 4},  
\begin{align*}
|\psi(s_1) - \psi(s_2)| 
& \geq \big( |l| - C \e^3 \g^{-1} |j^3 - k^3| - 2 C_2 |j-k| - 2C \e^{3-2a} \g^{-1} \big) |s_1 - s_2|
\\
& \geq |j^3 - k^3| \Big( C_1 - C \e^3 \g^{-1} - \frac{2 C_2 |j-k|}{|j^3 - k^3|} \, 
- \frac{2C \e^{3-2a} \g^{-1}}{|j^3 - k^3|} \Big) |s_1 - s_2|
\\
& \geq \frac{C_1}{2} \,|j^3 - k^3| |s_1 - s_2|
\end{align*}
for $\e$ small enough and  
$j^2 + k^2 + jk > C_0 := 12 C_2 / C_1$. 
As a consequence, the set $\Delta_{ljk}(i_n) := \{ s : s \hat l + v \in R_{ljk}(i_n) \}$ 
has Lebesgue measure 
\[
| \Delta_{ljk}(i_n) | 
\leq \frac{2}{C_1 |j^3 - k^3|} \, \frac{4 \g_n |j^3 - k^3|}{\la l \ra^\t}
\leq \frac{C \g}{\la l \ra^\t}
\]
for some $C > 0$. The lemma follows by Fubini's Theorem.  
\end{proof}

\begin{remark} \label{rem:lm M2-6}
When $ K = H + \lambda M^2 $, $ \lambda = 3 / 4 $,  
using \eqref{m1 IF},  the conclusion of Lemma \ref{lemma:res alti} holds without restrictions on $j,k$.
\end{remark}

It remains to estimate the measure of the finitely many resonant sets $R_{ljk}(i_n)$ for $j^2 + k^2 \leq C_0$. 
Recalling \eqref{c(xi)} and the parity $\xi_{-j} = \xi_j$, 
we write $c(\xi) = 6 \varsigma \vec{1} \cdot \xi$
where $\vec{1}$ is the vector $(1, \ldots, 1) \in \R^\nu$
and $\xi = (\xi_j)_{j \in S^+} \in \R^\nu$.  
Hence, by \eqref{linkxiomega}, 
\begin{equation} \label{alg}
\e^2 c(\xi) 
= 6 \varsigma \vec{1} \cdot \mathbb{A}^{-1} [\om - \bar\om] 
= 6 \varsigma \mathbb{A}^{-T} \vec{1} \cdot [\om - \bar\om]
\end{equation} 
where $\mathbb{A}^{-T}$ is the transpose of $\mathbb{A}^{-1}$. 
We write the function $ \phi (\om)  $ in \eqref{def:phi} as 
\[
\phi(\om) = a_{jk} + b_{ljk} \cdot \om + q_{jk}(\om) \, ,
\]
where 
\begin{align*}
a_{jk} & := - \ii \big(  j^3 - k^3 + 6 \varsigma (j-k) \vec{1} \cdot \mathbb{A}^{-1} \bar\om \big) , \\
b_{ljk} &  := \ii \big( l + 6 \varsigma (j-k) \mathbb{A}^{-T} \vec{1} \big) ,
\\
q_{jk}(\om) 
& := - \ii (\tilde m_3 -1) (j^3 - k^3) + \ii (\tilde m_1 -\e^2 c(\xi)) (j-k) 
+ r_j^\infty - r_k^\infty 
\end{align*}
(and $\tilde m_3, \tilde m_1, \xi, r_j^\infty, r_k^\infty$ all depend on $\om$). 
By \eqref{autofinali} and since $ j^2 + k^2 \leq C_0 $ we deduce that $ |q_{jk}|^\Lipg \leq C \e^{3-2a} $.
Recalling \eqref{def norma Lipg} we get
\be\label{stima:qjl}
|q_{jk}|^{\rm sup} \leq C \e^{3-2a} \, , \quad
|q_{jk}|^\lip 
\leq \g^{-1} |q_{jk}|^\Lipg
\leq C \e^{1-3a} 
\ee
so that $ \phi (\om) $
is a small perturbation of the affine function $ \om \mapsto a_{jk} + b_{ljk} \cdot \om $. 
By the next lemma, the hypothesis \eqref{scelta siti} on the tangential sites $ S$ allows to verify that such function does not vanish identically. 

\begin{lemma} \label{lemma:a jk} Assume  \eqref{scelta siti}. 
Then, for all $j \neq k$, $j^2 + k^2 \leq C_0$ it results  $a_{jk} \neq 0$. 
\end{lemma}

\begin{proof} Using formulae \eqref{bar omega} and \eqref{AA -1}, we calculate
\[
\vec{1} \cdot \mathbb{A}^{-1} \bar\om = - \frac{1}{3\varsigma (2\nu -1)} \, 
\sum_{i=1}^\nu \bar \jmath_i^{\,2}.   
\]
Hence 
\[
a_{jk} = - \ii (j-k) \Big(  j^2 + jk + k^2 - \frac{2}{2\nu -1} \, 
\sum_{i=1}^\nu \bar \jmath_i^{\,2} \Big) \neq 0
\]
by assumption \eqref{scelta siti} on the set $S$.  
\end{proof}

Lemma \ref{lemma:a jk} implies that $\d := \min\{ |a_{jk}| : j^2 + k^2 \leq C_0, \ j \neq k \} > 0$. 

\begin{lemma} \label{lemma:res bassi}
Assume  \eqref{scelta siti}.  If $j^2 + k^2 \leq C_0$, then 
$|R_{ljk}(i_n)| \leq C \e^{2(\nu-1)} \g \la l \ra^{-\t}$.
\end{lemma}

\begin{proof} 
Denote $b := b_{ljk}$ for brevity.
For $ j^2 + k^2 \leq C_0$, $\om \in R_{ljk}(i_n)$, one has, by \eqref{resonant sets}, \eqref{stima:qjl},  
\[
| b \cdot \om |
\geq |a_{jk}| - |\phi(\om)| - |q_{jk}(\om)| 
\geq \d - 2 \g_n |j^3 - k^3| \la l \ra^{-\t} - C \e^{3-2a} 
\geq \d/2
\]
for $\e$ small enough. 
On the other hand, $| b \cdot \om | \leq 2 | \bar \om| |b|$
because $|\om| \leq 2 |\bar\om|$ (see \eqref{Omega epsilon} and \eqref{mappa freq amp}).
Hence $|b| \geq \d_1$ where $\d_1 := \d / (4 |\bar\om|) > 0$. 
Split $\om = s \hat b + v$ where $\hat b := b / |b|$ 
and $v \cdot b = 0$. Let $\psi(s) := \phi( s \hat b + v )$. By \eqref{stima:qjl}, for $\e$ small enough,  
we get 
$$
| \psi(s_1) - \psi(s_2)| \geq (|b| - |q_{jk}|^\lip) |s_1 - s_2|
\geq \frac{\d_1}{2}  |s_1 - s_2| \, .
$$ 
Then we proceed similarly as in the proof of Lemma \ref{lemma:res alti}.  
\end{proof}

The proof of \eqref{Gmeasure} follows from 
the lemmata \ref{matteo 10}, \ldots, \ref{lemma:res bassi},  
proceeding like in \cite{BBM} 
(see the conclusion of the proof of Theorem 5.1 in \cite{BBM}).  
\end{proof}

\noindent
{\bf Proof of Theorem \ref{main theorem} concluded.}  
The conclusion of the proof of Theorem \ref{main theorem} follows exactly like in \cite{bbm} 
(see ``Proof of Theorem 5.1 concluded'' in \cite{bbm}). 

\begin{remark} \label{rem:lm M2-7}
By remark \ref{rem:lm M2-6}, 
Lemma \ref{lemma:a jk} (which is the only point in the paper
where assumption \eqref{scelta siti} is used)
is not needed any more. 
Thus Theorem \ref{thm:mKdV}   
applies to $ K = H + (3\varsigma/4) M^2$ without 
assuming hypothesis \eqref{scelta siti}.
\end{remark}

\bigskip

\noindent
\textbf{Acknowledgements.}
This research was supported by the European Research Council under FP7 
and PRIN 2012 ``Variational and perturbative aspects of nonlinear differential problems''.
This research was carried out in the frame of Programme STAR, financially
supported by UniNA and Compagnia di San Paolo.

\begin{footnotesize}

\end{footnotesize}

\bigskip

\noindent
\textbf{Pietro Baldi}\\
Dipartimento di Matematica e Applicazioni ``R. Caccioppoli'' \\
Universit\`a di Napoli Federico II\\  
Via Cintia, Monte S. Angelo, 80126 Napoli, Italy \\
\emph{Email:} {pietro.baldi@unina.it} 

\bigskip

\noindent
\textbf{Massimiliano Berti}\\
SISSA\\ 
Via Bonomea 265, 34136 Trieste, Italy \\
\emph{Email:} {berti@sissa.it} 

\bigskip

\noindent
\textbf{Riccardo Montalto}\\
Institut f\"ur Mathematik\\ 
Universit\"at Z\"urich\\
Winterthurerstrasse 190, CH-8057 Z\"urich \\
\emph{Email:} {riccardo.montalto@math.uzh.ch}


\begin{thebibliography}{10}

\bibitem{Alazard-Baldi} 
Alazard T., Baldi P., 
\emph{Gravity capillary standing water waves}, 
Arch. Ration. Mech. Anal. 217 (2015), no.\,3, 741-830.

\bibitem{Baldi-Benj-Ono} 
Baldi P., 
\emph{Periodic solutions of fully nonlinear autonomous equations of Benjamin-Ono type}, 
Ann. Inst. H. Poincar\'e (C) Anal. Non Lin\'eaire 30 (2013), 33-77.

\bibitem{BBM} 
Baldi P., Berti M., Montalto R., 
\emph{KAM for quasi-linear and fully nonlinear forced perturbations of Airy equation}, 
Math. Annalen 359, 471-536 (2014). 

\bibitem{BBM1} 
Baldi P., Berti M., Montalto R., 
\emph{KAM for quasi-linear KdV},  
C. R. Acad. Sci. Paris, Ser. I 352 (2014) 603-607.

\bibitem{bbm} 
Baldi P., Berti M., Montalto R., 
\emph{KAM for autonomous quasi-linear perturbations of KdV}, 
to appear on Ann. Inst. H. Poincar\'e (C) Anal. Non Lin\'eaire. 

\bibitem{Baldi-Floridia-Haus} 
Baldi P., Floridia G., Haus E., 
\emph{Exact controllability for quasi-linear perturbations of KdV},
preprint. 

\bibitem{BBiP1} 
Berti M., Biasco P., Procesi M., 
{\it KAM theory for the Hamiltonian DNLW}, 
Ann. Sci. \'Ec. Norm. Sup\'er. (4), Vol. 46, fascicule 2 (2013), 301-373.

\bibitem{BBiP2}
Berti M., Biasco P., Procesi M., {\it KAM theory for the reversible derivative wave equation},
Arch. Rational Mech. Anal., 212 (2014), 905-955. 

\bibitem{BB13JEMS} Berti M., Bolle P.,  
{\it Quasi-periodic solutions with Sobolev regularity of NLS on $ \T^d $ with a multiplicative potential}, 
J. Eur. Math. Soc. 15 (2013), 229-286.

\bibitem{BB13} Berti M., Bolle P., {\it A Nash-Moser approach to KAM theory}, 
Fields Institute Communications, special volume ``Hamiltonian PDEs and Applications'', to appear.

\bibitem{Berti-Montalto} Berti M., Montalto R., \emph{KAM for gravity capillary water waves},
preprint. 

\bibitem{B96}  Bourgain J.,  {\it Gibbs measures and quasi-periodic solutions for nonlinear Hamiltonian partial differential equations}, 
23-43, Gelfand Math. Sem., Birkh\"auser Boston, Boston, MA, 1996.

\bibitem{FP} Feola R., Procesi M. {\it Quasi-periodic solutions for fully nonlinear forced
reversible Schr\"odinger equations}, J. Diff. Eq., 259, no. 7, 3389-3447, 2015.

\bibitem{K13} Guan H., Kuksin S.,   
{\it The KdV equation under periodic boundary conditions and its perturbations}, 
Nonlinearity 27 (2014), no.\,9, R61-R88.

\bibitem{IP09}  Iooss G.,   Plotnikov P.I.,  
{\it Small divisor problem in the theory of three-dimensional water gravity waves}, 
Mem. Amer. Math. Soc. 200, no. 940 (2009).

\bibitem{Ioo-Plo-Tol} Iooss G.,   Plotnikov P.I., Toland J.F., 
{\it Standing waves on an infinitely deep perfect fluid under gravity}, 
Arch. Rational Mech. Anal. 177  no. 3, (2005), 367-478. 

\bibitem{Lax} Lax P.,
{\it Development of singularities of solutions of nonlinear
hyperbolic partial differential equations}, J. Mathematical Phys. 5 (1964), 611-613.

\bibitem{LY}  Liu J., Yuan X.,
{\it A KAM Theorem for Hamiltonian Partial Differential
Equations with Unbounded Perturbations}, Comm. Math. Phys, 307 (3) (2011), 629-673.

\bibitem{KaP} Kappeler T., P\"{o}schel J., {\it KAM and KdV}, Springer, 2003.

\bibitem{KaT}  Kappeler, T., Topalov, P. 
{\it Global well-posedness of mKdV in $ L^2 (T,R)$}, 
Comm. Partial Differential Equations 30 (2005), no. 1-3, 435-449.

\bibitem{KM} Klainerman S., Majda A.,
{\it Formation of singularities for wave equations including the
nonlinear vibrating string}, Comm. Pure Appl. Math., 33,  (1980), 241-263.

\bibitem{Ku}  Kuksin S.,
{\it Hamiltonian perturbations of infinite-dimensional linear
systems with imaginary spectrum},
Funktsional. Anal. i Prilozhen. 21, no. 3, 22--37, 95, 1987.

\bibitem{K2} Kuksin S., {\it A KAM theorem for equations of the Korteweg-de Vries type},
Rev. Math. Phys., 10, 3, (1998), 1-64.

\bibitem{k1}  Kuksin S.,
{\it Analysis of Hamiltonian PDEs},
Oxford Lecture Series in Mathematics and its Applications, 19.
Oxford University Press (2000).

\bibitem{KP} Kuksin S., P\"oschel J., {\it
Invariant Cantor manifolds of quasi-periodic oscillations
for a nonlinear Schr\"{o}dinger equation}, Annals of Math. 2  143, (1996), 149-179.

\bibitem{Po3} P\"oschel J., {\it Quasi-periodic solutions for
a nonlinear wave equation}, Comment. Math. Helv.,  71, no. 2, (1996)
269-296.

\bibitem{PP1}
Procesi M., Procesi C., 
{\it A normal form for the Schr\"odinger equation with analytic non-linearities}, Comm. Math. Phys. 312 (2012), 501-557. 

\bibitem{Taylor}  Taylor M. E., {\it Pseudodifferential Operators and Nonlinear PDEs}, 
Progress in Mathematics, Birkh\"auser, 1991.

\bibitem{ZGY} Zhang J.,  Gao M.,  Yuan X.
{\it KAM tori for reversible partial differential equations},
Nonlinearity 24 (2011), 1189-1228. 

\bibitem{Z1} Zehnder E., 
{\it Generalized implicit function theorems with applications to some small divisors problems I-II}, 
Comm. Pure Appl. Math. 28 (1975), 91-140, and 29 (1976), 49-113.
\end{thebibliography}
\end{document}